\pdfoutput=1

\documentclass[a4paper, twoside]{article}



\usepackage{amssymb}
\usepackage{amsmath}
\usepackage{amsthm}


\usepackage[T1]{fontenc}   
\usepackage{lmodern}       
\usepackage{mathrsfs}      
\usepackage{upgreek}       
\usepackage{microtype}     
\usepackage{xurl}          
\usepackage{hyperref}      
\usepackage[nottoc]{tocbibind}     


\newcounter{enumi_memory}  



\theoremstyle{definition}
\newtheorem*{intro-definition}{Definition}

\theoremstyle{plain}
\newtheorem{intro-example}{Example}
\newtheorem{Thm}{Theorem}

\newtheorem{intro-corollary}{Corollary}
\newtheorem{reg-hyp}{Regularity hypotheses}

\swapnumbers
\theoremstyle{plain}
\newtheorem{theorem}{Theorem}[section]
\newtheorem{lemma}[theorem]{Lemma}
\newtheorem{corollary}[theorem]{Corollary}

\theoremstyle{remark}
\newtheorem{remark}[theorem]{Remark}

\theoremstyle{definition}
\newtheorem{definition}[theorem]{Definition}
\newtheorem{miniremark}[theorem]{}
\newtheorem{example}[theorem]{Example}

\newtheoremstyle{citing}
  {3pt}
  {3pt}
  {\itshape}
  {}
  {}
  {\textbf.}
  {.5em}
  {\thmnote{#3}}
\theoremstyle{citing}
\newtheorem*{citing}{}


\DeclareMathOperator{\without}{\sim}

\newcommand{\restrict}{\mathop{\llcorner}}

\newcommand{\ud}{\,\mathrm{d}}


\newcommand{\etalchar}[1]{$^{#1}$}

\newcommand{\noopsort}[1]{}  
 \def\cfac#1{\ifmmode\setbox7\hbox{$\accent"5E#1$}\else
	\setbox7\hbox{\accent"5E#1}\penalty 10000\relax\fi\raise 1\ht7
	\hbox{\lower1.15ex\hbox to 1\wd7{\hss\accent"13\hss}}\penalty 10000
	\hskip-1\wd7\penalty 10000\box7}
\def\polhk#1{\setbox0=\hbox{#1}{\ooalign{\hidewidth
			\lower1.5ex\hbox{`}\hidewidth\crcr\unhbox0}}}


\DeclareMathOperator{\card}{card}

\DeclareMathOperator{\trace}{trace} 
\DeclareMathOperator{\with}{:}      
\DeclareMathOperator{\reach}{reach} 
\DeclareMathOperator{\Unp}{Unp}     
\DeclareMathOperator{\Bdry}{Bdry}   
\DeclareMathOperator{\Clos}{Clos}   
\DeclareMathOperator{\Tan}{Tan}     
\DeclareMathOperator{\Nor}{Nor}     
\DeclareMathOperator{\spt}{spt}     
\DeclareMathOperator{\im}{im}       
\DeclareMathOperator{\Int}{Int}     
\DeclareMathOperator{\diam}{diam}   
\DeclareMathOperator{\Lip}{Lip}     
\DeclareMathOperator{\sign}{sign}   
\DeclareMathOperator{\Number}{N}    
\DeclareMathOperator{\dmn}{dmn}     
\DeclareMathOperator{\dist}{dist}   
\DeclareMathOperator{\Hom}{Hom}     
\DeclareMathOperator{\grad}{grad}   
\DeclareMathOperator{\Der}{D}       
\DeclareMathOperator{\weakD}{\mathbf{D}}  
\DeclareMathOperator{\boundary}{\partial} 
\DeclareMathOperator{\pt}{pt}       
\DeclareMathOperator{\ap}{ap}       


\title{A priori bounds for geodesic diameter. Part II. Fine connectedness
properties of varifolds}

\author{Ulrich Menne \and Christian Scharrer}

\begin{document}

\maketitle 

\begin{abstract}
	For varifolds whose first variation is representable by integration,
	we introduce the notion of indecomposability with respect to locally
	Lipschitzian real valued functions.  Unlike indecomposability, this
	weaker connectedness property is inherited by varifolds associated
	with solutions to geometric variational problems phrased in terms of
	sets, $G$ chains, and immersions; yet it is strong enough for the
	subsequent deduction of substantial geometric consequences therefrom.
	Our present study is based on several further concepts for varifolds
	put forward in this paper: real valued functions of generalised
	bounded variation thereon, partitions thereof in general, partition
	thereof along a real valued generalised weakly differentiable function
	in particular, and local finiteness of decompositions.
\end{abstract}

\paragraph{MSC-classes 2020}
	49Q15 (Primary); 46E35, 49Q05, 49Q20, 53A07, 53A10, 53C42, 54D05
	(Secondary).

\paragraph{Keywords}
	varifolds $\cdot$ first variation $\cdot$ decomposability $\cdot$
	generalised $V$ weakly differentiable functions $\cdot$ real valued
	functions of generalised $V$ bounded variation $\cdot$ partitions
	$\cdot$ local finiteness of decompositions $\cdot$ indecomposability
	with respect to a family of functions $\cdot$ partition along a
	function $\cdot$ integral $G$ chains $\cdot$ Plateau's problem.

\tableofcontents

\section{Introduction}

Throughout the introduction, we suppose the following assumptions to be valid.

\begingroup \hypertarget{II-gen-hyp}{}
	\begin{citing} [\textbf{General hypotheses}]
		Suppose $m$ and $n$ are positive integers, $m \leq n$, $U$ is
		an open subset of $\mathbf R^n$, $V$ is an $m$ dimensional
		varifold in $U$, and its first variation, $\updelta V$, is
		representable by integration.
	\end{citing}
\endgroup

After reviewing the notion of sets of locally finite perimeter on varifolds,
we describe our contributions---five concepts, four examples,
eight theorems, and three
corollaries---in their logical order.

\subsection*{Sets of locally finite perimeter}

A fundamental concept for the present study is that of \emph{distributional
$V$ boundary} of a  $\| V \| + \| \updelta V \|$ measurable set
$E$ (equivalently, a set $E$ which is measurable with respect
to both measures, $\| V \|$ and $\| \updelta V \|$) defined by
\begin{equation*}
	V \boundary E = ( \updelta V ) \restrict E - \updelta ( V \restrict E
	\times \mathbf G (n,m) ) \in \mathscr D' ( U, \mathbf R^n )
\end{equation*}
in \cite[5.1]{MR3528825}.  With respect to the geometry of $V$, the
distribution $V \boundary E$ and its Borel regular variation measure, $\| V
\boundary E\|$, could be understood as the distributional gradient of the
characteristic function of $E$ and the measure-theoretic perimeter of $E$,
respectively.  Thus, one may say that the perimeter of $E$ with respect to $V$
is locally finite if and only if $V \boundary E$ is representable by
integration or, equivalently, $\| V \boundary E \|$ is a Radon measure.

\subsection*{Real valued functions of generalised bounded variation}

For a real valued $\| V \| + \| \updelta V \|$ measurable function, we shall
apply the preceding concept of locally finite perimeter to its subgraph to
obtain a definition of a function possessing generalised bounded variation
with respect to $V$.  Alternatively, as suggested in \cite[8.1,
8.10]{MR3528825}, we could base a definition of this notion on a distribution
in $U \times \mathbf R$ of type $\mathbf R^n$ associated with the
distributional boundaries of superlevel sets of the function in question.
Both approaches are equivalent by the following theorem, Theorem
\ref{Thm:GBV}, which includes a characterisation---by means of a natural
absolute continuity condition---of the subclass $\mathbf T (V)$ of generalised
$V$ weakly differentiable functions introduced in \cite[8.3]{MR3528825}.

\begin{Thm} [see \protect{\ref{thm:equiv_bv} and \ref{corollary:equiv_bv}}]
	\label{Thm:GBV}
	Suppose $V$ fulfils the \hyperlink{II-gen-hyp}{general hypotheses},
	$f$ is a real valued $\| V \| + \| \updelta V \|$ measurable function,
	\begin{equation*}
		E = \{ (x,y) \with f(x)>y \},
	\end{equation*}
	$W$ is the Cartesian product of $V$ with the one-dimensional varifold
	in $\mathbf R$ associated with $\mathscr L^1$, the distribution $T \in
	\mathscr D' ( U \times \mathbf R, \mathbf R^n )$ satisfies
	\begin{equation*}
		T ( \phi ) = {\textstyle\int} V \boundary \{ x \with (x,y) \in
		E \} ( \phi ( \cdot, y ) ) \ud \mathscr L^1 \, y \quad
		\text{for $\phi \in \mathscr D ( U \times \mathbf R, \mathbf
		R^n )$},
	\end{equation*}
	and $F : \dmn f \to U \times \mathbf R$ is defined by $F(x) =
	(x,f(x))$ for $x \in \dmn f$.
	
	Then, $W \boundary E$ is representable by integration if and only if
	$T$ is representable by integration.   Moreover, $f \in \mathbf T (V)$
	if and only if $\| W \boundary E \|$, or equivalently $\| T \|$, is
	absolutely continuous with respect to $F_\# \| V \|$.
\end{Thm}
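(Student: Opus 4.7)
My plan is to derive both assertions from a single coarea-type identity relating $W \boundary E$ to $T$. To set up the identity, I would view any $\phi \in \mathscr{D}(U \times \mathbf{R}, \mathbf{R}^n)$ as the horizontal part of a test field on $U \times \mathbf{R}$ and compute $W \boundary E (\phi)$ from its definition, using that $W$ is the Cartesian product of $V$ with the one-dimensional varifold associated with $\mathscr{L}^1$. The first variation of such a product varifold, when tested against horizontal fields, should split via Fubini into an integral over $y \in \mathbf{R}$ of an operator acting on the slice $E_y = \{ x \with (x,y) \in E \}$. Combined with the very definition of $V \boundary E_y$, this should yield
\begin{equation*}
	W \boundary E (\phi) = {\textstyle\int} V \boundary E_y (\phi(\cdot,y)) \ud \mathscr{L}^1 \, y = T(\phi).
\end{equation*}

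From this identity, the equivalence of representability of $T$ and $W \boundary E$ would follow by analysing the remaining vertical component of $W \boundary E$. That component amounts to the distributional $y$-derivative of the characteristic function of the subgraph $E$, which by a second application of Fubini to the product structure of $W$ is controlled by $F_\# \| V \|$ alone and is therefore automatically representable by integration regardless of $T$. Consequently, the horizontal identity above determines representability of $W \boundary E$ in terms of that of $T$.

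For the characterisation of $\mathbf{T}(V)$, I would invoke the superlevel-set viewpoint indicated in \cite[8.1, 8.10]{MR3528825}: membership $f \in \mathbf{T}(V)$ corresponds to the family $\{ V \boundary E_y \}_{y \in \mathbf{R}}$ satisfying, at almost every $y$, an absolute continuity condition of its variation measure with respect to $\| V \|$ together with an integrability in $y$. Integrating against $\mathscr{L}^1$ through the coarea identity above should translate this into $\| T \| \ll F_\# \| V \|$, and, in view of the previous paragraph, equivalently $\| W \boundary E \| \ll F_\# \| V \|$; the push-forward measure being supported on the graph of $f$ encodes the concentration of the boundary mass on that graph.

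The main obstacle I anticipate is the rigorous distributional justification of the coarea identity, in particular verifying that the first variation of the product varifold $W$ commutes with Fubini when tested against arbitrary compactly supported smooth fields, not merely ones of product form, and that the two cancellations in $(\updelta W) \restrict E - \updelta( W \restrict E \times \mathbf{G}(n+1,m+1))$ match slice-by-slice those defining $V \boundary E_y$. A secondary difficulty is the absolute continuity part: since $F_\# \| V \|$ lives on the graph, one must rule out any vertical cliff contributions to $\| W \boundary E \|$ over jump points of $f$ whenever $f \in \mathbf{T}(V)$, and conversely extract from $\| W \boundary E \| \ll F_\# \| V \|$ the approximate differentiability and derivative integrability encoded in the definition of $\mathbf{T}(V)$.
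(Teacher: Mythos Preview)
Your plan is correct and matches the paper's approach: the paper establishes the identity $W \boundary E(\theta) = T(p\circ\theta) - \int (q\circ\theta)(x,f(x))\ud \|V\|\,x$ by exactly the horizontal/vertical Fubini splitting you describe, so the vertical part is precisely $F_\#\|V\|$-representable and one gets $\|T\| \leq \|W\boundary E\| \leq \|T\| + F_\#\|V\|$. For the $\mathbf T(V)$ characterisation, the paper proceeds slightly more directly than your slice-by-slice absolute continuity idea: it uses \cite[8.5]{MR3528825} and \cite[4.11]{MR3777387} for one implication, and for the converse applies Radon--Nikodym on $U\times\mathbf R$ to write $\|T\| = (F_\#\|V\|)\restrict g$ and then reads off $V\weakD f(x) = (gk)(F(x))$ from the representing kernel $k$ of $T$.
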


Coarea formulae for functions $f$ of generalised $V$ bounded variation in
terms of $T$---which acts as distributional derivative of $f$---follow, see
\ref{remark:equiv_bv}; in particular, for characteristic functions, we recover
the concept of locally finite $V$ perimeter of the corresponding set.  These
developments are in line with those originating from \cite{MR1152641} and
\cite{MR1354907} for the special case $m = n$ and $\| V \| (A) = \mathscr L^n
(A)$ for $A \subset U$, see \cite[8.8, 8.19]{MR3528825} and \ref{remark:GBV}.

\subsection*{Partitions}

We employ sets of zero distributional $V$ boundary to introduce the notion of
partition of $V$ which generalises that of decomposition of $V$.

\begin{intro-definition} [see \protect{\ref{def:partition}}]
	Under the \hyperlink{II-gen-hyp}{general hypotheses}, a subfamily
	$\Pi$ of
	\begin{equation*}
		\{ V \restrict E \times \mathbf G (n,m) \with \text{$E$ is $\|
		V \| + \| \updelta V \|$ measurable, $V \boundary E = 0$, $\|
		V \| (E) > 0$} \}
	\end{equation*}
	is termed a \emph{partition} of $V$ if and only if
	\begin{equation*}
		V (k) = \sum_{W \in \Pi} W (k) \quad \text{and} \quad \|
		\updelta V \| (f) = \sum_{W \in \Pi} \| \updelta W \| (f)
	\end{equation*}
	for $k \in \mathscr K ( U \times \mathbf G (n,m) )$ and $f \in
	\mathscr K ( U)$.
\end{intro-definition}

In view of \ref{remark:char-partition}, the previous notions of
indecomposability, component, and decomposition from \cite[6.2, 6.6,
6.9]{MR3528825} can be rephrased: $V$ is \emph{indecomposable} if and only if
either $V = 0$ or $\{ V \}$ is the only partition of $V$; $W$ is a
\emph{component} of $V$ if and only if $W$ is an indecomposable member of some
partition of $V$; and, $\Xi$ is a \emph{decomposition} of $V$ if and only if
$\Xi$ is a partition of $V$ consisting of components of $V$.  Decompositions
were introduced for the study of $\mathbf T (V)$ to characterise those members
$f$ of that space which have vanishing generalised $V$ weak derivative, $V
\weakD f$.  In fact, in the case $V$ is rectifiable, \emph{$f \in \mathbf T
(V)$ satisfies $V \weakD f = 0$ if and only if there exist some decomposition
$\Xi$ of $V$ and $\upsilon : \Xi \to \mathbf R$ such that
\begin{equation*}
	f(x) = \upsilon (W) \quad \text{for $\| W \| + \| \updelta W \|$
	almost all $x$},
\end{equation*}
whenever $W \in \Xi$}, see \cite[6.10, 8.24, 8.34]{MR3528825}.  Decompositions
are in general non-unique; in fact, one may consider three lines intersecting
in a common point at equal angles, see \cite[6.13]{MR3528825}.  For
nonrectifiable $V$, elementary examples show that decompositions may fail to
exist even if $\updelta V = 0$, see \cite[4.12\,(3)]{MR3777387}.

The structure of functions in $\mathbf T (V)$ with non-vanishing generalised
$V$ weak derivative may be more complex:  Whereas it is possible to define a
member $f$ of $\mathbf T (V)$ by firstly selecting a partition $\Pi$ of $V$
and then, for each $W \in \Pi$, a member of $\mathbf T (W)$ subject only to a
natural summability condition, see \cite[4.14]{MR3777387} and
\ref{remark:partition-earlier}, our first example shows that those $f$
may not exhaust $\mathbf T (V)$.

\begin{intro-example} [see \protect{\ref{example:no-adapted-decomposition}}]
	\label{Example:no-adapt}
	There exists a nonzero two-dimensional varifold
	$X$ in $\mathbf R^3$ with $\| \updelta X \| \leq \kappa \| X \|$ for
	some $0 \leq \kappa < \infty$ and $\boldsymbol \Uptheta^2 ( \| X \|, x
	) = 1$ for $\| X \|$ almost all $x$ such that, for some $f \in \mathbf
	T ( X )$, the function $f$ does not belong to $\mathbf T (W)$ for any
	component $W$ of $X$.
\end{intro-example}

This varifold $X$ has four components, each associated with a properly
embedded two-dimensional submanifold of class $\infty$ of
$\mathbf R^3$, and two distinct decompositions;  there is a straight line
which is the intersection of the submanifolds corresponding to the two
components of each decomposition; $f$ possesses generalised bounded variation
with respect to $W$ for each component $W$ of $X$; and the singular parts of
the distributional derivative of $f$ with respect to $W$---that is, those not
representable with respect to $F_\# \| W \|$---are concentrated along this
line so as to cancel after summing over the two members $W$ of any
decomposition of $X$.  We have included Theorem \ref{Thm:GBV} in our present
paper because it allows us to employ the geometric viewpoint of boundaries of
subgraphs in this construction.

\subsection*{Examples of decomposable varifolds}

We shall demonstrate that---when viewed by means of the existing notion of
indecomposability---the varifolds associated with compact $(\mathscr H^m,m)$
rectifiable sets, indecomposable $G$ chains (see
\ref{definition:indecomposable_chains}), and immersions may fail to inherit
the connectedness properties of these objects. We firstly introduce a related
concept.

\begin{intro-definition} [see
	\protect{\ref{def:locally-finiteness-decompositions}}]
	A family $\Xi$ of $m$ dimensional varifolds in $U$ is termed
	\emph{locally finite} if and only if
	\begin{equation*}
		\card ( \Xi \cap \{ W \with K \cap \spt \| W \| \neq
		\varnothing \} ) < \infty
	\end{equation*}
	whenever $K$ is a compact subset of $U$.
\end{intro-definition}

\begin{intro-example} [Two touching spheres, see 
	\ref{example:indecomposability_of_type_f} and
	\ref{remark:indecomposability_of_type_f}]
	\label{intro-example:touching-spheres} Suppose $m=n-1$, $A = \{ a_1,
	a_2 \} \subset \mathbf R^n$, where $a_1 = 0$ and $a_2 = (0, \ldots, 0,
	2 )$,
	\begin{equation*}
		M = \mathbf R^n \cap \{ x \with \dist (x,A) = 1 \},
	\end{equation*}
	and $V$ is the $m$ dimensional varifold in $\mathbf R^n$ associated
	with $M$.
	
	Then, $V$ satisfies the \hyperlink{II-gen-hyp}{general hypotheses},
	$\| \updelta V \| \leq m \| V \|$, $M = \spt \| V \|$ is a connected
	compact $(\mathscr H^m,m)$ rectifiable set, and $V$ is decomposable.
\end{intro-example}

In fact, $V$ admits a unique decomposition whose elements correspond to the
varifolds associated with the two  spheres whose union equals $M$.  In the
case $m=1$, slightly modifying the shape of the set $M$, one may require that
$V$ is associated with some figure-eight immersion $F : \mathbf S^1 \to
\mathbf R^2$ of class $\infty$.

\begin{intro-example} [Three line segments, see \ref{example:four-arcs}]
	\label{intro-example:three-line-segments}
	Suppose $B = \{ b_1,b_2, b_3 \} \subset \mathbf R^2$, where $b_1 =
	(1,0)$, $b_2 = (0,1)$, and $b_3 = (-1,0)$,
	\begin{equation*}
		M = \{ t b \with 0 \leq t \leq 1, b \in B \},
	\end{equation*}
	and $V$ is the one-dimensional varifold in $\mathbf R^2$ associated
	with $M$.
	
	Then, $V$ is a varifold satisfying the \hyperlink{II-gen-hyp}{general
	hypotheses}, $V$ is associated with an indecomposable integral
	$\mathbf Z/3 \mathbf Z$ chain, and $V$ is decomposable.
\end{intro-example}

The unique decomposition of this varifold $V$ consists of the varifolds
associated with the two line segments $\{ tb_1+(1-t)b_3 \with 0 \leq t \leq 1
\}$ and $\{ t b_2 \with 0 \leq t \leq 1 \}$.

Similar situations do occur with integer coefficients; in fact, using a
variant of the map $f$ in \cite[p.\,426]{MR41:1976}, one may construct two
real projective planes with $m=2$ embedded into $\mathbf R^6$ touching along a
common bounding projective line.  In Examples
\ref{intro-example:touching-spheres} and
\ref{intro-example:three-line-segments}, $\spt \| V \|$ is connected only
through a single point.  This illustrates the challenge involved in creating a
\emph{measure-theoretic} notion of connectedness for varifolds which treats
these varifolds as connected and is yet powerful enough to entail meaningful
geometric consequences; this will be accomplished by the present paper and the
final paper, see~\cite{arXiv:1709.05504v3}.

We now exhibit an example of a varifold associated with a properly
\emph{immersed} submanifold-with-boundary in which decompositions are not
locally finite.  Even for general varifolds not necessarily associated with an
immersion, this behaviour can be precluded by either of the
natural Regularity hypotheses \ref{reg-hyp:Dirichlet} and
\ref{reg-hyp:Neumann} below involving properly \emph{embedded} boundary data
of class $2$ of Dirichlet or Neumann type, respectively, see Theorems
\ref{Thm:criterion-local-finiteness} and
\ref{Thm:criterion-local-finiteness-II} below.

\begin{intro-example} [see \ref{example:inifinite_components}]
	\label{Thm:immersion}
	There exist a two-dimensional man\-i\-fold-with-bound\-ary $M$ and a
	proper immersion $F : M \to \mathbf R^3$ of class $\infty$ such that
	no decomposition of the two-dimensional varifold in $\mathbf R^3$
	associated with $F$ is locally finite.
\end{intro-example}

One may partially visualise the construction as follows: $M$ equals the
disjoint union of four half-planes and one plane; $F$ restricted to any of
these components of $M$ is an embedding; the images of the four half-planes
meet the isometrically embedded plane at an angle of $60^\circ$ in two curved
lines (two half-planes per line); the two lines tangentially meet in a
prescribed closed set; the two lines enclose an infinite number of
accumulating open topological discs between each other; and, each of these
discs occurs as component in the unique decomposition of $V$.

\subsection*{Properties of indecomposability with respect to a family of
generalised weakly differentiable real valued functions}

The most central concept for our present study is the following notion of
indecomposability of $V$ with respect to a subfamily $\Psi$ of $\mathbf T (
V)$.

\begin{intro-definition} [see \ref{definition:indecomposability}]
	If $V$ satisfies the \hyperlink{II-gen-hyp}{general
	hypotheses} and $\Psi \subset \mathbf T (V)$, then we term $V$
	\emph{indecomposable of type $\Psi$} if and only if, whenever $f \in
	\Psi$, the set of $y \in \mathbf R$ such that $E(y) = \{ x \with f(x)
	> y \}$ satisfies
	\begin{equation*}
		\| V \| ( E (y)) > 0, \quad \| V \| ( U \without E (y) ) > 0,
		\quad V \boundary E (y) = 0
	\end{equation*}
	has $\mathscr L^1$ measure zero.
\end{intro-definition}

Equivalently, only for $y$ in an exceptional set of $\mathscr L^1$ measure
zero, we allow
\begin{equation*}
	\{ V \restrict E (y) \times \mathbf G (n,m), V \restrict ( U \without
	E (y)) \times \mathbf G (n,m) \}
\end{equation*}
to be a partition of $V$.  The smaller $\Psi$, the less restrictive is the
corresponding notion of indecomposability.  We shall consider the following
six families as $\Psi$.
\begin{center}
	\begin{tabular}{rl}
		$\mathbf T (V)$ & Generalised $V$ weakly differentiable real
		valued functions.\\
		$\Gamma$ & Continuous functions $f \in \mathbf T(V)$ with
		domain $\spt \|V\|$. \\
		$\Lambda$ & Locally Lipschitzian functions $f : U \to \mathbf
		R$. \\
		$\mathscr E (U,\mathbf R )$ & Functions $f : U \to \mathbf R$
		of class $\infty$. \\
		$\mathscr D ( U, \mathbf R )$ & Functions $f : U \to \mathbf
		R$ of class $\infty$ with compact support in $U$. \\
		$\{ f \}$ & A particular function $f \in \mathbf T (V)$.
	\end{tabular}
\end{center}
The strongest condition---indecomposability of type $\mathbf T
(V)$---coincides with the existing notion of indecomposability, see
\ref{remark:indecomposability_implications}.  In the case that $V$ is
rectifiable and every decomposition of $V$ is locally finite, connectedness of
$\spt \| V \|$ implies indecomposability of type $\Gamma$, see Theorem
\ref{Thm:indecomposability-type-G} below; in particular, the decomposable
varifolds of Examples \ref{intro-example:touching-spheres} and
\ref{intro-example:three-line-segments} are indecomposable of type $\Gamma$.
Indecomposability of type $\Lambda$ is expedient for varifolds associated with
immersions and integral $G$ chains, see Theorems
\ref{Thm:connected-immersions} and \ref{Thm:Indecomposability-G-chains} below,
respectively.  Indecomposability of type $\mathscr E ( U, \mathbf R )$
suffices to guarantee connectedness of $\spt \| V \|$, see
\ref{thm:indecomposability_connected}.  Indecomposability of type $\mathscr D
( U, \mathbf R )$ will be employed in the final paper of our series (see
\cite{arXiv:1709.05504v3}) to establish---for varifolds satisfying a uniform
lower density bound, local $p$-th power summability of their mean curvature,
and a boundary condition---two types of varifold-geometric results: lower
density bounds $\mathscr H^{m-p}$ almost everywhere, provided $m-1 \leq p <
m$, and an a priori bound on the geodesic diameter of $\spt \| V \|$.  Theorem
\ref{Thm:Indecomposability-G-chains} and Corollaries \ref{Thm:E} and
\ref{Thm:Neumann} below then entail that these results are applicable to
various geometric variational problems including a variety of Plateau
problems.  The latter refers to the classical formulations of Reifenberg and
Federer-Fleming as well as to those solutions obtained using integral $G$
chains, min-max methods, and Brakke flow.  Finally, the notion of
indecomposability of type $\{ f \}$ serves as a tool to study the behaviour of
$V$ with regard to a fixed $f \in \mathbf T(V)$.

For varifolds $V$ associated with $m$ dimensional properly embedded
submanifolds $M$ of class $2$ of $U$, see \ref{example:indecomposability},
indecomposability of type $\mathscr E (U, \mathbf R )$ is equivalent to both
previously existing notions---connectedness of $M$ and indecomposability of
$V$; moreover, such $V$ is indecomposable of type $\mathscr D ( U, \mathbf R)$
if and only if either $M$ is connected or no connected component of $M$ is
compact.

Whereas indecomposability of types $\mathbf T (V)$, $\mathscr E ( U, \mathbf
R)$, $\mathscr D (U, \mathbf R )$, and $\{ f \}$ all differ by the examples
already discussed (and the fact that $V$ is always indecomposable of type $\{
0 \}$), we did not study whether the families $\Gamma$, $\Lambda$, and
$\mathscr E (U,\mathbf R )$ yield different types of indecomposability; in the
case that $V$ is rectifiable and every decomposition of $V$ is locally finite,
these three types of indecomposability are all equivalent to connectedness of
$\spt \| V \|$ by Corollary \ref{Corollary:equivalence} below.

Indecomposability of type $\Psi$ may be exploited by coarea type arguments.
The following theorem serves as an example of this method and is the
foundation for the varifold-geometric results in the final paper of our
series~\cite{arXiv:1709.05504v3}.

\begin{Thm} [see \protect{\ref{lemma:basic_indecomp}%
		\,\eqref{item:basic_indecomp:interval}%
		\,\eqref{item:basic_indecomp:constant} and \ref{thm:utility}}]
	\label{Thm:consequences-indecomposability-singleton}
	Suppose $f \in \mathbf T (V)$, $V$ is indecomposable of type $\{ f
	\}$, $Y \subset \mathbf R$, and $f(x) \in Y$ for $\| V \|$ almost all
	$x$.
	
	Then, $\spt f_\# \| V \|$ is a subinterval of $\mathbf R$ whose
	diameter is bounded by $\mathscr L^1 (Y)$.  Moreover, if $V \weakD f =
	0$, then $f$ is $\| V \| + \| \updelta V \|$ almost constant.
\end{Thm}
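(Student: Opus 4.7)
The plan is to establish in turn the three assertions---the interval structure of $\spt f_\# \| V \|$, the diameter bound by $\mathscr L^1(Y)$, and the constancy conclusion under $V \weakD f = 0$---by combining Theorem~\ref{Thm:GBV} with the defining exception clause of indecomposability of type $\{f\}$. Throughout, set $E(y) = \{ x \with f(x) > y \}$, let $F(x) = (x,f(x))$, and let $T$ be the distribution associated with $f$ as in Theorem~\ref{Thm:GBV}, so that $\| T \|$ is absolutely continuous with respect to $F_\# \| V \|$ and admits the integral representation $T(\phi) = \int V \boundary E(y) ( \phi(\cdot,y) ) \ud \mathscr L^1 y$.

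First I would prove that $\spt f_\# \| V \|$ is an interval, arguing by contradiction. If it were not, then there exist $a < b$ in $\spt f_\# \| V \|$ with $(a,b) \cap \spt f_\# \| V \| = \varnothing$, whence $\| V \|(f^{-1}(a,b)) = 0$ and therefore $\| T \|(U \times (a,b)) = 0$ by the absolute continuity. The integral representation of $T$, combined with a countable dense choice of test functions on $U$, then gives $V \boundary E(y) = 0$ for $\mathscr L^1$-a.e.\ $y \in (a,b)$. For such $y$ the set $E(y)$ coincides $\| V \|$-a.e.\ with $\{ x \with f(x) \geq b \}$, and since both $a$ and $b$ lie in $\spt f_\# \| V \|$ this set and its complement each carry positive $\| V \|$-measure. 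This yields a positive $\mathscr L^1$-measure set of levels $y$ violating the defining condition of indecomposability of type $\{f\}$, a contradiction.

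Next I would deduce the diameter bound. Write $s = \inf \spt f_\# \| V \|$ and $t = \sup \spt f_\# \| V \|$. Since $f(x) \in Y$ for $\| V \|$-a.e.\ $x$, the push-forward $F_\# \| V \|$ gives zero measure to $U \times (\mathbf R \without Y)$, so the absolute continuity forces $\| T \|(U \times (\mathbf R \without Y)) = 0$, and the same Fubini-type unpacking produces $V \boundary E(y) = 0$ for $\mathscr L^1$-a.e.\ $y \in \mathbf R \without Y$. On the other hand, for every $y \in (s,t)$ both $\| V \|(E(y)) > 0$ and $\| V \|(U \without E(y)) > 0$ by the interval property. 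Indecomposability of type $\{f\}$ therefore forces $\mathscr L^1((s,t) \without Y) = 0$, which gives $t - s = \mathscr L^1((s,t) \cap Y) \leq \mathscr L^1(Y)$, as required.

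For the constancy statement my plan is to show that under $V \weakD f = 0$ one has $V \boundary E(y) = 0$ for $\mathscr L^1$-a.e.\ $y \in \mathbf R$; once this is in hand, indecomposability of type $\{f\}$ forces $\| V \|(E(y)) \in \{ 0, \| V \|(U) \}$ for a.e.\ $y$, the monotonicity of $y \mapsto \| V \|(E(y))$ pins the jump at a single value $c$ giving $f = c$ $\| V \|$-a.e., and the upgrade to $\| V \| + \| \updelta V \|$-a.e.\ uses the natural compatibility of $\mathbf T(V)$ with both measures. The principal obstacle is the first implication: Example~\ref{Example:no-adapt} cautions that $V \weakD f = 0$ does \emph{not} imply vanishing of the full distributional derivative $T$, because $T$ may carry singular parts along ``interfaces'' that survive only by cancellation across components of a decomposition. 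Indecomposability of type $\{f\}$ is precisely the hypothesis that precludes such cancellation, and the delicate task is to convert this qualitative statement into the vanishing of $V \boundary E(y)$ at almost every level---essentially showing that any non-trivial singular slice would itself supply a forbidden partition of $V$.
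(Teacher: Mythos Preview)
Your arguments for the interval property and the diameter bound are correct and run parallel to the paper's: where you invoke the absolute continuity of $\|T\|$ with respect to $F_\#\|V\|$ from Theorem~\ref{Thm:GBV} together with the coarea formula of \ref{remark:equiv_bv}, the paper appeals to the same circle of ideas packaged as the inclusion $\{ y \with \boldsymbol\Uptheta^1(\nu,y) = 0 \} \subset \{ y \with V\boundary E(y) = 0 \}$ drawn from \cite[8.29]{MR3528825}, with $\nu$ a weighted push-forward of $\|V\| \restrict |V\weakD f|$.

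For the constancy statement your strategy is right, but the ``principal obstacle'' you describe is a phantom based on a misreading of Example~\ref{Example:no-adapt}. In that example $V\weakD f$ is emphatically nonzero; the point there is that $f \notin \mathbf T(W)$ for the components $W$ of $V$, so singular parts appear in the $W$-picture and cancel when summed to yield the absolutely continuous $V$-picture. For $f \in \mathbf T(V)$ the distribution $T$ never carries a singular part relative to $F_\#\|V\|$---that is exactly the content of Theorem~\ref{Thm:GBV}, and you have already used it twice. Concretely, the proof of \ref{corollary:equiv_bv} shows $\|T\| = (F_\#\|V\|) \restrict g$ with $g \circ F = |V\weakD f|$ holding $\|V\|$~almost everywhere; hence $V\weakD f = 0$ forces $\|T\| = 0$ outright, and the coarea formula gives $V\boundary E(y) = 0$ for $\mathscr L^1$~almost all~$y$. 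No delicate argument about forbidden partitions is needed. Your monotonicity step then works, and for the upgrade to $\|V\| + \|\updelta V\|$~almost constancy the paper makes your appeal to ``natural compatibility'' precise via \ref{lemma:image-TVY}, which states that $f(x) \in \spt f_\#\|V\|$ for $\|V\| + \|\updelta V\|$~almost all~$x$.
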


By way of contrast, in the case $V$ admits a countably infinite partition and
$Y$ is a countable dense subset of $\mathbf R$, there exists $f \in \mathbf T
( V)$ with $V \weakD f = 0$, $\im f = Y$, and $\spt f_\# \| V \| = \mathbf R$
by \cite[4.14]{MR3777387}.  This shows the substance of the assumed
indecomposability of type $\{ f \}$ and the diameter bound established.

Refining indecomposability in the opposite direction, a more stringent notion
for integral varifolds $V$ was formulated in \cite[2.15]{MR3148123} and
subsequently studied in \cite{MR4609162}: Such $V$ is
termed \emph{integrally indecomposable} if and only if there exists no $m$
dimensional integral varifold $W$ in $U$ such that $0 \neq W \neq V$,
\begin{equation*}
	\| V \| = \| W \| + \| V-W \|, \quad \| \updelta V \| = \| \updelta W
	\| + \| \updelta (V-W) \|.
\end{equation*}
The purpose of \cite{MR4609162}---a foundation to merge
key elements of \cite[Chapters 1 and 2]{MR1777737} and \cite{MR3528825}---is
quite different from ours.

\subsection*{Unique partition along a generalised weakly differentiable real
valued function}

We next introduce two concepts expedient in deriving criteria for
indecomposability of type $\Psi$.  Firstly, the situation described in Example
\ref{Example:no-adapt} led us to develop the notion of partition along a
member of $\mathbf T (V)$ as substitute.

\begin{intro-definition} [see \protect{\ref{definition:partition-along}}]
	For $V$ satisfying the \hyperlink{II-gen-hyp}{general hypotheses} and
	$f \in \mathbf T (V)$, a subfamily $\Pi$ of
	\begin{equation*}
		\big \{ V \restrict f^{-1} [I] \times \mathbf G (n,m) \with
		\text{$I$ subinterval of $\mathbf R$, $V \boundary f^{-1} [I]
		= 0$} \big \}
	\end{equation*}
	is termed \emph{partition of $V$ along $f$} if and only if $\Pi$ is a
	partition of $V$ and, whenever $W \in \Pi$, there exists no partition
	of $\mathbf R$ into subintervals $J_1$ and $J_2$ such that
	\begin{equation*}
		\big \{ W \restrict f^{-1} [J_1] \times \mathbf G (n,m), W
		\restrict f^{-1} [J_2] \times \mathbf G (n,m) \big \}
	\end{equation*}
	is a partition of $W$.
\end{intro-definition}

In contrast to the behaviour of general partitions exhibited in
Example \ref{Example:no-adapt}, we always have $f \in \mathbf T (W)$ for
members $W$ of a partition of $V$ along $f$ by \ref{remark:partition-along}.
Moreover, each member of a partition along $f$ is indecomposable of type $\{
f \}$, see \ref{remark:partition-along}, but a varifold $V$ indecomposable of
type $\{ f \}$ may still admit a nontrivial partition along $f$, see
\ref{example:indecomposability-along}. The different behaviour corresponds to
the absence of an exceptional set in the definition of partition along $f$.

\begin{Thm} [see \protect{\ref{thm:uniqueness-partition-along} and
	\ref{thm:decomposition_adapted_to_fct}}]
	\label{Thm:partition-along-fct}
	Under the \hyperlink{II-gen-hyp}{general hypotheses}, let $f \in
	\mathbf T (V)$.
	
	Then, there exists at most one partition of $V$ along $f$; if $V$ is
	rectifiable, then a partition of $V$ along $f$ exists.
\end{Thm}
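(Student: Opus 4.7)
For uniqueness, I would compare two hypothetical partitions $\Pi_1, \Pi_2$ of $V$ along $f$ via their associated intervals. Each $W \in \Pi_1 \cup \Pi_2$ corresponds to a subinterval $I_W \subset \mathbf R$ with $W = V \restrict f^{-1}[I_W] \times \mathbf G (n,m)$ and $V \boundary f^{-1}[I_W] = 0$. The partition conditions force the intervals $\{ I_W \}_{W \in \Pi_i}$ to be pairwise disjoint modulo $f_\# \| V \|$-null sets and to cover the essential range of $f$; we may therefore order them contiguously and speak of their endpoints. From the identity $\chi_{A \cap B} + \chi_{A \cup B} = \chi_A + \chi_B$ applied in the definition of $V \boundary$, the distributional $V$ boundary is additive in its set argument; in particular, any initial segment $A_k = I_1 \cup \dots \cup I_k$ of a partition satisfies $V \boundary f^{-1}[A_k] = 0$. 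Assuming $\Pi_1 \neq \Pi_2$, some cut point $y^*$ of $\Pi_2$ -- a boundary between two $\Pi_2$-members carrying positive $\| V \|$-mass on each side -- lies essentially interior to some $I_W$ with $W \in \Pi_1$. Taking $A = (-\infty, y^*]$ and $B = I_W$, additivity yields
\begin{equation*}
	V \boundary f^{-1} [ A \cap B ] + V \boundary f^{-1} [ A \cup B ] = V \boundary f^{-1} [ A ] + V \boundary f^{-1} [ B ] = 0,
\end{equation*}
and since $A \cup B$ coincides with an initial segment of $\Pi_1$ terminating at the right endpoint of $I_W$, the second summand on the left vanishes; thus $V \boundary f^{-1} [ A \cap B ] = 0$. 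Combined with positive $\| V \|$-mass on both sides of $y^*$ inside $f^{-1}[I_W]$, this exhibits a two-piece split of $W$ via $J_1 = A$, $J_2 = \mathbf R \without A$, contradicting the maximality clause in the definition of ``partition along $f$''. Hence the sets of cut points agree, and so do the partitions.

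For existence when $V$ is rectifiable, the plan is a direct cut-point construction. Set
\begin{equation*}
	\mathcal C = \{ y \in \mathbf R \with V \boundary f^{-1} [ ( - \infty, y ] ] = 0 \},
\end{equation*}
and call $y$ \emph{essential} if, moreover, $\| V \| ( f^{-1} [ (y - \varepsilon, y) ] ) > 0$ and $\| V \| ( f^{-1} [ (y, y + \varepsilon) ] ) > 0$ for every $\varepsilon > 0$. The essential elements of $\mathcal C$, together with the endpoints of the essential range of $f$, partition that range into contiguous subintervals $\{ I \}$; by subtraction of cumulative cutoffs each satisfies $V \boundary f^{-1} [ I ] = 0$, so the family $\{ V \restrict f^{-1} [ I ] \times \mathbf G ( n, m ) \}$ is the natural candidate for the partition along $f$. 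Maximality of each member is automatic by construction (no essential interior cut point survives), leaving only the partition conditions to verify: countability of members, additivity of $\| V \|$ (clear), and additivity of $\| \updelta V \|$ over the pieces $f^{-1}[I]$.

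The main obstacle lies in this last verification in the rectifiable case -- establishing that the essential elements of $\mathcal C$ are at most countable and that $\| \updelta V \|$ distributes additively without residual concentration at cut points. I would approach this by invoking Theorem \ref{Thm:GBV}: it supplies the distribution $T$ assembling the boundaries $V \boundary \{ f > y \}$ and its absolute continuity with respect to $F_\# \| V \|$ characteristic of $f \in \mathbf T (V)$; pairing this coarea-type structure with the decomposition theory for rectifiable varifolds from \cite{MR3528825} should deliver both the countability and the additivity of $\| \updelta V \|$ across essential cut points. Rectifiability must enter here in an essential way, and cannot merely be invoked componentwise on a decomposition of $V$: Example \ref{Example:no-adapt} already shows that membership in $\mathbf T ( W )$ can fail for a component $W$ of $V$, so the existence cannot be reduced to the indecomposable case one component at a time, and the global cut-point construction is the appropriate substitute.
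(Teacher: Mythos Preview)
Your uniqueness argument is essentially correct and close in spirit to the paper's, which also exploits the interval structure via the map $\pi$ of \ref{remark:partition-along-pi}: both show that if the interval of some $W \in \Pi_1$ overlaps that of some $X \in \Pi_2$, then neither can properly overhang the other without violating condition~\eqref{item:partition-along:indecomposable}. Your organisation via the inclusion--exclusion identity for $V\boundary$ and ``initial segments'' is a slightly different bookkeeping of the same idea; just note that to speak of initial segments you implicitly need the disjointness and full-measure coverage of the intervals recorded in \ref{remark:partition-along-pi}, and that the passage from $V\boundary f^{-1}[A\cap B]=0$ to $W\boundary f^{-1}[A]=0$ uses \ref{lemma:little_more_general}.

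Your existence argument, however, has a genuine gap. The direct cut-point construction presumes that the essential elements of $\mathcal C$ are at most countable and, more seriously, that they are \emph{discrete} enough that the ``contiguous subintervals between them'' make sense and tile the essential range. Neither is established: between any two essential cut points there is positive $f_\#\|V\|$-mass, but that mass can be arbitrarily small, so essential cut points may accumulate and leave no ``between''. Your appeal to Theorem~\ref{Thm:GBV} does not help here: absolute continuity of $\|T\|$ with respect to $F_\#\|V\|$ says nothing about the cardinality or topology of the zero set of $y\mapsto V\boundary\{f>y\}$. Nor is rectifiability actually used anywhere in your outline---you correctly observe it must enter essentially, but it never does.

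The paper takes a genuinely different route for existence, modelled on the decomposition existence proof \cite[6.12]{MR3528825}. Rather than locating cut points directly, it runs an iterative maximisation: at stage~$i$ one refines the current family of intervals so as to maximise the number of pieces carrying mass on the set $A_i$ where the density is at least $1/i$ and the first variation is controlled. The key point---and the place rectifiability enters---is a uniform \emph{lower mass bound}: via the monotonicity-type estimates \cite[4.5, 4.6]{MR3528825}, any interval $I$ with $V\boundary f^{-1}[I]=0$ and $f_\#(\|V\|\restrict A_i)(I)>0$ satisfies $f_\#(\|V\|\restrict\mathbf B(a,\epsilon_i))(I)\geq\delta_i>0$ for suitable $a\in A_i$. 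This forces each stage to have only finitely many pieces meeting $A_i$, so the maximum exists; the limiting family $C$ of nested intersections is then shown, by a density comparison, to cover $f_\#\|V\|$-almost everything and to satisfy the criterion \ref{lemma:criterion-partition-along}. This lower mass bound is exactly the missing ingredient in your approach, and there is no evident way to recover it from the coarea structure of Theorem~\ref{Thm:GBV} alone.
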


The uniqueness established distinguishes the behaviour of partitions of $V$
along $f$ from that of decompositions, whereas the existence proof elaborates
on the machinery yielding existence of decompositions.  For non rectifiable
varifolds, similar to decompositions, existence may fail in simple examples,
see \ref{example:non-existence-partition-along}.

\subsection*{Criteria for local finiteness of decompositions}

Secondly, we discuss two sets of regularity
hypotheses---tailored to cover Dirichlet and Neumann boundary data,
respectively---which allow us to study $\spt \| V \|$ by providing criteria
for local finiteness of decompositions of $V$, see Theorems
\ref{Thm:criterion-local-finiteness} and
\ref{Thm:criterion-local-finiteness-II} below. For this purpose, we recall
the following definition.

\begin{intro-definition} [see \protect{\cite[2.4.12]{MR41:1976}}]
	Suppose $\phi$ measures $X$, $Y$ is a Banach space, $1 \leq p \leq
	\infty$, and $f$ is a $\phi$ measurable $Y$ valued function.
	
	Then, the quantity $\phi_{(p)}(f)$ is defined by
	\begin{align*}
		\phi_{(p)}(f) & = \big ( {\textstyle \int |f|^p \ud \phi}
		\big)^{1/p} \quad \text{if $p < \infty$}, \\
		\phi_{(p)}(f) & = \inf \big \{ s \with s \geq 0, \phi \, \{ x
		\with |f(x)|>s \} = 0 \big \} \quad \text{if $p = \infty$}.
	\end{align*}
\end{intro-definition}

\begin{reg-hyp} [see \ref{example:regularity-hypotheses}]
	\label{reg-hyp:Dirichlet}
	Suppose $m$ and $n$ are positive integers, $m \leq n$, $U$ is an open
	subset of $\mathbf R^n$, $B$ is a properly embedded $m-1$ dimensional
	submanifold of class $2$ of $U$, $X$ is an $m$ dimensional varifold in
	$U$,
	\begin{gather*}
		\text{$\beta = \infty$ if $m = 1$}, \quad \text{$\beta =
		m/(m-1)$ if $m>1$}, \\
		\sup \big \{ ( \updelta X ) ( \theta ) \with \theta \in
		\mathscr D ( U, \mathbf R^n ), \spt \theta \subset K \without
		B, \| X \|_{(\beta)} (\theta) \leq 1 \big \} < \infty
	\end{gather*}
	whenever $K$ is a compact subset of $U$, $\| X \| ( B ) = 0$, and
	\begin{equation*}
		\boldsymbol \Uptheta^m ( \| X \|, x ) \geq 1 \quad \text{for
		$\| X \|$ almost all $x$};
	\end{equation*}
	hence, \emph{$\| \updelta X \|$ is a Radon measure
	over $U$ and $X$ is rectifiable.}
\end{reg-hyp}

Using the inclusion map $i : U \without B \to U$, we may equivalently require
that, for some $m$ dimensional rectifiable varifold $W$ in $U \without B$ such
that
\begin{gather*}
	\boldsymbol \Uptheta^m ( \| W \|, x ) \geq 1 \quad \text{for $\| W \|$
	almost all $x$}, \\
	\text{$i_\# ( \| W \| + \| \updelta W \| )$ is a Radon measure over
	$U$}, \\
	\text{$\| \updelta W \|$ is absolutely continuous with respect to $\|
	W \|$ if $m > 1$}, \\
	\text{$i_\# ( \| W \| \restrict | \mathbf h ( W, \cdot ) |^m )$ is a
	Radon measure over $U$ if $m > 1$},
\end{gather*}
where $\mathbf h (W, \cdot )$ denotes the mean curvature vector of $W$, there
holds $i_\# W = X$.

\begin{Thm} [see \ref{thm:locally_finite_near_boundary}]
	\label{Thm:criterion-local-finiteness}
	Suppose $X$ satisfies the
	Regularity hypotheses \ref{reg-hyp:Dirichlet}.
	
	Then, every decomposition of $X$ is locally
	finite.
\end{Thm}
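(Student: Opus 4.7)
The plan is a contradiction argument based on Allard-style lower mass bounds. Assume some decomposition $\Xi$ of $V$ is not locally finite: there exist a compact $K \subset U$ and pairwise distinct $W_1, W_2, \ldots \in \Xi$ together with $x_i \in K \cap \spt \| W_i \|$, which, after extracting a subsequence, converge to some $x_\infty \in K$.

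The preparatory observation is that each component $W = V \restrict E \times \mathbf G(n,m)$ itself satisfies the \hyperlink{II-reg-hyp}{regularity hypotheses} with the same submanifold $B$. Indeed, $V \boundary E = 0$ gives $\updelta W = (\updelta V) \restrict E$ and so $\| \updelta W \| \leq \| \updelta V \|$, whence the $L^\beta$-type dual bound on vector fields supported in $K \without B$ transfers verbatim from $V$ to $W$; likewise the conditions $\| W \|(B) = 0$ and $\boldsymbol \Uptheta^m ( \| W \|, \cdot ) \geq 1$ almost everywhere are inherited via $\| W \| = \| V \| \restrict E$ and Lebesgue differentiation.

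The heart of the argument is to produce an open neighbourhood $G$ of $x_\infty$ with $\overline G \subset U$ and a constant $c > 0$ such that $\| W_i \| (G) \geq c$ for all sufficiently large $i$. Granted this, the pairwise $\| V \|$-disjointness of the representing sets $E_i$ and the finiteness of $\| V \| ( \overline G )$ yield the contradiction $\infty = \sum_i c \leq \sum_i \| W_i \| (G) \leq \| V \| (\overline G) < \infty$. If $x_\infty \in U \without B$, pick $r > 0$ with $\overline{\{ y \with |y - x_\infty | < 2r \}} \subset U \without B$; the mean curvatures of the $W_i$ are then uniformly bounded in $L^\beta$ on this ball, and Allard's monotonicity formula applied to each $W_i$ delivers $\| W_i \| ( \{ y \with |y - x_i| < r \} ) \geq c r^m$ with $c$ independent of $i$, so $G = \{ y \with |y - x_\infty| < 2r \}$ works.

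The hard case is $x_\infty \in B$, and this is where I expect the main obstacle to lie. Here one exploits that $B$ is properly embedded and of class $2$: locally one straightens $B$ to a flat $(m-1)$-plane in a $C^2$ chart and applies a boundary-aware variant of Allard's monotonicity, whose correction term is supported on the smooth $B$, to obtain $\| W_i \|(G) \geq c$ directly. An alternative route combines Theorem \ref{Thm:consequences-indecomposability-singleton}, applied to $d_B \in \Lambda$ on the indecomposable varifold $W_i$---which forces $\spt (d_B)_\# \| W_i \|$ to be an interval and thereby exhibits a point of $\spt \| W_i \|$ at a definite distance from $B$---with the interior monotonicity used in the previous case. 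This is precisely the step at which the $C^2$ regularity of $B$ intervenes; without it, Example \ref{Thm:immersion} shows that components may accumulate in ever narrower tubular neighbourhoods of $B$, so the quantitative boundary estimate is the crux of the proof.
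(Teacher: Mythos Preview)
Your overall strategy---interior monotonicity away from $B$, boundary monotonicity near $B$---is exactly the paper's. The paper likewise reduces to a uniform lower mass bound for components near any fixed point, citing \cite[2.5]{MR2537022} in the interior and Allard's boundary estimate \cite[3.4\,(1)]{MR0397520} near $B$, together with the reach bound from \cite[4.12, 4.18]{MR0110078}.

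There is, however, a genuine gap in your boundary case. Allard's boundary monotonicity must be centred at a point of $B \cap \spt \| W_i \|$, and nothing you have written guarantees such a point exists: your $x_i$ need only lie in $\spt \| W_i \|$ near $x_\infty \in B$, not on $B$ itself. The paper closes this by a two-step argument: assuming $\| W \| \, \mathbf B(a,2r)$ is small, one first applies \emph{interior} monotonicity at $x \in \mathbf B(a,r) \cap \spt \| W \|$ to force the singular part of $\| \updelta W \|$---which is supported in $B$ by \ref{lemma:zero_distributional_boundary}---to be nonzero in $\mathbf B(x,r/2)$, thus producing $b \in B \cap \mathbf B(a,3r/2) \cap \spt \| W \|$; only then is the boundary estimate applied, centred at $b$.

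Your alternative route via $d_B$ does not work as stated. Indecomposability of $W_i$ makes $\spt (d_B)_\# \| W_i \|$ an interval, but that interval may well be $[0,\epsilon_i]$ with $\epsilon_i \to 0$; the interval property alone gives no point at \emph{uniform} distance from $B$, so you cannot feed back into interior monotonicity with a scale independent of $i$. This is precisely why the paper's two-step mechanism (interior estimate forces boundary contact, then boundary estimate at the contact point) is needed.
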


For $B = \varnothing$, this was obtained in \cite[6.11]{MR3528825} where also
the sharpness of the summability condition on the mean curvature vector was
noted.  Thus, the difficulty stems from the boundary $B$.  For this purpose,
we additionally make use of the concept of reach and isoperimetric
considerations near the boundary, see \cite[Section 4]{MR0110078} and
\cite[Section 3]{MR0397520}, respectively.

\begin{reg-hyp} [see \ref{remark:Neumann-rectifiability} and
	\ref{example:Neumann-Lm} with $p = m$]
	\label{reg-hyp:Neumann}
	Suppose $m$ and $n$ are integers, $2 \leq m \leq n$, $U$ is an open
	subset of $\mathbf R^n$, $M$ is a relatively closed subset of $U$ and
	an $n$ dimensional submanifold-with-boundary of class $2$
	of $\mathbf R^n$, $B = \partial M$, $X \in \mathbf
	V_m (U)$, $\spt \| X \| \subset M$, $\Theta$ consists of all $\theta :
	U \to \mathbf R^n$ of class $1$ with compact support and $\theta (b)
	\in \Tan (B,b)$ for $b \in B$, $\beta = m/(m-1)$,
	\begin{equation*}
		\sup \{ {\textstyle\int S_\natural \bullet \Der \theta (x) \ud
		X \, (x,S)} \with \theta \in \Theta, \spt \theta \subset K, \|
		X \|_{(\beta)} ( \theta ) \leq 1 \big \} < \infty
	\end{equation*}
	whenever $K$ is a compact subset of $U$, and
	\begin{equation*}
		\boldsymbol \Uptheta^m ( \| X \|, x ) \geq 1 \quad \text{for
		$\| X \|$ almost all $x$};
	\end{equation*}
	hence, \emph{$\| \updelta X \|$ is a Radon measure over $U$, $\langle
	\mathbf h (X,\cdot), \tau \rangle \in \mathbf L_m^{\textup{loc}} ( \|
	X \|, \mathbf R^n )$, and $X$ is rectifiable,} where $\tau : U \to
	\Hom ( \mathbf R^n, \mathbf R^n )$ is defined by
	\begin{equation*}
		\text{$\tau (x) = \mathbf 1_{\mathbf R^n}$ for $x \in U
		\without B$}, \quad \text{$\tau (x) = \Tan (B,x)_\natural$ for
		$x \in B$}.
	\end{equation*}
\end{reg-hyp}
 
This set of hypotheses for instance occurs in the study of
constrained free boundary problems (see
\ref{remark:free-boundary-constrained}), of minimal sets with sliding boundary
conditions (see \ref{remark:free-boundary-sliding}), of the Willmore
functional with free boundary (see \ref{remark:free-boundary-Willmore}), and
of Brakke, or level set mean curvature, flow with free boundary (see
\ref{remark:free-boundary-MCF}).

\begin{Thm} [see \ref{corollary:Neumann-locally-finite-decompositions}]
	\label{Thm:criterion-local-finiteness-II}
	Suppose $X$ satisfies the Regularity hypotheses \ref{reg-hyp:Neumann}.
	
	Then, every decomposition of $X$ is locally finite.
\end{Thm}

Similar to Theorem \ref{Thm:criterion-local-finiteness}, the preceding theorem
relies on the concept of reach and isoperimetric considerations near the
boundary.  However, in the present setting, an isoperimetric lower density
ratio bound analogous to \cite[3.4\,(1)]{MR0397520} was not available;
therefore, we establish it in \ref{thm:Neumann-lower-density-ratio-bound}.

\subsection*{Criteria for indecomposability with respect to a family of
generalised weakly differentiable real valued functions}

The following two theorems concern varifolds associated with connected
immersions and indecomposable $G$ chains, respectively.  For this purpose, we
recall that $\Lambda$ denotes the class of all locally Lipschitzian functions
$f : U \to \mathbf R$.

\begin{Thm} [see \protect{\ref{thm:indecomposability_immersions}}]
	\label{Thm:connected-immersions}
	Suppose $M$ is a connected $m$ dimensional manifold-with-boundary of
	class $2$, $F : M \to U$ is a proper immersion of class $2$, and $X$
	is the $m$ dimensional varifold in $U$ associated with $F$.
	
	Then, $X$ is indecomposable of type $\Lambda$.
\end{Thm}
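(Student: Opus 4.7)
The plan is to transport the question from $U$ to the connected manifold $M$ via the pull-back $g = f \circ F$, and to rule out a positive-measure set of ``bad'' $y$ by a first variation computation combined with the coarea formula on $M$.

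Let $f \in \Lambda$; set $g = f \circ F \colon M \to \mathbf R$ (locally Lipschitzian on $M$, since $F$ is of class $2$), $E(y) = \{x \in U \with f(x) > y\}$, and $A(y) = F^{-1}(E(y)) = \{p \in M \with g(p) > y\}$. The core computation identifies $X \boundary E(y)$ as a free-boundary integral on $M$: applying the divergence theorem on the smooth manifold-with-boundary $M$ to the immersion $F$, and separately to its restriction to $A(y)$, the contributions from the mean curvature of $F$ and from $\partial M \cap A(y)$ cancel between $(\updelta X) \restrict E(y)$ and $\updelta(X \restrict E(y) \times \mathbf G(n,m))$. For $\mathscr L^1$-a.e.\ $y$ one is left with
\[
X \boundary E(y)(\theta) = -\int_{g^{-1}\{y\} \cap (M \without \partial M)} \theta(F(p)) \cdot DF_p(\nu_g(p)) \ud \mathscr H^{m-1}(p)
\]
for $\theta \in \mathscr D(U, \mathbf R^n)$, where $\nu_g$ is the $M$-outward unit conormal to $A(y)$ along its free boundary at regular points of $g$.

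Let $Y$ be the set of $y \in \mathbf R$ with $X \boundary E(y) = 0$, $\|X\|(E(y)) > 0$, and $\|X\|(U \without E(y)) > 0$; the goal is $\mathscr L^1(Y) = 0$. For each $y \in Y$, the displayed identity, tested against all $\theta$, forces the $\mathbf R^n$-valued Radon measure $F_\#(DF(\nu_g) \mathscr H^{m-1} \restrict g^{-1}\{y\} \cap (M \without \partial M))$ on $U$ to vanish. Integrating over $y \in Y$ and invoking the coarea formula for $g$ on $M$ yields
\[
\int_{g^{-1}(Y) \cap (M \without \partial M)} \theta(F(p)) \cdot DF_p(\nabla^M g(p)) \ud \mathscr H^m(p) = 0 \quad \text{for every } \theta \in \mathscr D(U, \mathbf R^n);
\]
since proper $C^2$ immersions are embeddings on an $\mathscr H^m$-conull subset of $M$, a density argument together with the injectivity of $DF_p$ forces $\nabla^M g = 0$ $\mathscr H^m$-almost everywhere on $g^{-1}(Y) \cap (M \without \partial M)$.

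The conclusion $\mathscr L^1(Y) = 0$ then follows from connectedness of $M$ combined with a Sard-type observation for locally Lipschitzian real-valued functions: since $g$ is continuous and $M$ is connected, the intermediate value theorem gives $Y \subset g(M) = g(M \without \partial M) \cup g(\partial M)$; the Sard property asserts that $g(\{p \in M \without \partial M \with \nabla^M g(p) = 0\})$ is $\mathscr L^1$-negligible, and $g(\partial M)$ is $\mathscr L^1$-negligible because $\partial M$ is a $C^2$ embedded submanifold of dimension $m-1$ along which $g$ is Lipschitzian (reducing to the same Sard-type assertion one dimension lower). The principal technical obstacles are this Sard-type reduction for merely Lipschitzian data in arbitrary dimension, and the control of potential cancellation among distinct preimages of $F$ at self-intersections in the identity for $X \boundary E(y)$; both are handled by partitioning $M$ via a locally finite cover of coordinate patches on which $F$ restricts to an embedding, combined with careful application of the coarea formula in each patch.
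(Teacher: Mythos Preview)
Two gaps, the second of which is fatal. The first is your claim that a proper $C^2$ immersion is an embedding off an $\mathscr H^m$-null subset of $M$: this fails for any multi-sheeted covering, e.g.\ $\mathbf S^1 \to \mathbf S^1 \subset \mathbf R^2$ by $z \mapsto z^2$. The cancellation concern at self-intersections can in fact be handled---for $\mathscr H^m$ almost all $a \in \im F$ every preimage carries the same tangent plane (see \ref{example:immersion-pt-approx-diff}), so the vectors $DF_p(\nabla^M g(p))$ over $p \in F^{-1}\{a\}$ all equal the common tangential gradient of $f$ at $a$ and therefore add rather than cancel---but this is not the argument you gave.

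The decisive gap is the Sard step. The ``Sard property'' you invoke---that a locally Lipschitzian real-valued function on an $m$-manifold maps its critical set to an $\mathscr L^1$-null set---is false for $m \geq 2$: already for $C^1$ functions on $\mathbf R^2$, Whitney's classical construction produces an arc contained in the critical set along which the function is non-constant, hence an interval of critical values. Even if you correctly establish $\nabla^M g = 0$ $\mathscr H^m$-a.e.\ on $g^{-1}[Y]$, you cannot conclude $\mathscr L^1(Y) = 0$ via Sard, since the $\mathscr H^m$-null exceptional set may map onto a set of positive length. The paper's proof sidesteps both issues at once: it works with the total variation $\| V \boundary E(y) \|$ rather than the signed distribution, so that the coarea formula \cite[3.33, 3.34]{arXiv:2206.14046v1} delivers $\| V \boundary E(y) \|(U) = \mathscr H^{m-1}_\sigma \big( F^{-1}[\{x \with f(x)=y\}] \big)$ for $\mathscr L^1$ almost all $y$ with no cancellation possible, and then, in place of Sard, the constancy theorem \ref{thm:constancy} (resting on \cite[4.5.11]{MR41:1976}) converts the vanishing of this $(m-1)$-dimensional measure together with connectedness of $M$ directly into the dichotomy $\mathscr H^m_\sigma(A(y)) = 0$ or $\mathscr H^m_\sigma(M \without A(y)) = 0$.
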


\begin{Thm} [see \protect{\ref{thm:indecomposable_chains}}]
	\label{Thm:Indecomposability-G-chains}
	Suppose $G$ is a complete normed commutative group, $S$ is an
	indecomposable $m$ dimensional integral $G$ chain in $U$, $X$ is the
	$m$ dimensional varifold in $U$ associated with $S$, and $\updelta X$
	is representable by integration.
	
	Then, $X$ is indecomposable of type $\Lambda$.
\end{Thm}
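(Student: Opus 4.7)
I would argue by contradiction. Suppose $X$ is not indecomposable of type $\Lambda$. Then there exist a locally Lipschitzian $f : U \to \mathbf R$ and a set $Y \subset \mathbf R$ with $\mathscr L^1(Y) > 0$ such that, for every $y \in Y$, the open set $E(y) = \{ x \with f(x) > y \}$ satisfies
\begin{equation*}
\| X \|(E(y)) > 0, \qquad \| X \|(U \without E(y)) > 0, \qquad X \boundary E(y) = 0.
\end{equation*}
The aim is to locate some $y_0 \in Y$ at which the pair $R = S \restrict E(y_0)$ and $T = S \restrict (U \without E(y_0))$ yields a nontrivial splitting $S = R + T$ with $\mathbf M(S) = \mathbf M(R) + \mathbf M(T)$ and $\mathbf M(\boundary S) = \mathbf M(\boundary R) + \mathbf M(\boundary T)$, contradicting the indecomposability hypothesis on $S$.

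\textbf{Slicing of the chain.} Because $f$ is locally Lipschitzian and $S$ is an integral $G$ chain, the slicing theory for integral $G$ chains produces, for $\mathscr L^1$ almost every $y$, an $(m-1)$ dimensional integral slice chain $\langle S, f, y \rangle$ supported in $f^{-1}\{y\}$, together with the slice identity
\begin{equation*}
\boundary ( S \restrict E(y) ) = ( \boundary S ) \restrict E(y) - \langle S, f, y \rangle.
\end{equation*}
Hence $R(y) := S \restrict E(y)$ and $T(y) := S \restrict (U \without E(y))$ are integral $G$ chains for such $y$ with $S = R(y) + T(y)$, and the mass identity $\mathbf M(S) = \mathbf M(R(y)) + \mathbf M(T(y))$ holds automatically from the disjointness of the measures $\|R(y)\|$ and $\|T(y)\|$ (and the fact that $\|S\| = \|X\|$).

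\textbf{Comparison of coarea formulas.} The crucial step is to identify the mass measure of the chain slice $\langle S, f, y \rangle$ with the varifold boundary measure $\| X \boundary E(y) \|$ for $\mathscr L^1$ almost every $y$. This proceeds by juxtaposing two coarea formulas for $f$: the chain-theoretic one, which expresses a suitable total variation built from $S$ as $\int \mathbf M(\langle S, f, y \rangle) \ud \mathscr L^1 y$, and the varifold-theoretic one afforded by Theorem~\ref{Thm:GBV} applied to $f \in \mathbf T(X)$, which expresses the same total variation (using that the weight of $X$ is the chain mass of $S$) as $\int \|X \boundary E(y)\|(U) \ud \mathscr L^1 y$. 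For almost every $y$ the integrands coincide, so $X \boundary E(y) = 0$ forces $\langle S, f, y \rangle = 0$. For such $y \in Y$ the slice identity then gives $\boundary R(y) = (\boundary S) \restrict E(y)$ and $\boundary T(y) = (\boundary S) \restrict (U \without E(y))$, whence $\mathbf M(\boundary S) = \mathbf M(\boundary R(y)) + \mathbf M(\boundary T(y))$. The mass hypotheses on $E(y)$ ensure $R(y) \neq 0 \neq T(y)$, producing the forbidden decomposition of $S$.

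\textbf{Main obstacle.} The delicate point is the coarea comparison: for a general locally Lipschitzian $f$ and a complete normed coefficient group $G$, matching the distributional varifold boundary $\| X \boundary E(y) \|$ with the chain slice mass $\| \langle S, f, y \rangle \|$ almost everywhere in $y$ requires careful reconciliation of Theorem~\ref{Thm:GBV} with Federer-style slicing for $G$ chains. Care is also needed to verify that $f \in \mathbf T(X)$ (so that Theorem~\ref{Thm:GBV} applies), but this follows from general hypotheses on $X$ together with local Lipschitz continuity.
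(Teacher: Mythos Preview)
Your overall architecture matches the paper's: reduce to showing that $X \boundary E(y) = 0$ forces $\langle S, f, y \rangle = 0$ for $\mathscr L^1$ almost all $y$, then invoke the slice identity $\boundary_G ( S \restrict E(y)) = (\boundary_G S) \restrict E(y)$ and indecomposability of $S$. The contradiction framing is just a cosmetic repackaging of the paper's direct argument.

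The substantive difference is in how you execute the key step. You propose to compare two \emph{integrated} coarea formulas and then descend to an almost-everywhere identification of the integrands, invoking Theorem~\ref{Thm:GBV}. This is not what the paper does, and your route is both more circuitous and imprecisely cited. Theorem~\ref{Thm:GBV} concerns subgraphs and the characterisation of $\mathbf T(V)$; it does not yield the pointwise identification you need. To make your integral comparison rigorous you would have to establish
\begin{equation*}
{\textstyle\int} \| \langle S, f, y \rangle \| (A) \ud \mathscr L^1 \, y = {\textstyle\int} \| X \boundary E(y) \| (A) \ud \mathscr L^1 \, y
\end{equation*}
for every Borel set $A$ (not merely $A = U$), and then argue that both sides define the same measure on $U \times \mathbf R$. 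That is doable, but it amounts to re-deriving what the paper simply cites.

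The paper instead proceeds pointwise in $y$: the coarea formula \cite[3.33, 3.34]{arXiv:2206.14046v1} gives, for $\mathscr L^1$ almost all $y$,
\begin{equation*}
\| X \boundary E(y) \| = \big ( \mathscr H^{m-1} \restrict \{ x \with f(x) = y \} \big ) \restrict \boldsymbol \Uptheta^m ( \| X \|, \cdot ),
\end{equation*}
so $X \boundary E(y) = 0$ means this density-weighted level-set measure vanishes. The slice construction \cite[4.8]{arXiv:2206.14046v1} then gives $\langle S, f, y \rangle = 0$ directly, because the slice weight is controlled by the same measure. No integration over $y$ is needed. A minor further point: the indecomposability condition is phrased in terms of the \emph{measures} $\| \boundary_G S \| = \| \boundary_G T \| + \| \boundary_G (S-T) \|$, not total masses $\mathbf M$, since $U$ is open and these may be infinite.
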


Both theorems ultimately rely on the general coarea formulae obtained in
\cite[12.1]{MR3528825} and \cite[2.33, 2.34]{DOI_10.4171_RMI_1487}, which
entail that, whenever $f \in \Lambda$, we have
\begin{equation*}
	\| V \boundary \{ x \with f(x)>y \} \| = \big ( \mathscr H^{m-1}
	\restrict \{ x \with f(x)=y \} \big ) \restrict \boldsymbol \Uptheta^m
	( \| V \|, \cdot)
\end{equation*}
for $\mathscr L^1$ almost all $y$.  For Theorem
\ref{Thm:connected-immersions}, this is combined with a constancy theorem
based on \cite[4.5.11]{MR41:1976}, see \ref{thm:constancy}.  For Theorem
\ref{Thm:Indecomposability-G-chains}, we instead employ the basic theory of
slicing for integral $G$ chains as recorded in \cite[3.8,
4.13\,(8)]{DOI_10.4171_RMI_1487}.

Capitalising on the topological notion of connectedness of $\spt \| V \|$ is
more subtle.  For this purpose, we employ the family
\begin{equation*}
	\Gamma = \mathbf T (V) \cap \{ f \with \text{$\dmn f = \spt \| V \|$,
	$f$ continuous} \}
\end{equation*}
and assume all decompositions of the varifold in question to be locally
finite. This additional hypothesis distinguishes the purely varifold-geometric
setting of the next theorem and its corollaries from the more specific
Theorems \ref{Thm:connected-immersions} and
\ref{Thm:Indecomposability-G-chains}.

\begin{Thm} [see \protect{\ref{thm:indecomposable_type_Psi}}]
	\label{Thm:indecomposability-type-G}
	Suppose that $X$ is an $m$ dimensional rectifiable varifold in $U$,
	that $\updelta X$ is representable by integration, that every
	decomposition of $X$ is locally finite, and that $\spt \| X \|$ is
	connected.
	
	Then, $X$ is indecomposable of type $\Gamma$.
\end{Thm}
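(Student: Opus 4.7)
The plan is to argue by contradiction. Assume there exists $f \in \Gamma$ and $Y \subset \mathbf R$ with $\mathscr L^1 ( Y ) > 0$ such that, for every $y \in Y$, the set $E(y) = \{ x \with f(x) > y \}$ satisfies $\| X \| ( E(y) ) > 0$, $\| X \| ( U \without E(y) ) > 0$, and $X \boundary E(y) = 0$. The goal is to show that $Y$ must be countable, yielding the desired contradiction. To exploit the local-finiteness hypothesis, fix a decomposition $\Xi$ of $X$; such a decomposition exists since $X$ is rectifiable with $\updelta X$ representable by integration, and by hypothesis $\Xi$ is locally finite, hence countable because $U$ is $\sigma$-compact.

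For each component $W = X \restrict F_W \times \mathbf G (n,m) \in \Xi$ and each $y \in Y$, expanding the definition of the distributional $W$-boundary yields $W \boundary E(y) = X \boundary ( F_W \cap E(y) )$; combining $X \boundary F_W = 0$, $X \boundary E(y) = 0$, and the Boolean-algebra property of sets with vanishing distributional $V$-boundary, one obtains $W \boundary E(y) = 0$. Since $W$ is indecomposable, this forces $\| W \| ( E(y) ) = 0$ or $\| W \| ( U \without E(y) ) = 0$, for otherwise $\{ W \restrict E(y) \times \mathbf G (n,m), W \restrict ( U \without E(y) ) \times \mathbf G (n,m) \}$ would form a nontrivial partition of $W$. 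Setting $a_W = \inf_{x \in \spt \| W \|} f(x)$ and $b_W = \sup_{x \in \spt \| W \|} f(x)$, the defining property of $\spt \| W \|$ (every neighbourhood of any of its points has positive $\| W \|$ measure) together with continuity of $f$ identifies $a_W$ and $b_W$ with the essential infimum and supremum of $f$ with respect to $\| W \|$; the dichotomy above therefore translates to $y \leq a_W$ or $y \geq b_W$, so $Y \cap ( a_W, b_W ) = \varnothing$ for every $W \in \Xi$.

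Local finiteness of $\Xi$ implies that $\bigcup_{W \in \Xi} \spt \| W \|$ is closed in $U$, which together with $\| X \| = \sum_{W \in \Xi} \| W \|$ gives $\spt \| X \| = \bigcup_{W \in \Xi} \spt \| W \|$. Continuity of $f$ then yields $f ( \spt \| X \| ) = \bigcup_{W \in \Xi} f ( \spt \| W \| ) \subset \bigcup_{W \in \Xi} [ a_W, b_W ]$, so each element of the range of $f$ lies in some open interval $( a_W, b_W )$ or in the countable set $\bigcup_{W \in \Xi} \{ a_W, b_W \}$. Since $Y \subset [ \inf f, \sup f ]$, combining this with the previous paragraph forces $Y$ to be countable, which contradicts $\mathscr L^1 ( Y ) > 0$; note that connectedness of $\spt \| X \|$ enters only through the intervening fact that $f ( \spt \| X \|)$ is a single interval, but what is actually used is the point-by-point inclusion just established.

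The main obstacle I anticipate is the correct invocation of the Boolean-algebra property of sets with vanishing distributional $V$-boundary, together with the identity $W \boundary E = X \boundary ( F_W \cap E )$ that is needed to descend from $X \boundary E(y) = 0$ to $W \boundary E(y) = 0$ on each component. Both ingredients should be available from the general theory developed in \cite{MR3528825} and from the earlier parts of the present paper on partitions and indecomposability, but their precise form—especially the stability of the class $\{ E \with X \boundary E = 0 \}$ under intersection—has to be verified with some care, and is what enables the simple essential-supremum bookkeeping that drives the rest of the argument.
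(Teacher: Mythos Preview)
Your argument has a genuine gap at exactly the point you flagged: the class $\{E \with X \boundary E = 0\}$ is \emph{not} closed under intersection, so from $X \boundary F_W = 0$ and $X \boundary E(y) = 0$ you cannot conclude $X \boundary (F_W \cap E(y)) = 0$, and hence you cannot conclude $W \boundary E(y) = 0$. The paper records this explicitly (see the remark following \ref{lemma:family_of_pieces}, which traces back to \cite[6.13]{MR3528825}).

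Here is a concrete failure inside the hypotheses of the theorem. Let $X$ be the varifold of three lines through the origin in $\mathbf R^2$ at angles $0^\circ$, $60^\circ$, $120^\circ$; then $\updelta X = 0$, $\spt \|X\|$ is connected, and every decomposition is finite. One decomposition is $\Xi = \{L_1,L_2,L_3\}$; another consists of the two ``tripods'' $Y_1$ (half-lines at $0^\circ,120^\circ,240^\circ$) and $Y_2$ (half-lines at $60^\circ,180^\circ,300^\circ$). Define $f$ on $\spt\|X\|$ so that $f$ equals the signed distance to the origin, positive on $Y_1$ and negative on $Y_2$; one checks that $f$ is continuous, restricts to a linear function on each $L_i$, and (after Lipschitz extension) lies in $\mathbf T(X)$, hence $f \in \Gamma$. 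For $y=0$ we have $E(0)=Y_1$ and $X\boundary E(0)=0$, but for $W=\mathbf v_1(L_1)$ the set $L_1\cap E(0)$ is a single half-line, so $W\boundary E(0)$ is a nonzero Dirac mass at the origin. Moreover $a_W=-\infty$ and $b_W=+\infty$, so your claimed relation $Y\cap(a_W,b_W)=\varnothing$ fails as well: here $0\in Y\cap(a_W,b_W)$.

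This is precisely why the paper does \emph{not} work with an arbitrary decomposition of $X$. Instead it constructs the partition of $X$ \emph{along $f$} (Theorem \ref{thm:decomposition_adapted_to_fct}), whose pieces are of the form $X\restrict f^{-1}[I]\times\mathbf G(n,m)$ for subintervals $I$; for such pieces the interaction with the superlevel sets $E(y)=f^{-1}[(y,\infty)]$ is controlled because $f^{-1}[I]\cap E(y)$ is again a preimage of an interval, and the intersection identity \ref{lemma:little_more_general} then does give vanishing boundary. Your bookkeeping with $a_W,b_W$ is the right shape, but it only works once the decomposition is adapted to $f$ in this way.
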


By \ref{lemma:spt-partitions}, the local finiteness of the decompositions of
$X$ ensures that
\begin{equation*}
	\spt \| X \| = \bigcup \{ \spt \| W \| \with W \in \Pi \} \quad
	\text{whenever $\Pi$ is a partition of $X$};
\end{equation*}
otherwise, $\spt \| X \|$ could be larger than this union.  For $f \in
\Gamma$, we then consider the partition $\Pi$ of $X$ along $f$ furnished by
Theorem \ref{Thm:partition-along-fct} and the subintervals $J(W) = \spt f_\#
\| W \|$ of $\mathbf R$ corresponding to $W \in \Pi$, see Theorem
\ref{Thm:consequences-indecomposability-singleton}.  Exploiting the equation
for the supports and the continuity of $f$, we finally prove that the set
\begin{equation*}
	B = \big \{ y \with \text{$\| X \| ( E(y)) > 0$, $\| X \| ( U \without
	E(y)) > 0$, $X \boundary E(y) = 0$} \big \},
\end{equation*}
where $E(y) = \{ x \with f(x)>y \}$, is in fact contained in the countable set
of boundary points of the intervals $J(W)$ corresponding to $W \in \Pi$;
hence, $\mathscr L^1 (B) = 0$.  Thus, the proof combines local finiteness of
decompositions with partitions along $f$.

Combining \ref{thm:indecomposability_connected} and Theorem
\ref{Thm:indecomposability-type-G} yields the following equivalence.
	
\begin{intro-corollary} [see
	\protect{\ref{corollary:indecomposability-equivalence}}]
	\label{Corollary:equivalence}
	Suppose that $V$ satisfies the \hyperlink{II-gen-hyp}{general
	hypotheses}, $V$ is rectifiable, and every decomposition of $V$ is
	locally finite.
		
	Then, connectedness of $\spt \| V \|$ is equivalent to
	indecomposability of $V$ of type $\Gamma$ and to indecomposability of
	$V$ of type $\mathscr E ( U, \mathbf R )$.
\end{intro-corollary}
	
Combining Theorems \ref{Thm:criterion-local-finiteness} and
\ref{Thm:indecomposability-type-G} implies the following corollary.

\begin{intro-corollary} [see
	\protect{\ref{corollary:indecomposable_type_Gamma}}]
	\label{Thm:E}
	Suppose $X$ satisfies the Regularity hypotheses
	\ref{reg-hyp:Dirichlet}, the set $\spt \| X \|$ is connected, and
	$\Gamma = \mathbf T (X) \cap \{ f \with \textup{$\dmn f = \spt \| X
	\|$, $f$ continuous} \}$.
	
	Then, $X$ is indecomposable of type $\Gamma$.
\end{intro-corollary}

The hypotheses of the preceding corollary for instance apply to the study of
the Willmore energy, see \ref{remark:clamped-Willmore}. In the final paper of
our series \cite{arXiv:1709.05504v3}, we will demonstrate the same for
solutions of Plateau's problem stemming from Reifenberg's formulation, Brakke
flow, and min-max methods.  For solutions based on integral currents or---more
generally---integral $G$ chains, we shall employ Theorem
\ref{Thm:Indecomposability-G-chains} instead.

Finally, combining Theorems
\ref{Thm:criterion-local-finiteness-II} and \ref{Thm:indecomposability-type-G}
entails the following corollary.
	
\begin{intro-corollary} [see
	\protect{\ref{corollary:Neumann-indecomposable}}]
	\label{Thm:Neumann}
	Suppose $X$ satisfies the Regularity hypotheses \ref{reg-hyp:Neumann},
	the set $\spt \| X \|$ is connected, and $\Gamma = \mathbf T (X) \cap
	\{ f \with \textup{$\dmn f = \spt \| X \|$, $f$ continuous} \}$.
	
	Then, $X$ is indecomposable of type $\Gamma$.
\end{intro-corollary}

\subsection*{Possible lines of further study}

A natural goal would be the development of a theory of real valued functions
possessing generalised bounded variation with respect to $V$ analogous to that
of generalised $V$ weakly differentiable functions laid out in \cite[Sections
8--13]{MR3528825}, \cite[Section 4]{MR3626845}, and \cite[Sections 4 and
5]{MR3777387}; ideally, such a theory would include a definition of
generalised $V$ bounded variation for functions with values in a
finite-dimensional normed space.  In this process, the concept of partitions
of $V$ along $f$ and indecomposability of type $\{ f \}$ could be defined---in
the obvious way---and investigated for real valued $\| V \| + \| \updelta V
\|$ measurable functions $f$ having generalised bounded variation with respect
to $V$.

\subsection*{Acknowledgements}

U.M.\ is grateful to Professor Felix Schulze for discussing
differential-geometric aspects of this note and to Professors Michael Goldman
and Benoît Merlet for discussing set-indecomposability.  One question answered
in the present paper (see \ref{remark:indecomposability_immersions}) first
appeared in the MSc thesis of the second author (see \cite[Section
A]{scharrer:MSc}).  Parts of the research described in this paper were carried
out while U.M.\ worked at the Max Planck Institutes for Gravitational Physics
(Albert Einstein Institute) and for Mathematics in the Sciences as well as the
Universities of Potsdam and Leipzig; further parts were compiled during
extended research visits at the Universities of Zurich and Bonn for which
U.M.\ is grateful to Professors Camillo De Lellis and Stefan Müller,
respectively; in Taiwan (R.O.C.), yet further parts were carried out while
U.M.\ was supported by the grants with nos.\ MOST 108-2115-M-003-016-MY3, MOST
110-2115-M-003-017, MOST 111-2115-M-003-014, 
NSTC 112-2115-M-003-001, and NSTC 113-2115-M-003-018-MY3
by the National Science and Technology Council (formerly termed Ministry of
Science and Technology) and as Center Scientist at the National Center for
Theoretical Sciences.

Parts of this paper were written while C.S.\ worked at the Max Planck
Institute for Gravitational Physics (Albert Einstein Institute) and the Max
Planck Institute for Mathematics. Further parts were written while C.S.\ was
supported by the EPSRC as part of the MASDOC DTC at the University of Warwick,
Grant No.\ EP/HO23364/1. 

\section{Notation}

\paragraph{Basic sources} As in the first paper of our
series~\cite{DOI_10.4171_RMI_1487}, our notation follows \cite{MR3528825} and
is thus largely consistent with H.\ Federer's terminology for geometric
measure theory listed in \cite[pp.\,669--676]{MR41:1976} and W.\ Allard's
notation for varifolds introduced in \cite{MR0307015}.  This includes, for
certain distributions $S$ and sets $A$, the definition of the variation
measure $\| S \|$ and of the restriction $S \restrict A$ of $S$ to $A$, see
\cite[2.18, 2.20]{MR3528825}, respectively; and, for certain varifolds $V$ and
sets $E$, the notions of \emph{distributional boundary}, $V \boundary E$, of
$E$ with respect to $V$, \emph{indecomposability}, \emph{component}, and
\emph{decomposition}, see \cite[5.1, 6.2, 6.6, 6.9]{MR3528825}, respectively.

\paragraph{Review} Here, we list symbols not already reviewed in
\cite[Introduction, Section 1]{MR3528825}: The determinant, $\det f$, of an
endomorphism $f$, see \cite[1.3.4]{MR41:1976}; the Grassmann manifold,
$\mathbf G(n,m)$, of all $m$ dimensional subspaces of $\mathbf R^n$, see
\cite[1.6.2]{MR41:1976}; the norms associated with inner products, $| \cdot
|$, see \cite[1.7.1]{MR41:1976}; the adjoint, $f^*$, of a linear map $f$
between inner product spaces, see \cite[1.7.4]{MR41:1976}; the seminorm, $\|
\cdot \|$, on $\Hom (V,W)$ associated with normed spaces $V$ and $W$, see
\cite[1.10.5]{MR41:1976}; the infimum and supremum, $\inf S$ and $\sup S$, of
a subset $S$ of the extended real numbers and the $\sign$ function, denoted
$\sign$, see \cite[2.1.1]{MR41:1976}; the class $\mathbf 2^X$ of all subsets
of $X$, see \cite[2.1.2]{MR41:1976}; the restriction, $\phi \restrict A$, of a
measure $\phi$ to a set $A$, see \cite[2.1.2]{MR41:1976}; the support,
abbreviated $\spt \phi$, of a measure $\phi$, see \cite[2.2.1]{MR41:1976}; the
Lebesgue spaces, $\mathbf L_p ( \phi, Y )$ and $\mathbf L_p ( \phi )$, and the
dual, $Y^\ast$, of a Banach space $Y$, see \cite[2.4.12]{MR41:1976}; the Dirac
measure, $\boldsymbol \updelta_a$, at $a$, see \cite[2.5.19]{MR41:1976}; the
$n$ dimensional Lebesgue measure, $\mathscr L^n$, see \cite[2.6.5]{MR41:1976};
the $\mathscr L^n$ measure of the unit ball, $\boldsymbol \upalpha (n)$, see
\cite[2.7.16]{MR41:1976}; the diameter of $S$, $\diam S$, see
\cite[2.8.8]{MR41:1976}; the limit, $(V)\,\lim_{S \to x} f(S)$, of $f$ at $x$
with respect to a Vitali relation $V$, see \cite[2.8.16]{MR41:1976}; the
absolutely continuous part $\psi_\phi$ of a measure $\psi$ with respect to
$\phi$, see \cite[2.9.1]{MR41:1976};%
\begin{footnote}%
	{As was stipulated in \cite{MR3528825} for similar notions,
	$\psi_\phi$ will also be employed in the case where one of the
	measures $\psi$ or $\phi$ on $X$ fails to be finite on bounded sets
	but there exist open sets $U_1, U_2, U_3, \ldots$ covering $X$ at
	which $\phi$ and $\psi$ are finite.}
\end{footnote}%
the derivative, $g'$, of a function on the real line, see
\cite[2.9.19]{MR41:1976}; the $m$ dimensional Hausdorff measure, $\mathscr
H^m$, see \cite[2.10.2]{MR41:1976}; the integralgeometric measure with
exponent $1$, denoted $\mathscr I_1^m$, see \cite[2.10.5]{MR41:1976}; the
multiplicity function, $\Number (f,\cdot)$, see \cite[2.10.9]{MR41:1976}; the
$m$ dimensional densities, $\boldsymbol \Uptheta^{\ast m} ( \phi, a)$ and
$\boldsymbol \Uptheta^m (\phi, a )$, see \cite[2.10.19]{MR41:1976}; the
differential, $\Der f$, and the gradient, $\grad f$, see
\cite[3.1.1]{MR41:1976}; the closed cones of tangent and normal vectors, $\Tan
(S,a)$ and $\Nor (S,a)$, see \cite[3.1.21]{MR41:1976}; the
normal bundle, $\Nor (B)$, of a submanifold $B$ of class $1$ of $\mathbf R^n$,
see \cite[3.1.21]{MR41:1976}; the unit sphere, $\mathbf S^{n-1}$, in $\mathbf
R^n$, see \cite[3.2.13]{MR41:1976}; the space, $\mathscr E (U,Y)$, of
functions of class $\infty$, the support, $\spt \gamma$, of a member $\gamma$
of $\mathscr E (U,Y)$, and the space, $\mathscr D'(U,Y)$, of distributions in
$U$ of type $Y$,%
\begin{footnote}%
	{Our topology on $\mathscr D(U,Y)$ differs from that employed in
	\cite[4.1.1]{MR41:1976} but yields the same dual $\mathscr D'(U,Y)$ of
	continuous linear functionals $S : \mathscr D (U,Y) \to \mathbf R$,
	see \cite[2.13, 2.17\,(1)]{MR3528825}.}
\end{footnote}%
see \cite[4.1.1]{MR41:1976};
the boundary, denoted $\boundary T$, for currents $T$ and the Euclidean
currents, $\mathbf E^n$, see \cite[4.1.7]{MR41:1976}; the one-dimensional
current $\boldsymbol [ u,v \boldsymbol ]$ in $\mathbf R^n$ associated with the
line segment from $u$ to $v$, see \cite[4.1.8]{MR41:1976}; the group, $\mathbf
I_m ( \mathbf R^n )$, of $m$ dimensional integral currents in $\mathbf R^n$,
see \cite[4.1.24]{MR41:1976}; the exterior normal, $\mathbf n ( A, \cdot)$, of
$A$, see \cite[4.5.5]{MR41:1976}; the weight, $\| V \|$, of a varifold $V$,
see \cite[3.1]{MR0307015}; and, the first variation, $\updelta V$, of a
varifold $V$, see \cite[4.2]{MR0307015}.

\paragraph{Modifications} For the push forward, $f_\# \phi$, of a measure
$\phi$ by a function $f$, we use the definition 
\cite[2.9]{DOI_10.4171_RMI_1487}
which extends \cite[2.1.2]{MR41:1976}. Whenever $M$ is an
$m$ dimensional submanifold of class $1$ of $\mathbf R^n$,
\begin{equation*}
	\mathbf G_m ( M ) = ( M \times \mathbf G(n,m) ) \cap \{ (a,S) \with S
	\subset \Tan (M,a) \};
\end{equation*}
this extends \cite[2.5]{MR0307015}. Whenever $M$ is a submanifold of class
$2$ of $\mathbf R^n$, the \emph{second fundamental form of $M$ at $a$}, for $a
\in M$, and the \emph{mean curvature vector of $M$ at $(a,S)$}, for $(a,S) \in
\mathbf G_m (M)$, are defined to be the unique (symmetric)
bilinear form $\mathbf b (M,a) : \Tan (M,a) \times \Tan (M,a) \to \Nor (M,a)$
and the unique vector $\mathbf h (M,a,S) \in \Nor (M,a)$ such that
\begin{gather*}
	v \bullet \langle w, \Der g (a) \rangle = - \mathbf b (M,a) (v,w)
	\bullet g(a) \quad \text{for $v,w \in \Tan (M,a)$}, \\
	S_\natural \bullet ( \Der g(a) \circ \Tan (M,a)_\natural ) = - \mathbf
	h ( M,a,S) \bullet g(a)
\end{gather*}
whenever $g : M \to \mathbf R^n$ is of class $1$ relative to $M$ and satisfies
$g(x) \in \Nor (M,x)$ for $x \in M$; these notions extend
\cite[2.5]{MR0307015}. Whenever $m$ and $n$ are positive integers, $m \leq n$,
$U$ is an open subset of $\mathbf R^n$, and $E$ is a $\mathscr H^m$ measurable
set which meets every compact subset of $U$ in a $(\mathscr H^m,m)$
rectifiable set, we define the $m$ dimensional varifold associated with $E$,
denoted $\mathbf v_m (E)$, as the unique member of $\mathbf {RV}_m ( U )$
satisfying $\| \mathbf v_m ( E ) \| = \mathscr H^m \restrict E$; this modifies
the notation of \cite[3.5]{MR0307015}.  The concepts relating to generalised
weak differentiability on varifolds---that is, \emph{generalised $V$ weak
derivatives}, $V \weakD f$, of \emph{generalised $V$ weakly differentiable
functions $f$} and the resulting function spaces, $\mathbf T(V,Y)$ and
$\mathbf T(V)$,---are employed in accordance with the generalisation
\cite[4.2]{MR3777387} of \cite[8.3]{MR3528825}.

\paragraph{Amendments} Whenever $n \geq 2$, the trace of an $\mathbf R^n$
valued bilinear form $B$ on $\mathbf R^m$ is defined so that $\alpha ( \trace
B ) = \trace ( \alpha \circ B)$ whenever $\alpha \in \Hom ( \mathbf R^n,
\mathbf R)$.  Modelled on \cite[4.1.7]{MR41:1976}, we employ the restriction
notation, $\phi \restrict f$, for the measure $\phi$ weighted by $f$
introduced in \cite[2.6]{DOI_10.4171_RMI_1487}; this weighted measure was
discussed---without name---in \cite[2.4.10]{MR41:1976}.  For subsets $A$ of
Euclidean space, we adopt the concepts of unique nearest point and reach, and
the symbols $\Unp (A)$, $\reach (A, \cdot )$, and $\reach (A)$ from
\cite[4.1]{MR0110078} as well as those of \emph{pointwise} and
\emph{approximate differentiability of order $k$} with the corresponding
\emph{pointwise} and \emph{approximate differentials of order $k$}, denoted by
$\pt \Der^k A$ and $\ap \Der^k A$, respectively, from \cite[3.3,
3.12]{MR3936235} and \cite[3.8, 3.20]{MR3978264}.  The terms \emph{immersion}
and \emph{embedding} are employed in accordance with \cite[p.\,21]{MR1336822}.
Whenever $k$ is a positive integer or $k = \infty$, we mean by a
[\emph{sub}]\emph{manifold-with-boundary of class~$k$} a Hausdorff topological
space with a countable base of its topology that is, in the terminology
of~\cite[pp.\,29--30]{MR1336822}, a $C^k$~[sub]manifold.  For
manifolds-with-boundary~$M$ of class~$k$, we similarly adapt the notion of
\emph{chart of class~$k$} and \emph{Riemannian metric of class~$k-1$}
from~\cite[p.\,29, p.\,95]{MR1336822} and denote by $\partial M$ its
\emph{boundary} as in~\cite[p.\,30]{MR1336822}.  The image, $f_\# V$, of a
varifold $V$ under suitable locally Lipschitzian maps $f$ is used in
accordance with \cite[2.28]{DOI_10.4171_RMI_1487}.  Finally, whenever $G$ is a
complete normed commutative group as defined in
\cite[2.1]{DOI_10.4171_RMI_1487}, we employ the following
notation regarding the group $\mathscr R_m^{\textup{loc}} ( U, G )$ of $m$
dimensional \emph{locally rectifiable $G$ chains} $S$ in an open subset $U$ of
$\mathbf R^n$, see \cite[3.5]{DOI_10.4171_RMI_1487}: the notions of weight
measure, $\| S \|$, and restriction, $S \restrict A$, see
\cite[3.5]{DOI_10.4171_RMI_1487}; the slice, $\langle S,f, \cdot \rangle$, of
$S$ by $f$, see \cite[3.8]{DOI_10.4171_RMI_1487}; the isomorphism $\iota_{U,m}
: \mathscr R_m^{\textup{loc}} (U) \to \mathscr R_m^{\textup{loc}} ( U, \mathbf
Z )$, see \cite[4.1]{DOI_10.4171_RMI_1487}; the bilinear operation $\cdot :
\mathscr R_m^{\textup{loc}} ( U, \mathbf Z ) \times G \to \mathscr
R_m^{\textup{loc}} ( U, G )$, see \cite[4.5]{DOI_10.4171_RMI_1487}; and, the
chain complex of \emph{integral $G$ chains} in $U$ with $m$-th chain group
$\mathbf I_m ( U, G)$---identified with a subgroup of $\mathscr
R_m^{\textup{loc}} ( U, G)$---and boundary operator~$\boundary_G$, see
\cite[4.11, 4.13, 4.17]{DOI_10.4171_RMI_1487}.

\paragraph{Definitions in the text} The \emph{$Y$ topology} on the dual space
$Y^\ast$ of a Banach space $Y$ is specified in \ref{def:Y-topology}.  The
\emph{flow} associated with a vector field occurs in \ref{example:flow}. For
varifolds, the function $\boldsymbol \upeta (V, \cdot)$ representing the first
variation $\updelta V$ with respect to $\| \updelta V \|$ is defined in
\ref{def:upeta}; the concepts of \emph{partition}, \emph{local finiteness} of
families thereof, \emph{indecomposability of type $\Psi$}, and \emph{partition
along $f$} are introduced in \ref{def:partition},
\ref{def:locally-finiteness-decompositions},
\ref{definition:indecomposability}, and \ref{definition:partition-along},
respectively.  For integral $G$ chains, the definition of
\emph{indecomposability} is recorded in
\ref{definition:indecomposable_chains}.  Based on \cite{MR3936235} and
\cite{MR3978264}, the notions of \emph{pointwise} and \emph{approximate mean
curvature vector} $\pt \mathbf h (A, \cdot )$ and $\ap \mathbf h (A, \cdot)$
of subsets $A$ of Euclidean space are introduced in
\ref{def:mean-curvature-of-sets}.  For immersions $F$, we lay down the
concepts of \emph{tangent cone} $\Tan (F,\cdot)$, \emph{normal cone} $\Nor
(F,\cdot)$, \emph{mean curvature vector} $\mathbf h (F, \cdot )$, and
\emph{exterior normal} $\mathbf n (F,\cdot)$ in \ref{def:mean_curvature},
\emph{associated varifold} in \ref{def:immersed_varifold}, and
\emph{Riemannian distance} in \ref{def:Riemannian_distance}.

\section{Preliminaries}

We compile properties of distributions representable by integration in
\ref{def:Y-topology}--\ref{example:distribution-submanifolds}, approximate a
retraction in \ref{lemma:almost-retraction-to-ball}, study the reach of the
graph of functions and submanifolds of class $2$ in
\ref{miniremark:p-q}--\ref{remark:bounded-domains} and
\ref{thm:reach-non-proper-submanifolds}--\ref{remark:flow-contained},
respectively, consider the flow associated to certain vector fields in
\ref{example:flow}, and discuss the intersection of sets of locally finite
perimeter in \ref{example:perimeter}.  Proceeding to varifolds, we treat
notions related to the first variation in
\ref{def:upeta}--\ref{remark:polar_decomposition} and observe some new basic
properties of the distributional boundary of superlevel sets of generalised
weakly differentiable functions in
\ref{thm:area-formular-representation-slice}%
--\ref{remark:area-formular-representation-slice}.

\begin{definition} [see \protect{\cite[p.\,420]{MR0117523}}]
	\label{def:Y-topology}
	Suppose $Y$ is a Banach space.
	
	Then, the topology on $Y^\ast$ inherited from $\mathbf R^Y$ is termed
	the \emph{$Y$ topology}.
\end{definition}

\begin{theorem} \label{thm:split-distribution-rep-by-int}
	Suppose $U$ is an open subset of $\mathbf R^n$, $Y$ is a separable
	Banach space, $\phi$ is a Radon measure over $U$, $T \in \mathscr D'
	(U,Y)$ is representable by integration, $g$ is a $Y^\ast$ valued
	function that is $\| T \|$ measurable with respect to the $Y$
	topology,
	\begin{gather*}
		\| g(x) \| = 1 \quad \text{for $\| T \|$ almost all $x$}, \\
		T ( \theta ) = {\textstyle\int \langle \theta, g \rangle \ud
		\| T \|} \quad \text{for $\theta \in \mathscr D (U,Y)$},
	\end{gather*}
	the function $h$ mapping a subset of $U$ into
	$\mathbf R^Y$ is defined by
	\begin{gather*}
		\langle y,h(a) \rangle = (V) \lim_{S \to a} T ( b_S \cdot y )
		\big / \phi (S) \in \mathbf R \ \text{for $y \in Y$}, \quad
		\text{whenever $a \in U$}, \\
		\text{where $V = \{ (a,\mathbf B (a,r)) \with a \in U, 0 < r <
		\infty, \mathbf B(a,r) \subset U \}$}
	\end{gather*}
	and $b_S$ is the characteristic function of $S$ on $U$.
	
	Then, $\im h \subset Y^\ast$, $h$ is a Borel function
	with respect to the $Y$ topology on $Y^\ast$, $\dmn h$ is a Borel set,
	and, for $\phi$ almost all $a$, there holds, for $y
	\in Y$,
	\begin{alignat*}{2}
		\langle y,h(a) \rangle & = \mathbf D ( \| T \|, \phi, V, a )
		\langle y, g(a) \rangle && \quad \text{if $a \in \dmn g$} \\
		\langle y,h(a) \rangle & = 0 && \quad \text{if $a \notin \dmn
		g$};
	\end{alignat*}
	in particular, $\| h(a) \| = \mathbf D ( \| T \|, \phi, V, a )$ for
	$\phi$ almost all $a$, $\| T \|_\phi = \phi \restrict \| h \|$, and
	\begin{equation*}
		T ( \theta ) = {\textstyle\int} \langle \theta, h \rangle \ud
		\phi + {\textstyle\int} \langle \theta, g \rangle \ud ( \| T
		\| - \| T \|_\phi ) \quad \text{for $\theta \in \mathbf L_1 (
		\| T \|, Y )$}.
	\end{equation*}
\end{theorem}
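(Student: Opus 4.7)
The plan is to reduce the statement, for each fixed $y \in Y$, to the scalar Radon-Nikodym differentiation theorem \cite[2.9.7]{MR41:1976} applied to the signed Radon measure $\mu_y = \|T\| \restrict \langle y, g\rangle$, and then exploit the separability of $Y$ to patch together the countably many resulting $\phi$-null exceptional sets into one.

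First I would observe that, since $T$ is representable by integration with $T(\theta) = \int \langle \theta, g\rangle \,d\|T\|$, this formula extends by approximation to $\theta \in \mathbf{L}_1(\|T\|, Y)$, so that
\begin{equation*}
	T(b_S \cdot y) = {\textstyle\int}_S \langle y, g \rangle \ud \|T\|
	\quad \text{for every Borel $S \subset U$ with $\mathbf{B}(a,r) \supset S$}.
\end{equation*}
Hence $S \mapsto T(b_S \cdot y)$ is a signed Radon measure equal to $\mu_y$, satisfying $|\mu_y| \leq \|y\| \|T\|$. Applying \cite[2.9.7]{MR41:1976} to $\mu_y$ and to $\|T\|$ with respect to $\phi$ and the Vitali relation $V$, one obtains a $\phi$-null set $N(y)$ outside of which the limit defining $\langle y, h(a)\rangle$ exists and equals $\mathbf{D}(\|T\|, \phi, V, a)\langle y, g(a)\rangle$ when $a \in \dmn g$, and equals $0$ when $a \notin \dmn g$ (using that $\|T\|$ is concentrated on $\dmn g$). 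The same application yields the $y$-uniform bound $\limsup_{S\to a} |T(b_S \cdot y)|/\phi(S) \leq \|y\| \,\mathbf{D}(\|T\|, \phi, V, a)$ for $\phi$-a.e.\ $a$.

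Next I would choose, using separability, a countable $\mathbf{Q}$-linear dense subset $Y_0 \subset Y$ and form the $\phi$-null set $N = \bigcup_{y \in Y_0} N(y) \cup \{a \with \mathbf{D}(\|T\|, \phi, V, a) = \infty\}$. For $a \in U \without N$, the assignment $y \mapsto \langle y, h(a)\rangle$ is $\mathbf{Q}$-linear on $Y_0$ and bounded there by $\|y\|\,\mathbf{D}(\|T\|, \phi, V, a)$; it thus extends uniquely to $h(a) \in Y^*$ with $\|h(a)\| \leq \mathbf{D}(\|T\|, \phi, V, a)$, and the uniform $\limsup$ bound propagates the limit formula from $Y_0$ to all of $Y$ by density. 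On $\dmn g \without N$ this forces $h(a) = \mathbf{D}(\|T\|, \phi, V, a) g(a)$ and, since $\|g(a)\| = 1$, the equality $\|h(a)\| = \mathbf{D}(\|T\|, \phi, V, a)$. Measurability of $h$ with respect to the $Y$ topology is inherited from the pointwise limit defining $\langle y, h(\cdot)\rangle$ for $y \in Y_0$ and the uniform extension to $Y$.

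The identity $\|T\|_\phi = \phi \restrict \|h\|$ is then immediate from the Lebesgue decomposition $\|T\|_\phi = \phi \restrict \mathbf{D}(\|T\|, \phi, V, \cdot)$ furnished by \cite[2.9.7]{MR41:1976}. For the splitting formula, I would insert $\|T\| = \|T\|_\phi + (\|T\| - \|T\|_\phi)$ into $T(\theta) = \int \langle \theta, g\rangle \,d\|T\|$; on the $\phi$-absolutely continuous part, $\langle \theta(a), g(a)\rangle \|h(a)\| = \langle \theta(a), h(a)\rangle$ for $\phi$-a.e.\ $a$, giving the desired decomposition, and extension from $\mathscr{D}(U,Y)$ to $\mathbf{L}_1(\|T\|, Y)$ is standard. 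The main obstacle is the transition from a fixed-$y$ scalar differentiation statement with $y$-dependent exceptional null set to a pointwise-in-$a$ vector-valued statement producing a well-defined $h(a) \in Y^*$; the $\|y\|$-uniform $\limsup$ control combined with separability of $Y$ is what makes the countably many null sets and the continuous extension across $Y$ compatible.
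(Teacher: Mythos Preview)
Your proposal is correct and follows essentially the same route as the paper: reduce to the scalar differentiation theorem \cite[2.9.7]{MR41:1976} for each fixed $y$, then use a countable dense subset of $Y$ to merge the exceptional null sets. The only minor difference is in how the identification $\mathbf D(\mu_y,\phi,V,a)=\mathbf D(\|T\|,\phi,V,a)\langle y,g(a)\rangle$ is obtained: the paper factors $\mu_y(S)/\phi(S)=\big(\|T\|(S)/\phi(S)\big)\big(\mu_y(S)/\|T\|(S)\big)$ and invokes the $(\|T\|,V)$ Lebesgue set of $\langle y,g(\cdot)\rangle$ via \cite[2.9.9]{MR41:1976}, whereas you leave this step implicit (it is justified by noting $(\mu_y)_\phi=\|T\|_\phi\restrict\langle y,g\rangle=\phi\restrict\big(\mathbf D(\|T\|,\phi,V,\cdot)\langle y,g\rangle\big)$ and applying \cite[2.9.7]{MR41:1976} once more).
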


\begin{proof}
	The first three conclusions may be verified by
	employing the uniform boundedness principle (see
	\cite[II.1.11]{MR0117523}) and a countable dense subset of $Y$. In
	view of \cite[2.8.18]{MR41:1976}, $V$ is a Vitali relation with
	respect to $\phi$ and $\| T \|$.  If either $\mathbf
	D ( \| T \|, \phi, V, a) = 0$ or $0 < \mathbf D ( \| T \|, \phi, V , a
	) < \infty$ and, whenever $y \in Y$, the point $a$ belongs to the $(\|
	T \|, V )$ Lebesgue set of the function mapping $x \in \dmn g$ onto
	$\langle y, g(x) \rangle$, then the fourth
	conclusion holds at $a$.  Employing a countable dense subset of $Y$
	and \cite[2.9.5, 2.9.7, 2.9.9]{MR41:1976}, we see that $\phi$ almost
	all $a$ satisfy these conditions.  Since $\| T \|_\phi = \phi
	\restrict \mathbf D ( \| T \|, \phi, V, \cdot )$ by
	\cite[2.9.7]{MR41:1976} and \cite[2.7]{DOI_10.4171_RMI_1487} and $T (
	\theta ) = \int \langle \theta, g \rangle \ud \| T \|$ for $\theta \in
	\mathbf L_1 ( \| T \|, Y )$, the postscript follows from \cite[2.4.10,
	2.9.2, 2.9.6]{MR41:1976}.
\end{proof}

\begin{example} \label{example:distributions_representable_by_integration}
	Suppose $U$ is an open subset of $\mathbf R^n$, $Y$ is a separable
	Banach space, $\phi$ is a Radon measure over $U$, $k$ is a $Y^\ast$
	valued function that is $\phi$ measurable with respect to the $Y$
	topology, $\| k \| \in \mathbf L_1^{\textup{loc}} ( \phi )$, and $T
	\in \mathscr D' ( U, Y )$ satisfies
	\begin{equation*}
		T ( \theta ) = {\textstyle\int} \langle \theta, k \rangle \ud
		\phi \quad \text{for $\theta \in \mathscr D ( U, Y )$}.
	\end{equation*}
	Then, $\| T \|$ is a Radon measure absolutely continuous with respect
	to $\phi$ and
	\begin{equation*}
		\text{$\langle y, k(a) \rangle = (V) \lim_{S \to a} T ( b_S
		\cdot y) \big / \phi (S)$ for $y \in Y$}, \quad \text{for
		$\phi$ almost all $a$},
	\end{equation*}
	by \cite[2.8.18, 2.9.9]{MR41:1976}, where the notation of
	\ref{thm:split-distribution-rep-by-int} is employed; hence,
	\cite[2.9.2, 4.1.5]{MR41:1976} and
	\ref{thm:split-distribution-rep-by-int} yield $\| T \| = \phi
	\restrict \|k\|$ and
	\begin{equation*}
		T ( \theta ) = {\textstyle\int} \langle \theta, k \rangle \ud
		\phi \quad \text{for $\theta \in \mathbf L_1 ( \| T \|, Y )$}.
	\end{equation*}
	In particular, whenever $S \in \mathscr D' ( U, Y)$ is representable
	by integration and $A$ is $\| S \|$ measurable, we have $\| S
	\restrict A \| = \| S \| \restrict A$; in fact, we may take $T = S
	\restrict A$.
\end{example}

\begin{theorem} \label{thm:distributions_Lp_representable}
	Suppose $1 \leq p < \infty$, $q = \infty$ if $p = 1$ and $q = p/(p-1)$
	if $p>1$, $U$ is an open subset of $\mathbf R^n$, $\phi$ is a Radon
	measure over $U$, $Y$ is a separable Banach space, $T \in \mathscr D'
	( U, Y )$, and
	\begin{equation*}
		M = \sup \{ T ( \theta ) \with \theta \in \mathscr D ( U,Y ),
		\phi_{(p)} ( \theta ) \leq 1 \} < \infty.
	\end{equation*}
	
	Then, there exists a $\phi$ almost unique function $k : U \to Y^\ast$
	that is $\phi$ measurable with respect to the $Y$ topology such that
	$\|k\| \in \mathbf L_1^{\textup{loc}} ( \phi )$ and
	\begin{equation*}
		T ( \theta ) = {\textstyle \int} \langle \theta, k \rangle \ud
		\phi \quad \text{for $\theta \in \mathscr D (U,Y)$}.
	\end{equation*}
	Moreover, we have $M = \phi_{(q)} (\|k\|)$.
\end{theorem}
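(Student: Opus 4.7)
The plan is to reduce the statement to Theorem~\ref{thm:split-distribution-rep-by-int} by first showing that the hypothesis on $M$ forces $T$ to be representable by integration with variation measure $\| T \|$ absolutely continuous with respect to $\phi$.

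For the first step: if $\theta \in \mathscr D ( U, Y )$ satisfies $| \theta (x) | \leq 1$ for all $x$ and has $\spt \theta$ contained in some compact $K \subset U$, then $\phi_{(p)} ( \theta ) \leq \phi ( K )^{1/p} < \infty$, so $| T ( \theta ) | \leq M \phi ( K )^{1/p}$; this is exactly the criterion for representability by integration (see \cite[2.18]{MR3528825}). Passing to the supremum defining the variation measure over such $\theta$ with support in an open $W \subset U$ gives $\| T \| ( W ) \leq M \phi ( W )^{1/p}$, whence Borel regularity yields $\| T \| ( A ) \leq M \phi ( A )^{1/p}$ for every Borel set $A$, so $\| T \|$ is a Radon measure absolutely continuous with respect to $\phi$. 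The standard representation of distributions representable by integration with separable target then furnishes a $\| T \|$ measurable $g : U \to Y^\ast$ with $\| g(x) \| = 1$ for $\| T \|$ almost all $x$ and $T ( \theta ) = \int \langle \theta, g \rangle \ud \| T \|$ for $\theta \in \mathbf L_1 ( \| T \|, Y )$. Theorem~\ref{thm:split-distribution-rep-by-int} then supplies a function $h$, $\phi$ measurable with respect to the $Y$ topology, with $\| h(a) \| = \mathbf D ( \| T \|, \phi, V, a )$ for $\phi$ almost all $a$; absolute continuity gives $\| T \|_\phi = \| T \|$, so the singular term in the theorem's decomposition vanishes and $k := h$ (extended by $0$ off its domain) satisfies $T ( \theta ) = \int \langle \theta, k \rangle \ud \phi$. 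Moreover, $\| k \| \in \mathbf L_1^{\textup{loc}} ( \phi )$ because $\int_K \| k \| \ud \phi = \| T \| ( K ) < \infty$ for compact $K \subset U$.

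Uniqueness is obtained by testing $\int \langle \theta, k - k' \rangle \ud \phi = 0$ against $\theta = \psi \cdot y_i$ with $\psi \in \mathscr D ( U, \mathbf R )$ and $\{ y_i \}$ a countable dense subset of $Y$, forcing $\langle y_i, k(x) \rangle = \langle y_i, k'(x) \rangle$ for $\phi$ almost all $x$ and every $i$, hence $k = k'$ $\phi$ almost everywhere. The inequality $M \leq \phi_{(q)} ( \| k \| )$ is immediate from H\"older's inequality applied to $| T ( \theta ) | \leq \int | \theta | \| k \| \ud \phi$. For the reverse inequality, I would extend $T$ by density to a bounded linear functional on $\mathbf L_p ( \phi, Y )$ of norm $M$ (using $p < \infty$); for compact $K \subset U$ and $\varepsilon > 0$, I would measurably select $u : U \to Y$ supported in $K$, with $| u(x) |$ suitably related to $\| k(x) \|^{q-1}$, such that $\langle u(x), k(x) \rangle$ is within $\varepsilon$ of $| u(x) | \cdot \| k(x) \|$ for $\phi$ almost all $x$; approximating $u$ by elements of $\mathscr D ( U, Y )$ in $\mathbf L_p ( \phi, Y )$ then saturates H\"older up to $\varepsilon$, yielding $\phi_{(q)} ( \| k \| ) \leq M$ after exhausting $U$ by compacts.

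The main obstacle is the measurable selection underlying the reverse norm inequality: since $k$ is only weak-$\ast$ measurable into $Y^\ast$, no Bochner-type polar decomposition is available, and the ``direction'' of $k(x)$ in $Y$ must be chosen measurably through a countable dense subfamily of the unit ball of $Y$, relying essentially on separability of $Y$.
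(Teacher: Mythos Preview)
Your argument is correct but takes a genuinely different route from the paper's. The paper first extends $T$ by $\phi_{(p)}$-continuity to a bounded functional $S$ on $\mathbf L_p(\phi,Y)$ with norm $M$ and then invokes the duality $\mathbf L_p(\phi,Y)^\ast \simeq \mathbf L_q(\phi,Y^\ast)$ (citing Ionescu--Tulcea \cite[Chapter~7, Sections~4 and~5]{MR0276438}, with an alternative via \cite[2.5.7, 2.5.12]{MR41:1976}) to obtain $k$ together with the equality $M = \phi_{(q)}(\|k\|)$ in one stroke; uniqueness is then read off from Example~\ref{example:distributions_representable_by_integration}. You instead first establish representability by integration and absolute continuity $\|T\| \ll \phi$, then manufacture $k$ via the paper's own Theorem~\ref{thm:split-distribution-rep-by-int}, and only afterwards attack the norm equality. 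This buys self-containment within the paper's framework for the existence part, but your reverse inequality $\phi_{(q)}(\|k\|) \leq M$ then essentially re-derives the nontrivial half of the $\mathbf L_p$-duality: the measurable selection of a near-norming direction in $Y$ through a countable dense subset, plus a simultaneous approximation in $\mathbf L_p(\phi,Y)$ and $\mathbf L_1(\|T\|,Y)$ (or a dominated-convergence argument with uniformly bounded approximants on a fixed compact set) to pass from $T(\theta_j)$ to $\int \langle u,k\rangle \,\mathrm d\phi$. Your sketch of this step is correct in outline, and you rightly flag it as the crux; the paper simply outsources it.
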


\begin{proof}
	In view of \cite[2.1]{MR3626845}, we may define the linear map
	\begin{equation*}
		S : \mathbf L_p ( \phi, Y ) \cap \{ \theta \with \dmn \theta =
		U \} \to \mathbf R
	\end{equation*}
	to be the unique $\phi_{(p)}$ continuous extension of $T$; hence,
	\begin{equation*}
		M = \sup \{ S ( \theta ) \with \theta \in \mathbf L_p ( \phi,
		Y ), \dmn \theta = U, \phi_{(p)} ( \theta ) \leq 1 \}.
	\end{equation*}
	Using \cite[Chapter 7, Sections 4 and 5]{MR0276438}, we obtain a
	function $k : U \to Y^\ast$ that is $\phi$ measurable with respect to
	the $Y$ topology such that
	\begin{equation*}
		S ( \theta ) = {\textstyle \int} \langle \theta, k \rangle \ud
		\phi \quad \text{whenever $\theta \in \mathbf L_p ( \phi, Y )$
		and $\dmn \theta = U$,}
	\end{equation*}
	and $M = \phi_{(q)}(\|k\|)$.%
	\begin{footnote}%
		{More elementary, one may pass from the case $Y = \mathbf R$
		treated in \cite[2.5.7\,(i)\,(ii)]{MR41:1976} to the general
		case adapting the method of \cite[2.5.12]{MR41:1976}; this is
		carried out in
		\cite[4.6.3\,(1)\,(2)]{snulmenn:diploma_thesis}.}
	\end{footnote}%
	Finally, the uniqueness statement follows from
	\ref{example:distributions_representable_by_integration}.
\end{proof}

\begin{remark} \label{remark:distributions_Lp_representable}
	We recall that, for general $Y$, the functions $g$, $h$, and $k$
	occurring in \ref{thm:split-distribution-rep-by-int}%
	--\ref{thm:distributions_Lp_representable} may be $\phi$ nonmeasurable
	with respect to the norm topology on $Y^\ast$, see
	\cite[2.32]{MR4241804}.  In the present paper, we only consider the
	case $Y = \mathbf R^n$.
\end{remark}

\begin{example} \label{example:distribution-submanifolds}
	Suppose $1 \leq p < \infty$, $q = \infty$ if $p = 1$ and $q = p/(p-1)$
	if $p > 1$, $k$ is positive integer or $k = \infty$, $U$ is an open
	subset of $\mathbf R^n$, $\phi$ is a Radon measure over $U$, $B$ is a
	relatively closed subset and a submanifold of class $k$ of $U$, the
	vector space $\Theta$ consists of all vector fields $\theta : U \to
	\mathbf R^n$ of class $k-1$ with compact support satisfying $\theta
	(b) \in \Tan (B,b)$ for $b \in B$, and
	\begin{equation*}
		X_r = \mathbf L_r ( \phi, \mathbf R^n ) \cap \{ \theta \with
		\text{$\dmn \theta = U$, $\theta (b) \in \Tan(B,b)$ for $b \in
		B$} \}
	\end{equation*}
	for $1 \leq r \leq \infty$; hence, $\Theta$ is $\phi_{(p)}$
	dense in $X_p$ as may be verified combining \cite[2.2.17, 2.4.18\,(1),
	3.1.19\,(1)]{MR41:1976} and \cite[2.1]{MR3626845}.  Next, suppose $R
	\in \Hom ( \Theta, \mathbf R )$ and
	\begin{equation*}
		M = \sup \{ R ( \theta ) \with \theta \in \Theta, \phi_{(p)} (
		\theta ) \leq 1 \} < \infty.
	\end{equation*}
	Let $S \in \Hom ( X_p, \mathbf R )$ be the unique $\phi_{(p)}$
	continuous extension of $R$.  Defining the endomorphism $E$ of
	$\mathbf L_p ( \phi, \mathbf R^n ) \cap \{ \theta \with \dmn \theta =
	U \}$ with $E \circ E = E$ by
	\begin{equation*}
		\text{$E(\theta)(x) = \theta(x)$ if $x \in U \without B$},
		\quad \text{$E(\theta)(x) = \langle \theta (x),
		\Tan(B,x)_\natural \rangle$ if $x \in B$}
	\end{equation*}
	for $\theta \in \mathbf L_p ( \phi, \mathbf R^n )$ with $\dmn \theta =
	U$ and noting $\phi_{(p)} ( E(\theta)) \leq \phi_{(p)} ( \theta )$ for
	such $\theta$, we apply \ref{thm:distributions_Lp_representable}
	with $T = S \circ E | \mathscr D ( U, \mathbf R^n )$ to furnish $k \in
	X_q$ satisfying
	\begin{equation*}
		T ( \theta ) = {\textstyle\int k \bullet \theta \ud \phi}
		\quad \text{for $\theta \in \mathbf L_p ( \phi, \mathbf R^n
		)$}
	\end{equation*}
	and $\phi_{(q)} (k) = M$.  Finally, we deduce from
	\ref{example:distributions_representable_by_integration} that, for
	$\phi$ almost all $x$, there holds
	\begin{equation*}
		k(x) \bullet \theta(x) = \lim_{r \to 0+} \frac{S ( b_{x,r}
		\cdot \theta )}{\phi \, \mathbf B (x,r)} \quad \text{whenever
		$\theta \in \Theta$},
	\end{equation*}
	where $b_{x,r}$ is the characteristic function of $\mathbf B(x,r)$ on
	$U$ since $\theta$ is continuous and $\lim_{r \to 0+} \int_{\mathbf
	B(x,r)} |k| \ud \phi \big / \phi \, \mathbf B (x,r) < \infty$ for
	$\phi$ almost all $x$ by \cite[2.8.18, 2.9.5]{MR41:1976}.
\end{example}

\begin{lemma} \label{lemma:almost-retraction-to-ball}
	Suppose $n$ is a positive integer and $\epsilon > 0$.

	Then, there exists a function $\rho : \mathbf R^n \to \mathbf R^n$ of
	class $\infty$ such that $\Lip \rho = 1$, $\im \rho \subset \mathbf
	B (0,1+\epsilon)$,
	\begin{equation*}
		\rho (b) = b \quad \text{for $b \in \mathbf R^n \cap \mathbf B
		(0,1)$},
	\end{equation*}
	and $\rho (b) \in \{ t b \with 0 \leq t \leq 1 \}$ whenever $b \in
	\mathbf R^n$.
\end{lemma}

\begin{proof}
	Choosing a concave nondecreasing function $f : \mathbf R \to \mathbf
	R$ of class $\infty$ such that $f(t) = t$ for $- \infty < t \leq 1$
	and $f(t) \leq 1 + \epsilon$ for $t \in \mathbf R$, we define $\rho :
	\mathbf R^n \to \mathbf R^n$ of class $\infty$ by requiring $\rho(y)=
	(f(|y|)/|y|) y$ for $y \in \mathbf R^n \without \{ 0 \}$.  Since $\Der
	\rho (0) = \mathbf 1_{\mathbf R^n}$ and
	\begin{equation*}
		\| \Der \rho (y) \| = \sup \{ f'(|y|), f(|y|)/|y| \} \leq 1
		\quad \text{for $y \in \mathbf R^n \without \{ 0 \}$},
	\end{equation*}
	we conclude $\Lip \rho \leq 1$ from \cite[2.2.7, 3.1.1]{MR41:1976}.
\end{proof}

\begin{miniremark} \label{miniremark:p-q}
	Suppose $m$ and $n$ are positive integers and $m < n$.  Following
	\cite[5.1.9]{MR41:1976}, we define orthogonal projections $\mathbf p :
	\mathbf R^n \to \mathbf R^m$ and $\mathbf q : \mathbf R^n \to \mathbf
	R^{n-m}$ by
	\begin{equation*}
		\mathbf p (z) = (z_1, \ldots, z_m ), \quad \mathbf q (z) =
		(z_{m+1}, \ldots, z_n)
	\end{equation*}
	whenever $z = (z_1, \ldots, z_n) \in \mathbf R^n$.
\end{miniremark}

\begin{theorem} \label{thm:reach-real-valued-graphs}
	In the situation of \ref{miniremark:p-q}, suppose $n-m=1$, $f :
	\mathbf R^{n-1} \to \mathbf R$ is of class $2$, $F = \mathbf p^\ast +
	\mathbf q^\ast \circ f$, $C = \im F$, $0 < r \leq \infty$, and
	\begin{equation*}
		\| \mathbf b ( C,c ) \| \leq r^{-1} \quad \text{for $c \in
		C$}.
	\end{equation*}
	
	Then, whenever $\dist (z,C) < r$, there exists a unique point $c$ in
	$C$ satisfying $|z-c| = \dist (z,C)$; moreover, we have $\sign \mathbf
	q ( z - c ) = \sign ( \mathbf q (z) - f ( \mathbf p (z)) )$.
\end{theorem}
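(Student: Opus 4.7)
The plan is to reduce the uniqueness assertion to the reach theory for $C^2$ hypersurfaces and then to pin down the sign by combining a connectedness argument with a first-order calculation at $c$.

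For uniqueness, $F$ realises $C$ as a properly embedded $n-1$ dimensional submanifold of class~$2$ of $\mathbf R^n$ with $\|\mathbf b (C, \cdot)\| \leq r^{-1}$; hence, the classical reach estimate for $C^2$ submanifolds of bounded second fundamental form (see \cite[Section 4]{MR0110078}) yields $\reach (C) \geq r$, so every $z \in \mathbf R^n$ with $\dist (z, C) < r$ possesses a unique nearest point $c \in C$.

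For the sign identity, assume first that $z \notin C$ (otherwise $\mathbf q (z - c) = 0$ and $\mathbf q (z) - f(\mathbf p (z)) = 0$, so both sides vanish trivially). The half-open segment from $c$ to $z$, excluding $c$, cannot meet $C$: an intermediate hit would contradict minimality of $|z - c|$, while $z \in C$ is excluded by hypothesis. Since $C = \im F$ is the graph of $f$, its complement decomposes as $\mathbf R^n \without C = U^+ \cup U^-$ with
\begin{equation*}
	U^\pm = \big \{ w \with \pm \bigl( \mathbf q (w) - f(\mathbf p (w)) \bigr) > 0 \big \},
\end{equation*}
so the entire segment (and in particular $z$) lies in a single $U^\pm$ by connectedness. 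To identify the correct one, write $c = F(y)$ with $y \in \mathbf R^{n-1}$; first-order optimality gives $z - c \in \Nor (C, c)$, and since $\Nor (C, c)$ is spanned by $\bigl( -\grad f(y), 1 \bigr)$, there exists $\lambda \neq 0$ with $z - c = \lambda \bigl( -\grad f(y), 1 \bigr)$, whence $\mathbf q (z - c) = \lambda$. A first-order Taylor expansion then yields
\begin{equation*}
	\mathbf q (c + s (z - c)) - f(\mathbf p (c + s (z - c))) = s \lambda \bigl( 1 + |\grad f(y)|^2 \bigr) + O (s^2) \quad \text{as $s \to 0^+$},
\end{equation*}
whose sign agrees with $\sign \lambda = \sign \mathbf q (z - c)$. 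Propagating this sign to $z$ via the connectedness observation above establishes $\sign \mathbf q (z - c) = \sign (\mathbf q (z) - f(\mathbf p (z)))$, as required.

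The main obstacle is the global reach bound $\reach (C) \geq r$: extracting it from \cite[Section 4]{MR0110078} requires translating the uniform pointwise control of the second fundamental form on the complete properly embedded graph $C$ into a tubular neighbourhood of uniform width $r$ (rather than merely local reach at each point). Once that is in hand, the sign identity follows routinely from the topological connectedness argument combined with the first-order Taylor expansion recorded above.
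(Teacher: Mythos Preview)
Your proposal correctly identifies its own gap but does not close it. The inequality $\reach(C) \geq r$ is the substantive content of the theorem, and the citation to \cite[Section~4]{MR0110078} does not deliver it: Federer's results go in the converse direction (positive reach controls second-order behaviour, as in \cite[4.18]{MR0110078}) and yield only \emph{local} tubular neighbourhoods for $C^2$ submanifolds. No statement there converts a uniform bound $\|\mathbf b(C,\cdot)\| \leq r^{-1}$ into a global reach bound. Indeed, the paper's spiral example \ref{example:spiral} shows this implication fails in codimension $\geq 2$, so any proof must exploit the hypothesis $n-m=1$ explicitly, which your proposal never does. The paper supplies exactly this missing step: writing the normal flow $\phi_s(c) = c + s\,\nu(c)$ and projecting via $\mathbf p$, one obtains proper maps $g_s = \mathbf p \circ \phi_s \circ F : \mathbf R^{n-1} \to \mathbf R^{n-1}$ that are local diffeomorphisms with positive Jacobian for $|s| < r$; a homotopy argument (connecting $g_s$ to $g_0 = \mathbf 1_{\mathbf R^{n-1}}$) then forces $\Number(g_s,\cdot) \equiv 1$, whence each $\phi_s$ is univalent. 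This is where the codimension-one structure enters, and your proposal has no substitute for it.

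Your sign argument, on the other hand, is correct and arguably tidier than the paper's. The paper establishes the sign by tracking $z$ through the parametrisation $\phi_s$ together with a separate observation that $\sign\bigl(\mathbf q(z - F(x))\bigr)$ is constant for $x \in \mathbf B(\mathbf p(z), d)$; your approach---the half-open segment from $c$ to $z$ avoids $C$ by minimality, hence lies in a single component $U^{\pm}$ of $\mathbf R^n \setminus C$, and a first-order expansion at $c$ picks out which one---is a clean alternative once uniqueness of $c$ is in hand.
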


\begin{proof}
	Abbreviating $A = \{ s \with -r < s < r \}$, we define functions $\nu
	: C \to \mathbf S^{n-1}$, $\phi_s : C \to \mathbf R^n$, and $\psi : A
	\times C \to \mathbf R^n$ of class $1$ by
	\begin{equation*}
		\begin{gathered}
			\nu (c) = ( 1 + | \Der f ( \mathbf p (c)) |^2)^{-1/2}
			\big ( \mathbf q^\ast (1) - \mathbf p^\ast ( \grad f (
			\mathbf p (c ) ) ) \big ), \\
			\phi_s (c) = \psi (s,c) = c + s \, \nu (c)
		\end{gathered}
	\end{equation*}
	whenever $(s,c) \in A \times C$; hence, we have
	\begin{gather*}
		| \phi_s(c)-c | \leq |s|, \quad \text{$\phi_s$ is proper}, \\
		\Der \phi_s (c) = \mathbf 1_{\Tan(C,c)} + s \Der \nu (c),
		\quad \Der \phi_s(c) \in \Hom ( \Tan(C,c), \Tan (C,c) ), \\
		\| \Der \phi_s (c) - \mathbf 1_{\Tan (C,c)} \| \leq |s| \|
		\mathbf b (C,c) \| \leq |s|r^{-1} < 1, \quad \det \Der\phi_s
		(c) > 0.
	\end{gather*}
	Defining functions $h = \mathbf p \circ \psi \circ ( \mathbf 1_A
	\times F )$ and $g_s = \mathbf p \circ \phi_s \circ F$ of class $1$,
	we see that, whenever $(s,x) \in A \times \mathbf R^{n-1}$, we have
	\begin{gather*}
		\mathbf p ( F (x) ) = x, \quad g_s(x) = x + s \, \mathbf p (
		\nu ( F(x))), \quad | g_s (x)-x| \leq |s|, \\
		\mathbf p \circ \Der F (x) = \mathbf 1_{\mathbf R^{n-1}},
		\quad \Der g_s(x) = \mathbf p \circ \Der \phi_s(F(x)) \circ
		\Der F(x), \quad \det \Der g_s (x) > 0.
	\end{gather*}
	Thus, $h | (I \times \mathbf R^{n-1})$ is proper whenever $I$ is a
	compact subinterval of $A$ and, for $s \in A$, $\Number (g_s,\cdot)$
	is a constant $\mathbf Z$ valued function because $g_s$ is a proper
	map which locally is a diffeomorphism of class $1$ by
	\cite[3.1.1]{MR41:1976}.  In view of \cite[4.1]{DOI_10.4171_RMI_1487},
	we conclude
	\begin{equation*}
		g_{s\,\#} \mathbf E^{n-1} = g_{0\,\#} \mathbf E^{n-1} \quad
		\text{for $s \in A$}
	\end{equation*}
	from \cite[4.16]{DOI_10.4171_RMI_1487} applied with $m$, $n$, $U$,
	$G$, $\nu$, $V$, $t$, and $S$ replaced by $n-1$, $n-1$, $\mathbf
	R^{n-1}$, $\mathbf Z$, $n-1$, $\mathbf R^{n-1}$, $s$, and
	$\iota_{\mathbf R^{n-1},n-1} ( \mathbf E^{n-1} )$.  Noting $g_0 =
	\mathbf 1_{\mathbf R^{n-1}}$ and computing $g_{s\,\#} \mathbf E^{n-1}$
	by means of \cite[3.6, 4.1]{DOI_10.4171_RMI_1487}, we obtain that
	\begin{equation*}
		\Number (g_s,\chi) = 1 \quad \text{for $(s,\chi) \in A \times
		\mathbf R^{n-1}$}.
	\end{equation*}
	Therefore, $\phi_s$ is univalent for $s \in A$.  Moreover, we have
	\begin{equation*}
		\sign \mathbf q ( \phi_s (c) - c ) = \sign s \quad \text{for
		$(s,c) \in A \times C$}.
	\end{equation*}
	
	Next, suppose $0 < d = \dist (z,C) < r$ and $\epsilon = \sign (
	\mathbf q (z) - f ( \mathbf p (z)) )$.  Observing
	\begin{equation*}
		\mathbf q (z) \notin \mathbf q \big [ C \cap \mathbf p^{-1} [
		\mathbf B ( \mathbf p (z),d) ] \big ]
	\end{equation*}
	because $\mathbf p [ \Tan (C,c) ] = \mathbf R^{n-1}$ for $c \in C$, we
	conclude
	\begin{equation*}
		\sign ( \mathbf q (z-F(x)) ) = \epsilon \quad \text{for $x \in
		\mathbf B ( \mathbf p (z),d )$}.
	\end{equation*}
	Finally, for $c \in C \cap \mathbf B ( z,d )$, we shall verify $z =
	\phi_{\epsilon d} (c)$ and $\sign \mathbf q (z-c) = \epsilon$; in
	fact, noting $z-c \in \Nor (C,c)$, there exists $s \in A$ such that
	$|s|=d$ and $\phi_s(c)=z$, whence we infer $\sign s = \sign \mathbf q
	( z-c ) = \epsilon$ as $C \cap \mathbf B (z,d) \subset F [ \mathbf B (
	\mathbf p (z), d ) ]$.
\end{proof}

\begin{remark} \label{remark:reach-real-valued-graphs}
	The principal conclusion is equivalent to $\reach (C) \geq r$; for
	this lower bound to hold, the hypothesis $n-m=1$ is essential, see
	\ref{example:spiral} below.
\end{remark}

\begin{example} \label{example:spiral}
	Suppose $m$ and $n$ are positive integers satisfying $n-m \geq 2$,
	$\pi = \mathscr H^1 ( \mathbf S^1 ) / 2$, $0 < \epsilon \leq 1$, $f :
	\mathbf R^m \to \mathbf R^{n-m}$ is defined by
	\begin{equation*}
		f (x) = ( \cos ( \pi x_1/\epsilon ), \sin ( \pi x_1/\epsilon),
		0, \ldots, 0) \in \mathbf R^{n-m} \quad \text{for $x=(x_1,
		\ldots, x_m) \in \mathbf R^m$},
	\end{equation*}
	and $C$ is associated with $f$ as in
	\ref{thm:reach-real-valued-graphs}.  Then, there holds
	\begin{equation*}
		\text{$\| \mathbf b(C,c) \| \leq 1$ for $c \in C$} \quad
		\text{but} \quad \reach (C) \leq \epsilon;
	\end{equation*}
	in fact, reducing to $(m,n)=(1,3)$ and defining $F$ as in
	\ref{thm:reach-real-valued-graphs}, we compute
	\begin{equation*}
		|F'(x)|^2=1+\epsilon^{-2}\pi^2, \quad |F''(x)| =
		\epsilon^{-2}\pi^2, \quad \| \mathbf b (C,F(x)) \| =
		\frac{\epsilon^{-2} \pi^2}{1+\epsilon^{-2}\pi^2} \leq 1
	\end{equation*}
	for $x \in \mathbf R$, take $z=(0,-1,0)$, $d = \dist (z,C)$, and note
	\begin{equation*}
		|z-F(x)|=|z-F(-x)| \quad \text{for $x \in \mathbf R$},
	\end{equation*}
	so that $|z-F(0)|=2>\epsilon = |z-F(\epsilon)| \geq d$ implies $\reach
	(C) \leq \epsilon$.
\end{example}

\begin{corollary} \label{corollary:reach-real-valued-graphs}
	Suppose additionally $U = \{ z \with \dist ( z,C ) < r \}$ and the
	functions $\xi : U \to C$ and $\delta : U \to \mathbf R$ satisfy
	\begin{equation*}
		\{ \xi (z) \} = C \cap \{ c \with |z-c| = \dist(z,C) \}, \quad
		\delta(z) = \dist (z,C) \sign \mathbf q ( z-\xi(z))
	\end{equation*}
	for $z \in U$, $f$ is of class $k \geq 2$, and $E = \{ z \with f(
	\mathbf p (z)) > \mathbf q (z) \}$.
	
	Then, $\xi$ is of class $k-1$, $\delta$ is of class $k$, and
	\begin{equation*}
		\grad \delta (z) = \mathbf n ( E,\xi(z)) \quad \text{for $z
		\in U$}.
	\end{equation*}
\end{corollary}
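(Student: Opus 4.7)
The plan is to realise $\xi$ and $\delta$ as components of the inverse of the tubular map $\psi(s,c) = c + s\,\nu(c)$ already employed in the proof of \ref{thm:reach-real-valued-graphs}, and then to bootstrap the regularity of $\delta$ via the classical identity $\grad\delta = \nu\circ\xi$ for signed distance functions to a $C^k$ hypersurface.

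First, I would note that the unit normal $\nu : C \to \mathbf S^{n-1}$ defined in the proof of \ref{thm:reach-real-valued-graphs} is of class $k-1$, since it involves $\Der f$; hence $\psi : A \times C \to \mathbf R^n$ is of class $k-1$. That proof already establishes two crucial facts: the differential $\Der\psi(s,c)$ maps $\mathbf R \times \Tan(C,c)$ isomorphically onto $\mathbf R^n$ (because $\mathbf 1_{\Tan(C,c)} + s\,\Der\nu(c)$ is invertible on $\Tan(C,c)$ with positive determinant, and $\nu(c)\in\Nor(C,c)$ supplies the missing direction), and the maps $\phi_s$ are univalent. Combined with the statement of \ref{thm:reach-real-valued-graphs}, asserting that every $z\in U$ has a unique nearest point in $C$ in the prescribed half-space, these facts yield a bijection $\psi : A \times C \to U$. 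The inverse function theorem then promotes $\psi$ to a global $C^{k-1}$ diffeomorphism.

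Since $z-\xi(z) \in \Nor(C,\xi(z)) = \mathbf R\,\nu(\xi(z))$ and the sign condition in \ref{thm:reach-real-valued-graphs} matches the definition of $\delta$, one obtains $z - \xi(z) = \delta(z)\,\nu(\xi(z))$, so $(\delta,\xi) = \psi^{-1}$; thus both $\delta$ and $\xi$ are, a priori, of class $k-1$. The main step is to upgrade $\delta$: from $\delta(z)^2 = |z-\xi(z)|^2$ and the orthogonality $(z-\xi(z)) \bullet \Der\xi(z)[v] = 0$ (valid because $\xi(z)\in C$ forces $\Der\xi(z)[v] \in \Tan(C,\xi(z))$, whereas $z-\xi(z) \in \Nor(C,\xi(z))$), differentiation yields $2\delta(z)\,\grad\delta(z) = 2(z-\xi(z)) = 2\delta(z)\,\nu(\xi(z))$. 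Cancelling $\delta(z)$ on $\{\delta \neq 0\}$ and extending by continuity (permissible because $k-1 \geq 1$ so $\grad\delta$ is already continuous) gives $\grad\delta = \nu\circ\xi$ on all of $U$; as $\nu\circ\xi$ is of class $k-1$, this upgrades $\delta$ to class $k$.

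It remains to identify $\nu(c)$ with $\mathbf n(E,c)$ for $c \in C$. From the explicit formula for $\nu$, one computes $\mathbf q(\nu(c)) - \grad f(\mathbf p(c)) \bullet \mathbf p(\nu(c)) = (1+|\Der f(\mathbf p(c))|^2)^{1/2} > 0$, so moving from $c \in C$ along $\nu(c)$ strictly increases $\mathbf q - f\circ\mathbf p$ and thus exits $E$; since $C$ is locally the $C^2$ boundary of $E$, this characterises $\nu(c)$ as the exterior unit normal $\mathbf n(E,c)$. Combined with the identity for $\grad\delta$, this proves $\grad\delta(z) = \mathbf n(E,\xi(z))$ for all $z\in U$. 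I expect the one-derivative bootstrap for $\delta$ to be the principal substance of the argument; the remaining ingredients reduce to the inverse function theorem and the geometry already laid out in \ref{thm:reach-real-valued-graphs}.
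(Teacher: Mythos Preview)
Your proposal is correct and follows essentially the same route as the paper: both arguments identify $(\delta,\xi)$ as the inverse of the tubular map $\psi(s,c)=c+s\,\nu(c)$, apply the inverse function theorem to obtain $C^{k-1}$ regularity, and then bootstrap $\delta$ to class $k$ via $\grad\delta=\nu\circ\xi$ on $U\setminus C$ extended by continuity. The paper cites Federer's reach paper \cite[4.8\,(3)\,(5)\,(13)]{MR0110078} for the identities $\xi\circ\psi=\mathrm{pr}_2$, $\delta\circ\psi=\mathrm{pr}_1$, and $\grad\dist(\cdot,C)=(z-\xi(z))/\dist(z,C)$, whereas you work these out directly from orthogonality and the sign convention in \ref{thm:reach-real-valued-graphs}; you also make the identification $\nu=\mathbf n(E,\cdot)$ explicit, which the paper leaves implicit.
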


\begin{proof}
	Defining $A$, $\nu$, and $\psi$ as in the proof of
	\ref{thm:reach-real-valued-graphs}, we note that $\im \psi = U$ and
	that $\Der \psi (s,c)$ is univalent for $(s,c) \in A \times C$ because
	$\langle (1,0), \Der \psi (s,c) \rangle = \nu(c)$.  From
	\cite[4.8\,(13)]{MR0110078} and \ref{thm:reach-real-valued-graphs}, we
	obtain
	\begin{equation*}
		\xi ( \psi (s,c) ) = c, \quad \delta ( \psi (s,c)) = s
	\end{equation*}
	for $(s,c) \in A \times C$.  Thus, $\psi$ is a diffeomorphism of class
	$k-1$ of $A \times C$ onto $U$ by \cite[3.1.1, 3.1.11,
	3.1.19]{MR41:1976}; in particular, $\xi$ and $\delta$ are of class
	$k-1$.  Finally, we deduce
	\begin{equation*}
		\grad \delta (z) = \frac{z-\xi(z)}{\delta(z)} = \nu ( \xi (z))
		\quad \text{for $z \in U \without C$}
	\end{equation*}
	from \cite[4.8\,(3)\,(5)]{MR0110078} and the fact that
	$\frac{\psi(s,c)-c}s = \nu (c)$ whenever $(s,c) \in A \times C$ and $s
	\neq 0$; hence, $\grad \delta = \nu \circ \xi$ and $\delta$ is of
	class $k$.
\end{proof}

\begin{remark} \label{remark:bounded-domains}
	In the context of \emph{bounded domains}, a similar deduction of
	properties of the distance function is carried out in \cite[Lemma
	14.6]{MR1814364}.
\end{remark}

\begin{theorem} \label{thm:reach-non-proper-submanifolds}
	Suppose $B$ is a submanifold of class $2$ of $\mathbf R^n$, $A = \Clos
	B$,
	\begin{gather*}
		\xi = ( \mathbf R^n \times A ) \cap \{ (x,a) \with \{ a \} = A
		\cap \mathbf B (x,|x-a|) \}, \\
		X = \xi^{-1} [ B ] \cap \{ x \with \reach (A,\xi (x)) > \dist
		(x,A) \},
	\end{gather*}
	and $\psi : X \to \mathbf R^n \times \mathbf R^n$ satisfies $\psi (x)
	= ( \xi(x),x-\xi(x))$ for $x \in X$.

	Then, $X$ is an open set containing $B$, $\psi$ is a univalent
	function of class $1$ whose image is contained in $\Nor (B)$, and
	$\Der \psi (x)$ is univalent for $x \in X$.
\end{theorem}

\begin{proof}
	Assume $A \neq \varnothing$.  For $b \in B$, there holds $\reach (A,b)
	> 0$; in fact, \cite[4.12]{MR0110078} and \cite[3.1.19]{MR41:1976}
	yield a neighbourhood $U$ of $b$ and a set $C$ satisfying $\reach
	(C,b)>0$ and $U \cap A = U \cap C$.  It follows that $B \subset X$.
	From \cite[4.2, 4.8\,(4)]{MR0110078}, we recall that $\reach
	(A,\cdot)$ and $\xi$ are continuous functions; in particular,
	observing that $X \subset \Int \Unp ( A )$, we infer that $X$ is open,
	as $B$ is open relative to $A$.  Moreover, combining the continuity
	with \cite[4.8\,(13)]{MR0110078}, we conclude that $\psi$ is locally
	Lipschitzian with $\im \psi \subset \Nor (B)$, that $\psi$ is
	univalent, and that $\psi^{-1} (b,v) = b + v$ for $(b,v) \in \im
	\psi$.  Recalling \cite[3.1.19]{MR41:1976} and that
	$\Nor (B)$ is an $n$ dimensional submanifold of class $1$, we deduce
	firstly that $\Der \psi (x)$ is univalent for $x \in \dmn \Der \psi$
	by \cite[3.1.1]{MR41:1976}, secondly, that $\im \psi$ is open relative
	to $\Nor (B)$ by \cite[4.1.26]{MR41:1976}, and finally that $\psi$ is
	of class $1$ by \cite[3.1.1]{MR41:1976}.
\end{proof}

\begin{remark}
	Instead of applying degree theory for Lipschitzian maps via
	\cite[4.1.26]{MR41:1976}, the general principle of \emph{invariance of
	domain} could have been employed.
\end{remark}

\begin{remark} \label{remark:nearest-point-projection}
	From \cite[4.8\,(4)\,(12)]{MR0110078}, we deduce that $B = X \cap \{ x
	\with x = \xi (x) \}$ is closed relative to $X$ and that we have $\xi
	(b+v) = b$ whenever $(b,v) \in \Nor (B)$ and $|v| < \reach (A,b)$.
	Defining $Q : X \to \Hom ( \mathbf R^n, \mathbf R^n )$ by
	\begin{align*}
		v \bullet \langle u, Q(x) \rangle & = v \bullet \langle \Tan
		(B,\xi(x))_\natural (u), \Der \Nor ( B, \cdot)_\natural ( \xi
		(x)) \rangle (x-\xi(x)) \\
		& = - \mathbf b ( B, \xi(x) ) ( \langle u, \Tan ( B, \xi
		(x))_\natural \rangle, \langle v, \Tan (B,\xi(x))_\natural
		\rangle ) \bullet ( x- \xi(x))
	\end{align*}
	whenever $x \in X$ and $u,v \in \mathbf R^n$ with the help of
	\cite[3.1\,(2)]{MR3625810}, we thus obtain
	\begin{equation*}
		\| \mathbf b ( B, \xi(x)) \| \leq \reach (A,\xi(x))^{-1},
		\quad \Der \xi(x) = \Tan (B,\xi(x))_\natural +
		\sum_{i=1}^\infty (-1)^i Q(x)^i
	\end{equation*}
	whenever $x \in X$ by generalising \cite[2.2\,(3)]{MR0397520} to
	submanifolds of class $2$ and applying the result with $B$ replaced by
	$B \cap \{ b \with \reach (A,b)>r \}$ and $\kappa = r^{-1}$ for $0 < r
	< \infty$.  Abbreviating $\delta = \dist ( \cdot, B)$, we have $\grad
	\delta (x) = \delta(x)^{-1}(x-\xi(x))$ for $x \in X \without B$ by
	\cite[4.8\,(3)]{MR0110078}, $\delta | (X \without B)$ of class $2$,
	and, in case $\dim B = n-1$,
	\begin{equation*}
		\delta(x) \| \Der^2 \delta (x) \| \leq \frac{\| Q (x) \|}{1-\|
		Q(x)\|} \quad \text{for $x \in X \without B$}.
	\end{equation*}
\end{remark}

\begin{remark} \label{remark:flow-contained}
	If $\theta : \mathbf R^n \to \mathbf R^n$ is of class $1$, $\theta (b)
	\in \Tan (B,b)$ for $b \in B$, $I \subset \mathbf R$, $f : I \to
	\mathbf R^n$ is of class $1$, and $f' = \theta \circ f$, then $f^{-1}
	[B]$ is open; in fact, whenever $U$ is an open subset of $\mathbf
	R^n$, $\kappa = \Lip ( \theta | U ) < \infty$, and $B \neq
	\varnothing$, we recall \cite[2.10.19\,(4), 3.1.5]{MR41:1976} and
	\ref{remark:nearest-point-projection}, abbreviate $J = f^{-1} \big [ U
	\cap X \cap \xi^{-1} [ U ] \big ]$, let $g = \delta \circ f | J$,
	define $h_\pm : J \to \mathbf R \cap \{ y \with y \geq 0 \}$ by
	$h_\pm(t) = g(t) \exp ( \pm \kappa t )$ for $t \in J$, and estimate
	\begin{equation*}
		|g'(t)| \leq \kappa g (t), \quad h_+'(t) \geq 0, \quad h_-'(t)
		\leq 0 \quad \text{for $\mathscr L^1$ almost all $t \in J$},
	\end{equation*}
	because we have $( \delta \circ f )'(t) = \delta (f(t))^{-1} (
	f(t)-\xi(f(t))) \bullet ( \theta (f(t))- \theta ( \xi (f(t)))$ for $t
	\in f^{-1} [ U \cap X \without B ]$, whence, using
	\cite[2.9.20]{MR41:1976}, we infer $\{ t \with g(t) = 0 \}$ is open.
	Thus, if $I$ is an interval, $B \cap \spt \theta$ is closed, and $B
	\cap \im f \neq \varnothing$, then $\im f \subset B$ as either $B \cap
	\im f \subset \spt \theta$ or $f$ is constant by \cite[Chapter 1,
	Theorem 5.2]{MR0273082}.
\end{remark}

\begin{example} \label{example:flow}
	Suppose $U$ is an open subset of $\mathbf R^n$, $\theta : U \to
	\mathbf R^n$ is of class $1$, and $\spt \theta$ is compact.  Then, by
	\cite[Chapter 1, Theorems 3.4, 5.2, 5.7, and
	10.5]{MR0273082}, there exists $\phi : \mathbf R \times U \to U$ of
	class $1$ with $\phi (0, \cdot ) = \mathbf 1_U$ such that
	\begin{equation*}
		\phi ( \cdot, x )' (t) = \theta ( \phi (t,x)) \quad \text{for
		$(t,x) \in \mathbf R \times U$}
	\end{equation*}
	and such that, if $I$ is an interval, $0 \in I$, $f : I \to U$ is of
	class $1$, and $f' = \theta \circ f$, then $f = \phi (\cdot,f(0)) |
	I$; hence, $\phi (t,\cdot)^{-1} = \phi ( -t, \cdot )$ and $\spt ( \phi
	(t,\cdot)- \mathbf 1_U) \subset \spt \theta$ for $t \in \mathbf R$.
	The unique such $\phi$ is termed the \emph{flow associated with
	$\theta$}.  By the method of \cite[4.1]{MR0307015} it follows that, for
	$V \in \mathbf V_m ( U)$, the derivative, at $0$, of the function
	mapping $t \in \mathbf R$ onto $\| \phi ( t, \cdot )_\# V \| ( \spt
	\theta )$ equals $\int S_\natural \bullet \Der \theta (x) \ud V \,
	(x,S)$.  Finally, if $M$ is a relatively closed subset of $U$ and an
	$n$ dimensional submanifold-with-boundary of class $2$, $B = \partial
	M$, and $\theta (b) \in \Tan (B,b)$ for $b \in B$, then
	\ref{remark:flow-contained} implies
	\begin{equation*}
		\phi (t, \cdot) [B] \subset B, \quad \phi (t, \cdot) [M]
		\subset M, \quad \text{for $t \in \mathbf R$}.
	\end{equation*}
\end{example}

\begin{example} \label{example:perimeter}
	Suppose $E$ and $F$ are subsets of $\mathbf R^n$ with locally finite
	perimeter, and
	\begin{equation*}
		\begin{gathered}
			P = \{ x \with \boldsymbol \Uptheta^n ( \mathscr L^n
			\restrict E, x ) = 1 \}, \quad Q = \{ x \with
			\boldsymbol \Uptheta^n ( \mathscr L^n \restrict F, x )
			= 1 \}, \\
			A = \big \{ x \with \mathbf n(E,x) \in \mathbf S^{n-1}
			\big \}, \quad B = \big \{ x \with \mathbf n(F,x) \in
			\mathbf S^{n-1} \big \}.
		\end{gathered}
	\end{equation*}
	Then, $E \cap F$ is a set with locally finite perimeter by
	\cite[4.5.9\,(13)]{MR41:1976} and, recalling \cite[2.10.19\,(4),
	3.2.19, 4.1.28\,(1)\,(4)]{MR41:1976}, one readily verifies that
	\begin{gather*}
		\text{$\mathbf n(E\cap F,x) = \mathbf n(E,x)$ if $x \in A \cap
		Q$}, \quad \text{$\mathbf n(E\cap F,x) = \mathbf n(F,x)$ if $x
		\in B \cap P$}, \\
		\text{$\mathbf n (E \cap F, x) = {\textstyle \frac{\mathbf n
		(E,x) + \mathbf n(F,x)} 2}$ if $x \in A \cap B$}, \quad \quad
		\text{$\mathbf n (E \cap F, x) = 0$ else}
	\end{gather*}
	for $\mathscr H^{n-1}$ almost all $x \in \mathbf R^n$ by means of
	\cite[4.5.5, 4.5.6\,(1)\,(2)\,(3)]{MR41:1976}; a related fact is
	recorded in \cite[Proposition 1]{MR1812124}.
\end{example}

\begin{definition} \label{def:upeta}
	Suppose $m$ and $n$ are positive integers, $m \leq n$, $U$ is an open
	subset of $\mathbf R^n$, $V \in \mathbf V_m ( U )$, and $\| \updelta V
	\|$ is a Radon measure.
	
	Then, the $\mathbf R^n$ valued function $\boldsymbol \upeta (V,\cdot)$
	is defined on a subset of $U$, by the requirement that, for $x \in U$,
	\begin{equation*}
		\boldsymbol \upeta (V,x) \bullet u = \lim_{r \to 0+} \frac{(
		\updelta V ) ( b_{x,r} \cdot u )}{\| \updelta V \| \, \mathbf
		B (x,r)} \quad \text{whenever $u \in \mathbf R^n$},
	\end{equation*}
	where $b_{x,r}$ denotes the characteristic function of $\mathbf
	B(x,r)$ on $U$; hence, $x \in U$ belongs to the domain of $\boldsymbol
	\upeta (V,\cdot)$ if and only if the above limit exists for $u \in
	\mathbf R^n$.
\end{definition}

\begin{remark} \label{remark:polar_decomposition}
	Our definitions of $\mathbf h(V,\cdot)$, see
	\cite[p.\,992]{MR3528825}, and $\boldsymbol \upeta (V,\cdot)$ adapt
	\cite[4.3]{MR0307015} in the spirit of \cite[4.1.5]{MR41:1976}; in
	particular, \ref{thm:split-distribution-rep-by-int} and
	\ref{example:distributions_representable_by_integration} yield
	\begin{equation*}
		\mathbf h ( V, \cdot ) \in \mathbf L_1^{\textup{loc}} ( \| V
		\|, \mathbf R^n ), \quad \| \updelta V \|_{\| V \|} = \| V \|
		\restrict | \mathbf h (V,\cdot) |,
	\end{equation*}
	$\mathbf h (V,\cdot)$ and $\boldsymbol \upeta ( V,
	\cdot )$ are Borel functions, $\dmn \mathbf h (V,\cdot)$ and $\dmn
	\boldsymbol \upeta (V,\cdot)$ are Borel sets, $|\boldsymbol \upeta
	(V,x) | = 1$ for $\| \updelta V \|$ almost all $x$, 
	\begin{alignat*}{2}
		\mathbf h(V,x) & = | \mathbf h(V,x)| \, \boldsymbol \upeta (V,x)
		&& \quad \text{if $x \in \dmn \boldsymbol \upeta ( V, \cdot
		)$} \\
		\mathbf h (V,x) & = 0 && \quad \text{if $x \notin \dmn
		\boldsymbol \upeta (V,\cdot )$}
	\end{alignat*}
	for $\| V \|$ almost all $x$, and
	\begin{align*}
		( \updelta V ) ( \theta ) & = {\textstyle\int \boldsymbol
		\upeta (V,x) \bullet \theta (x) \ud \| \updelta V \| \, x} \\
		& = - {\textstyle\int} \mathbf h (V,x) \bullet \theta (x) \ud
		\| V \| \, x + {\textstyle\int \boldsymbol \upeta (V,x)
		\bullet \theta (x) \ud ( \| \updelta V \| - \| \updelta V
		\|_{\| V \|}) \, x}
	\end{align*}
	for $\theta \in \mathbf L_1 ( \| \updelta V \|, \mathbf R^n )$.
	Applying \cite[4.1, 4.5, 4.6]{MR3777387} and
	convolution shows
	\begin{equation*}
		\theta \in \mathbf T ( V, \mathbf R^n ) \quad \text{and} \quad
		( \updelta V ) ( \theta ) = {\textstyle\int \trace ( V \weakD
		\theta ) \ud \| V \|}
	\end{equation*}
	whenever $\theta : U \to \mathbf R^n$ is Lipschitzian with compact
	support.
\end{remark}

\begin{theorem} \label{thm:area-formular-representation-slice}
	Suppose $m$ and $n$ are positive integers, $m \leq n$, $U$ is an open
	subset of $\mathbf R^n$, $V \in \mathbf V_m (U)$, $\| \updelta V \|$
	is a Radon measure, $f \in \mathbf T (V)$, and
	\begin{equation*}
		E(y) = \{ x \with f(x) > y \} \quad \text{for $y \in \mathbf
		R$}.
	\end{equation*}

	Then, for $\mathscr L^1$ almost all $y$, $V \boundary E (y)$ is
	representable by integration and
	\begin{equation*}
		V \boundary E (y) ( \theta ) = {\textstyle\int \langle \theta
		(x), | V \weakD f (x) |^{-1} V \weakD f(x) \rangle \ud \| V
		\boundary E(y) \| \, x}.
	\end{equation*}
	for $\theta \in \mathbf L_1 ( \| V \boundary E(y) \|, \mathbf R^n )$.
\end{theorem}

\begin{proof}
	By \cite[8.30]{MR3528825} and \cite[4.11]{MR3777387}, we have
	\begin{equation*}
		{\textstyle\int_{-s}^s \| V \boundary E(y) \| (K) \ud \mathscr
		L^1 \, y < \infty}
	\end{equation*}
	whenever $K$ is a compact subset of $U$ and $0 \leq s < \infty$.  In
	view of \cite[2.2, 2.24]{MR3528825} and
	\ref{example:distributions_representable_by_integration}, it thus
	suffices to prove
	\begin{align*}
		& {\textstyle\int \omega (y) V \boundary E (y) ( \theta ) \ud
		\mathscr L^1 \, y } \\
		& \qquad = {\textstyle\int \omega(y) \int \langle
		\theta (x), | V \weakD f (x) |^{-1} V \weakD f(x) \rangle \ud
		\| V \boundary E(y) \| \, x \ud \mathscr L^1 \, y }
	\end{align*}
	for $\theta \in \mathscr D ( U, \mathbf R^n )$ and $\omega \in
	\mathscr D ( \mathbf R, \mathbf R )$.  The latter equation follows by
	applying \cite[8.5, 8.30]{MR3528825} and \cite[4.11]{MR3777387} with
	$\phi$ and $g$ satisfying $\phi(x,y) = \omega (y)\theta(x)$ and
	$g(x,y) = \omega (y) \langle \theta (x), | V \weakD f(x)|^{-1} V
	\weakD f (x) \rangle$ for $x \in U$ and $y \in \mathbf R$.
\end{proof}

\begin{remark}
	In the special case that $\boldsymbol \Uptheta^m ( \| V \|, x ) \geq
	1$ for $\| V \|$ almost all $x$, the conclusion is immediate from
	\cite[12.2]{MR3528825} and
	\ref{example:distributions_representable_by_integration}.
\end{remark}

\begin{remark} \label{remark:area-formular-representation-slice}
	Under the hypotheses of \ref{thm:area-formular-representation-slice},
	we will show that, if $y \in \mathbf R$, $V \boundary E(y)$ is
	representable by integration, $\theta : U \to \mathbf R^n$ is locally
	Lipschitzian, and
	\begin{equation*}
		K = \spt \theta \cap \spt \| V \| \cap \Clos E(y)
	\end{equation*}
	is compact, then $\theta \in \mathbf T ( V, \mathbf R^n )$ and there
	holds
	\begin{equation*}
		V \boundary E(y) ( \theta ) = {\textstyle\int_{E(y)}
		\boldsymbol \upeta (V,x) \bullet \theta (x) \ud \| \updelta V
		\| \, x}  - {\textstyle\int_{E(y)} \trace (V \weakD \theta
		(x)) \ud \| V \| \, x}.
	\end{equation*}
	By \cite[4.6\,(1)]{MR3777387}, $\theta$ is generalised $V$ weakly
	differentiable.  For the equation, we reduce the problem: firstly, to
	the case that $\spt \theta$ is compact by expressing $\theta$ as sum
	of $\zeta \theta$ and $(1-\zeta) \theta$ for some $\zeta \in \mathscr
	D ( U, \mathbf R )$ such that $K \subset \Int \{ x \with \zeta (x) = 1
	\}$ and noting \cite[8.20\,(1)\,(3)]{MR3528825} and
	\cite[4.11]{MR3777387}; secondly, to the case that $\theta \in
	\mathscr D ( U, \mathbf R^n )$ by approximating $\theta$ using
	convolution and noting \cite[4.6\,(2)]{MR3777387}.  The latter case
	follows from \cite[4.1, 4.5]{MR3777387} and
	\ref{remark:polar_decomposition}.
\end{remark}

\section{Real valued functions of generalised bounded variation}

We prove Theorem \ref{Thm:GBV} of the introductory section in
\ref{thm:equiv_bv} and \ref{corollary:equiv_bv}, coarea formulae in
\ref{remark:equiv_bv}, and relations to the case of Lebesgue measure in
\ref{example:set-of-locally-finite-perimeter} and \ref{remark:GBV}.

\begin{example} \label{example:set-of-locally-finite-perimeter}
	Suppose $V \in \mathbf V_n ( \mathbf R^n )$ is characterised by $\| V
	\| = \mathscr L^n$ and $A$ is an $\mathscr L^n$ measurable set.  Then,
	$\| V \boundary A \|$ is a Radon measure if and only if $A$ is a set
	of locally finite perimeter by \cite[4.5, Remark]{MR0307015} applied
	to $V \restrict A \times \mathbf G(n,n)$.
\end{example}

\begin{theorem} \label{thm:equiv_bv}
	Suppose $m$ and $n$ are positive integers, $m \leq n$, $U$ is an open
	subset of $\mathbf R^n$, $V$ is an $m$ dimensional varifold in $U$,
	$\| \updelta V \|$ is a Radon measure, $f$ is a real valued $\| V \| +
	\| \updelta V \|$ measurable function whose domain is contained in
	$U$, $E = \{ (x,y) \with f(x)>y \}$, $T \in \mathscr D' ( U \times
	\mathbf R, \mathbf R^n )$ satisfies
	\begin{equation*}
		T(\phi) = {\textstyle \int} V \boundary \{ x \with (x,y) \in E
		\} ( \phi ( \cdot, y ) ) \ud \mathscr L^1 \, y \quad \text{for
		$\phi \in \mathscr D ( U \times \mathbf R, \mathbf R^n)$},
	\end{equation*}
	$p : \mathbf R^n \times \mathbf R \to \mathbf R^n$ and $q : \mathbf
	R^n \times \mathbf R \to \mathbf R$ are defined by
	\begin{equation*}
		p(x,y) = x \quad \text{and} \quad q(x,y) = y \quad \text{for
		$(x,y) \in \mathbf R^n \times \mathbf R$},
	\end{equation*}
	and $W \in \mathbf V_{m+1} ( U \times \mathbf R )$ is characterised by
	\begin{equation*}
		W(k) = {\textstyle \int} k((x,y), S \times \mathbf R ) \ud ( V
		\times \mathscr L^1 ) \, ((x,S),y)
	\end{equation*}
	whenever $k \in \mathscr K ( ( U \times \mathbf R ) \times \mathbf G (
	\mathbf R^n \times \mathbf R, m+1 ) )$.
	
	Then, there holds
	\begin{equation*}
		W \boundary E ( \theta ) = T ( p \circ \theta ) -
		{\textstyle\int} ( q \circ \theta ) (x,f(x)) \ud \| V \| \, x
		\quad \text{for $\theta \in \mathscr D ( U \times \mathbf R,
		\mathbf R^n \times \mathbf R )$}.
	\end{equation*}
	Thus, $\| W \boundary E \|$ is a Radon measure if and only if $\| T
	\|$ has this property; in this case, $f$ possesses \emph{generalised
	bounded variation} with respect to $V$.
\end{theorem}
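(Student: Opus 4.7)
The approach is to unfold both terms in the definition
\[
W \boundary E = ( \updelta W ) \restrict E - \updelta ( W \restrict E \times \mathbf G ( \mathbf R^n \times \mathbf R, m+1 ) )
\]
by exploiting the product structure of $W$ and invoking Fubini. Writing $\theta = ( \theta_1, \theta_2 )$ with $\theta_1$ taking values in $\mathbf R^n$ and $\theta_2$ in $\mathbf R$, the tangent planes of $W$ are precisely $S \times \mathbf R$ for $S \in \mathbf G ( n, m )$, so the tangential divergence decomposes as
\[
\mathrm{div}_{S \times \mathbf R} \theta ( x, y ) = \mathrm{div}_S \theta_1 ( \cdot, y ) ( x ) + \partial_y \theta_2 ( x, y ).
\]
Since $\int \partial_y \theta_2 ( x, y ) \ud \mathscr L^1 \, y = 0$ for $\theta_2$ of compact support, integrating this identity against $V \times \mathscr L^1$ yields $\updelta W ( \theta ) = \int \updelta V ( \theta_1 ( \cdot, y ) ) \ud \mathscr L^1 \, y$; in particular $\| \updelta W \| = \| \updelta V \| \times \mathscr L^1$, and Fubini gives $( \updelta W ) \restrict E ( \theta ) = \int ( \updelta V ) \restrict E_y ( \theta_1 ( \cdot, y ) ) \ud \mathscr L^1 \, y$ with $E_y = \{ x \with f ( x ) > y \}$.

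Applying the same divergence decomposition inside $\updelta ( W \restrict E \times \mathbf G ( \mathbf R^n \times \mathbf R, m+1 ) )$, the $\mathrm{div}_S \theta_1 ( \cdot, y )$ piece collects, by Fubini, into $\int \updelta ( V \restrict E_y \times \mathbf G ( n, m ) ) ( \theta_1 ( \cdot, y ) ) \ud \mathscr L^1 \, y$, whereas the $\partial_y \theta_2$ piece no longer vanishes: the inner integral in $y$ now runs over $( - \infty, f ( x ) )$ and, by the fundamental theorem of calculus, produces the trace $\int \theta_2 ( x, f ( x ) ) \ud \| V \| \, x$. Subtracting, the $y$-integrands combine pointwise into $V \boundary E_y ( \theta_1 ( \cdot, y ) )$, whose $\mathscr L^1$-integral is exactly $T ( p \circ \theta )$, and the trace term enters with a minus sign, yielding the stated identity. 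The postscript is then immediate: the trace distribution is representable against the Radon measure $F_\# \| V \|$, so $\| W \boundary E \|$ is a Radon measure whenever $\| T \|$ is; conversely, testing against $( \phi, 0 )$ for $\phi \in \mathscr D ( U \times \mathbf R, \mathbf R^n )$ recovers $T ( \phi ) = W \boundary E ( \phi, 0 )$, forcing the reverse implication.

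The delicate point throughout is the measurability and Fubini bookkeeping. One must verify that $E \subset U \times \mathbf R$ is jointly measurable with respect to both $\| V \| \times \mathscr L^1$ and $\| \updelta V \| \times \mathscr L^1$, that the sections $E_y$ are respectively $\| V \|$- and $\| \updelta V \|$-measurable, and that the resulting $y$-parametrised distributions $V \boundary E_y$ assemble---consistently with the integral defining $T$---into a well-defined distribution on $U \times \mathbf R$. All of this is secured by the hypothesis that $f$ be $\| V \| + \| \updelta V \|$-measurable, but the bookkeeping must be executed for both components of a general vector-valued test function $\theta = ( \theta_1, \theta_2 )$.
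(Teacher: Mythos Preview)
Your proposal is correct and follows essentially the same route as the paper's proof: both split the tangential divergence on $S \times \mathbf R$ into the $S$-piece acting on $p \circ \theta$ and the $\partial_y$-piece acting on $q \circ \theta$, then apply Fubini and the fundamental theorem of calculus to identify the trace term $\int (q \circ \theta)(x,f(x)) \ud \| V \| \, x$. The paper is simply more explicit about the citations securing the product identities $\| W \| = \| V \| \times \mathscr L^1$, $\| \updelta W \| = \| \updelta V \| \times \mathscr L^1$, and the measurability of $E$, invoking \cite[3.6]{MR3625810} and \cite[2.6.2]{MR41:1976} for what you summarise as ``Fubini bookkeeping''.
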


\begin{proof}
	Firstly, defining $\iota : ( U \times \mathbf G (n,m) ) \times \mathbf
	R \to ( U \times \mathbf R ) \times \mathbf G ( \mathbf R^n \times
	\mathbf R, m+1 )$ by $\iota ((x,S),y) = ((x,y), S \times \mathbf R )$
	whenever $x \in U$, $y \in \mathbf R$, and $S \in \mathbf G (n,m)$, we
	conclude that $W = \iota_\# ( V \times \mathscr L^1 )$ from
	\cite[2.2.17, 2.4.18\,(1)]{MR41:1976}.  Moreover, we deduce $\| W \| =
	\| V \| \times \mathscr L^1$ and $\| \updelta W \| = \| \updelta V \|
	\times \mathscr L^1$ from \cite[3.6\,(1)\,(5)]{MR3625810} in
	conjunction with \cite[3.4\,(1)\,(2)]{MR3528825} applied with $T$
	replaced by $\updelta W$ and \cite[2.6.2\,(4)]{MR41:1976}.  The last
	two equations imply that $\| \updelta W \|$ is a Radon measure and
	that $E$ is a $\| W \| + \| \updelta W \|$ measurable set by
	\cite[2.6.2\,(2)]{MR41:1976}.
	
	Next, suppose $\theta \in \mathscr D ( U \times \mathbf R, \mathbf R^n
	\times \mathbf R )$ and let $\phi = p \circ \theta$ and $\psi = q
	\circ \theta$.  Noting
	\begin{equation*}
		( S \times \mathbf R)_\natural \bullet \Der \theta (x,y) =
		S_\natural \bullet \Der ( \phi ( \cdot, y )) (x) + ( \psi
		(x,\cdot))' (y)
	\end{equation*}
	whenever $(x,y) \in U \times \mathbf R$ and $S \in \mathbf G (n,m)$ by
	\cite[3.6\,(4)]{MR3625810}, we employ the preceding paragraph,
	\cite[2.4.18\,(1)\,(2), 2.6.2\,(2)\,(4), 2.9.20\,(1)]{MR41:1976}, and
	\cite[4.1]{MR3777387} to obtain
	\begin{align*}
		& \updelta ( W \restrict E \times \mathbf G ( \mathbf R^n
		\times \mathbf R, m+1 ) ) ( \theta ) \\
		& \qquad = {\textstyle\int_{E \times \mathbf G ( \mathbf R^n
		\times \mathbf R, m+1 )}} R_\natural \bullet \Der \theta (x,y)
		\ud \iota_\# ( V \times \mathscr L^1 ) ( (x,y),R ) \\
		& \qquad = {\textstyle\iint_{\{(x,S) \with (x,y) \in E, S \in
		\mathbf G (n,m) \}}} S_\natural \bullet \Der ( \phi ( \cdot,
		y) )(x) \ud V \, (x,S) \ud \mathscr L^1 \, y \\
		& \qquad \phantom{=} \ + {\textstyle\iint_{\{y \with (x,y) \in
		E \}}} ( \psi (x,\cdot))'(y) \ud \mathscr L^1 \, y \ud \| V \|
		\, x \\
		& \qquad = {\textstyle\int} \updelta ( V \restrict \{ x \with
		(x,y) \in E \} \times \mathbf G (n,m) ) ( \phi ( \cdot, y ) )
		\ud \mathscr L^1 \, y + {\textstyle\int} \psi (x,f(x)) \ud \|
		V \| \, x.
	\end{align*}
	Combining this with the equation (see \cite[3.6\,(5)\,(6)]{MR3625810})
	\begin{equation*}
		( (\updelta W) \restrict E ) ( \theta ) = {\textstyle\int} (
		(\updelta V) \restrict \{ x \with (x,y) \in E \} ) ( \phi (
		\cdot, y ) ) \ud \mathscr L^1 \, y,
	\end{equation*}
	the conclusion follows.
\end{proof}

\begin{remark} \label{remark:equiv_bv}
	If $f$ has generalised bounded variation with respect to $V$, then
	abbreviating $E(y) = \{ x \with (x,y) \in E \}$ for $y \in \mathbf R$
	and taking $J = \mathbf R$ and $Z = \mathbf R^n$ in \cite[3.2,
	3.4\,(2)]{MR3528825} shows the \emph{coarea formulae}
	\begin{equation*}
		\begin{aligned}
			T ( \phi ) & = {\textstyle\int} V \boundary E(y) (
			\phi ( \cdot, y ) ) \ud \mathscr L^1 \, y, \\
			{\textstyle\int} g \ud \| T \| & = {\textstyle\iint}
			g(x,y) \ud \| V \boundary E(y) \| \, x \ud \mathscr
			L^1 \, y
		\end{aligned}
	\end{equation*}
	whenever $\phi \in \mathbf L_1 ( \| T \|, \mathbf R^n )$ and $g$ is a
	$\| T \|$ integrable function.  Therefore, $f$ has generalised bounded
	variation with respect to $V$ if and only if
	\begin{equation*}
		{\textstyle\int_{-s}^s} \| V \boundary E (y) \| (K) \ud
		\mathscr L^1 \, y < \infty
	\end{equation*}
	whenever $K$ is a compact subset of $U$ and $0 \leq s < \infty$; in
	the case that $f$ is the characteristic function of some $\| V \| + \|
	\updelta V \|$ measurable set $A$, the latter condition is equivalent
	to $\| V \boundary A\| $ being a Radon measure.
\end{remark}

\begin{remark} \label{remark:GBV}
	In the case $n = m$ and $\| V \| = \mathscr L^n | \mathbf 2^U$, the
	function $f$ possesses generalised bounded variation with respect to
	$V$ if and only if $f$ is a \emph{generalized function of bounded
	variation} in the sense of \cite[4.26]{MR2003a:49002}; in fact, in
	view of \ref{remark:equiv_bv}, this readily follows from
	\cite[4.27]{MR2003a:49002} and \cite[Lemma 1]{MR1812124}.
\end{remark}

\begin{corollary} \label{corollary:equiv_bv}
	Suppose additionally $f$ possesses generalised bounded variation with
	respect to $V$ and $F : \dmn f \to U \times \mathbf R$ satisfies
	$F(x)=(x,f(x))$ for $x \in \dmn f$.
	
	Then, $f$ is generalised $V$ weakly differentiable if and only if $\|
	W \boundary E \|$, or equivalently $\| T \|$, is absolutely continuous
	with respect to $F_\# \| V \|$.
\end{corollary}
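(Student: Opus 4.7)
The plan is to deduce the corollary from Theorem \ref{thm:equiv_bv} combined with the original definition of $\mathbf{T}(V)$ in \cite[8.3]{MR3528825} and the coarea formulae in \ref{remark:equiv_bv}.

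First, I would establish the equivalence between absolute continuity of $\| W \boundary E \|$ and of $\| T \|$ with respect to $F_\# \| V \|$. By \ref{thm:equiv_bv}, for every $\theta \in \mathscr{D}(U \times \mathbf{R}, \mathbf{R}^n \times \mathbf{R})$, writing $\phi = p \circ \theta$ and $\psi = q \circ \theta$, one has
\[
	W \boundary E ( \theta ) = T ( \phi ) - {\textstyle\int} \psi \ud F_\# \| V \|,
\]
by the change-of-variables identity ${\textstyle\int} \psi ( x, f(x) ) \ud \| V \| \, x = {\textstyle\int} \psi \ud F_\# \| V \|$ applied to the map $F$. Taking alternately $\psi = 0$ and $\phi = 0$ among test functions with $|\theta| \leq 1$ and $\spt \theta \subset K$ shows $\| T \| (K) \leq \| W \boundary E \| (K)$ for every Borel set $K$, while the triangle inequality applied to the above identity yields $\| W \boundary E \| (K) \leq \| T \| (K) + F_\# \| V \| (K)$. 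Since $F_\# \| V \|$ is trivially absolutely continuous with respect to itself, the two absolute continuity conditions coincide.

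Second, I would connect these with $f \in \mathbf{T}(V)$. According to \cite[8.3, 8.10]{MR3528825}, the space $\mathbf{T}(V)$ is characterised in terms of the distribution in $U \times \mathbf{R}$ of type $\mathbf{R}^n$ built from the distributional boundaries of the superlevel sets of $f$---which is precisely the distribution $T$ here. In particular, $f \in \mathbf{T}(V)$ holds if and only if $T$ is representable by integration against $F_\# \| V \|$ with density given by the generalised weak derivative $V \weakD f$, which by \ref{thm:split-distribution-rep-by-int} and \ref{example:distributions_representable_by_integration} amounts to $\| T \|$ being absolutely continuous with respect to $F_\# \| V \|$.

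The main obstacle, as I foresee it, lies in the careful identification of $T$ with the distribution implicit in \cite[8.3, 8.10]{MR3528825} and of its density with $V \weakD f$; this identification proceeds through the coarea formulae of \ref{remark:equiv_bv} and requires some attention to the measurability of representing densities with respect to the $Y$ topology introduced in \ref{def:Y-topology}.
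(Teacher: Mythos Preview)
Your first paragraph is correct and matches the paper's argument: the inequalities $\| T \| \leq \| W \boundary E \| \leq \| T \| + F_\# \| V \|$ follow from the identity in \ref{thm:equiv_bv}, and since $F_\# \| V \|$ is Radon (by \cite[3.11]{arXiv:2206.14046v1}), the two absolute continuity conditions are equivalent.

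Your second paragraph, however, leans on a characterisation that \cite[8.3, 8.10]{MR3528825} does not contain. The equivalence ``$f \in \mathbf T(V)$ if and only if $T$ is representable against $F_\# \| V \|$'' is precisely the content of the present corollary, not a result available in \cite{MR3528825}; the introduction of this paper describes that characterisation as new. So appealing to \cite[8.3, 8.10]{MR3528825} here is circular. The paper instead argues the two implications separately and concretely. For the forward direction it invokes \cite[8.5]{MR3528825} and \cite[4.11]{MR3777387}, which give, for $f \in \mathbf T(V)$, an explicit representation of each $V \boundary \{ x \with f(x)>y \}$ in terms of $V \weakD f$, whence absolute continuity of $\| T \|$ with respect to $F_\# \| V \|$ follows via the coarea formulae in \ref{remark:equiv_bv}. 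For the backward direction the paper does \emph{not} presuppose a weak derivative: it takes the Radon--Nikodym density $g$ of $\| T \|$ with respect to $F_\# \| V \|$, combines it with a unit representing function $k$ for $T$ (so that $T(\phi)=\int \langle \phi, k \rangle \ud \| T \|$), and then verifies directly from the defining identity in \cite[8.1, 8.2]{MR3528825} that $f$ is generalised $V$ weakly differentiable with $V \weakD f(x) = (gk)(F(x))$.

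Your closing remark correctly identifies where the real work lies, but the resolution is not an ``identification'' with something already in \cite{MR3528825}; it is the explicit two-way verification just described. Once you replace the appeal to \cite[8.3, 8.10]{MR3528825} by the forward citation \cite[8.5]{MR3528825} together with \cite[4.11]{MR3777387}, and carry out the backward construction of the candidate derivative from the Radon--Nikodym density, your argument becomes complete and coincides with the paper's.
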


\begin{proof}
	There exists a Borel function $k : U \times \mathbf R \to \Hom (
	\mathbf R^n, \mathbf R ) \cap \{ h \with |h|=1 \}$ such that
	\begin{equation*}
		T ( \phi ) = {\textstyle \int \langle \phi, k \rangle \ud \| T
		\|} \quad \text{for $\phi \in \mathscr D ( U \times \mathbf R,
		\mathbf R^n )$}
	\end{equation*}
	by \cite[2.3.6, 4.1.5]{MR41:1976}.  Since $F_\# \| V \|$ is a Radon
	measure by \cite[2.11]{DOI_10.4171_RMI_1487} and we have $\| T \| \leq
	\| W \boundary E \| \leq \| T \| + F_\# \| V \|$ by
	\ref{thm:equiv_bv}, the two absolute continuity conditions are
	equivalent.  If $f$ is generalised $V$ weakly differentiable, they are
	satisfied by \cite[8.5]{MR3528825} and \cite[4.11]{MR3777387}.  If $\|
	T \|$ is absolutely continuous with respect to $F_\# \| V \|$, then,
	by \cite[2.8.18, 2.9.2, 2.9.7]{MR41:1976} and
	\cite[2.8]{DOI_10.4171_RMI_1487}, there exists a nonnegative Borel
	function $g \in \mathbf L_1^{\textup{loc}} ( F_\# \| V \| )$ with $\|
	T \| = ( F_\# \| V \| ) \restrict g$.  Employing \cite[2.4.10,
	2.4.18\,(1)]{MR41:1976} and \cite[8.1, 8.2]{MR3528825}, we therefore
	verify that $f$ is generalised $V$ weakly differentiable with $V
	\weakD f(x) = (gk)(F(x))$ for $\| V \|$ almost all $x$.
\end{proof}

\section{Partitions}

Properties of sets with vanishing distributional boundary follow in
\ref{lemma:zero_distributional_boundary}.  The distributional boundary of an
intersection of certain sets then appears in \ref{lemma:little_more_general}
and \ref{remark:perimeter}; in \ref{example:perimeter-continued}, this leads
in particular to an example relating indecomposability of varifolds to the
notion of \cite[p.\,52]{MR1812124} for sets of finite perimeter bearing the
same name.  With these preparations at hand, we obtain basic properties of
partitions in \ref{def:partition}--\ref{remark:no-adapted-decomposition}
including Example \ref{Example:no-adapt} of the
introductory section in \ref{example:no-adapted-decomposition}.

\begin{lemma} \label{lemma:zero_distributional_boundary}
	Suppose $m$ and $n$ are positive integers, $m \leq n$, $U$ is an open
	subset of $\mathbf R^n$, $V \in \mathbf V_m ( U )$, $\| \updelta V \|$
	is a Radon measure, $E$ is $\| V \| + \| \updelta V \|$ measurable, $V
	\boundary E = 0$, and $W = V \restrict E \times \mathbf G (n,m)$.
	
	Then, there holds $\| \updelta W \| = \| \updelta V \| \restrict E$,
	$\| \updelta W \|_{\| W \|} = \| \updelta V \|_{\| V \|} \restrict E$,
	and
	\begin{align*}
		\boldsymbol \upeta (W,x) & = \boldsymbol \upeta (V,x) \quad
		\text{for $\| \updelta W \|$ almost all $x$}, \\
		\mathbf h (W,x) & = \mathbf h (V,x) \quad \text{for $\| W \|$
		almost all $x$}.
	\end{align*}
\end{lemma}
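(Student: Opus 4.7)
The plan is to unpack what $V\boundary E = 0$ tells us at the level of distributions, and then push this identification through to variation measures, polar representations, and Lebesgue decompositions using the machinery already developed in the preliminaries.

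First, I would observe that, by the definition of the distributional $V$ boundary recalled in the introduction, the hypothesis $V \boundary E = 0$ is equivalent to the identity $\updelta W = (\updelta V) \restrict E$ in $\mathscr{D}'(U,\mathbf{R}^n)$. Combined with the elementary fact $\|W\| = \|V\| \restrict E$ coming from the definition of $W$, the identity $\|\updelta W\| = \|\updelta V\| \restrict E$ is then immediate from \ref{example:distributions_representable_by_integration} applied to $S = \updelta V$ and $A = E$, which yields $\|(\updelta V)\restrict E\| = \|\updelta V\| \restrict E$.

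Next, I would obtain the $\boldsymbol{\upeta}$ identity from the uniqueness of polar representations. Using \ref{remark:polar_decomposition} for $V$, we may rewrite, for $\theta \in \mathbf{L}_1(\|\updelta W\|,\mathbf{R}^n)$,
\begin{equation*}
(\updelta W)(\theta) = ((\updelta V)\restrict E)(\theta) = \int \boldsymbol{\upeta}(V,x) \bullet \theta(x) \ud \|\updelta V\| \restrict E\, x = \int \boldsymbol{\upeta}(V,x) \bullet \theta(x) \ud \|\updelta W\|\, x,
\end{equation*}
so that $\boldsymbol{\upeta}(V,\cdot)$ itself is a polar representative of $\updelta W$ with respect to $\|\updelta W\|$; $\|\updelta W\|$ almost everywhere uniqueness from \ref{thm:split-distribution-rep-by-int} then yields $\boldsymbol{\upeta}(W,x) = \boldsymbol{\upeta}(V,x)$ for $\|\updelta W\|$ almost all $x$.

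For the remaining two identities, I would invoke the uniqueness of the Lebesgue decomposition. Write $\|\updelta V\| = \|\updelta V\|_{\|V\|} + \sigma$ with $\sigma$ singular with respect to $\|V\|$; restricting to $E$ gives
\begin{equation*}
\|\updelta W\| = \|\updelta V\|_{\|V\|} \restrict E + \sigma \restrict E,
\end{equation*}
in which the first summand is absolutely continuous with respect to $\|V\| \restrict E = \|W\|$ and the second remains singular with respect to $\|W\|$; uniqueness of the decomposition yields $\|\updelta W\|_{\|W\|} = \|\updelta V\|_{\|V\|} \restrict E$. Since $\|\updelta V\|_{\|V\|} = \|V\| \restrict |\mathbf{h}(V,\cdot)|$ by \ref{remark:polar_decomposition}, the same polar-uniqueness argument applied to the absolutely continuous parts (equivalently, applied directly to $-\mathbf{h}(V,\cdot)$ as vectorial density of the $\|V\|$-absolutely continuous part of $\updelta V$) gives $\mathbf{h}(W,x) = \mathbf{h}(V,x)$ for $\|W\|$ almost all $x$.

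No step is really an obstacle; the main point requiring care is keeping track of which uniqueness statement is being used at each stage — polar uniqueness from \ref{thm:split-distribution-rep-by-int} for $\boldsymbol{\upeta}$ and $\mathbf{h}$, versus Lebesgue decomposition uniqueness for the absolutely continuous part — and verifying that restriction to $E$ preserves the relevant absolute continuity and singularity, which is where the measurability hypothesis on $E$ is used.
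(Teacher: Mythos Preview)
Your argument is correct. The treatment of $\|\updelta W\| = \|\updelta V\| \restrict E$ and of $\boldsymbol{\upeta}(W,\cdot) = \boldsymbol{\upeta}(V,\cdot)$ coincides with the paper's. The one genuine difference is in the order and method for the remaining two identities: the paper first establishes $\mathbf h(W,x) = \mathbf h(V,x)$ by a direct density-point computation---showing that for $\|V\|$ almost all $x \in E$ one has $\lim_{r\to 0+} (\|V\|+\|\updelta V\|)(\mathbf B(x,r)\without E)/\|V\|\,\mathbf B(x,r) = 0$, so the limits defining $\mathbf h(W,x)$ and $\mathbf h(V,x)$ agree---and then reads off $\|\updelta W\|_{\|W\|} = \|\updelta V\|_{\|V\|}\restrict E$ from \ref{remark:polar_decomposition}. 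You instead first obtain $\|\updelta W\|_{\|W\|} = \|\updelta V\|_{\|V\|}\restrict E$ by uniqueness of the Lebesgue decomposition (checking that restriction to $E$ preserves absolute continuity and singularity with respect to $\|W\| = \|V\|\restrict E$), and then deduce the $\mathbf h$ identity from uniqueness of the vectorial density. Your route is marginally more abstract and avoids the explicit Vitali-type limit computation; the paper's route is more hands-on and makes visible that the pointwise-defined limits actually coincide. Both are sound and of comparable length.
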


\begin{proof}
	We have $\| W \| = \| V \| \restrict E$ and $\updelta W = ( \updelta
	V) \restrict E$.  Taking $S = \updelta W$ in
	\ref{example:distributions_representable_by_integration} yields $\|
	\updelta W \| = \| \updelta V \| \restrict E$. Next, we verify the
	last conclusion; in fact, for $x \in U$, the condition
	\begin{equation*}
		\lim_{r \to 0+} \frac{( \| V \| + \| \updelta V \| ) ( \mathbf
		B (x,r) \without E )}{\| V \| \, \mathbf B (x,r)} = 0,
	\end{equation*}
	which is valid for $\| V \|$ almost all $x \in E$ as follows from
	\cite[2.8.18, 2.9.7]{MR41:1976} applied with $\psi = ( \| V \| + \|
	\updelta V \| ) \restrict ( U \without E )$, $\phi = \| V \|$, and $A
	= E$, implies
	\begin{equation*}
		\mathbf h (W,x) \bullet v = - \lim_{r \to 0+} \frac{( \updelta
		W) ( b_{x,r} \cdot v )}{\| W \| \, \mathbf B(x,r)} = - \lim_{r
		\to 0+} \frac{( \updelta V ) ( b_{x,r} \cdot v )}{\| V \| \,
		\mathbf B (x,r)} = \mathbf h(V,x) \bullet v
	\end{equation*}
	whenever $v \in \mathbf R^n$. Thus, we have $\| \updelta W \|_{\| W
	\|} = \| \updelta V \|_{\| V \|} \restrict E$ by
	\ref{remark:polar_decomposition}.  Finally, as
	\begin{align*}
		(\updelta W ) ( \theta ) & = ( ( \updelta V ) \restrict E ) (
		\theta ) \\
		& = {\textstyle\int_E} \boldsymbol \upeta (V,x) \bullet \theta
		(x) \ud \| \updelta V \| \, x = {\textstyle\int} \boldsymbol
		\upeta (V,x) \bullet \theta (x) \ud \| \updelta W \| \, x
	\end{align*}
	for $\theta \in \mathscr D ( U, \mathbf R^n )$ by
	\ref{remark:polar_decomposition}, the remaining conclusion follows
	from \ref{example:distributions_representable_by_integration}.
\end{proof}

\begin{lemma} \label{lemma:little_more_general}
	Suppose $m$ and $n$ are positive integers, $m \leq n$, $U$ is an open
	subset of $\mathbf R^n$, $V \in \mathbf V_m ( U )$, $\| \updelta V \|$
	is a Radon measure, $E$ is a $\| V \| + \| \updelta V \|$ measurable
	set, $\| V \boundary E \|$ is a Radon measure, $W = V \restrict E
	\times \mathbf G(n,m)$, and $F$ is a $\| V \| + \| \updelta V \| + \|
	V \boundary E \|$ measurable set.
	
	Then, there holds
	\begin{equation*}
		V \boundary ( E \cap F ) = W \boundary F + ( V \boundary E )
		\restrict F.
	\end{equation*}
\end{lemma}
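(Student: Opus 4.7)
The plan is to unfold all three distributions in the desired identity using the definition $V\boundary A = (\updelta V)\restrict A - \updelta(V\restrict A\times \mathbf G(n,m))$ from \cite[5.1]{MR3528825}, and to exploit two elementary compatibility facts: that restriction of distributions composes as $(S\restrict A)\restrict B = S\restrict(A\cap B)$, and that $W\restrict F\times \mathbf G(n,m)$ coincides with $V\restrict(E\cap F)\times\mathbf G(n,m)$.

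First, I would verify that the three distributions in the statement are well defined under the given measurability hypotheses. The set $E\cap F$ is $\|V\|+\|\updelta V\|$ measurable, so $V\boundary(E\cap F)$ makes sense. By \ref{example:distributions_representable_by_integration} (applied with $S=\updelta V$, $A=E$), we have $\|(\updelta V)\restrict E\|=\|\updelta V\|\restrict E$ and hence, by definition of $V\boundary E$, $\|\updelta W\|\le \|\updelta V\|\restrict E+\|V\boundary E\|$; combined with $\|W\|=\|V\|\restrict E$, this shows that $F$ is $\|W\|+\|\updelta W\|$ measurable, so $W\boundary F$ is defined.  Finally $(V\boundary E)\restrict F$ is defined since $F$ is $\|V\boundary E\|$ measurable.

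Next comes the key algebraic manipulation.  By definition of $V\boundary E$,
\begin{equation*}
	\updelta W = (\updelta V)\restrict E - V\boundary E.
\end{equation*}
Restricting to $F$ and using the compositional rule for restrictions applied to $\updelta V$ with $A=E$, $B=F$,
\begin{equation*}
	(\updelta W)\restrict F = (\updelta V)\restrict(E\cap F) - (V\boundary E)\restrict F.
\end{equation*}
Since $W\restrict F\times\mathbf G(n,m) = V\restrict(E\cap F)\times\mathbf G(n,m)$ directly from the definition of $W$, the first variations of these two varifolds agree. Substituting these two identities into the definition of $W\boundary F$ yields
\begin{equation*}
	W\boundary F = (\updelta V)\restrict(E\cap F) - \updelta(V\restrict(E\cap F)\times\mathbf G(n,m)) - (V\boundary E)\restrict F,
\end{equation*}
and the first two terms on the right assemble into $V\boundary(E\cap F)$, which is exactly the desired identity after rearrangement.

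There is no real obstacle; the entire argument is a bookkeeping exercise using the linearity of $\updelta$ and $\restrict$.  The one point that must be handled carefully is the verification that iterated restrictions commute with intersection of the restricting sets---i.e.\ $((\updelta V)\restrict E)\restrict F=(\updelta V)\restrict(E\cap F)$---which is immediate from \cite[2.20]{MR3528825} once one knows that multiplying characteristic functions corresponds to intersecting the associated sets.  Everything else follows by substitution.
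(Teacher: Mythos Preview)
Your proof is correct and complete. The algebraic manipulation is sound: once you know $\updelta W = (\updelta V)\restrict E - V\boundary E$, restricting to $F$ and using $W\restrict F\times\mathbf G(n,m) = V\restrict(E\cap F)\times\mathbf G(n,m)$ gives the identity immediately. Your care about measurability---in particular, that $F$ is $\|\updelta W\|$ measurable because $\|\updelta W\|$ is absolutely continuous with respect to $\|\updelta V\|\restrict E + \|V\boundary E\|$---is the only delicate point, and you handle it.

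The paper takes a different route: it invokes \cite[5.4]{MR3528825}, which is essentially the same identity but stated under a more restrictive hypothesis on $F$, and then passes to general $\|V\|+\|\updelta V\|+\|V\boundary E\|$ measurable $F$ by approximation. Your argument is more self-contained---it does not require the cited lemma and avoids the approximation step entirely, working directly from the definition of $V\boundary(\cdot)$ and the compositional rule $(S\restrict A)\restrict B = S\restrict(A\cap B)$. The paper's approach has the virtue of reusing existing machinery; yours has the virtue of making transparent that the identity is purely formal once the objects are well defined.
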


\begin{proof}
	Approximation of $F$ reduces the assertion to \cite[5.4]{MR3528825}.
\end{proof}

\begin{remark} \label{remark:perimeter}
	If $m=n$, $U = \mathbf R^n$, $\| V \| = \mathscr L^n$, and $E$, $F$,
	$P$, $Q$, $A$, and $B$ are as in \ref{example:perimeter}, then we
	compute, for $\theta \in \mathscr D ( \mathbf R^n, \mathbf R^n )$,
	that
	\begin{align*}
		& W \boundary ( Q \cup B ) ( \theta ) \\
		& \qquad {\textstyle = \int_P \theta (x) \bullet \mathbf n
		(F,x) \ud \mathscr H^{n-1} \, x + \int_{A \cap B} \theta (x)
		\bullet \frac{\mathbf n (F,x) - \mathbf n (E,x)} 2 \ud
		\mathscr H^{n-1} \, x}
	\end{align*}
	by \cite[4.5.6\,(5)]{MR41:1976}; if additionally $F \subset E$, then
	the second integral is zero.
\end{remark}

\begin{example} \label{example:perimeter-continued}
	Suppose $V \in \mathbf V_n ( \mathbf R^n )$ such that $\| V \| =
	\mathscr L^n$, the set $E$ is $\| V \|$ measurable, and $\| V
	\boundary E \| ( \mathbf R^n ) < \infty$.  Then, $V \restrict E \times
	\mathbf G(n,n)$ is indecomposable if and only if $E$ is indecomposable
	in the sense of \cite[p.\,52]{MR1812124} as may be verified using
	\ref{lemma:little_more_general} and \ref{remark:perimeter}. $\big
	($This comparison was planned for inclusion in \cite{MR3528825} but
	was unintentionally omitted in that paper.$\big)$
\end{example}

\begin{definition} \label{def:partition}
	Suppose $m$ and $n$ are positive integers, $m \leq n$, $U$ is an open
	subset of $\mathbf R^n$, $V \in \mathbf V_m ( U )$, $\| \updelta V \|$
	is a Radon measure, and $\Pi \subset \mathbf V_m ( U )$.
	
	Then, $\Pi$ is termed a \emph{partition of $V$} if and only if the
	following three conditions are satisfied.
	\begin{enumerate}
		\item Whenever $W \in \Pi$, there exists a $\| V \| + \|
		\updelta V \|$ measurable set $E$ such that $W = V \restrict E
		\times \mathbf G (n,m)$, $\| V \| ( E ) > 0$, and $V \boundary
		E = 0$.
		\item \label{item:partition:weight} If $k \in \mathscr K ( U
		\times \mathbf G (n,m) )$, then $V (k) = \sum_{W \in \Pi} W
		(k)$.
		\item \label{item:partition:variation} If $f \in \mathscr K (
		U )$, then $\| \updelta V \| (f) = \sum_{W \in \Pi} \|
		\updelta W \| (f)$.
	\end{enumerate}
\end{definition}

\begin{remark} \label{remark:partition}
	The family $\Pi$ is countable and the equations in
	\eqref{item:partition:weight} and \eqref{item:partition:variation}
	also hold for $V$ integrable functions $k$ and $\| \updelta V \|$
	integrable functions $f$, respectively.  Thus, whenever $F$ is a $\| V
	\| + \| \updelta V \|$ measurable set,
	\ref{remark:polar_decomposition} and
	\ref{lemma:zero_distributional_boundary} yield
	\begin{equation*}
		V \boundary F ( \theta ) = \sum_{W \in \Xi} W \boundary F (
		\theta ) \quad \text{for $\theta \in \mathscr D ( U, \mathbf
		R^n )$}.
	\end{equation*}
\end{remark}

\begin{remark} \label{remark:partition-vs-decomposition}
	Every decomposition of $V$ forms a partition of $V$.  Conversely, if
	$\xi : \Pi \to \mathbf 2^{\mathbf V_m ( U )}$ such that $\xi ( W )$ is
	a decomposition of $W$ for $W \in \Pi$, then the map $\xi$ is
	univalent, $\im \xi$ is disjointed, and $\bigcup \im \xi$ is a
	decomposition of $V$; in fact, for $X \in \bigcup \im \xi$, we
	construct a $\| V \| + \| \updelta V \|$ measurable set $E$ with $\| V
	\| ( E ) > 0$, $X = V \restrict E \times \mathbf G (n,m)$, and $V
	\boundary E = 0$ using \cite[5.4]{MR3528825} or
	\ref{lemma:little_more_general}.  In case $V$ is rectifiable, such
	$\xi$ exists by \cite[6.12]{MR3528825}.
\end{remark}

\begin{remark} \label{remark:char-partition}
	Employing \ref{lemma:zero_distributional_boundary} and
	\ref{remark:partition}, we may verify that a subfamily $\Pi$ of
	$\mathbf V_m (U)$ is a partition of $V$ if and only if there exists a
	map $\pi : \Pi \to \mathbf 2^U$ mapping distinct members of $\Pi$ to
	disjoint Borel subsets of $U$ of positive $\| V \|$ measure satisfying
	\begin{equation*}
		{\textstyle ( \| V \| + \| \updelta V \| ) \big ( U \without
		\bigcup \im \pi \big ) = 0}
	\end{equation*}
	and $W = V \restrict \pi (W) \times \mathbf G (n,m)$ with $V \boundary
	\pi (W) = 0$ for $W \in \Pi$.
\end{remark}

\begin{remark} \label{remark:partition-earlier}
	If $m$, $n$, $U$, $V$, and $\Xi$ satisfy the hypotheses of
	\cite[4.14]{MR3777387}, then $\Xi \without \{ 0 \}$ is a partition of
	$V$ by \ref{lemma:zero_distributional_boundary} and
	\ref{remark:char-partition}.
\end{remark}

\begin{example} \label{example:no-adapted-decomposition}
	There exist $L$, $L_1, \ldots, L_4$, $M$, $V$, $\kappa$, and $f$ such
	that
	\begin{gather*}
		\text{$L$ is a submanifold of class $\infty$ of $\mathbf
		R^2$}, \quad \dim L = 1, \quad ( \Clos L ) \without L = \{ 0
		\}, \\
		\text{$L_1, \ldots, L_4$ enumerate the connected components of
		$L$}, \\
		\Tan (L_1,0) = \Tan (L_2,0) = \{ (t,0) \with t \geq 0 \}, \\
		\Tan (L_3,0) = \Tan (L_4,0) = \{ (t,0) \with t \leq 0 \}, \\
		\text{$\Clos (L_i \cup L_j)$ is a submanifold of class
		$\infty$ for $i \in \{ 1,2 \}$ and $j \in \{ 3,4 \}$}, \\
		M = \mathbf R^3 \cap \{ (x_1,x_2,x_3) \with (x_1,x_2) \in L
		\}, \quad V \in \mathbf{RV}_2 ( \mathbf R^3 ), \\
		\| V \| = \mathscr H^2 \restrict M, \quad 0 \leq \kappa <
		\infty, \quad \| \updelta V \| \leq \kappa \| V \|, \\
		\text{$f : M \to \{ y \with 0 \leq y \leq 1 \}$ is of class
		$\infty$ relative to $M$}, \\
		\text{$f \in \mathbf T(V)$}, \quad \text{but} \quad \text{$f
		\notin \mathbf T(W)$ whenever $W$ is a component of $V$}.
	\end{gather*}
\end{example}

\begin{proof}
	We choose $\gamma : \mathbf R \to \{ s \with 0 \leq s \leq 1 \}$ of
	class $\infty$ such that
	\begin{equation*}
		\{ r \with \gamma (r) = 0 \} = \{ r \with - \infty < r \leq 0
		\} \quad \text{and} \quad \{ r \with \gamma (r) = 1 \} = \{ r
		\with 1 \leq r < \infty \}.
	\end{equation*}
	For $i \in \{ 1, 2, 3, 4 \}$, we define functions $h_i : \mathbf R \to
	\mathbf R$ of class $\infty$ by
	\begin{equation*}
		h_1(r) = 0, \quad h_2(r) = \gamma(r), \quad h_3(r) =
		\gamma(|r|), \quad h_4 (r) = \gamma (-r)
	\end{equation*}
	whenever $r \in \mathbf R$ and associate with them $M_i$ and $V_i \in
	\mathbf{RV}_2 ( \mathbf R^3 )$ such that
	\begin{gather*}
		M_i = \mathbf R^3 \cap \{ (x_1,x_2,x_3) \with x_1 \neq 0, x_2
		= h_i(x_1) \}, \quad \| V_i \| = \mathscr H^2 \restrict M_i.
	\end{gather*}
	Defining $V = V_1 + V_3 = V_2 + V_4$, there exists $0 \leq \kappa <
	\infty$ with $\| \updelta V \| \leq \kappa \| V \|$ and $V_1, \ldots,
	V_4$ enumerate the components of $V$.  Abbreviating
	\begin{equation*}
		C = \mathbf R^2 \cap \{ (u_1,u_2) \with 0 < u_1 < u_2 \},
	\end{equation*}
	we define $g : \mathbf R^2 \cap \{ (u_1,u_2) \with u_1 \neq 0 \} \to
	\{ y \with 0 < y \leq 1 \}$ by
	\begin{equation*}
		\text{$g(u)=\gamma(u_1/u_2)$ if $u_1 >0<u_2$}, \quad
		\text{$g(u)=1$ else}
	\end{equation*}
	whenever $u=(u_1,u_2) \in \mathbf R^2$ and $u_1 \neq 0$.  We see that
	$g$ is of class $\infty$ with
	\begin{equation*}
		{\textstyle\int_{\mathbf B(0,r)} | \Der g | \ud \mathscr L^2}
		< \infty \quad \text{for $0 \leq r < \infty$},
	\end{equation*}
	since $C = \{ u \with g(u) < 1 \}$ and $| \Der g (u) | \leq ( 2 \sup
	\im |\gamma'| )/ u_2$ for $u=(u_1,u_2) \in C$.  We let $M = M_1 \cup
	M_3 = M_2 \cup M_4$, notice $\| V \| = \mathscr H^2 \restrict M$, and
	define the function $f : M \to \{ y \with 0 \leq y \leq 1 \}$ of class
	$\infty$ relative to $M$ by
	\begin{equation*}
		\text{$f(x) = g(x_1,x_3)$ if $x_2=0$}, \quad \text{$f(x) =
		1-g(x_1,x_3)$ if $x_2>0$}
	\end{equation*}
	whenever $x=(x_1,x_2,x_3) \in M$.  We take
	\begin{equation*}
		E = \{ (x,y) \with f(x)>y \}, \quad G = \{ (x,y) \with f(x) =
		y \},
	\end{equation*}
	and verify $\mathscr H^2 ( G \cap K ) < \infty$ whenever $K$ is a
	compact subset of $\mathbf R^3 \times \mathbf R$ by means of
	\cite[3.2.20]{MR41:1976}.  Associating $W_i$ with $V_i$ as in
	\ref{thm:equiv_bv} and noting $\| \updelta W_i \| \leq \kappa \| W_i
	\|$ by \cite[3.6\,(1)\,(6)]{MR3625810}, we define $T = ( \mathbf R^3
	\times \mathbf R ) \cap \{ ((x_1,x_2,x_3),y) \with x_1 = 0 \}$ and
	apply \cite[5.9\,(3)]{MR3528825} with $V$ replaced by $W_i$ to compute
	\begin{gather*}
		\| W_i \boundary E \| \restrict ( \mathbf R^3 \times \mathbf
		R) \without T = \mathscr H^2 \restrict G \cap ( M_i \times
		\mathbf R ) \quad \text{for $i \in \{ 1,2,3,4 \}$}, \\
		\| W_1 \boundary E \| \restrict T = \| W_3 \boundary E \|
		\restrict T = \mathscr H^2 \restrict T \cap \{ (x,y) \with
		x_2=0, x_3 \geq 0, 0 \leq y \leq 1 \}, \\
		\| W_2 \boundary E \| \restrict T = \| W_4 \boundary E \|
		\restrict T = \mathscr H^2 \restrict T \cap \{ (x,y) \with
		x_2=0, x_3 \leq 0, 0 \leq y \leq 1 \}, \\
		( W_1 \boundary E + W_3 \boundary E ) \restrict T = 0 = ( W_2
		\boundary E + W_4 \boundary E ) \restrict T.
	\end{gather*}
	Defining $W = W_1 + W_3 = W_2 + W_4$, we infer
	\begin{equation*}
		\| W \boundary E \| = \mathscr H^2 \restrict G
	\end{equation*}
	by \cite[5.2]{MR3528825}.  In view of \ref{corollary:equiv_bv}, the
	conclusion is now evident.
\end{proof}

\begin{remark} \label{remark:no-adapted-decomposition}
	Whereas, whenever $V$ is an $m$ dimensional rectifiable varifold in an
	open subset of $\mathbf R^n$ such that $\| \updelta V \|$ a Radon
	measure, a generalised $V$ weakly differentiable function may be
	defined (see \cite[6.10, 6.12, 8.24]{MR3528825}) by selecting a
	decomposition $\Xi$ of $V$ and, subject to the natural summability
	condition, a member of $\mathbf T(W)$ for each $W \in \Xi$, the
	preceding example shows that, in general, not all members of $\mathbf
	T(V)$ arise in this way.
\end{remark}

\section{Examples of decomposable varifolds}

After some preparations in \ref{lemma:criterion_indecomposable}--%
\ref{def:locally-finiteness-decompositions}, we construct Example
\ref{intro-example:touching-spheres} of the introductory section in
\ref{example:indecomposability_of_type_f} and
\ref{remark:indecomposability_of_type_f}.  In
\ref{definition:indecomposable_chains}--\ref{remark:indecomposable_chains}, we
introduce the terminology of indecomposability for integral chains with
coefficients in a complete normed commutative group to construct Example
\ref{intro-example:three-line-segments} of the introductory section in
\ref{example:four-arcs}.  In
\ref{def:mean-curvature-of-sets}--\ref{remark:immersion_varifold}, we discuss
basic properties of immersions and their associated varifolds to construct
Example \ref{Thm:immersion} of the introductory section in
\ref{example:inifinite_components}.

\begin{lemma} \label{lemma:criterion_indecomposable}
	Suppose $m$ and $n$ are positive integers, $m \leq n$, $U$ is an open
	subset of $\mathbf R^n$, $M$ is a connected $m$ dimensional
	submanifold of $U$ of class $2$ meeting every compact subset of $U$ in
	a set of finite $\mathscr H^m$ measure, $V \in \mathbf V_m ( U )$, $\|
	\updelta V \|$~is a Radon measure, $0 < c < \infty$,
	\begin{equation*}
		\spt ( V - c \mathbf v_m ( M ) ) \subset ( U \without M)
		\times \mathbf G (n,m),
	\end{equation*}
	$E$ is a $\| V \| + \| \updelta V \|$ measurable set, and $V \boundary
	E = 0$.
	
	Then, there holds $\| V \| ( M \cap E ) = 0$ or $\| V \| ( M \without
	E ) = 0$; in particular, if $V = c \mathbf v_m ( M )$, then $V$ is
	indecomposable.
\end{lemma}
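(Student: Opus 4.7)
The plan is to reduce the statement to a local claim on $M$ and then invoke connectedness. First, I would use the support hypothesis to deduce that for every $x_0 \in M$ there is $0 < r < \infty$ with $\mathbf B(x_0, r) \subset U$ such that $V \restrict \mathbf B(x_0, r) \times \mathbf G(n,m)$ equals $c \mathbf v_m(M \cap \mathbf B(x_0, r))$; this immediately gives $\|V\| \restrict M = c \mathscr H^m \restrict M$. Next, since $V \boundary E$ is a distribution on $U$ that vanishes and its action on test functions supported in any given ball depends only on $V$ restricted there, I would infer that $c \mathbf v_m(M \cap \mathbf B(x_0, r)) \boundary E = 0$ as a distribution on $\mathbf B(x_0, r)$.

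Shrinking $r$ further, $M \cap \mathbf B(x_0, r)$ may be assumed to be a connected embedded graph of class $2$ over $\Tan(M, x_0)$. I would then appeal to the formula \cite[5.9\,(3)]{MR3528825}, or alternatively carry out a direct first-variation computation on a chart, to identify $\|c \mathbf v_m(M \cap \mathbf B(x_0, r)) \boundary E\|$ with $c \mathscr H^{m-1}$ restricted to the reduced boundary of $E$ inside the smooth submanifold $M \cap \mathbf B(x_0, r)$. The vanishing of $V \boundary E$ would then force that reduced boundary to be $\mathscr H^{m-1}$-null, whence De Giorgi's classical result applied in a chart of $M$ yields that $E \cap M \cap \mathbf B(x_0, r)$ is $\mathscr H^m$-equivalent either to $\varnothing$ or to $M \cap \mathbf B(x_0, r)$.

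Finally, I would define $A_0 = \{x \in M \with \mathscr H^m(E \cap M \cap \mathbf B(x, \rho)) = 0 \text{ for some } 0 < \rho < \infty \}$ and $A_1 = \{x \in M \with \mathscr H^m((M \without E) \cap \mathbf B(x, \rho)) = 0 \text{ for some } 0 < \rho < \infty \}$, observe that both are open in $M$, disjoint, and cover $M$ by the preceding step, and conclude from the connectedness of $M$ that $M = A_0$ or $M = A_1$. Combined with $\|V\| \restrict M = c \mathscr H^m \restrict M$, this yields $\|V\|(M \cap E) = 0$ or $\|V\|(M \without E) = 0$; the ``in particular'' clause is then immediate, since $V = c \mathbf v_m(M)$ gives $\|V\|(E) = c \mathscr H^m(M \cap E)$ and $\|V\|(U \without E) = c \mathscr H^m(M \without E)$.

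The principal obstacle lies in the second paragraph: making precise the identification of $V \boundary E$ with the classical reduced perimeter of $E \cap M$ inside the smooth submanifold $M$. Once that local identification is in hand—either via explicit parametrization combined with a first-variation calculation, or by invoking the perimeter formula of \cite[Section 5]{MR3528825}—the remaining steps amount to a standard connectedness argument on the manifold $M$.
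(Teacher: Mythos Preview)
Your proposal is correct and follows essentially the same approach as the paper: establish the local dichotomy on $M$ and then invoke connectedness. The paper's proof is terser, citing \cite[4.4, 4.6\,(3)]{MR0307015} directly to obtain that each $z \in M$ has a neighbourhood $X$ in $M$ with $E \cap X$ being $\mathscr H^m$~almost equal to $\varnothing$ or to $X$, whereas you spell out the reduction through \cite[5.9\,(3)]{MR3528825} (or a direct chart computation) followed by the classical constancy theorem; both routes accomplish the same local identification.
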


\begin{proof}
	We use \cite[4.4, 4.6\,(3)]{MR0307015} to conclude that each $z \in M$
	admits a neighbourhood $X$ in $M$ such that $E \cap X$ is $\mathscr
	H^m$~almost equal to $\varnothing$ or $E \cap X$, whence the principal
	conclusion follows because $M$ is connected.
\end{proof}

\begin{definition} \label{def:locally-finiteness-decompositions}
	Suppose $m$ and $n$ are positive integers, $m \leq n$, $U$ is an open
	subset of $\mathbf R^n$, and $\Xi \subset \mathbf V_m (U)$.
	
	Then, $\Xi$ is termed \emph{locally finite} if and only if
	\begin{equation*}
		\card ( \Xi \cap \{ W \with K \cap \spt \| W \| \neq
		\varnothing \} ) < \infty
	\end{equation*}
	whenever $K$ is a compact subset of $U$.
\end{definition}

\begin{remark} \label{remark:criterion-locally-finite}
	If $V \in \mathbf V_m ( U )$ satisfies that $\|
	\updelta V \|$ is a Radon measure and that, for each $a \in U$, there
	exists a positive radius $r$ with $\mathbf B ( a, 2r ) \subset U$ and
	\begin{equation*}
		\inf \{ \| W \| \, \mathbf B ( a, 2r ) \with \text{$W$ is a
		component of $V$, $\mathbf B (a,r) \cap \spt \| W \| \neq
		\varnothing$} \} > 0,
	\end{equation*}
	then every decomposition of $V$ is locally finite.
\end{remark}

\begin{example} \label{example:indecomposability_of_type_f}
	Suppose $m$ is a positive integer, $n = m+1$, $\mathbf q$ is as in
	\ref{miniremark:p-q}, and
	\begin{equation*}
		A = \mathbf q^\ast [ 2 \mathbf Z ], \quad M = \mathbf R^n \cap
		\{ x \with \dist (x,A) = 1 \}, \quad V = \mathbf v_m ( M ) \in
		\mathbf V_m ( \mathbf R^n ),
	\end{equation*}
	Then, we have $\boldsymbol \Uptheta^m ( \| V \|, x )
	= 1$ for $\| V \|$ almost all $x$, $\spt \| V \|$ is connected, $\|
	\updelta V \| \leq m \| V \|$, and the following three statements
	hold.
	\begin{enumerate}
		\item Whenever $I$ is a subinterval of $\mathbf R$, there
		holds $V \boundary \mathbf q^{-1} [I] = 0$ if and only if
		$\Bdry I$ is contained in the set of odd integers.
		\item The family $\{ V \restrict \mathbf q^{-1} [ \mathbf U
		(2i,1) ] \times \mathbf G(n,m) \with i \in \mathbf Z \}$ is a
		decomposition of $V$; in particular, $V$ is decomposable.
		\item Every decomposition of $V$ is locally finite.
	\end{enumerate}
	Noting that $N = M \without \mathbf q^{-1} [ \{ z \with \text{$z$ odd
	integer} \} ]$ is an $m$ dimensional submanifold of class $\infty$ of
	$\mathbf R^n$, it suffices to apply
	\ref{lemma:criterion_indecomposable} to each connected component of
	$N$.
\end{example}

\begin{remark} \label{remark:indecomposability_of_type_f}
	Similar statements hold if $A$ is replaced by its subset $\mathbf
	q^\ast [ \{ 0,2 \} ]$ in the preceding example; in that case, the
	decomposition of $V$ is unique.
\end{remark}

\begin{definition} \label{definition:indecomposable_chains}
	Suppose $m$ and $n$ are nonnegative integers, $n \geq 1$, $U$ is an
	open subset of $\mathbf R^n$, $G$ is a complete normed commutative
	group, and $S \in \mathbf I_m ( U, G)$.
	
	Then, $S$ is called \emph{indecomposable} if and only if there exists
	no $T \in \mathbf I_m( U, G)$ with
	\begin{equation*}
		\begin{gathered}
			T \neq 0 \neq S - T, \quad \| S \| = \| T \| + \| S-T
			\|, \\
			\text{either $m=0$ or $\| \boundary_G S \| = \|
			\boundary_G T \| + \| \boundary_G (S-T) \|$}.
		\end{gathered}
	\end{equation*}
\end{definition}

\begin{remark} \label{remark:indecomposable_chains}
	By \cite[4.1]{DOI_10.4171_RMI_1487}, an integral current $R \in
	\mathbf I_m ( \mathbf R^n )$ is indecomposable if and only if
	$\iota_{\mathbf R^n,m} (R) \in \mathbf I_m ( \mathbf R^n, \mathbf Z )$
	is indecomposable.
\end{remark}

\begin{example} \label{example:four-arcs}
	This example is based on \cite[4.1, 4.5]{DOI_10.4171_RMI_1487} with $G
	= \mathbf Z/ 3 \mathbf Z$.  Abbreviating $b_1 = (1,0)$, $b_2 = (0,1)$,
	and $b_3 = (-1,0)$, we define $B = \{ b_1, b_2, b_3 \}$ and
	\begin{equation*}
		Q = \sum_{i=1}^3 R_i, \quad \text{where $R_i = \boldsymbol [
		0, b_i \boldsymbol ] \in \mathbf I_1 ( \mathbf R^2 )$}
	\end{equation*}
	so that we have $\boundary Q = \left ( \sum_{b \in B} \boldsymbol
	\updelta_b \right) - 3 \boldsymbol \updelta_0 \in \mathbf I_0 (
	\mathbf R^2 )$ and $N = (\spt Q) \without \spt \boundary Q$ is a
	one-dimensional submanifold of class $1$ of $\mathbf R^2$ satisfying
	\begin{equation*}
		( \Clos N ) \without N = B \cup \{ 0 \}.
	\end{equation*}
	Choosing $g \in G$ with $|g| = 1$, we see $g$ generates $G$.  We
	define $S$ and $V$ such that
	\begin{equation*}
		S = \iota_{\mathbf R^2,1} ( Q ) \cdot g \in \mathbf I_1 (
		\mathbf R^2, G), \quad V \in \mathbf{RV}_1 ( \mathbf R^2 ),
		\quad \| V \| = \| S \|;
	\end{equation*}
	hence, $\| V \| = \mathscr H^1 \restrict N$ and $\| \updelta V \| =
	\mathscr H^0 \restrict B \cup \{ 0 \}$.  Moreover, $V$ is decomposable
	because
	\begin{equation*}
		\{ \mathbf v_1 ( \spt R_1 \cup \spt R_3 ) , \mathbf v_1 ( \spt
		R_2 ) \}
	\end{equation*}
	is the only decomposition of $V$ by
	\ref{lemma:criterion_indecomposable}.  Since $\boundary_G S = \left (
	\sum_{b \in B} \iota_{\mathbf R^2,0} ( \boldsymbol \updelta_b )
	\right) \cdot g$, we have $\spt \| \boundary_G S \| = B$.  Finally, we
	show that $S$ is indecomposable.  Whenever
	\begin{equation*}
		\text{$T \in \mathbf I_1 ( \mathbf R^2, G)$ with $\spt \| T \|
		\subset \Clos N$ and $N \cap \spt \| \boundary_G T \| =
		\varnothing$},
	\end{equation*}
	we observe that \cite[6.1]{DOI_10.4171_RMI_1487} may be applied to
	each connected component $M$ of $N$ to represent $T = \sum_{i=1}^3
	\iota_{\mathbf R^2,1} ( R_i ) \cdot h_i$ for some $h_i \in G$;
	accordingly, the condition $\| T \| + \| S-T \| \leq \|S\|$ would
	imply $|h_i| + |g-h_i| \leq 1$, hence $h_i \in \{ 0,g \}$, for $i \in
	\{1,2,3\}$ and the additional requirement $0 \notin \spt \boundary_G
	T$ would then allow us to conclude $\sum_{i=1}^3 h_i = 0$, hence $h_1
	= h_2 = h_3$, and thus either $T = 0$ or $T = S$.
\end{example}

\begin{definition} \label{def:mean-curvature-of-sets}
	Suppose $A \subset \mathbf R^n$, $a \in \mathbf R^n$, and $A$ is
	pointwise [approximately] differentiable of order $2$ at $a$.  Then,
	the \emph{pointwise} [\emph{approximate}] \emph{mean curvature vector}
	of $A$ at $a$ is defined by
	\begin{align*}
		\pt \mathbf h ( A, a ) & = \trace \pt \Der^2 A (a,\Tan(A,a))
		\in \mathbf R^n \\
		\big [ \ap \mathbf h ( A, a ) & = \trace \ap \Der^2 A (a) \in
		\mathbf R^n \big ].
	\end{align*}
\end{definition}

\begin{definition} \label{def:mean_curvature}
	Suppose $M$ is a manifold-with-boundary of class $k$, the map $F : M
	\to \mathbf R^n$ is an immersion of class $k$, and $k \geq 1$.  Then,
	for $c \in M$, the \emph{tangent cone} $\Tan (F,c)$,
	the \emph{normal cone} $\Nor (F,c)$, and, in case $k \geq 2$ and $c
	\notin \partial M$, also the \emph{mean curvature vector $\mathbf h
	(F,c)$} along $F$ at $c$ are characterised by
	\begin{equation*}
		\begin{gathered}
			\Tan (F,c) = \Tan (F[W],F(c)), \quad \Nor (F,c) = \Nor
			( F[W], F(c)), \\
			\mathbf h (F,c) = \mathbf h (F[W],F(c))
		\end{gathered}
	\end{equation*}
	whenever $W$ is an open neighbourhood of $c$ in $M$ and $F|W$ is an
	embedding.  Moreover, we define the \emph{exterior normal $\mathbf n (
	F, c )$} along $F$ at $c$ by
	\begin{equation*}
		\{ - \mathbf n ( F,c ) \} = \mathbf S^{n-1} \cap \Tan ( F, c )
		\cap \Nor ( F | \partial M, c ) \quad \text{for $c \in
		\partial M$}.
	\end{equation*}
\end{definition}

\begin{remark} \label{remark:second_fundamental_form}
	Whenever $\phi$ is a chart of $M$ of class $2$, $c \in ( \dmn \phi )
	\without \partial M$, $\psi = \phi^{-1}$, and $e_1, \ldots, e_m \in
	\mathbf R^m$ are such that $\langle e_1, \Der (F \circ \psi) (\phi(c))
	\rangle, \ldots, \langle e_m, \Der (F \circ \psi) (\phi(c)) \rangle$
	form an orthonormal basis of $\Tan ( F, c )$, we have (cf.\
	\cite[pp.\,423--424]{MR0307015})
	\begin{equation*}
		\mathbf h (F,c) = \sum_{i=1}^m \left \langle (e_i,e_i), \Nor
		(F,c)_\natural \circ \Der^2 (F \circ \psi) (\phi(c)) \right
		\rangle.
	\end{equation*}
\end{remark}

\begin{example} \label{example:immersion-pt-approx-diff}
	Suppose $k$, $m$, and $n$ are positive integers, $m \leq n$, $M$ is an
	$m$ dimensional manifold-with-boundary of class~$k$, $U$~is an open
	subset of $\mathbf R^n$, $F : M \to U$ is a proper immersion of class
	$k$, and $A = \im F$.  Then, there holds
	\begin{gather*}
		\text{$\sup \Number ( F, \cdot ) [K] < \infty$ whenever $K$ is
		a compact subset of $U$}, \\
		\text{$A$ is pointwise and approximately differentiable of
		order $k$ at $a$}, \\
		\Tan ( A, a ) = \Tan^m ( \mathscr H^m \restrict A, a ) = \Tan
		(F, c ), \\
		\pt \Der^k A (a,\Tan(A,a)) = \ap \Der^k A (a), \\
		\text{if $k \geq 2$, then $\pt \mathbf h ( A, a ) = \ap
		\mathbf h ( A, a ) = \mathbf h (F,c)$},
	\end{gather*}
	whenever $F(c) = a$, for $\mathscr H^m$ almost all $a \in A$; in fact,
	recalling \cite[3.1, 3.3, 3.16, 3.22, 4.1, 4.3, 4.10]{MR3978264},
	\cite[3.7]{MR3936235}, and \cite[2.10.19\,(4)]{MR41:1976}, we apply
	first \cite[Footnote 19]{MR3936235} and then \cite[3.11]{MR3936235}
	regarding approximate and pointwise differentiation, respectively.
\end{example}

\begin{definition} \label{def:immersed_varifold}
	Suppose $m$ and $n$ are positive integers, $m \leq n$, $M$~is an
	$m$~dimensional manifold-with-boundary of class~$1$, $U$~is an open
	subset of~$\mathbf R^n$, and $F : M \to U$ is a proper immersion of
	class~$1$.
	
	Then, we define the \emph{varifold~$V$ associated with $(F,U)$} by
	\begin{equation*}
		V(k) = {\textstyle \int k (x,\Tan(\im F,x)) \Number(F,x) \ud
		\mathscr H^m \, x} \quad \text{for $k \in \mathscr K ( U
		\times \mathbf G (n,m))$}.
	\end{equation*}
\end{definition}

\begin{remark} \label{remark:immersion_varifold}
	We notice that $V$ is rectifiable, $\| V \| = \mathscr H^m \restrict
	\Number(F,\cdot)$, and
	\begin{equation*}
		\Tan^m ( \| V \|, x ) = \Tan ( \im F, x ), \quad \boldsymbol
		\Uptheta^m ( \| V \|, x ) = \Number (F,x)
	\end{equation*}
	for $\| V \|$ almost all $x$.  If $M$ and $F$ are of class $2$, then
	we employ \cite[4.4, 4.7]{MR0307015},
	\ref{remark:polar_decomposition}, and
	\ref{example:immersion-pt-approx-diff} to verify firstly that $\|
	\updelta V \|$ is a Radon measure satisfying
	\begin{align*}
		( \updelta V ) ( \theta ) & = {\textstyle - \int_{F[M \without
		\partial M]} \big ( \sum_{c \in F^{-1} [ \{ x \} ]} \mathbf h
		( F, c) \big ) \bullet \theta (x) \ud \mathscr H^m \, x} \\
		& \phantom = \ + {\textstyle\int_{F[ \partial M]} \big (
		\sum_{c \in F^{-1} [ \{ x \} ]} \mathbf n ( F, c ) \big )
		\bullet \theta (x) \ud \mathscr H^{m-1} \, x}
	\end{align*}
	for $\theta \in \mathscr D ( U, \mathbf R^n )$ and secondly that, for
	$\| V \|$ almost all $x$, there holds
	\begin{equation*}
		\mathbf h (V,x) = \mathbf h (F,c) \quad \text{whenever $F(c) =
		x$};
	\end{equation*}
	in particular, $\| \updelta V \| \leq \| V \| \restrict | \mathbf h
	(V,\cdot) | + \mathscr H^{m-1} \restrict \Number (F|\partial M,
	\cdot)$ with equality in case $F|\partial M$ is an embedding.
\end{remark}

\begin{example} \label{example:inifinite_components}
	There exists a two-dimensional manifold-with-boundary $M$ of class
	$\infty$ and a proper immersion $F : M \to \mathbf R^3$ of class
	$\infty$ such that the varifold associated with $( F, \mathbf R^3 )$
	has a unique decomposition $\Xi$ and $\Xi$ is not locally finite.
\end{example}

\begin{proof}
	We define a closed subset of $\mathbf R$ by
	\begin{equation*}
		X = \mathbf R \cap \{ x \with \text{$x \leq 0$, or $x \geq 1$,
		or $x=1/i$ for some positive integer $i$} \}
	\end{equation*}
	and use \cite[2.14]{DOI_10.4171_RMI_1487} to construct a nonpositive
	function $f : \mathbf R \to \mathbf R$ of class $\infty$ satisfying
	\begin{equation*}
		X = \{ x \with f(x) = 0 \} \quad \text{and} \quad \sup \im |
		f'' | \leq r^{-1},
	\end{equation*}
	where $r = 4 \sup \im |f|$.  We set $T = \mathbf R^2 \cap \{ (x,y)
	\with |y|<r/2 \}$ and define
	\begin{equation*}
		A = \{ (x,y) \with f(x)<y \}, \quad G = \{ (x,y) \with f(x)=y
		\}.
	\end{equation*}
	Since we have $\sup \im \| \mathbf b (G,\cdot) \| \leq \sup \im
	|f''|$, we conclude $\reach (G) \geq r$ from
	\ref{thm:reach-real-valued-graphs} and
	\ref{remark:reach-real-valued-graphs}; hence, taking $U$ and $\delta$
	as in \ref{thm:reach-real-valued-graphs} and
	\ref{corollary:reach-real-valued-graphs}, we have
	\begin{equation*}
		G \subset T \subset U, \quad A \cup T = \mathbf R^2 \cap \{
		(x,y) \with y > - r/2 \}.
	\end{equation*}
	Accordingly, we may construct a function $g : A \cup T \to \mathbf R$
	of class $\infty$ such that
	\begin{alignat*}{2}
		g(x,y) & = 3^{1/2} \delta(x,y) & \quad & \text{if $-r/2 < y
		\leq r/4$}, \\
		g(x,y) & > 0 && \text{if $r/4 < y < r/2$}, \\
		g(x,y) & = 0 && \text{if $y \geq r/2$}
	\end{alignat*}
	whenever $(x,y) \in \mathbf R^2$ and $y > -r/2$; hence, $T \cap \{
	(x,y) \with g(x,y)= 0 \} = G$ and
	\begin{equation*}
		\Der_2 g(x,y) > 0 \quad \text{whenever $x \in \mathbf R$ and
		$-r/2 < y \leq r/4$}.
	\end{equation*}
	Defining $M = \mathbf R^3 \cap \{ (x,y,z) \with g(x,y) = z \}$ as well
	as closed sets
	\begin{equation*}
		E = M \cap \{ (x,y,z) \with (x,y) \in A \cup G \}, \quad F = M
		\cap \{ (x,y,z) \with y \geq 0 \},
	\end{equation*}
	$E$ and $F$ are two-dimensional submanifolds-with-boundary of class
	$\infty$ of $\mathbf R^3$,
	\begin{alignat*}{2}
		\mathbf n ( M, E, (x,y,z) ) & = 2^{-1} \big ( \mathbf n
		(A,(x,y)), -3^{1/2} \big ) & \quad & \text{if $(x,y) \in G$
		and $z=0$} \\
		\mathbf n (M,E, (x,y,z)) & = 0 && \text{else}
	\end{alignat*}
	whenever $(x,y,z) \in \mathbf R^3$, where we have identified $\mathbf
	R^3 \simeq \mathbf R^2 \times \mathbf R$, and
	\begin{alignat*}{2}
		\mathbf n ( M, F, (x,y,z) ) & \in \{ (u,v,w) \with v<0,w<0 \}
		& \quad & \text{if $y = 0$, $g(x,y)=z$}, \\
		\mathbf n ( M,F, (x,y,z) ) & = 0 && \text{else}
	\end{alignat*}
	for $(x,y,z) \in \mathbf R^3$.  Defining
	\begin{align*}
		B_1 & = \mathbf R^3 \cap \{(x,y,z) \with (x,y) \in A \cup G,
		\, z = g(x,y)\}, \\
		B_2 & = \mathbf R^3 \cap \{(x,y,z) \with (x,-y) \in A \cup G,
		\, z = g(x,-y)\}, \\
		B_3 & = \mathbf R^3 \cap \{(x,y,z) \with (x,y) \in A \cup G,
		\, z = -g(x,y)\}, \\
		B_4 & = \mathbf R^3 \cap \{(x,y,z) \with (x,-y) \in A \cup G,
		\, z = -g(x,-y)\}, \\
		B_5 & = \mathbf R^3 \cap \{(x,y,z) \with z=0 \},
	\end{align*}
	we take $V = \sum_{j=1}^5 \mathbf v_2 ( B_j ) \in \mathbf V_2 (
	\mathbf R^3 )$ and compute
	\begin{equation*}
		( \updelta V ) ( \theta ) = - \Big ( \sum_{j=1}^4
		{\textstyle\int_{B_j}} \theta \bullet \mathbf h ( B_j, \cdot )
		\ud \mathscr H^2 \Big ) + {\textstyle\int} \mathbf \theta
		\bullet \mathbf n ( B_5, K, \cdot ) \ud \mathscr H^1
	\end{equation*}
	for $\theta \in \mathscr D ( \mathbf R^3, \mathbf R^3 )$, where $K =
	\mathbf R^3 \cap \{ (x,y,z) \with (x,y) \in A, (x,-y) \in A, z = 0
	\}$.  The connected components of $K$ are given by
	\begin{equation*}
		D_i = \mathbf R^3 \cap \big \{ (x,y,z) \with {\textstyle \frac
		1{i+1} < x < \frac 1i, |y| < |f(x)|, z=0} \big \}
	\end{equation*}
	corresponding to every positive integer $i$.
	
	Next, we consider the closed sets and one-dimensional submanifolds
	$L_1, \ldots, L_6$ of class $\infty$ of $\mathbf R^3$ defined by
	\begin{align*}
		L_1 & = \mathbf R^3 \cap \{ (x,y,z) \with y = r/2, z = 0 \},
		\\
		L_2 & = \mathbf R^3 \cap \{ (x,y,z) \with (x,-y) \in G, z = 0
		\}, \\
		L_3 & = \mathbf R^3 \cap \{ (x,y,z) \with y = 0, g(x,y)=z \},
		\\
		L_4 & = \mathbf R^3 \cap \{ (x,y,z) \with y = 0, -g(x,y)=z \},
		\\
		L_5 & = \mathbf R^3 \cap \{ (x,y,z) \with (x,y) \in G, z = 0
		\}, \\
		L_6 & = \mathbf R^3 \cap \{ (x,y,z) \with y = - r/2, z = 0 \},
	\end{align*}
	note $L_j \subset \spt \| V \|$ for $j = 1, \ldots, 6$, set $S =
	\bigcup_{j=1}^6 L_j$ and $R = ( \spt \| V \| ) \without S$, let
	\begin{align*}
		P_{i,1} & = \big \{ (x,y,z) \with {\textstyle \frac 1{i+1} < x
		< \frac 1i}, y<0, (x,y) \in A, g(x,y) = z \big \}, \\
		P_{i,2} & = \big \{ (x,y,z) \with {\textstyle \frac 1{i+1} < x
		< \frac 1i}, y>0, (x,-y) \in A, g(x,-y) = z \big \}, \\
		P_{i,3} & = \big \{ (x,y,z) \with {\textstyle \frac 1{i+1} < x
		< \frac 1i}, y<0, (x,y) \in A, -g(x,y) = z \big \}, \\
		P_{i,4} & = \big \{ (x,y,z) \with {\textstyle \frac 1{i+1} < x
		< \frac 1i}, y>0, (x,-y) \in A, -g(x,-y) = z \big \},
	\end{align*}
	for every positive integer $i$ and $Q_j = \bigcup_{i=1}^\infty
	P_{i,j}$ for $j \in \{ 1,2,3,4 \}$ and abbreviate 
	\begin{alignat*}{2}
		N_1 & = B_1 \cap \{ (x,y,z) \with y>0, z>0 \}, & \quad N_5 & =
		\mathbf R^3 \cap \{ (x,y,z) \with y>r/2,z=0 \}, \\
		N_2 & = B_2 \cap \{ (x,y,z) \with y<0, z>0 \}, & \quad N_6 & =
		\{ (x,y,z) \with -f(x) < y < r/2, z = 0 \}, \\
		N_3 & = B_3 \cap \{ (x,y,z) \with y>0, z<0 \}, & \quad N_7 & =
		\{ (x,y,z) \with -r/2 < y < f(x), z = 0 \}, \\
		N_4 & = B_4 \cap \{ (x,y,z) \with y<0, z<0 \}, & \quad N_8 & =
		\mathbf R^3 \cap \{ (x,y,z) \with y<-r/2,z=0 \};
	\end{alignat*}
	hence, $R$ is a two-dimensional submanifold of class $\infty$ of
	$\mathbf R^3$, the family $\Phi$ of its connected components consists
	of the sets $D_i$ and $P_{i,j}$ corresponding to positive integers $i$
	and $j \in \{ 1,2,3,4 \}$ together with the sets $N_1, \ldots, N_8$,
	the function $\boldsymbol \Uptheta^2 ( \| V \|, \cdot )|C$ is constant
	for $C \in \Phi$, we have $Q_j \subset B_j$ for $j \in \{ 1,2,3,4 \}$,
	\begin{alignat*}{2}
		B_1 \cap B_2 & = L_3, & \quad B_1 \cap B_3 & = L_1 \cup L_5
		\cup N_5 \\
		B_3 \cap B_4 & = L_4, & \quad B_2 \cap B_4 & = L_2 \cup L_6
		\cup N_8,
	\end{alignat*}
	and we let $X = (\bigcup \Phi ) \without K$.  Whenever $W$ is a member
	of a partition of $V$, we obtain
	\begin{equation*}
		\| W \| \restrict C = \| V \| \restrict C \quad \text{whenever
		$\| W \| ( C ) > 0$ and $C \in \Phi$}
	\end{equation*}
	by applying \ref{lemma:criterion_indecomposable}, for a suitable set
	$E$, with $m$, $n$, $U$, $M$, and $\{ c \}$ replaced by $2$, $3$,
	$\mathbf R^3$, $C$, and $\boldsymbol \Uptheta^2 ( \| V \|, \cdot )
	[C]$.  Thus, whenever $\Xi$ is a partition of $V$, we may partition
	$\Phi$ into nonempty subfamilies $Y_\Xi (W)$ corresponding to $W \in
	\Xi$ such that $W = V \restrict ( \bigcup Y_\Xi (W)) \times \mathbf
	G(3,2)$ for $W \in \Xi$.  We also record that
	\begin{equation*}
		\mathscr H^2 ( ( \Clos D_i) \without D_i ) = 0 \quad
		\text{and} \quad V \boundary \Clos D_i = 0
	\end{equation*}
	whenever $i$ is a positive integer, as $\mathbf n ( B_5, K, \cdot)$
	and $\mathbf n ( B_5, D_i, \cdot )$ agree on $\Clos D_i$ and applying
	\cite[4.4, 4.7]{MR0307015} with $M$ and $E$ replaced by $B_5$ and
	$D_i$ yields
	\begin{equation*}
		\updelta ( V \restrict D_i \times \mathbf G (3,2) ) ( \theta )
		= {\textstyle\int} \theta \bullet \mathbf n ( B_5, D_i, \cdot)
		\ud \mathscr H^1 \quad \text{for $\theta \in \mathscr D (
		\mathbf R^3, \mathbf R^3 )$}.
	\end{equation*}
	Accordingly, $\Clos D_1, \Clos D_2, \Clos D_3, \ldots$ form a $\| V \|
	+ \| \updelta V \|$ almost disjoint sequence of sets with vanishing
	distributional $V$ boundary whose union $\Clos K$ is $\| V \| + \|
	\updelta V \|$ equal to $\mathbf R^3 \without X$; in particular, we
	have $V \boundary X = 0$ and, defining
	\begin{gather*}
		Z_1 = V \restrict X \times \mathbf G (3,2), \quad Z_2 = V
		\restrict ( \Clos K ) \times \mathbf G (3,2), \\
		\text{and $\Pi = \{ V \restrict D_i \times \mathbf G (3,2)
		\with i = 1, 2, 3, \ldots \}$},
	\end{gather*}
	we see that $\{ Z_1, Z_2 \}$ and $\{ Z_1 \} \cup \Pi$ are partitions
	of $V$ by \ref{remark:char-partition}.  Employing
	\ref{lemma:little_more_general}, we also verify that $\Pi$ is the
	unique decomposition of $Z_2$.
	
	Since $V$ admits a decomposition by \cite[6.12]{MR3528825}, the proof
	may be concluded by showing that $Z_1$ belongs to every decomposition
	$\Xi$ of $V$ because in this case $\Xi \without \{ Z_1 \}$ is a
	decomposition of $Z_2$ by \cite[5.4, 6.7, 6.10]{MR3528825}.  For this
	purpose, computing $\updelta ( V \restrict C \times \mathbf G (3,2) )$
	whenever $C \in \Phi$, we shall verify:
	\begin{equation*}
		\begin{aligned}
			& \text{If $\| \updelta W \| ( L_1 ) = 0$ and $N_5 \in
			Y$, then $\{ N_1, N_3, N_6 \} \subset Y$}; \\
			& \text{if $\| \updelta W \| ( L_6 ) = 0$ and
			$\{N_2,N_4,N_7 \} \subset Y$, then $N_8 \in Y$}; \\
			& \text{if $\| \updelta W \| ( L_2 \cap L_5 ) = 0$ and
			$\{ N_1, N_3, N_6 \} \subset Y$, then $\{ N_2, N_4,
			N_7 \} \subset Y$;} \\
			& \text{if $\| \updelta W \| ( L_3 ) = 0$ and $N_1 \in
			Y$, then $Q_1 \subset {\textstyle \bigcup Y}$}; \\
			& \text{if $\| \updelta W \| ( L_3 ) = 0$ and $N_2 \in
			Y$, then $Q_2 \subset {\textstyle \bigcup Y}$}; \\
			& \text{if $\| \updelta W \| ( L_4 ) = 0$ and $N_3 \in
			Y$, then $Q_3 \subset {\textstyle\bigcup Y}$; and}, \\
			& \text{if $\| \updelta W \| ( L_4 ) = 0$ and $N_4 \in
			Y$, then $Q_4 \subset {\textstyle \bigcup Y}$}.
		\end{aligned}
	\end{equation*}
	whenever $Y \subset \Phi$ and $W = V \restrict ( \bigcup Y ) \times
	\mathbf G (3,2)$.  In fact, the first two implications are elementary;
	the third implication follows from the equations for $\mathbf n
	(M,E,\cdot)$; and the last four implications follow from the
	inclusions for $\mathbf n(M,F,\cdot)$.  Therefore, if $Y \subset \Phi$
	and $W = V \restrict ( \bigcup Y ) \times \mathbf G (3,2)$ is a member
	of a partition of $V$, then
	\begin{equation*}
		\text{either $X \subset {\textstyle \bigcup Y}$ or $X \cap
		{\textstyle\bigcup Y} = \varnothing$};
	\end{equation*}
	hence, there exists $W \in \Xi$ with $X \subset \bigcup Y_\Xi (W)$.
	Noting $\| \updelta V \| \restrict X$ is absolutely continuous with
	respect to $\| V \|$, we apply \ref{lemma:little_more_general}, for a
	suitable set $E$, with $m$, $n$, $U$, and $F$ replaced by $2$, $3$,
	$\mathbf R^3$, and $X$ to conclude $W \boundary X = 0$, whence we
	infer $\bigcup Y_\Xi (W) \subset X$.  Thus, $Z_1 \in \Xi$.
\end{proof}

\section{Properties of indecomposability with respect to a family of
generalised weakly differentiable real valued functions}
\label{section:indecomposability}

We introduce the main new concept of the present paper---indecomposability of
type $\Psi$---in
\ref{definition:indecomposability}--\ref{example:indecomposability-type-f}.
Its basic relations to topological connectedness are given in
\ref{thm:indecomposability_connected}--\ref{example:indecomposability} before
establishing first geometric consequences in
\ref{lemma:image-TVY}--\ref{remark:utility}.  The latter includes Theorem
\ref{Thm:consequences-indecomposability-singleton} of the introductory section
in \ref{lemma:basic_indecomp}\,\eqref{item:basic_indecomp:interval}%
\,\eqref{item:basic_indecomp:constant} and \ref{thm:utility}.

\begin{definition} \label{definition:indecomposability}
	Suppose $m$ and $n$ are positive integers, $m \leq n$, $U$~is an open
	subset of~$\mathbf{R}^n$, $V \in \mathbf{V}_m(U)$, $\| \updelta V\|$
	is a Radon measure, and $\Psi \subset \mathbf{T}(V)$. 
	
	Then, $V$ is called \emph{indecomposable of type~$\Psi$} if and only
	if, whenever $f \in \Psi$, the set of~$y \in \mathbf R$, such that
	$E(y) = \{x \with f(x)>y\}$ satisfies
	\begin{equation*}
		\|V\|( E(y) )>0, \quad \|V\|( U \without E(y) )>0, \quad V
		\boundary E(y) = 0,
	\end{equation*}
	has $\mathscr L^1$~measure zero.
\end{definition}

\begin{remark} \label{remark:indecomposability_implications}
	If $V$ is indecomposable, then $V$ is also indecomposable of type
	$\Psi$ whenever $\Psi \subset \mathbf T (V)$.  For $\Psi = \mathbf T (
	V)$, the converse implication holds; in fact, \cite[4.14]{MR3777387}
	readily yields that, whenever $E$ is a $\| V \| + \| \updelta
	V\|$~measurable set satisfying $V \boundary E = 0$, its characteristic
	function belongs to~$\mathbf T (V)$.
\end{remark}

\begin{remark} \label{remark:indecomposability_compactness}
	If $\spt \| V \|$ is compact, then indecomposability of
	types~$\mathscr E ( U, \mathbf R )$ and~$\mathscr D ( U, \mathbf R )$
	agree.  In general, these concepts differ as will be shown
	in~\ref{example:indecomposability}.
\end{remark}

\begin{remark} \label{remark:indecomposability_ball}
	If $V$ is indecomposable of type~$\mathscr D (U,\mathbf R )$, $a \in
	U$, $0 < r < \infty$, $\mathbf B (a,r) \subset U$, and $f : U \to
	\mathbf R$ satisfies $f(x) = \sup \{ r-|x-a|,0 \}$ for~$x \in U$, then
	$V$~is indecomposable of type~$\{ f \}$, as may be verified by
	approximation.
\end{remark}

\begin{remark} \label{remark:local_indecomposability}
	If $V$~is indecomposable of type~$\mathscr D ( U, \mathbf R )$ and
	$A$~is a relatively closed subset of~$U$, then $V | \mathbf 2^{(U
	\without A) \times \mathbf G (n,m)}$~is indecomposable of
	type~$\mathscr{D} ( U \without A, \mathbf R )$, as may be verified
	using the canonical extension map of~$\mathscr D ( U \without A,
	\mathbf R )$ into~$\mathscr D ( U, \mathbf R )$.
\end{remark}

\begin{example} \label{example:indecomposability-type-f}
	Taking $V$ and $\mathbf q$ as in
	\ref{example:indecomposability_of_type_f} and defining $E(y) = \{ x
	\with \mathbf q (x)>y \}$ whenever $y \in \mathbf R$, the set $\{ y
	\with \| V \| ( E(y) )> 0, \| V \| ( U \without E(y)) > 0, V \boundary
	E(y) = 0 \}$ is countably infinite and $V$ is indecomposable of type
	$\{ \mathbf q \}$.
\end{example}

\begin{theorem} \label{thm:indecomposability_connected}
	Suppose $m$ and $n$ are positive integers, $m \leq n$, $U$~is an open
	subset of~$\mathbf{R}^n$, $V \in \mathbf{V}_m(U)$, $\| \updelta V\|$
	is a Radon measure, and $V$~is indecomposable of type~$\mathscr E ( U,
	\mathbf R )$.
	
	Then, $\spt \| V \|$ is connected.
\end{theorem}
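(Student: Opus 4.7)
The plan is to argue by contradiction, constructing for a hypothetical disconnection of $\spt \| V \|$ an $f \in \mathscr E ( U, \mathbf R )$ witnessing the failure of indecomposability of type $\{ f \}$. If $\spt \| V \| = A \cup B$ is a disconnection, then both $A$ and $B$ are closed in $U$ (since $\spt \| V \|$ is), and normality of $U$ yields disjoint open neighbourhoods $U_A \supset A$ and $U_B \supset B$ with $\overline{U_A} \cap \overline{U_B} = \varnothing$; a standard smooth separation argument (or equivalently a smooth partition of unity) then produces $f \in \mathscr E ( U, \mathbf R )$ with $0 \leq f \leq 1$, $f | \overline{U_A} = 0$, and $f | \overline{U_B} = 1$. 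The crucial feature of $f$ is that $\Der f$ vanishes identically on the open set $U_A \cup U_B \supset \spt \| V \|$.

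For $y \in ( 0, 1 )$, I set $E(y) = \{ x \with f(x) > y \}$, so that $U_B \subset E(y)$ and $U_A \cap E(y) = \varnothing$. Because $\spt \| V \| \subset U_A \cup U_B$ and $\spt \| \updelta V \| \subset \spt \| V \|$ by a standard support argument, the symmetric difference $E(y) \vartriangle U_B$ is contained in $U \without ( U_A \cup U_B )$ and is therefore $\| V \| + \| \updelta V \|$ null; consequently $V \boundary E(y) = V \boundary U_B$. The strict inequalities $\| V \| ( E(y) ) \geq \| V \| ( B ) > 0$ and $\| V \| ( U \without E(y) ) \geq \| V \| ( A ) > 0$ then follow from the fact that every nonempty relatively open subset of $\spt \| V \|$ carries positive $\| V \|$ mass.

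The main step, and the only real obstacle, is to verify the distributional identity $V \boundary U_B = 0$. I plan to do this by testing against arbitrary $\theta \in \mathscr D ( U, \mathbf R^n )$ and computing $( \updelta V )( f \theta )$ in two distinct ways. On the one hand, using the defining integral representation of $\updelta V$ over $V$ together with the product rule, the contribution from $\Der f$ drops out because $\Der f = 0$ on $\spt \| V \|$ where $V$ is concentrated; the surviving contribution, combined with $f | U_A = 0$ and $f | U_B = 1$, reduces to $\updelta ( V \restrict U_B \times \mathbf G ( n, m ) )( \theta )$. On the other hand, rewriting $( \updelta V )( f \theta )$ via the representation $( \updelta V )( \phi ) = \int \boldsymbol \upeta ( V, \cdot ) \bullet \phi \ud \| \updelta V \|$ from \ref{remark:polar_decomposition} and recalling that $\| \updelta V \|$ is concentrated on $\spt \| V \| \subset U_A \cup U_B$ where $f$ equals $0$ on $U_A$ and $1$ on $U_B$, the same quantity equals $( ( \updelta V ) \restrict U_B )( \theta )$. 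Equating the two expressions yields $V \boundary U_B ( \theta ) = 0$, as desired.

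With this identity in hand, every $y \in ( 0, 1 )$ simultaneously satisfies the three conditions in the definition of indecomposability of type $\{ f \}$, producing a subset of $\mathbf R$ of positive $\mathscr L^1$ measure and contradicting the assumed indecomposability of type $\mathscr E ( U, \mathbf R ) \supset \{ f \}$. Aside from the distributional identity $V \boundary U_B = 0$, every step is a direct application of definitions and standard support arguments.
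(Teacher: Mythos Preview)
Your proposal is correct and follows essentially the same contradiction scheme as the paper: construct a smooth separating function for the two pieces of a hypothetical disconnection, then show that the corresponding superlevel sets violate indecomposability of type~$\mathscr E(U,\mathbf R)$. The only notable difference is that the paper invokes \cite[6.5]{MR3528825} to obtain $V \boundary E_1 = 0$ for the relatively clopen piece~$E_1$, whereas you unpack this step by computing $(\updelta V)(f\theta)$ in two ways; your direct argument is valid and makes the proof more self-contained.
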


\begin{proof}
	If $\spt \| V \|$ were not connected, then there would exist nonempty
	disjoint relatively closed subsets $E_0$ and $E_1$ of~$U$ with $\spt
	\| V \| = E_0 \cup E_1$, and \cite[2.16]{DOI_10.4171_RMI_1487} would
	yield $f$ satisfying $\spt \| V \| \cap \{ x \with f(x) > y \} = E_1$
	for $0 \leq y < 1$, in contradiction to $\| V \| (E_1) > 0$, $\| V \|
	( U \without E_1 ) > 0$, and $V \boundary E_1 = 0$ by
	\cite[6.5]{MR3528825}.
\end{proof}

\begin{remark}
	In view of~\ref{remark:indecomposability_implications}, the preceding
	theorem extends~\cite[6.5]{MR3528825}.
\end{remark}

\begin{example} \label{example:indecomposability}
	Suppose $m$ and $n$ are integers, $1 \leq m \leq n$,
	$U$~is an open subset of~$\mathbf R^n$, $M$~is an
	$m$~dimensional submanifold-with-boundary of class
	$2$ of $U$,
	the inclusion map $F : M \to U$ is proper, $V$ is associated with
	$(F,U)$, and $\Phi$ is the family of connected components of~$M$.
	Then, $\| \updelta V \|$ is a Radon measure
	by~\ref{remark:immersion_varifold} and one verifies the equivalence of
	the following four conditions using
	\ref{lemma:criterion_indecomposable} and
	\ref{thm:indecomposability_connected}:
	\begin{enumerate}
		\item \label{item:indecomposability:connected} The
		submanifold-with-boundary~$M$ is connected.
		\item The submanifold~$M \without \partial M$ is connected.
		\item \label{item:indecomposability:dito} The varifold $V$ is
		indecomposable.
		\item \label{item:indecomposability:type-E} The varifold $V$
		is indecomposable of type $\mathscr E (U,\mathbf R)$.
		\setcounter{enumi_memory}{\value{enumi}}
	\end{enumerate}
	Hence, \emph{$V \restrict C \times \mathbf G (n,m)$ is indecomposable
	and $V \boundary C = 0$ for $C \in \Phi$}, as $C$~is relatively open
	in~$M$.  Next, we notice that \cite[5.2]{MR3528825} may be used to
	obtain \emph{that
	\begin{equation*}
		\begin{aligned}
			V \boundary E ( \theta ) & = \sum_{C \in \Phi} ( V
			\restrict C \times \mathbf G (n,m) ) \boundary E (
			\theta ), \\
			\| V \boundary E \| ( f ) & = \sum_{C \in \Phi} \| ( V
			\restrict C \times \mathbf G (n,m) ) \boundary E \| (
			f )
		\end{aligned}
	\end{equation*}
	whenever $E$~is $\| V \| + \| \updelta V \|$~measurable, $\theta \in
	\mathscr D ( U, \mathbf R^n )$, and $f \in \mathscr K ( U )$.} These
	equations are readily used to verify that \emph{$\{ V \restrict C
	\times \mathbf G (n,m) \with C \in \Phi \}$ is the unique
	decomposition of~$V$} and that the following two conditions are
	equivalent:
	\begin{enumerate}
		\setcounter{enumi}{\value{enumi_memory}}
		\item If $C$ is compact for some $C \in \Phi$, then $M$ is
		connected.
		\item The varifold~$V$ is indecomposable of type~$\mathscr D (
		U, \mathbf R )$.
	\end{enumerate}
\end{example}

\begin{lemma} \label{lemma:image-TVY}
	Suppose $m$ and $n$ are positive integers, $m \leq n$, $U$ is an open
	subset of $\mathbf R^n$, $V \in \mathbf V_m ( U )$, $\| \updelta V \|$
	is a Radon measure, $Y$ is a finite dimensional normed space, and $f
	\in \mathbf T ( V,Y )$.
	
	Then, there holds
	\begin{equation*}
		f(x) \in \spt f_\# \| V \| \quad \text{for $\| V \| + \|
		\updelta V \|$ almost all $x$}.
	\end{equation*}
\end{lemma}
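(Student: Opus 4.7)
The plan is to reduce the statement to a locality lemma asserting that a function $g \in \mathbf T ( V )$ vanishing $\| V \|$-almost everywhere must vanish $\| \updelta V \|$-almost everywhere as well.

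Let $C = \spt f_\# \| V \|$, a closed subset of $Y$, and, for each positive integer $M$, define the bounded $1$-Lipschitzian function $\phi_M : Y \to \mathbf R$ by $\phi_M (y) = \min \{ \dist (y,C), M \}$; note $\phi_M (y) = 0$ if and only if $y \in C$. Since $Y$ is separable---being finite-dimensional---the open set $Y \without C$ is a countable union of open sets of zero $f_\# \| V \|$ measure by the very definition of $\spt$, whence $\| V \| ( f^{-1} [ Y \without C ] ) = 0$; consequently $g_M = \phi_M \circ f$ vanishes $\| V \|$-almost everywhere, and by the uniqueness of the generalised weak derivative, so does $V \weakD g_M$.

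By the Lipschitzian chain rule for generalised weakly differentiable functions (see \cite[8.20]{MR3528825} and \cite[4.7]{MR3777387}), one has $g_M \in \mathbf T ( V )$. Substituting $g_M$ into the defining integration-by-parts formula for $\mathbf T ( V )$ and invoking \ref{remark:polar_decomposition}, all $\| V \|$-integrals cancel, leaving
\begin{equation*}
	{\textstyle \int} g_M (x) \, \boldsymbol \upeta ( V,x ) \bullet \theta (x) \ud ( \| \updelta V \| - \| \updelta V \|_{\| V \|} ) (x) = 0
\end{equation*}
for every $\theta \in \mathscr D ( U, \mathbf R^n )$. Since $| \boldsymbol \upeta ( V, \cdot ) | = 1$ holds $\| \updelta V \|$-almost everywhere by \ref{remark:polar_decomposition} and $\| \updelta V \| - \| \updelta V \|_{\| V \|}$ is a Radon measure, mollification and approximation of $\boldsymbol \upeta ( V, \cdot)$ by smooth vector fields supported on small balls yield $g_M = 0$ almost everywhere with respect to $\| \updelta V \| - \| \updelta V \|_{\| V \|}$; combined with the vanishing on the $\| V \|$-absolutely continuous part $\| \updelta V \|_{\| V \|}$, this gives $g_M = 0$ holds $\| \updelta V \|$-almost everywhere. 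Letting $M \to \infty$ and applying monotone convergence, $\dist ( f (x), C) = 0$ for $\| V \| + \| \updelta V \|$ almost all $x$, which is the principal conclusion.

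The main obstacle is the duality step---deducing from the vanishing of the distribution $\theta \mapsto \int g_M \, \boldsymbol \upeta (V, \cdot) \bullet \theta \ud ( \| \updelta V \| - \| \updelta V \|_{\| V \|})$ that $g_M$ itself vanishes on the singular part of $\| \updelta V \|$---which requires carefully exploiting the unit-length property of $\boldsymbol \upeta ( V, \cdot )$ together with standard distributional arguments, and locating the precise Lipschitzian chain rule for $Y$-valued functions in the cited papers.
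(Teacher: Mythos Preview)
Your proof is correct and follows the same conceptual route as the paper: compose $f$ with a scalar function vanishing on $C = \spt f_\# \|V\|$, observe the result lies in $\mathbf T(V)$ and vanishes $\|V\|$-almost everywhere, and then conclude it vanishes $\|\updelta V\|$-almost everywhere. The paper uses smooth $\gamma \in \mathscr D(Y,\mathbf R)$ with $\spt \gamma \cap C = \varnothing$ (via \cite[8.15]{MR3528825}) and dispatches the last step by simply quoting the inequality $\|\updelta V\|_{(\infty)}(\gamma \circ f) \leq \|V\|_{(\infty)}(\gamma \circ f)$ from \cite[8.33]{MR3528825}; you instead use Lipschitz $\phi_M$ and re-derive that locality fact by hand from the defining integration-by-parts identity. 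Your duality step, which you flag as the main obstacle, is in fact immediate from \ref{example:distributions_representable_by_integration}: taking $k = g_M\,\boldsymbol\upeta(V,\cdot)$ and $\phi = \|\updelta V\|$ there yields $\|T\| = \|\updelta V\| \restrict g_M$ directly (since $|\boldsymbol\upeta(V,\cdot)| = 1$ and $g_M \geq 0$), so $T = 0$ forces $g_M = 0$ $\|\updelta V\|$-almost everywhere with no approximation needed. The only genuinely loose end is locating a Lipschitz chain rule for $Y$-valued $f$; the paper sidesteps this by sticking to smooth $\gamma$.
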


\begin{proof}
	Noting \cite[4.11]{MR3777387}, we see that $\gamma \circ f \in \mathbf
	T (V)$ by \cite[8.15]{MR3528825} and obtain
	\begin{equation*}
		f(x) \in \{ y \with \gamma(y) = 0 \} \quad \text{for $\|
		\updelta V \|$ almost all $x$},
	\end{equation*}
	since we have $\| \updelta V \|_{(\infty)} ( \gamma \circ f ) \leq \|
	V \|_{(\infty)} ( \gamma \circ f ) = 0$ by \cite[8.33]{MR3528825},
	whenever $\gamma \in \mathscr D ( Y, \mathbf R )$ satisfies $\spt
	\gamma \cap \spt f_\# \| V \| = \varnothing$.
\end{proof}

\begin{lemma} \label{lemma:basic_indecomp}
	Suppose $m$ and $n$ are positive integers, $m \leq n$, $U$~is an open
	subset of~$\mathbf R^n$, $V \in \mathbf V_m ( U )$, $\| \updelta V \|$
	is a Radon measure, $f \in \mathbf T (V)$, $V$ is indecomposable of
	type~$\{ f \}$, and $J = \spt f_\# \| V \|$.
	
	Then, the following four statements hold.
	\begin{enumerate}
		\item \label{item:basic_indecomp:interval} The set $J$ is a
		subinterval of $\mathbf R$.
		\item \label{item:basic_indecomp:bdry} For $\mathscr L^1$
		almost all $y \in J$, we have $V \boundary \{ x \with f (x)>y
		\} \neq 0$.
		\item \label{item:basic_indecomp:abs} If $Y \subset \mathbf R$
		and $V \weakD f (x) = 0$ for $\| V \|$~almost all $x \in
		f^{-1} [Y]$, then we have $\mathscr L^1 ( Y \cap J ) = 0$.
		\item \label{item:basic_indecomp:constant} If $V \weakD f =
		0$, then $f$ is $\| V \| + \| \updelta V \|$~almost constant.
	\end{enumerate}
\end{lemma}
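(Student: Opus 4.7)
The plan is to deduce (1) by contradiction from the hypothesized indecomposability via a coarea formula, then derive (2) and (3) as direct consequences, and finally conclude (4) from (3) together with Lemma \ref{lemma:image-TVY}. The key tool is the coarea formula for generalised $V$ weakly differentiable functions: combining Theorem \ref{Thm:GBV}, Corollary \ref{corollary:equiv_bv}, and Remark \ref{remark:equiv_bv}, the absolute continuity statement $\|T\| \ll F_\# \|V\|$ with density $|V \weakD f|$ (cf.\ the proof of \ref{corollary:equiv_bv}) yields the identity
\begin{equation*}
	{\textstyle\int} \psi(f(x)) \eta(x) |V \weakD f(x)| \ud \| V \| \, x = {\textstyle\int} \psi(y) \| V \boundary E(y) \| ( \eta ) \ud \mathscr L^1 \, y
\end{equation*}
for nonnegative Borel $\psi$ on $\mathbf R$ and $\eta$ on $U$, where $E(y) = \{ x \with f(x) > y \}$.

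For (1), I argue by contradiction: if $J$ fails to be a subinterval, pick $y_1 < y_0 < y_2$ with $y_1, y_2 \in J$ and some open interval $I \subset (y_1, y_2)$ with $y_0 \in I$ and $I \cap J = \varnothing$, so $\|V\|(f^{-1}[I]) = f_\# \|V\|(I) = 0$. Taking $\psi = \chi_I$ in the coarea formula and letting $\eta$ range over a countable dense family of $\mathscr K (U)$ yields $V \boundary E(y) = 0$ for $\mathscr L^1$ almost all $y \in I$. Since $y_1, y_2 \in \spt f_\# \|V\|$ lie on opposite sides of $I$, every $y \in I$ satisfies $\|V\|(E(y)) > 0$ and $\|V\|(U \without E(y)) > 0$, contradicting the indecomposability of type $\{ f \}$. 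Conclusion (2) then follows from (1): for $y$ in the interior of the interval $J$, one can choose points of $J$ on either side to obtain $\|V\|(E(y)) > 0$ and $\|V\|(U \without E(y)) > 0$, so indecomposability of type $\{f\}$ forces $V \boundary E(y) \neq 0$ for $\mathscr L^1$ almost all such $y$; the endpoints of $J$ carry zero $\mathscr L^1$ measure.

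For (3), take $\psi = \chi_Y$ and exhaust $U$ by compact sets via $\eta$; the hypothesis $V \weakD f = 0$ on $f^{-1}[Y]$ annihilates the left-hand side of the coarea identity, so $V \boundary E(y) = 0$ for $\mathscr L^1$ almost all $y \in Y$, and combining with (2) gives $\mathscr L^1 ( Y \cap J ) = 0$. Finally (4) follows by applying (3) with $Y = \mathbf R$: the resulting $\mathscr L^1 (J) = 0$ and the fact that $J$ is an interval (by (1)) force $J$ to be a singleton $\{c\}$ (or empty, in which case $V = 0$ and the assertion is trivial); Lemma \ref{lemma:image-TVY} then guarantees $f(x) \in J$ for $\|V\| + \|\updelta V\|$ almost all $x$, yielding $f = c$ almost everywhere with respect to $\|V\| + \|\updelta V\|$. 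The principal technical step is the rigorous passage from the absolute continuity statement of Theorem \ref{Thm:GBV} and Remark \ref{remark:equiv_bv} to the above coarea identity with the correct density $|V \weakD f|$; once this identity is in hand, the remaining deductions reduce to the definition of indecomposability of type $\{f\}$ together with elementary properties of the interval $J = \spt f_\# \|V\|$.
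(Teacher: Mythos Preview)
Your argument is correct, and the logical flow---establishing the coarea identity, proving (1) by contradiction, then deducing (2), (3), (4) in order---is sound. However, your route differs from the paper's in the key technical device.

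The paper does not invoke the coarea identity you extract from \ref{thm:equiv_bv}--\ref{remark:equiv_bv}. Instead, it builds an auxiliary finite measure $\nu$ on $\mathbf R$ (a weighted sum of pushforwards of $\| V \| \restrict |V \weakD f|$ over a compact exhaustion) and appeals to the pointwise result \cite[8.29]{MR3528825}, which yields the inclusion $\{ y \with \boldsymbol\Uptheta^1(\nu,y) = 0 \} \subset B$ where $B = \{ y \with V \boundary E(y) = 0 \}$. The paper then proves (2) first (directly from indecomposability, as you do), and obtains (1) and (3) simultaneously via a density comparison \cite[2.10.19\,(4)]{MR41:1976}: for any $Y$ with $\nu(Y)=0$ one gets $\mathscr L^1(Y \cap I) = 0$ with $I = [\inf J, \sup J]$, and taking $Y = I \without J$ forces $I = J$. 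Your approach instead proves (1) first by finding an entire subinterval of~$B$ inside any gap of~$J$, which is a cleaner contradiction once the integral coarea formula is in hand.

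What each buys: your route is more self-contained within the present paper---everything comes from Theorem~\ref{Thm:GBV} and its corollaries rather than from \cite[8.29]{MR3528825}. The paper's route via $\nu$ handles the case of a non-measurable~$Y$ in~(3) transparently, since $\nu(Y)=0$ is an outer-measure statement and the density comparison needs no measurability of~$Y$. In your argument, applying the coarea identity with $\psi = \chi_Y$ requires a short additional step when $Y$ is not Borel: pass to a Borel hull $Y' \supset Y$ with $\int_{f^{-1}[Y']} |V\weakD f| \ud \| V \| = 0$ on each compact set (such $Y'$ exists because $f^{-1}[Y] \cap \{ |V\weakD f| > 0 \}$ is $\|V\|$~null), run your argument for~$Y'$, and conclude for~$Y \subset Y'$. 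You flag the coarea passage as the principal technical step but do not mention this hull; it is routine, yet worth noting since the statement imposes no measurability on~$Y$.
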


\begin{proof}
	Abbreviate $I = \mathbf R \cap \{ y \with \inf J \leq y \leq \sup J
	\}$, choose compact sets $K_i$ with $K_i \subset \Int K_{i+1}$ and
	$\bigcup_{i=1}^\infty K_i = U$, and pick $\varepsilon_i > 0$ such that
	(see \cite[2.8, 2.11]{DOI_10.4171_RMI_1487})
	\begin{equation*}
		\nu = \sum_{i=1}^\infty \varepsilon_i f_\# \big ( ( \| V \|
		\restrict K_i \cap \{ x \with |f(x)| \leq i \} ) \restrict | V
		\weakD f | \big )
	\end{equation*}
	satisfies $\nu ( \mathbf R ) < \infty$. Define $B = \mathbf R \cap \{
	y \with V \boundary \{ x \with f (x)>y \} = 0 \}$.  Clearly, we have
	$\mathscr L^1 ( B \cap I ) = 0$; in particular
	\eqref{item:basic_indecomp:bdry} holds.  \cite[4.11]{MR3777387} and
	\cite[8.29]{MR3528825} yield
	\begin{equation*}
		\{ y \with \boldsymbol \Uptheta^1 ( \nu, y ) = 0 \} \subset B.
	\end{equation*}
	If $Y$ satisfies the hypotheses of~\eqref{item:basic_indecomp:abs},
	then we have $\nu (Y)=0$ which entails firstly $\mathscr L^1 ( Y
	\without \{ y \with \boldsymbol \Uptheta^1 ( \nu, y ) = 0 \} ) = 0$ by
	\cite[2.10.19\,(4)]{MR41:1976}, and then $\mathscr L^1 ( Y \cap I ) =
	0$.  Now, \eqref{item:basic_indecomp:interval} and
	\eqref{item:basic_indecomp:abs} follow; in particular $I = J$.  Under
	the hypothesis of~\eqref{item:basic_indecomp:constant}, we have $\nu =
	0$ which implies $B = \mathbf R$, $\mathscr L^1 (I) = 0$, and that
	$f$~is $\| V \|$~almost constant; thence, \ref{lemma:image-TVY}
	implies \eqref{item:basic_indecomp:constant}.
\end{proof}

\begin{theorem} \label{thm:utility}
	Suppose $m$ and $n$ are positive integers, $m \leq n$, $U$ is an open
	subset of $\mathbf R^n$, $V \in \mathbf V_m (U)$, $\| \updelta V \|$
	is a Radon measure, $f \in \mathbf T(V)$, $V$ is indecomposable of
	type $\{ f \}$, $Y \subset \mathbf R$, and $f(x) \in Y$ for $\| V \|$
	almost all $x$.
	
	Then, there holds
	\begin{equation*}
		\diam \spt f_\# \| V \| \leq \mathscr L^1 (Y).
	\end{equation*}
\end{theorem}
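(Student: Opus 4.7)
The plan is to deduce this directly from Lemma~\ref{lemma:basic_indecomp}, which already captures the two ingredients needed: that $J = \spt f_\# \|V\|$ is an interval, and that $f$ cannot sweep out a set of positive $\mathscr L^1$ measure inside $J$ on which $V\weakD f$ vanishes.

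First, item~\eqref{item:basic_indecomp:interval} of~\ref{lemma:basic_indecomp} gives that $J$ is a subinterval of $\mathbf R$, so $\diam J = \mathscr L^1(J)$. Next, set
\begin{equation*}
	Y' = J \without Y.
\end{equation*}
Since $f(x) \in Y$ for $\|V\|$ almost all $x$, the preimage $f^{-1}[Y']$ is $\|V\|$ null; in particular, the condition ``$V\weakD f(x) = 0$ for $\|V\|$ almost all $x \in f^{-1}[Y']$'' is satisfied vacuously. Applying item~\eqref{item:basic_indecomp:abs} of~\ref{lemma:basic_indecomp} with $Y$ replaced by $Y'$ then yields
\begin{equation*}
	\mathscr L^1 ( J \without Y ) = \mathscr L^1 ( Y' \cap J ) = 0.
\end{equation*}

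By countable subadditivity of $\mathscr L^1$ (as an outer measure, so no measurability hypothesis on $Y$ is needed), we conclude
\begin{equation*}
	\diam J = \mathscr L^1 ( J ) \leq \mathscr L^1 ( J \cap Y ) + \mathscr L^1 ( J \without Y ) \leq \mathscr L^1 ( Y ),
\end{equation*}
which is the desired bound. No step appears delicate; the only point to verify is that item~\eqref{item:basic_indecomp:abs} of~\ref{lemma:basic_indecomp} may indeed be applied to $Y'$, which is immediate because the hypothesis is vacuous on a $\|V\|$ null set.
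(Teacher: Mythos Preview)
Your proof is correct and follows essentially the same route as the paper: both invoke \ref{lemma:basic_indecomp}\,\eqref{item:basic_indecomp:interval} to see $J$ is an interval and \ref{lemma:basic_indecomp}\,\eqref{item:basic_indecomp:abs} applied to the complement of $Y$ (the paper uses $\mathbf R \without Y$, you use $J \without Y$, which yields the same conclusion) to obtain $\mathscr L^1(J \without Y) = 0$.
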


\begin{proof}
	From \ref{lemma:basic_indecomp}\,\eqref{item:basic_indecomp:interval}
	and \ref{lemma:basic_indecomp}\,\eqref{item:basic_indecomp:abs}
	applied with $Y$ replaced by $\mathbf R \without Y$, we conclude that
	$\spt f_\# \| V \|$ is an interval which is $\mathscr L^1$ almost
	contained in $Y$.
\end{proof}

\begin{remark} \label{remark:utility}
	The utility of an estimate of $\diam \spt f_\# \| V \|$ is illustrated
	by the following two facts concerning Radon measures $\phi$ over $U$,
	see \cite[2.11]{DOI_10.4171_RMI_1487}.
	\begin{enumerate}
		\item \label{item:utility:ess-sup} Whenever $g$ is a
		nonnegative real valued $\phi$ measurable function satisfying
		$\phi \, \{ x \with g(x) \leq y \} > 0$ for $y>0$, we have
		$\phi_{(\infty)} (g) = \diam \spt g_\# \phi$.
		\item \label{item:utility:continuous} If $g : \spt \phi \to
		\mathbf R$ is continuous, then $\im g$ is a dense subset of
		$\spt g_\# \phi$.
	\end{enumerate}
	In the final paper of our series \cite{arXiv:1709.05504v3}, for
	certain $f$, such a set $Y$ satisfying a geometric estimate for
	$\mathscr L^1 (Y)$ will be constructed to obtain a novel type of
	Sobolev Poincaré inequality.
\end{remark}

\section{Unique partition along a generalised weakly
differentiable real valued function}

Following some preparations in
\ref{lemma:restriction}--\ref{remark:redundancy-derivative-agreement}, we
introduce the concept of partition along $f$ in
\ref{definition:partition-along}--\ref{remark:partition-along-pi}. Its
uniqueness and existence properties are proven in
\ref{thm:uniqueness-partition-along}--\ref{remark:constancy-theorem}; this
includes Theorem \ref{Thm:partition-along-fct} of the introductory section in
\ref{thm:uniqueness-partition-along} and
\ref{thm:decomposition_adapted_to_fct}.

\begin{lemma} \label{lemma:restriction}
	Suppose $m$ and $n$ are positive integers, $m \leq n$, $U$ is an open
	subset of $\mathbf R^n$, $V \in \mathbf V_m ( U )$, $\| \updelta V \|$
	is a Radon measure, $f \in \mathbf T ( V )$, $y \in \mathbf R$, and
	\begin{equation*}
		E = \{ x \with f(x) > y \}, \quad V \boundary E = 0, \quad W =
		V \restrict E \times \mathbf G (n,m).
	\end{equation*}
	
	Then, there holds $f \in \mathbf T (W)$ and
	\begin{equation*}
		W \weakD f(x) = V \weakD f(x) \quad \text{for $\| W \|$ almost
		all $x$}.
	\end{equation*}
\end{lemma}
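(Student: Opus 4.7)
The plan is to reduce the statement, through the subgraph characterisation of $\mathbf T(V)$ from \ref{thm:equiv_bv} and \ref{corollary:equiv_bv}, to the identity for distributional boundaries of intersections furnished by \ref{lemma:little_more_general}. Measurability of $f$ with respect to $\|W\| + \|\updelta W\|$ is immediate from \ref{lemma:zero_distributional_boundary}, since $\|W\| + \|\updelta W\| = (\|V\| + \|\updelta V\|) \restrict E$.

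The essential comparison is the following. For $z \in \mathbf R$, abbreviate $E(z) = \{x \with f(x) > z\}$. Applying \ref{lemma:little_more_general} with $F$ replaced by $E(z)$ and using $V \boundary E = 0$ yields
\begin{equation*}
    W \boundary E(z) = V \boundary ( E \cap E(z) ).
\end{equation*}
A case analysis on the sign of $z - y$ then gives $W \boundary E(z) = V \boundary E(z)$ when $z \geq y$ and $W \boundary E(z) = V \boundary E = 0$ when $z < y$. Combining this with the coarea formula of \ref{remark:equiv_bv} applied to both $V$ and $W$, the auxiliary distribution $T$ associated by \ref{thm:equiv_bv} to $f$ and $W$ is representable by integration because its counterpart for $V$ is, so $f$ possesses generalised bounded variation with respect to $W$. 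Moreover, $F_\# \|W\| = F_\# \|V\| \restrict ( E \times \mathbf R )$ by definition, and the variation of the new $T$ is concentrated in the region $\{(x,z) \with z > y\}$ where the two subgraph boundary measures coincide by the case analysis above; hence the absolute continuity hypothesis of \ref{corollary:equiv_bv} transfers from $V$ to $W$, yielding $f \in \mathbf T (W)$.

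Finally, the derivative identity follows from the explicit formula in the proof of \ref{corollary:equiv_bv}, which identifies the generalised weak derivative at $x$ as $(gk)(F(x))$, where $gk$ is built from the density of the subgraph-boundary variation measure with respect to the graph pushforward. For $\|W\|$-almost all $x$ one has $x \in E$, so $f(x) > y$ and $F(x) = (x, f(x))$ lies in the region where the two subgraph boundary measures coincide; hence the extracted densities agree and $W \weakD f (x) = V \weakD f (x)$. The main obstacle in a careful write-up will be the notational collision with the auxiliary varifold $W$ appearing in \ref{thm:equiv_bv} and \ref{corollary:equiv_bv} together with the bookkeeping for pushforward measures under restriction; no new analytic content beyond the cited results should be required.
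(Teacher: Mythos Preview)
Your approach is correct and genuinely different from the paper's. The paper never invokes the subgraph characterisation; instead it truncates $f$ to $g = \sup\{f,y\} \in \mathbf T(V)$ via \cite[8.12, 8.13\,(4)]{MR3528825}, observes that $V \weakD g$ vanishes on $U \without E$ and equals $V \weakD f$ on $E$, and then splits the defining integration-by-parts identity for $g$ along the partition $\{E, U \without E\}$; the contribution from $U \without E$ collapses to a multiple of $V \boundary (U \without E) = 0$, and what remains is exactly the defining identity for $f \in \mathbf T(W)$ with derivative $V \weakD f$. Your route through \ref{thm:equiv_bv}, \ref{corollary:equiv_bv}, and \ref{lemma:little_more_general} trades this direct computation for the geometric picture of subgraph boundaries, and it is a pleasant demonstration that Theorem~\ref{Thm:GBV} can be used as a working tool rather than only as a structural equivalence. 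The paper's argument is shorter and avoids the auxiliary $(m{+}1)$-dimensional varifold entirely; yours makes the role of the superlevel-set identity $W \boundary E(z) = V \boundary(E \cap E(z))$ completely transparent and would generalise more readily to the bounded-variation setting mentioned in the paper's closing remarks. One small point worth making explicit in your write-up: the coarea formula forces $\|T_V\|(U \times \{y\}) = 0$, which is what guarantees that the restriction to $\{z > y\}$ versus $\{z \geq y\}$ is immaterial when you transfer absolute continuity.
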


\begin{proof}
	Employing \cite[4.11]{MR3777387} and \cite[8.12,
	8.13\,(4)]{MR3528825}, we define the function $g = \sup \{ f,y \} \in
	\mathbf T (V)$ and notice
	\begin{align*}
		V \weakD g(x) & = V \weakD f(x) \quad \text{for $\| V \|$
		almost all $x \in E$} \\
		V \weakD g(x) & = 0 \quad \text{for $\| V ||$ almost all $x
		\in U \without E$}.
	\end{align*}
	Splitting the left integrals along the partition $\{ E, U \without E
	\}$, we obtain
	\begin{gather*}
		( \updelta V ) ( ( \gamma \circ g ) \theta ) = ( \updelta W )
		( ( \gamma \circ f ) \theta ) + \gamma(y) \big ( (\updelta V)
		\restrict ( U \without E ) \big ) ( \theta ), \\
		\begin{aligned}
			& {\textstyle\int} \gamma ( g(x)) S_\natural \bullet
			\Der \theta (x) \ud V \, (x,S) \\
			& \qquad = {\textstyle\int} \gamma ( f(x) ) S_\natural
			\bullet \Der \theta (x) \ud W \, (x,S) + \gamma(y)
			\updelta (  V \restrict ( U \without E ) \times
			\mathbf G (n,m)) ( \theta ),
		\end{aligned} \\
		{\textstyle\int} \langle \theta (x), \Der \gamma ( g(x) )
		\circ V \weakD g (x) \rangle \ud \| V \| \, x =
		{\textstyle\int} \langle \theta (x), \Der \gamma (f(x)) \circ
		V \weakD f(x) \rangle \ud \| W \| \, x
	\end{gather*}
	whenever $\theta \in \mathscr D ( U, \mathbf R^n )$, $\gamma \in
	\mathscr E ( \mathbf R, \mathbf R )$, and $\spt \Der \gamma$ is
	compact.  Subtracting the last two equations from the first, the
	conclusion follows, as $V \boundary ( U \without E ) = 0$.
\end{proof}

\begin{remark}
	In view of \ref{example:no-adapted-decomposition} or
	\cite[8.25]{MR3528825}, the condition on $E$ to equal $\{ x \with f(x)
	> y \}$ for some $y \in \mathbf R$ may not weakened to $\| V \| + \|
	\updelta V \|$ measurability.
\end{remark}

\begin{lemma} \label{lemma:family_of_pieces}
	Suppose $m$ and $n$ are positive integers, $m \leq n$, $U$ is an open
	subset of $\mathbf R^n$, $V \in \mathbf V_m ( U )$, $\| \updelta V \|$
	is a Radon measure, $f \in \mathbf T (V)$, and $R$ is the family of
	all $\| V \| + \| \updelta V \|$ measurable sets $E$ such that $V
	\boundary E = 0$ and such that $W = V \restrict E \times \mathbf G
	(n,m)$ satisfies $f \in \mathbf T (W)$ and
	\begin{equation*}
		W \weakD f(x) = V \weakD f(x) \quad \text{for $\| W \|$ almost
		all $x$}.
	\end{equation*}
	
	Then, the following four statements hold:
	\begin{enumerate}
		\item The set $U$ belongs to $R$.
		\item If $E \in R$, $F \in R$, and $F \subset E$, then $E
		\without F \in R$.
		\item If $E_1, E_2, E_3, \ldots$ form a disjoint sequence in
		$R$, then $\bigcup_{i=1}^\infty E_i \in R$.
		\item If $E_1 \supset E_2 \supset E_3 \supset \cdots$ form a
		sequence in $R$, then $\bigcap_{i=1}^\infty E_i \in R$.
	\end{enumerate}
	In particular, if $I$ is a subinterval of $\mathbf R$, $E = f^{-1}
	[I]$, and $V \boundary E = 0$, then $E \in R$.
\end{lemma}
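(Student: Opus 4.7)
The plan is to verify closure properties (1)--(4) in order and then derive the ``in particular'' clause from them together with \ref{lemma:restriction}.  Statement (1) is immediate since $V \restrict U \times \mathbf G(n,m) = V$ and $f \in \mathbf T(V)$ holds by hypothesis.  For (2), set $W_E = V \restrict E \times \mathbf G(n,m)$ and similarly $W_F$, $W_{E \without F}$.  Applying \ref{lemma:little_more_general} to the pair $(E,F)$, where $E \cap F = F$ and $V \boundary E = 0$, gives $W_E \boundary F = V \boundary F = 0$; applying it again to $(E, U \without F)$ yields $V \boundary (E \without F) = W_E \boundary (U \without F) = -W_E \boundary F = 0$.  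The weak-derivative condition then follows because $W_E = W_F + W_{E \without F}$, so $\updelta W_{E \without F} = \updelta W_E - \updelta W_F$, and subtracting the integration-by-parts identities witnessing $E, F \in R$ yields the identity for $W_{E \without F}$ with weak derivative $V \weakD f$.  For (3), countable additivity of $\updelta V$ on disjoint measurable sets combined with linearity of the first variation gives $V \boundary E = \sum_i V \boundary E_i = 0$; summing the integration-by-parts identities for the $W_{E_i}$ and invoking dominated convergence places $f$ in $\mathbf T(W_E)$ with the prescribed weak derivative.

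Statement (4) is a consequence of (2) and (3) via the telescoping identity $E_1 \without E = \bigsqcup_i (E_i \without E_{i+1})$: each summand belongs to $R$ by (2), the disjoint union belongs to $R$ by (3), and a further application of (2) with $F = E_1 \without E \subset E_1$ delivers $E = E_1 \without (E_1 \without E) \in R$.

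For the ``in particular'' clause, the principal input is \ref{lemma:restriction}, which places $\{f>y\}$ in $R$ whenever $V \boundary \{f>y\} = 0$; applied to $-f \in \mathbf T(V)$ via \cite[8.12]{MR3528825}, it yields the analogous statement for strict sublevel sets, and combination with $U \in R$ and (2) converts these to weak sub- or superlevel sets via complementation.  When $I$ is a half-line, the conclusion follows at once from these observations together with the equality $V \boundary (U \without E) = -V \boundary E$.  When $I$ is bounded with endpoints $a<b$---the main anticipated difficulty---writing $\{f>a\} = E \sqcup \{f \geq b\}$ (taking $I=(a,b)$ for concreteness) forces $V \boundary \{f>a\} = V \boundary \{f \geq b\}$; I expect to verify that this common value vanishes by exploiting that its variation should be simultaneously concentrated on $\{f=a\}$ and on $\{f=b\}$, two disjoint sets, after which \ref{lemma:restriction} and (2) deliver $E \in R$, and closed and half-open bounded intervals follow by inserting or removing the level sets $\{f=a\}$ or $\{f=b\}$ through further applications of (2) and (3).
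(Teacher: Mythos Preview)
Your treatment of (1)--(4) is correct and matches the paper's approach: both reformulate membership in $R$ via the integral identity
\begin{equation*}
	((\updelta V)\restrict E)((\gamma\circ f)\theta) = \int_{E\times\mathbf G(n,m)} \big[(\gamma\circ f)\,S_\natural\bullet\Der\theta + \langle\theta,\Der\gamma(f)\circ V\weakD f\rangle\big]\ud V,
\end{equation*}
observe that this identity is additive in $E$ (and that $V\boundary E=0$ is likewise additive by \cite[5.3]{MR3528825}), and read off the closure properties. Your derivation of (4) from (2) and (3) by telescoping is exactly right. For half-lines the postscript also goes through just as you indicate.

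For bounded $I$, however, your sketch has a genuine gap. You correctly reduce to showing that the common value $V\boundary\{f>a\}=V\boundary\{f\geq b\}$ vanishes, and you propose to argue that $\|V\boundary\{f>a\}\|$ is concentrated on $\{f=a\}$ while $\|V\boundary\{f\geq b\}\|$ is concentrated on $\{f=b\}$. But this concentration is not available at this level of generality: $f\in\mathbf T(V)$ need not be continuous, so the topological boundary of $\{f>a\}$ need not lie in $\{f=a\}$, and indeed $\|V\boundary\{f>a\}\|$ need not even be a Radon measure for every $a$, nor absolutely continuous with respect to $\|V\|+\|\updelta V\|$ (so restricting it to a level set of $f$ is not well-defined). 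Your phrase ``I expect to verify'' is therefore hiding the entire difficulty. A direct computation (extending the one in the proof of \ref{lemma:restriction} with the two-sided truncation $g=\inf\{\sup\{f,a\},b\}$) shows that the defect of the identity on $E=f^{-1}[I]$ equals $(\gamma(b)-\gamma(a))\,V\boundary\{f\geq b\}(\theta)$; so to place $E$ in $R$ you really must show $V\boundary\{f\geq b\}=0$, and your concentration heuristic does not accomplish this without further input. The paper's one-line proof of the postscript is itself quite terse; making the bounded-interval case rigorous seems to require finer properties of $\mathbf T(V)$ from \cite{MR3528825} (compare the use of \cite[8.5,\,8.30]{MR3528825} later in the proof of \ref{thm:decomposition_adapted_to_fct}) rather than a purely formal combination of \ref{lemma:restriction} with (1)--(4).
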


\begin{proof}
	Recalling \cite[5.3]{MR3528825}, we verify the principal conclusion
	using that $E \in R$ if and only if $E$ is $\| V \| + \| \updelta V
	\|$ measurable, $V \boundary E = 0$, and
	\begin{align*}
		& ((\updelta V) \restrict E ) ( ( \gamma \circ f ) \theta ) \\
		& \quad = {\textstyle\int_{E \times \mathbf G (n,m)} ( \gamma
		\circ f ) (x) S_\natural \bullet \Der \theta (x) + \langle
		\theta (x), \Der \gamma (f(x)) \circ V \weakD f(x) \rangle \ud
		V \, (x,S)}
	\end{align*}
	whenever $\theta \in \mathscr D ( U, \mathbf R^n )$, $\gamma \in
	\mathscr E ( \mathbf R, \mathbf R )$, and $\spt \Der \gamma$ is
	compact.  Finally, the postscript follows from the principal
	conclusion and \ref{lemma:restriction}.
\end{proof}

\begin{remark}
	The family $R$ need not to be a Borel family; in fact, it may happen
	that $E \in R$ and $F \in R$ but neither $E \cup F \in R$ nor $E \cap
	F \in R$ as is evident from considering the varifold constructed in
	\cite[6.13]{MR3528825} and taking $f=0$.
\end{remark}

\begin{remark} \label{remark:redundancy-derivative-agreement}
	If $\boldsymbol \Uptheta^m ( \| V \|, x ) \geq 1$ for $ \| V \|$
	almost all $x$, then the condition that ``$W \weakD f (x) = V \weakD
	f(x)$ for $\| W \|$ almost all $x$'' in the definition of $R$ is
	redundant, as may be verified by means of \cite[11.2]{MR3528825} in
	conjunction with \cite[2.8.18, 2.9.11]{MR41:1976},
	\cite[3.5\,(1b)]{MR0307015}, and \cite[2.26]{DOI_10.4171_RMI_1487}.
\end{remark}

\begin{definition} \label{definition:partition-along}
	Suppose $m$ and $n$ are positive integers, $m \leq n$, $U$ is an open
	subset of $\mathbf R^n$, $V \in \mathbf V_m ( U )$, $\| \updelta V \|$
	is a Radon measure, and $f \in \mathbf T ( V )$,
	
	Then, $\Pi$ is called a \emph{partition of $V$ along $f$} if and only
	if $\Pi$ is a partition of $V$ and, whenever $W \in \Pi$, the
	following two conditions hold.
	\begin{enumerate}
		\item \label{item:partition-along:representation} There exists
		a subinterval $I$ of $\mathbf R$ such that $W = V \restrict
		f^{-1}[I] \times \mathbf G (n,m)$ and $V \boundary f^{-1} [I]
		= 0$.
		\item \label{item:partition-along:indecomposable} There exists
		no partition of $\mathbf R$ into subintervals $J_1$ and $J_2$
		such that
		\begin{equation*}
			\text{$\| W \| ( f^{-1} [J_i] ) > 0$ for $i \in \{ 1,2
			\}$}, \quad W \boundary f^{-1} [J_1] = 0.
		\end{equation*}
	\end{enumerate}
\end{definition}

\begin{remark} \label{remark:partition-along}
	For $W \in \Pi$, we see $\| \updelta W \|$ is a Radon measure by
	\eqref{item:partition-along:representation}, $f \in \mathbf T ( W )$
	with
	\begin{equation*}
		W \weakD f(x) = V \weakD f(x) \quad \text{for $\| W \|$ almost
		all $x$}
	\end{equation*}
	by \ref{lemma:family_of_pieces}, $W$ is indecomposable of type $\{ f
	\}$ by \eqref{item:partition-along:indecomposable}, $J = \spt f_\# \|
	V \|$ is a subinterval of $\mathbf R$ by
	\ref{lemma:basic_indecomp}\,\eqref{item:basic_indecomp:interval}, and
	$W = V \restrict f^{-1} [I] \times \mathbf G (n,m)$ for some
	subinterval $I$ of $J$ with $V \boundary f^{-1} [I] = 0$ by
	\ref{lemma:image-TVY} and \eqref{item:partition-along:representation};
	in this case, $I$ is dense in $J$, whence we infer
	\begin{equation*}
		\Int J \subset I \subset J, \quad \Bdry I = \Bdry J.
	\end{equation*}
\end{remark}

\begin{remark} \label{remark:partition-along-pi}
	By \ref{lemma:zero_distributional_boundary} and
	\ref{remark:partition-along}, there exists a function $\pi : \Pi \to
	\mathbf 2^{\mathbf R}$ whose value at $W \in \Pi$ is characterised to
	be the dense subinterval $\pi(W)$ of $\spt f_\# \| W \|$ such that $W
	= V \restrict f^{-1} [ \pi (W)] \times \mathbf G (n,m)$ with $V
	\boundary f^{-1} [ \pi(W) ] = 0$ and
	\begin{equation*}
		b \in \pi(W) \quad \text{if and only if} \quad ( \| W \| + \|
		\updelta W \| ) \, \{ x \with f(x) = b \} > 0
	\end{equation*}
	whenever $b \in \Bdry \pi(W)$; hence, $\| \updelta W \| = \| \updelta
	V \| \restrict f^{-1} [ \pi(W) ]$ for $W \in \Pi$.  This entails that
	$\pi$ maps distinct members of $\Pi$ to disjoint subintervals of
	$\mathbf R$ and
	\begin{equation*}
		{\textstyle ( \| V \| + \| \updelta V \| ) \big ( U \without
		f^{-1} [ \bigcup \im \pi ] \big ) = 0}
	\end{equation*}
	because $\card \{ W \with x \in f^{-1} [\pi (W)] \} = 1$ for $\| V \|
	+ \| \updelta V \|$ almost all $x$ by \ref{remark:partition}.
\end{remark}

\begin{example} \label{example:indecomposability-along}
	Taking $V$ and $\mathbf q$ as in
	\ref{example:indecomposability_of_type_f}, we see that $V$ admits a
	nontrivial partition along $\mathbf q$ but is nevertheless
	indecomposable of type $\{ \mathbf q \}$.
\end{example}

\begin{theorem} \label{thm:uniqueness-partition-along}
	Suppose $m$ and $n$ are positive integers, $m \leq n$, $U$ is an open
	subset of $\mathbf R^n$, $V \in \mathbf V_m (U)$, $\| \updelta V \|$
	is a Radon measure, and $f \in \mathbf T (V)$.
	
	Then, there exists at most one partition of $V$ along $f$.
\end{theorem}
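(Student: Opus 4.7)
The plan is to prove $\Pi_1 = \Pi_2$ for any two partitions of $V$ along $f$ by showing both are encoded by the same $(V,f)$-invariant open set. By \ref{remark:partition-along-pi}, each $\Pi_i$ is determined by a countable family $\mathscr{I}_i = \im \pi_i$ of pairwise disjoint subintervals of $\mathbf R$; every $I \in \mathscr{I}_i$ satisfies $V \boundary f^{-1}[I] = 0$, and $\bigcup \mathscr{I}_i$ covers $\mathbf R$ up to an $f_\#(\|V\|+\|\updelta V\|)$-null set. Define
\[
  C = \{ y \in \mathbf R : V \boundary f^{-1}[(-\infty,y]] = 0 \text{ or } V \boundary f^{-1}[(-\infty,y)] = 0 \},
\]
depending only on $V$ and $f$. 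The key claim is $\mathbf R \setminus C = \bigcup_{I \in \mathscr{I}_i} \Int I$ for $i = 1, 2$.

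For $\supset$, let $y \in \Int \pi_i(W)$ for some $W \in \Pi_i$. Expand $V \boundary f^{-1}[(-\infty,y]] = \sum_{W' \in \Pi_i} W' \boundary f^{-1}[(-\infty,y]]$ via \ref{remark:partition}. For $W' \neq W$, disjointness of $\pi_i(W')$ from $\pi_i(W)$ and connectedness rule out $y \in \pi_i(W')$, so $\pi_i(W')$ lies entirely in $(-\infty, y)$ or in $(y, \infty)$; since $\|W'\| + \|\updelta W'\|$ is concentrated on $f^{-1}[\pi_i(W')]$ by \ref{remark:partition-along-pi}, the term $W' \boundary f^{-1}[(-\infty, y]]$ vanishes. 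Hence $V \boundary f^{-1}[(-\infty, y]] = W \boundary f^{-1}[(-\infty, y]]$, which is nonzero by condition \eqref{item:partition-along:indecomposable} of \ref{definition:partition-along}: both $\|W\|(f^{-1}[(-\infty, y]])$ and $\|W\|(f^{-1}[(y, \infty)])$ are positive, as $y \in \Int \pi_i(W)$ and $\pi_i(W)$ is dense in $\spt f_\# \|W\|$. The identical argument with $(-\infty, y)$ in place of $(-\infty, y]$ yields $V \boundary f^{-1}[(-\infty, y)] \neq 0$, so $y \notin C$.

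For $\subset$, let $y \notin \bigcup_I \Int I$. A case analysis on whether $y$ lies in a gap, in a singleton member $\{y\} \in \mathscr{I}_i$, or at an included or excluded boundary point of some $I \in \mathscr{I}_i$ shows that at least one of $(-\infty, y]$ or $(-\infty, y)$ agrees, up to an $f_\#(\|V\|+\|\updelta V\|)$-null set, with a disjoint countable union of full intervals from $\mathscr{I}_i$ lying therein; additivity of $V \boundary$ over this union (see \ref{remark:partition}) together with $V \boundary f^{-1}[I] = 0$ for every $I \in \mathscr{I}_i$ yields $V \boundary f^{-1}[(-\infty, y]] = 0$ or $V \boundary f^{-1}[(-\infty, y)] = 0$, so $y \in C$.

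Having established the claim, $\mathbf R \setminus C$ is open and its connected components are precisely the non-empty interiors $\Int I$ for $I \in \mathscr{I}_i$; by uniqueness of the decomposition of an open set into connected components, the families of non-degenerate members of $\mathscr{I}_1$ and $\mathscr{I}_2$ correspond bijectively via equality of interiors. The degenerate singleton members $\{\alpha\} \in \mathscr{I}_i$ are likewise intrinsic, being characterised as those $\alpha \in C$ with $(\|V\| + \|\updelta V\|)(f^{-1}\{\alpha\}) > 0$ that cannot be absorbed into an adjacent non-degenerate interval without violating condition \eqref{item:partition-along:indecomposable} for the absorber. Finally, for matched non-degenerate intervals, the endpoint-inclusion characterisation in \ref{remark:partition-along-pi} forces $\pi_1(W_1) = \pi_2(W_2)$, so $W_1 = V \restrict f^{-1}[\pi_1(W_1)] \times \mathbf G(n,m) = W_2$ and $\Pi_1 = \Pi_2$. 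The principal difficulty is the $\supset$ direction, where partition-additivity of $V \boundary$ must be combined carefully with the "no half-line split" property \eqref{item:partition-along:indecomposable}; the $\subset$ direction and the endpoint bookkeeping are routine but require handling several cases.
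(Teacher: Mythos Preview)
Your strategy---encoding each partition by the intrinsic set
\[
C = \{y : V\boundary f^{-1}[(-\infty,y]] = 0 \text{ or } V\boundary f^{-1}[(-\infty,y)] = 0\}
\]
and recovering the interiors of the partition intervals as the connected components of $\mathbf R \setminus C$---is genuinely different from the paper's. The paper instead fixes $W \in \Pi_1$ and $X \in \Pi_2$ whose associated intervals meet in positive $f_\#\|V\|$ measure and, by choosing suitable half-lines $J$ and applying \ref{remark:partition} together with condition~\eqref{item:partition-along:indecomposable}, shows $f_\#\|V\|(\rho(X) \setminus \pi(W)) = 0$; symmetry then forces $W = X$. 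Your approach yields a pleasant structural picture, and the key claim $\mathbf R \setminus C = \bigcup_{I \in \mathscr I_i} \Int I$ is correct and adequately argued (for the $\subset$ direction the relevant additivity is really \cite[5.3]{MR3528825} or \ref{lemma:family_of_pieces} rather than \ref{remark:partition}).

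However, the final ``endpoint bookkeeping'' contains a genuine gap. The characterisation in \ref{remark:partition-along-pi} determines $\pi(W)$ \emph{from $W$}: the test for a boundary point $b$ is $(\|W\|+\|\updelta W\|)(f^{-1}\{b\}) > 0$, which already presupposes knowledge of $W$. Invoking it to conclude $\pi_1(W_1) = \pi_2(W_2)$ from equality of interiors is therefore circular unless you first establish $W_1 = W_2$. Concretely, with common interior $(\alpha,\beta)$ and $\|V\|(f^{-1}\{\alpha\}) > 0$, nothing you have written excludes the scenario in which $\alpha$ is attached to the right interval $\pi_1(W_1)$ in $\Pi_1$ but to the left-adjacent interval $\pi_2(W_2')$ in $\Pi_2$. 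Ruling this out requires first showing $V\boundary f^{-1}\{\alpha\} = 0$ (for instance by observing that $(-\infty,\alpha)$ is, up to a null set, a full union of $\mathscr I_1$-intervals while $(-\infty,\alpha]$ is a full union of $\mathscr I_2$-intervals, and subtracting) and then applying condition~\eqref{item:partition-along:indecomposable} to $W_1$ with $J_1 = (-\infty,\alpha]$ to reach a contradiction. Your singleton clause has the same defect: ``cannot be absorbed without violating condition~\eqref{item:partition-along:indecomposable}'' is an assertion, not a verification, and making it precise needs exactly the same step. The gap is repairable along these lines, but it is not the routine case-check you describe, and \ref{remark:partition-along-pi} does not do the work you attribute to it.
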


\begin{proof}
	We suppose $\Pi$ and $P$ are partitions of $V$ along $f$ and associate
	functions $\pi$ and $\rho$ to $\Pi$ and $P$, respectively, as in
	\ref{remark:partition-along-pi}.  Whenever $W \in \Pi$, there exists
	$X \in P$ satisfying
	\begin{equation*}
		f_\# \| V \| ( \pi ( W ) \cap \rho (X) ) > 0
	\end{equation*}
	by \ref{remark:partition}.  Accordingly, the proof may conducted by
	showing that, whenever $W \in \Pi$ and $X \in P$ are related by the
	preceding condition, there holds
	\begin{equation*}
		f_\# \| V \| ( \rho (X) \without \pi (W) ) = 0.
	\end{equation*}
	Clearly, we have $\pi (W ) \cap \rho (X) \neq \varnothing$.  Moreover,
	whenever $J$ is a subinterval of $\mathbf R$, $\mathbf R \without J$
	is a subinterval of $\mathbf R$, every member of $( \im \pi \cup \im
	\rho ) \without \{ \rho(X) \}$ is contained either in $J$ or in
	$\mathbf R \without J$, and $\pi(W) \subset \mathbf R \without J$, we
	apply \ref{remark:partition} twice to conclude
	\begin{equation*}
		X \boundary f^{-1} [ J ] = V \boundary f^{-1} [J] = 0
	\end{equation*}
	so that $\| X \| \big ( f^{-1} [ \mathbf R \without J ] \big ) \geq
	f_\# \| V \| ( \pi (W) \cap \rho (X) ) > 0$ implies
	\begin{equation*}
		f_\# \| V \| ( \rho (X) \cap J ) = \| X \| \big ( f^{-1} [ J ]
		\big ) = 0.
	\end{equation*}
	This allows us to deduce
	\begin{equation*}
		f_\# \| V \| \big ( \rho (X) \cap \{ y \with \sup \pi (W) \leq
		y \} \without \pi(W) \big ) = 0;
	\end{equation*}
	in fact, assuming $b = \sup \pi (W) \in \rho (X)$, we take $J = \{ y
	\with b \leq y < \infty \} \without \pi(W)$.  Similarly, we obtain
	\begin{equation*}
		f_\# \| V \| \big ( \rho (X) \cap \{ y \with y \leq \inf \pi
		(W) \} \without \pi(W) \big ) = 0
	\end{equation*}
	by assuming $b = \inf \pi(W) \in \rho (X)$ and taking $J = \{ y \with
	- \infty < y \leq b \} \without \pi(W)$.
\end{proof}

\begin{example} \label{example:non-existence-partition-along}
	Suppose $m$ and $n$ are positive integers, $m < n$, $T \in \mathbf
	G(n,m)$, $V = \mathscr L^n \times \boldsymbol{\updelta}_T \in \mathbf
	V_m ( \mathbf R^n )$, $f : \mathbf R^n \to \mathbf R$ is a nonzero
	linear map, and $T \subset \ker f$.  Then, $\updelta V = 0$ by
	\cite[4.8\,(2)]{MR0307015} and there exists no partition of $V$ along
	$f$; in fact, whenever $I$ is a subinterval of $\mathbf R$, we verify
	$V \boundary f^{-1} [ I ] = 0$ by means of
	\cite[4.12\,(1)]{MR3777387}, \ref{lemma:restriction}, and
	\ref{lemma:family_of_pieces}, whence, taking $W = V \restrict f^{-1} [
	I ] \times \mathbf G (n,m)$, we infer $W \boundary f^{-1} [ J ] = 0$
	whenever $J$ is a subinterval of $\mathbf R$ by
	\ref{lemma:little_more_general}.
\end{example}

\begin{lemma} \label{lemma:criterion-partition-along}
	Suppose $m$ and $n$ are positive integers with $m \leq n$, $U$ is an
	open subset of $\mathbf R^n$, $V \in \mathbf V_m ( U )$, $\| \updelta
	V \|$ is a Radon measure, $f \in \mathbf T (V)$, $P$ is the family of
	all subintervals $I$ of $\mathbf R$ such that $V \boundary f^{-1} [I]
	= 0$ and $\| V \| ( f^{-1} [I] ) > 0$, and $C$ is a countable
	disjointed subfamily of $P$ satisfying
	\begin{equation*}
		{\textstyle \| V \| \big ( U \without f^{-1} [ \bigcup C ]
		\big ) = 0}
	\end{equation*}
	such that, for $I \in C$, there exists no $J$ with
	\begin{equation*}
		J \subset I, \quad J \in P, \quad I \without J \in P.
	\end{equation*}
	
	Then, $\big \{ V \restrict f^{-1} [I] \times \mathbf G (n,m) \with I
	\in C \big \}$ is a partition of $V$ along $f$.
\end{lemma}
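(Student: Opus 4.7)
The plan is to verify the three clauses of Definition~\ref{def:partition} together with the two conditions of Definition~\ref{definition:partition-along}. Writing $E_I = f^{-1}[I]$ and $W_I = V \restrict E_I \times \mathbf G(n,m)$ for $I \in C$, the hypothesis $C \subset P$ immediately yields $V \boundary E_I = 0$ and $\|V\|(E_I) > 0$, so that clause~(1) of Definition~\ref{def:partition} and condition~\eqref{item:partition-along:representation} of Definition~\ref{definition:partition-along} hold by construction.

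For clauses~\eqref{item:partition:weight} and~\eqref{item:partition:variation}, disjointness of the family $\{E_I \with I \in C\}$ together with the hypothesis $\|V\|(U \without \bigcup_I E_I) = 0$ gives clause~\eqref{item:partition:weight} directly from countable additivity of $\|V\|$. For clause~\eqref{item:partition:variation}, I would first upgrade this to $\|\updelta V\|(U \without \bigcup_I E_I) = 0$: countable additivity of the distributional boundary operator on the disjoint family (valid because $\|\updelta V\|$ is Radon) yields $V \boundary \bigcup_I E_I = 0$, and since $V \boundary U = 0$, the complement has vanishing distributional boundary as well; combined with $\|V \restrict (U \without \bigcup_I E_I) \times \mathbf G(n,m)\| = 0$, this forces $(\updelta V) \restrict (U \without \bigcup_I E_I) = 0$. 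Lemma~\ref{lemma:zero_distributional_boundary} then gives $\|\updelta W_I\| = \|\updelta V\| \restrict E_I$, and clause~\eqref{item:partition:variation} reduces to countable additivity of $\|\updelta V\|$.

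The heart of the argument is condition~\eqref{item:partition-along:indecomposable}, which I would establish by contradiction. Suppose some $I_0 \in C$ admits a partition $\mathbf R = J_1 \cup J_2$ into subintervals with $\|W_{I_0}\|(f^{-1}[J_k]) > 0$ for $k \in \{1,2\}$ and $W_{I_0} \boundary f^{-1}[J_1] = 0$. Setting $J := I_0 \cap J_1$, which is a subinterval of $\mathbf R$ contained in $I_0$, Lemma~\ref{lemma:little_more_general} applied with $E = E_{I_0}$ and $F = f^{-1}[J_1]$ yields
\begin{equation*}
	V \boundary f^{-1}[J] = W_{I_0} \boundary f^{-1}[J_1] + (V \boundary E_{I_0}) \restrict f^{-1}[J_1] = 0,
\end{equation*}
while $\|V\|(f^{-1}[J]) = \|W_{I_0}\|(f^{-1}[J_1]) > 0$; hence $J \in P$. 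The analogous computation with $J_2$ in place of $J_1$ shows $I_0 \without J = I_0 \cap J_2 \in P$, contradicting the hypothesis that no such subdivision of $I_0$ exists within $P$.

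The main obstacle is the routine but delicate justification of countable additivity of $V \boundary$ across the disjoint family $\{E_I\}$, which requires tracking the absolute continuity of $\updelta V$ and the vanishing of both $V \restrict A \times \mathbf G(n,m)$ and $(\updelta V) \restrict A$ on the residual null set $A = U \without \bigcup_I E_I$; once this is in place, recognising Lemma~\ref{lemma:little_more_general} as the correct vehicle to convert the $W_{I_0} \boundary$ condition into a $V \boundary$ condition makes the contradiction immediate.
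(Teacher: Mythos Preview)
Your proof is correct and follows essentially the same route as the paper's: verify the partition axioms via Lemma~\ref{lemma:zero_distributional_boundary} and countable additivity, then derive condition~\eqref{item:partition-along:indecomposable} by converting $W_{I_0}\boundary f^{-1}[J_1]=0$ into $V\boundary f^{-1}[I_0\cap J_1]=0$ via Lemma~\ref{lemma:little_more_general}. The paper compresses your treatment of clause~\eqref{item:partition:variation} into the single chain $\|\updelta V\|(g)\leq\sum_{I}\|\updelta W_I\|(g)=(\|\updelta V\|\restrict f^{-1}[\bigcup C])(g)\leq\|\updelta V\|(g)$, which simultaneously forces equality and kills the residual set; and for the second half of the contradiction step (that $I_0\without J\in P$) the paper invokes \cite[5.3]{MR3528825}, whereas you appeal to ``the analogous computation with $J_2$''---for that you implicitly need $W_{I_0}\boundary f^{-1}[J_2]=0$, which does follow from $W_{I_0}\boundary f^{-1}[J_1]=0$ by the complement rule $W\boundary E + W\boundary(U\without E)=0$, but it would be worth stating this explicitly.
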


\begin{proof}
	Abbreviating $W_I = V \restrict f^{-1} [I] \times \mathbf G(n,m)$, we
	define $\Pi = \{ W_I \with I \in C \}$.  Firstly, we see that $\Pi$ is
	a partition of $V$ because
	\begin{equation*}
		\| \updelta V \| (f) \leq \sum_{I \in C} \| \updelta W_I \|
		(f) = \big ( \| \updelta V \| \restrict {\textstyle \bigcup C}
		\big ) (f) \quad \text{for $0 \leq f \in \mathscr K(U)$}
	\end{equation*}
	by \ref{lemma:zero_distributional_boundary}.  Then, in view of
	\cite[5.3]{MR3528825}, it suffices to note $V \boundary f^{-1} [ I
	\cap J ] = 0$ whenever $I \in C$, $J$ is a subinterval of $\mathbf R$,
	and $W_I \boundary f^{-1} [J] = 0$ by \ref{lemma:little_more_general}.
\end{proof}

\begin{theorem} \label{thm:decomposition_adapted_to_fct}
	Suppose $m$ and $n$ are positive integers, $m \leq n$, $U$ is an open
	subset of $\mathbf R^n$, $V \in \mathbf{RV}_m ( U )$, $\| \updelta V
	\|$ is a Radon measure, and $f \in \mathbf T (V)$.
	
	Then, there exists a partition of $V$ along $f$.
\end{theorem}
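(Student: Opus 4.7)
The plan is to apply \ref{lemma:criterion-partition-along}: it suffices to construct a countable disjointed family $C$ of \emph{minimal} members of
\begin{equation*}
	P = \big \{ I \with \text{$I$ subinterval of $\mathbf R$, $V \boundary f^{-1}[I] = 0$, $\| V \|(f^{-1}[I]) > 0$} \big \}
\end{equation*}
satisfying $\|V\|(U \without f^{-1}[\bigcup C]) = 0$, where $I \in P$ is termed \emph{minimal} if no splitting $I = J_1 \sqcup J_2$ into subintervals yields $J_1, J_2 \in P$.  By \ref{lemma:zero_distributional_boundary} and \ref{lemma:little_more_general}, minimality of $I \in P$ is equivalent to indecomposability along $f$ of $W_I = V \restrict f^{-1}[I] \times \mathbf G(n,m)$ in the sense of \ref{definition:partition-along}\,\eqref{item:partition-along:indecomposable}.

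I would first establish that every $I \in P$ contains a minimal subinterval in $P$, via a recursive splitting argument: starting from $I$, iteratively split any current non-minimal piece into two members of $P$ as permitted by the failure of minimality.  The disjointed family of current pieces stays countable throughout, because each member has positive $\|V\|$-preimage mass and $\sigma$-finiteness of $\|V\|$---via a compact exhaustion $U = \bigcup K_i$ with $\|V\|(K_i)<\infty$---bounds its cardinality.  A transfinite well-ordering argument modelled on the existence proof of decompositions \cite[6.12]{MR3528825} then yields termination at a configuration containing a minimal member.  Rectifiability of $V$ plays a double role here: it underwrites the splitting step via \cite[6.12]{MR3528825} applied to $W_J$, and it is essential for minimal subintervals to exist at all, as evidenced by \ref{example:non-existence-partition-along} in the non-rectifiable case.

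Next, I would apply Zorn's lemma to the inclusion-ordered family of disjointed subfamilies of $P$ consisting of minimal intervals, yielding a maximal element $C$ whose countability follows from the same $\sigma$-finiteness consideration.  To establish $\|V\|(U \without f^{-1}[\bigcup C]) = 0$, suppose for contradiction the residual set $A = U \without f^{-1}[\bigcup C]$ has positive $\|V\|$-mass.  Countability of $C$, together with $V \boundary U = 0$ and \ref{remark:partition} applied to the pieces indexed by $C$, give $V \boundary A = 0$; hence $V \restrict A \times \mathbf G(n,m)$ is rectifiable with Radon first variation, and the first step applied to it furnishes a minimal $J \in P$ located within a component interval of $\mathbf R \without \bigcup C$ and disjoint from $\bigcup C$, contradicting maximality of $C$.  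The main obstacle is the termination argument in the first step, where rectifiability of $V$ is decisive.
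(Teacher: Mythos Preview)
Your overall strategy via \ref{lemma:criterion-partition-along} matches the paper's, but the first step is where the real difficulty lies, and your sketch does not close it. Countability of disjointed subfamilies of $P$ (from $\sigma$-finiteness of $\|V\|$) is not enough to make a recursive-splitting or transfinite argument terminate in a minimal member: along a nested chain $I_0 \supset I_1 \supset \cdots$ in $P$ with $\|V\|(f^{-1}[I_k]) \to 0$, the intersection can fall out of $P$, so no minimal interval is produced along that branch. What the paper actually uses is a \emph{uniform positive lower bound} at each scale: from rectifiability and the monotonicity-type estimate \cite[4.5, 4.6]{MR3528825} one defines sets $A_i$ and numbers $\delta_i > 0$ such that every $I \in P_i$ (those $I \in P$ with $f_\#(\|V\| \restrict A_i)(I) > 0$) satisfies $f_\#(\|V\| \restrict \mathbf B(a,\epsilon_i))(I) \geq \delta_i$ for suitable $a \in A_i$. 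This forces $\card(H \cap P_i) < \infty$ for every disjointed $H \subset P$, so one can \emph{maximise} $\card(G_i \cap P_i)$ over refinements $G_i$ of $G_{i-1}$; the maximiser automatically consists of intervals that admit no further splitting at level $i$, and $C$ is built from intersections of nested sequences in the $G_i$. Your appeal to \cite[6.12]{MR3528825} ``applied to $W_J$'' does not supply this: that result yields a decomposition of $W_J$ into indecomposable varifolds, not a splitting of $J$ into two subintervals in $P$.

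There is a second point you have not addressed, which the paper explicitly flags as the novel ingredient: verifying that each $I \in C$ is minimal. A limit interval $I = \bigcap_i I_i$ with $I_i$ unsplittable at level $i$ is not obviously unsplittable in $P$ itself. The paper closes this by showing that a putative splitting $J \subset I$ with $J, I \without J \in P$ propagates to a splitting of $I_i$ into $I_i \cap L$ and $I_i \without L$ (for a half-line $L$ with $J = I \cap L$), contradicting the maximality of $G_i$; the key step $W \boundary f^{-1}[L] = 0$ for $W = V \restrict f^{-1}[I_i \without I] \times \mathbf G(n,m)$ uses \ref{lemma:family_of_pieces} together with \cite[8.5, 8.30]{MR3528825}. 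Finally, in your residual argument, a minimal interval for $V \restrict A \times \mathbf G(n,m)$ need not lie inside a single component of $\mathbf R \without \bigcup C$, nor is it automatically minimal for $V$; the paper avoids this Zorn-plus-residual step entirely by constructing $C$ directly from the layered maximisers $G_i$.
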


\begin{proof}
	Assuming $V \neq 0$, we will verify the conditions of
	\ref{lemma:criterion-partition-along}.
	
	First, for every positive integer $i$, we define
	\begin{equation*}
		\delta_i = \boldsymbol \upalpha (m) 2^{-m-1} i^{-1-2m}, \quad
		\epsilon_i = 2^{-1} i^{-2}
	\end{equation*}
	and let $A_i$ denote the Borel set of $a \in \mathbf R^n$ satisfying
	\begin{gather*}
		|a| \leq i, \quad \mathbf U ( a, 2\epsilon_i ) \subset U,
		\quad \boldsymbol \Uptheta^m ( \| V \|, a ) \geq 1/i, \\
		\| \updelta V \| \, \mathbf B (a,r) \leq \boldsymbol \upalpha
		(m) i r^m \quad \text{for $0 < r < \epsilon_i$}
	\end{gather*}
	Clearly, we have $A_i \subset A_{i+1}$ for every positive integer $i$
	and $\| V \| \big ( U \without \bigcup_{i=1}^\infty A_i \big ) = 0$ by
	\cite[3.5\,(1a)]{MR0307015} and \cite[2.8.18, 2.9.5]{MR41:1976}.
	Moreover, we define
	\begin{equation*}
		P_i = P \cap \big \{ I \with f_\# ( \| V \| \restrict A_i )
		(I) > 0 \big \}
	\end{equation*}
	and notice that $P_i \subset P_{i+1}$ for every positive integer $i$
	and $P = \bigcup_{i=1}^\infty P_i$.  Next, we observe the lower bound
	given by
	\begin{equation*}
		f_\# ( \| V \| \restrict \mathbf B(a,\epsilon_i) ) (I) \geq
		\delta_i
	\end{equation*}
	whenever $i$ is a positive integer, $I \in P$, $a \in A_i$, and
	$\boldsymbol \Uptheta^{\ast m} ( \| V \| \restrict f^{-1} [I], a )
	\geq 1/i$; in fact, noting
	\begin{equation*}
		{\textstyle\int_0^{\epsilon_i}} r^{-m} \| \updelta ( V
		\restrict f^{-1} [I] \times \mathbf G(n,m) ) \| \, \mathbf B (
		a,r ) \ud \mathscr L^1 \, r \leq \boldsymbol \upalpha (m) i
		\epsilon_i,
	\end{equation*}
	the inequality follows from \cite[4.5, 4.6]{MR3528825}.  Let $Q_i$
	denote the set of $I \in P$ such that there is no $J$ satisfying
	\begin{equation*}
		J \subset I, \quad J \in P_i, \quad I \without J \in P_i.
	\end{equation*}
	
	We denote by $\Omega$ the class of all disjointed subfamilies $H$ of
	$P$ with $\bigcup H = \mathbf R$ and let $G_0 = \{ \mathbf R \} \in
	\Omega$.  The previously observed lower bound implies
	\begin{equation*}
		\delta_i \card ( H \cap P_i ) \leq \| V \| ( U \cap \{ x \with
		\dist (x,A_i) \leq \epsilon_i \} ) < \infty
	\end{equation*}
	whenever $H$ is a disjointed subfamily of $P$ and $i$ is a positive
	integer; in fact, for each $I \in H \cap P_i$, there exists $a \in
	A_i$ with
	\begin{equation*}
		\boldsymbol \Uptheta^m ( \| V \| \restrict f^{-1} [I],a ) =
		\boldsymbol \Uptheta^m ( \| V \|, a ) \geq 1/i
	\end{equation*}
	by \cite[2.8.18, 2.9.11]{MR41:1976}, whence we infer
	\begin{equation*}
		\| V \| \big ( f^{-1} [I] \cap \{ x \with \dist (x,A_i) \leq
		\epsilon_i \} \big ) (I) \geq f_\# ( \| V \| \restrict \mathbf
		B (a,\epsilon_i) ) (I) \geq \delta_i.
	\end{equation*}
	In particular, such $H$ is countable.
	
	Next, we inductively (for every positive integer $i$) define
	$\Omega_i$ to be the class of all $H \in \Omega$ such that every $E
	\in G_{i-1}$ is the union of some subfamily of $H$, and choose $G_i
	\in \Omega_i$ such that
	\begin{equation*}
		\card ( G_i \cap P_i ) \geq \card ( H \cap P_i ) \quad
		\text{whenever $H \in \Omega_i$}.
	\end{equation*}
	The maximality of $G_i$ implies $G_i \subset Q_i$; in fact, if there
	existed $I \in G_i \without Q_i$, there would exist $J$ satisfying
	\begin{equation*}
		J \subset I, \quad J \in P_i, \quad I \without J \in P_i,
	\end{equation*}
	and $H = ( G_i \without \{ I \} ) \cup \{ J, I \without J \}$ would
	belong to $\Omega_i$ with
	\begin{equation*}
		\card ( H \cap P_i ) > \card ( G_i \cap P_i ).
	\end{equation*}
	Moreover, it is evident that, to each $x \in \dmn f$, there
	corresponds a sequence $I_1, I_2, I_3, \ldots$ characterised by the
	conditions $f(x) \in I_i \in G_i$, hence $I_{i+1} \subset I_i$, for
	every positive integer $i$.
	
	We define $G = \bigcup_{i=1}^\infty G_i$ and notice that $G$ is
	countable.  We let $C$ denote the collection of sets
	$\bigcap_{i=1}^\infty I_i$ with positive $f_\# \| V \|$ measure
	corresponding to some sequence $I_1, I_2, I_3, \ldots$ with $I_{i+1}
	\subset I_i \in G_i$ for every positive integer $i$.  Clearly, $C$ is
	a disjointed subfamily of $P$, and hence $C$ is countable.  We will
	show that
	\begin{equation*}
		{\textstyle f_\# \| V \| \big ( \mathbf R \without \bigcup C
		\big ) = 0}.
	\end{equation*}
	In view of \cite[2.8.18, 2.9.11]{MR41:1976}, it is sufficient to prove
	\begin{align*}
		& A_i \cap ( \dmn f ) \without {\textstyle\bigcup} C \\
		& \qquad \subset {\textstyle \bigcup} \big \{ f^{-1} [I] \cap
		\{ x \with \boldsymbol \Uptheta^{\ast m} ( \| V \| \restrict
		f^{-1} [I], x ) < \boldsymbol \Uptheta^{\ast m} ( \| V \|, x )
		\} \with I \in C \big \}
	\end{align*}
	for every positive integer $i$.  For this purpose, we consider $a \in
	A_i \cap ( \dmn f ) \without \bigcup C$ with corresponding sequence
	$I_1, I_2, I_3, \ldots$ as above.  It follows that
	\begin{equation*}
		f_\# \| V \| \left ( \bigcap_{j=1}^\infty I_j \right ) = 0.
	\end{equation*}
	Therefore, there exists $j$ with $f_\# (\| V \| \restrict \mathbf B
	(a,\epsilon_i)) (I_j) < \delta_i$, and the lower bound implies
	\begin{equation*}
		\boldsymbol \Uptheta^{\ast m} ( \| V \| \restrict f^{-1}
		[I_j], a) < 1/i \leq \boldsymbol \Uptheta^m ( \| V \|, a ).
	\end{equation*}
	
	If, for some $I \in C$, there existed $J$ with
	\begin{equation*}
		J \subset I, \quad J \in P, \quad I \without J \in P,
	\end{equation*}
	then there would exist intervals $I_i$ with $I = \bigcap_{i=1}^\infty
	I_i$ and $I_i \in G_i$ for every positive integer $i$.  We could
	choose $i$ such that $J \in P_i$ and $I \without J \in P_i$.  Defining
	$E = f^{-1} [ I_i \without I ]$ and $W = V \restrict E \times \mathbf
	G (n,m)$, we would notice
	\begin{equation*}
		V \boundary E = 0, \quad f \in \mathbf T (W)
	\end{equation*}
	by \ref{lemma:family_of_pieces}.  As $J$ and $I \without J$ would be
	nonempty, $I$ would have nonempty interior.  Accordingly, picking an
	interval $L$ in $\mathbf R$ such that $\mathbf R \without L$ would
	also be an interval in $\mathbf R$ and
	\begin{equation*}
		J = I \cap L,
	\end{equation*}
	we could employ \cite[8.5, 8.30]{MR3528825} to deduce $W \boundary
	f^{-1} [ L ] = 0$, whence it would follow
	\begin{equation*}
		V \boundary f^{-1} [ I_i \cap L \without I ] = 0
	\end{equation*}
	from \ref{lemma:little_more_general} with $F = f^{-1} [L]$.  Finally,
	noting that $I_i \cap L$ would equal the disjoint union of $J$ and
	$I_i \cap L \without I$ and that $I_i \without L = I_i \without ( I_i
	\cap L )$, we could establish
	\begin{gather*}
		V \boundary f^{-1} [ I_i \cap L ] = 0, \quad V \boundary
		f^{-1} [ I_i \without L ] = 0, \\
		J \subset I_i \cap L \in P_i, \quad I \without J \subset I_i
		\without L \in P_i
	\end{gather*}
	by \cite[5.3]{MR3528825}, in contradiction to $I_i \in Q_i$.
\end{proof}

\begin{remark}
	The novelty of the preceding proof is its last paragraph; otherwise,
	it rests on the machinery developed in \cite[6.12]{MR3528825}.  A
	refinement of that machinery in a different direction appears in
	\cite[5.12]{MR4609162}.
\end{remark}

\begin{remark}
	The rectifiability hypothesis may not be omitted by
	\ref{example:non-existence-partition-along}.
\end{remark}

\begin{remark} \label{remark:constancy-theorem}
	In view of \ref{remark:partition} and
	\ref{remark:partition-vs-decomposition}, one may obtain the case $Y =
	\mathbf R$ of the constancy theorem \cite[8.34]{MR3528825} from
	\ref{thm:decomposition_adapted_to_fct} by means of
	\ref{lemma:basic_indecomp}\,\eqref{item:basic_indecomp:constant} and
	\ref{remark:partition-along}.
\end{remark}

\section{Criteria for local finiteness of decompositions}

A first criterion is obtained in
\ref{example:regularity-hypotheses}--\ref{thm:locally_finite_near_boundary};
in particular, Theorem \ref{Thm:criterion-local-finiteness} of the
introductory section is provided in \ref{thm:locally_finite_near_boundary}.  A
second criterion is derived in \ref{lemma:zero-trace-test}%
--\ref{corollary:Neumann-locally-finite-decompositions}: Firstly, the material
of \cite[pp.\,2614--2625]{MR4359920} is localised and streamlined in
\ref{lemma:zero-trace-test}--\ref{remark:comparison-DM21}; secondly, an
isoperimetric lower density ratio bound is established in
\ref{example:Neumann-Lm}--\ref{remark:Neumann-lower-density-ratio-bound};
and, finally, Theorem \ref{Thm:criterion-local-finiteness-II} of the
introductory section is provided in
\ref{corollary:Neumann-locally-finite-decompositions}.

\begin{example} \label{example:regularity-hypotheses}
	Suppose $m$ and $n$ are positive integers, $m \leq n$, $U$ is an open
	subset of $\mathbf R^n$, $B$ is an $m-1$ dimensional submanifold of
	class $2$ of $U$ satisfying $U \cap ( \Clos B ) \without B =
	\varnothing$, $V \in \mathbf V_m (U)$, $\beta = \infty$ if $m = 1$,
	$\beta = m/(m-1)$ if $m>1$,
	\begin{equation*}
		\sup \big \{ ( \updelta V ) ( \theta ) \with \theta \in
		\mathscr D ( U, \mathbf R^n ), \spt \theta \subset K \without
		B, \| V \|_{(\beta)} (\theta) \leq 1 \big \} < \infty
	\end{equation*}
	whenever $K$ is a compact subset of $U$, $\| V \| ( B ) = 0$, and
	\begin{equation*}
		\boldsymbol \Uptheta^m ( \| V \|, x ) \geq 1 \quad \text{for
		$\| V \|$ almost all $x$}.
	\end{equation*}
	Then, $V$ is rectifiable, $\| \updelta V \|$ is a Radon measure,
	$\mathbf h (V,\cdot) \in \mathbf L_m^{\textup{loc}} ( \| V \|, \mathbf
	R^n )$, and, if $m>1$, then $\spt ( \| \updelta V \| - \| \updelta V
	\|_{\| V \|} ) \subset B$; in fact, noting that $\| \updelta V \|
	\restrict ( U \without B )$ is a Radon measure and that
	\cite[3.1\,(2)]{MR0397520} remains valid for submanifolds of class
	$2$, we conclude that $\| \updelta V \|$ is a Radon measure, hence $V$
	is rectifiable by \cite[5.5\,(1)]{MR0307015}, and the remaining
	assertions follow from \ref{thm:distributions_Lp_representable} and
	\ref{remark:polar_decomposition}.
\end{example}

\begin{theorem} \label{thm:locally_finite_near_boundary}
	Suppose $V$ is as in
	\ref{example:regularity-hypotheses}.
		
	Then, every decomposition of $V$ is locally finite.
\end{theorem}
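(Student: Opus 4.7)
The plan is to argue by contradiction. Suppose $\Xi$ is a decomposition of $V$ that is not locally finite; then there exist a compact set $K \subset U$ and pairwise distinct members $W_1, W_2, W_3, \ldots$ of $\Xi$ with $\spt \| W_i \| \cap K \neq \varnothing$. Each $W_i$ has the form $V \restrict E_i \times \mathbf G (n,m)$ for a $\| V \| + \| \updelta V \|$ measurable set $E_i$, and the density lower bound $\boldsymbol \Uptheta^m ( \| V \|, \cdot ) \geq 1$ descends to $W_i$ via $\| W_i \| = \| V \| \restrict E_i$, yielding $\boldsymbol \Uptheta^m ( \| W_i \|, x ) \geq 1$ for $\| W_i \|$ almost all $x$. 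I would therefore choose $x_i \in \spt \| W_i \|$ near $K$ with $\boldsymbol \Uptheta^m ( \| W_i \|, x_i ) \geq 1$ and, after extracting a subsequence, $x_i \to x_0$ for some $x_0 \in U$.

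I would first dispose of the case $x_0 \in U \without B$. Since $B$ is relatively closed in $U$, there is a ball $\mathbf B ( x_0, \rho ) \subset U \without B$ containing all but finitely many $x_i$; on this ball $\| \updelta V \|$ is absolutely continuous with respect to $\| V \|$ with $\mathbf h ( V, \cdot ) \in \mathbf L_m^{\textup{loc}} ( \| V \|, \mathbf R^n )$ by \ref{example:regularity-hypotheses}, so the case $B = \varnothing$ due to \cite[6.11]{MR3528825}, applied to the restricted varifold, yields the desired contradiction.

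The essential case is therefore $x_0 \in B$, and the key step is to establish a uniform lower bound of the form $\| W_i \| ( \mathbf B ( x_0, 2r ) ) \geq c_0 r^m > 0$ for all sufficiently large $i$ and some $r$ with $\Clos \mathbf B ( x_0, 2r ) \subset U$. Since $B$ is a properly embedded $m-1$ dimensional submanifold of class $2$, a neighbourhood of $x_0$ admits a tubular structure with locally positive reach in the spirit of \ref{thm:reach-real-valued-graphs} and \ref{corollary:reach-real-valued-graphs} (applied in the appropriate codimension). The plan is to combine this tubular structure with the boundary-monotonicity identity from \cite[Section 3]{MR0397520}---applicable because $\spt ( \| \updelta W_i \| - \| \updelta W_i \|_{\| W_i \|} ) \subset B$ for each $W_i$ by \ref{example:regularity-hypotheses} and \ref{lemma:zero_distributional_boundary}---together with the $\mathbf L_m^{\textup{loc}}$ bound on the mean curvature of $V$ and the density estimate at $x_i$, to obtain $\| W_i \| ( \mathbf B ( x_i, \rho ) ) \geq c_0 \rho^m$ with constants $c_0 > 0$ and $\rho > 0$ independent of $i$.

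With these uniform lower bounds in hand, choosing $r$ so that $\mathbf B ( x_i, \rho ) \subset \mathbf B ( x_0, 2r )$ for all large $i$, and summing over the countably many $W_i$, the partition property of $\Xi$ produces $\| V \| ( \mathbf B ( x_0, 2r ) ) \geq \sum_i \| W_i \| ( \mathbf B ( x_0, 2r ) ) = \infty$, contradicting the finiteness of the Radon measure $\| V \|$ on $\Clos \mathbf B ( x_0, 2r ) \subset U$. The main obstacle is setting up the correct boundary-monotonicity formula near $B$ to yield the uniform density lower bound; this is where the class $2$ regularity of $B$, the reach considerations developed in \ref{thm:reach-real-valued-graphs}--\ref{corollary:reach-real-valued-graphs}, and the integrability provided by \ref{example:regularity-hypotheses} are all indispensable.
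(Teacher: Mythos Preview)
Your approach is essentially the paper's: both obtain a uniform lower bound on $\|W\|\,\mathbf B(a,2r)$ for every component $W$ whose support meets $\mathbf B(a,r)$, handling $a \in U \without B$ via interior monotonicity and $a \in B$ via Allard's boundary monotonicity \cite[3.4\,(1)]{MR0397520}, with the reach of $B$ controlling the local geometry. Two refinements you will need when filling in the details: first, \ref{thm:reach-real-valued-graphs}--\ref{corollary:reach-real-valued-graphs} are specific to codimension one and \ref{example:spiral} shows the reach bound fails in higher codimension, so for the positive reach of the $C^2$ submanifold $B$ you should invoke Federer's \cite[4.12]{MR0110078} as the paper does; second, Allard's boundary estimate applies to balls centred on $B$, so before invoking it you must locate a point $b \in B \cap \spt \|W_i\|$ near $x_i$---the paper does this by assuming $\|W_i\|\,\mathbf B(a,2r)$ is small and applying the interior estimate \cite[2.5]{MR2537022} at $x_i$ to force $\spt(\|\updelta W_i\| - \|\updelta W_i\|_{\|W_i\|}) \cap \mathbf B(x_i,r/2) \neq \varnothing$.
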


\begin{proof}
	We take $m$, $n$, $U$, and $B$ as in
	\ref{example:regularity-hypotheses}.

	If $m=1$, then the assertion is a special case of
	\cite[6.11]{MR3528825}. To prove the conclusion in case $m > 1$, we
	will verify the hypotheses of \ref{remark:criterion-locally-finite}.
	If $a \in U \without B$, then, in view of
	\cite[2.5]{MR2537022} and \ref{lemma:zero_distributional_boundary}, we
	may take $r > 0$ with $\mathbf B (a,2r) \subset U \without B$ and
	$\int_{\mathbf B(a,2r)} | \mathbf h (V,\cdot) |^m \ud \| V \| \leq
	2^{-m} \boldsymbol \upgamma (m)^{-m}$.
	
	If $a \in B$, we firstly notice that, for instance by
	\ref{remark:nearest-point-projection}, there exists $R > 0$ with
	$\mathbf B (a,R) \subset U$ and
	\begin{equation*}
		\big | \Nor (B,b)_\natural (y-b) \big | \leq R^{-1} |y-b|^2/2
	\end{equation*}
	whenever $b,y \in B \cap \mathbf U (a,R)$. Then, we pick $0 < r \leq
	R/2$ such that
	\begin{equation*}
		{\textstyle\int_{\mathbf B (a,2r)}} | \mathbf h (V,\cdot) |^m
		\ud \| V \| \leq 8^{-m} \boldsymbol \upgamma(m)^{-m}.
	\end{equation*}
	We will conclude the proof by showing that
	\begin{equation*}
		\| W \| \, \mathbf B (a,2r) \geq (16m \boldsymbol
		\upgamma(m))^{-m} r^m.
	\end{equation*}
	whenever $W$ is a component of $V$ and $\mathbf B (a,r) \cap \spt \| W
	\| \neq \varnothing$.  For this purpose, we abbreviate $A = \spt ( \|
	\updelta W \| - \| \updelta W \|_{\| W \|} )$ and assume
	\begin{equation*}
		\| W \| \, \mathbf B (a,2r) < ( 4 m \boldsymbol
		\upgamma(m))^{-m} r^m.
	\end{equation*}
	Noting \ref{lemma:zero_distributional_boundary}, we infer $A \cap
	\mathbf B (a,3r/2) \neq \varnothing$; in fact, taking $x \in \mathbf B
	(a,r) \cap \spt \| W \|$, we have $A \cap \mathbf B (x,r/2) \neq
	\varnothing$ by \cite[2.5]{MR2537022}.  As $A \subset B \cap \spt \| W
	\|$ by \ref{lemma:zero_distributional_boundary}, we may hence select
	\begin{equation*}
		b \in B \cap \mathbf B (a,3r/2) \cap \spt \| W \|.
	\end{equation*}
	Noting that \cite[3.4\,(1)]{MR0397520} remains valid%
	\begin{footnote}%
		{Notice that ``$m(s)^{1/k}$'' should be replaced by ''$k
		m(s)^{1/k}$'' in the cited statement and that ``$+m(t)$'' in
		line six of its proof should read ``$+km(t)$''.}
	\end{footnote}%
	when $\mathbf c_1$ is replaced by $\boldsymbol \upgamma(k)$ and $B$ is
	required to be of class $2$, instead of class $\infty$, we apply
	\cite[3.4\,(1)]{MR0397520} with
	\begin{align*}
		& \text{$B$, $k$, $s$, $V$, $\alpha$, and $r$ replaced by} \\
		& \text{$B \cap \mathbf U (a,R)$, $m$,
		$r/4$, $W | \mathbf 2^{\mathbf U (b,r/4) \times \mathbf G
		(n,m)}$, $( 8 \boldsymbol \upgamma (m))^{-1}$, and $r/4$}
	\end{align*}
	to obtain
	\begin{equation*}
		\| W \| \, \mathbf B (b,r/4) \geq (16m \boldsymbol
		\upgamma(m))^{-m} r^m,
	\end{equation*}
	as $R/(R-r/4) \leq 2$ and $m \boldsymbol \upgamma(m) \| W \| ( \mathbf
	B(b,r/4))^{1/m} / (R-r/4) \leq 1/4$.
\end{proof}

\begin{lemma} \label{lemma:zero-trace-test}
	Suppose $m$ and $n$ are positive integers, $m \leq n$, $U$ is an open
	subset of $\mathbf R^n$, $B$ is a relatively closed subset of $U$, and
	$H$ is the set of Lipschitzian functions $\eta : U \to \mathbf R^n$
	with compact support satisfying $\eta (b) = 0$ for $b \in B$.

	Then, the following three statements hold.
	\begin{enumerate}
		\item \label{item:zero-trace-test:density} If $\eta \in H$ and
		$\epsilon > 0$, then there exists $\theta \in H$ such that
		\begin{equation*}
			\sup \im | \eta-\theta | \leq \epsilon, \quad |\theta|
			\leq | \eta |, \quad \Lip \theta \leq \Lip \eta, \quad
			B \cap \spt \theta = \varnothing,
		\end{equation*}
		and, if $\eta$ is of class $1$, then so is $\theta$.
		\item \label{item:zero-trace-test:Radon} If $W \in \mathbf V_m
		( U \without B )$, $\| \updelta W \|$ is a Radon measure,
		$\eta \in H$, $h = \eta | ( U \without B )$, and $\| W \| (
		\spt h ) + \| \updelta W \|_{(1)}(h) < \infty$,%
		\begin{footnote}%
			{For the present paper, the stronger condition $( \| W
			\| + \| \updelta W \| ) ( K \without B) < \infty$
			whenever $K$ is a compact subset of $U$ would be
			sufficient.  The additional generality is essential in
			\cite{arXiv:2512.19227v1}.}
		\end{footnote}%
		then $h \in \mathbf T ( W, \mathbf R^n )$ and
		\begin{equation*}
			{\textstyle\int \trace (
			W \weakD h ) \ud \| W \|} = {\textstyle\int
			\boldsymbol \upeta (W,\cdot) \bullet h \ud \| \updelta
			W \|}.
		\end{equation*}
		\item \label{item:zero-trace-test:boundedness} If $V \in
		\mathbf V_m ( U )$ and
		\begin{equation*}
			\sup \{ {\textstyle\int S_\natural \bullet \Der \eta
			(x) \ud V \, (x,S)} \with \textup{$\eta \in H$ is of
			class $1$, $\spt \eta \subset K$, $|\eta| \leq 1$} \}
			< \infty
		\end{equation*}
		whenever $K$ is a compact subset of $U$, then
		\begin{equation*}
			{\textstyle\int_{B \times \mathbf G(n,m)} S_\natural
			\bullet \Der \eta (x) \ud V \, (x,S)} = 0 \quad
			\text{whenever $\eta \in H$ is of class $1$}.
		\end{equation*}
	\end{enumerate}
\end{lemma}

\begin{proof}
	Choosing a nonnegative function $\beta : \mathbf R \to \mathbf R$ of
	class $1$ with $\inf \spt \beta > 0$ such that $t-\epsilon \leq \beta
	(t) \leq t$ and $0 \leq \beta'(t) \leq 1$ for $0 \leq t < \infty$, and
	defining $\gamma : \mathbf R^n \to \mathbf R^n$ of class $1$ such that
	$\gamma(y) = \beta (|y|) |y|^{-1} y$ for $0 \neq y \in \mathbf R^n$,
	we verify
	\begin{equation*}
		\sup \im | \mathbf 1_{\mathbf R^n} - \gamma | \leq \epsilon,
		\quad | \gamma | \leq | \mathbf 1_{\mathbf R^n} |, \quad \sup
		\im \| \Der \gamma \| \leq 1, \quad 0 \notin \spt \gamma
	\end{equation*}
	and may take $\theta = \gamma \circ \eta$ in
	\eqref{item:zero-trace-test:density} because $\spt \theta \subset
	\eta^{-1} [ \spt \gamma ] \subset U \without B$.  Combining
	\cite[4.6]{MR3777387} with \eqref{item:zero-trace-test:density}
	reduces the proof of \eqref{item:zero-trace-test:Radon} to the case
	that $\spt \eta \subset U \without B$ which follows from
	\ref{remark:polar_decomposition}.  Next, under the hypotheses of
	\eqref{item:zero-trace-test:boundedness}, we suppose $\eta \in H$ is
	of class $1$.  The special case $\spt \eta \subset U \without B$ is
	trivial.  To treat the general case, we define $W = V | \mathbf 2^{(U
	\without B) \times \mathbf G (n,m)} \in \mathbf V_m ( U \without B )$,
	note
	\begin{equation*}
		{\textstyle\int_{B \times \mathbf G (n,m)} S_\natural \bullet
		\Der \eta (x) \ud V \, (x,S)} = {\textstyle\int S_\natural
		\bullet \Der \eta (x) \ud V \, (x,S)} - {\textstyle\int
		\boldsymbol \upeta (W, \cdot ) \bullet \eta \ud \| \updelta W
		\|}
	\end{equation*}
	by \eqref{item:zero-trace-test:Radon} in conjunction with \cite[4.1,
	4.5]{MR3777387}, and use the fact that, by
	\eqref{item:zero-trace-test:density}, there exist $\theta_1, \theta_2,
	\theta_3, \ldots$ in $H$ of class $1$ such that $\spt \theta_i \subset
	(\spt \eta) \without B$ for every positive integer $i$ and $\lim_{i
	\to \infty} \sup \im | \eta - \theta_i | = 0$.
\end{proof}

\begin{remark} \label{remark:boundary-not-charged}
	If $\| V \| (B) = 0$ and $W = V | \mathbf 2^{(U \without B) \times
	\mathbf G (n,m)}$, then the hypotheses of
	\eqref{item:zero-trace-test:Radon} and
	\eqref{item:zero-trace-test:boundedness} are equivalent by
	\ref{remark:polar_decomposition}.  In general, they differ as examples
	with $\card B = 1$ and $\card \spt V = 1$ show.
\end{remark}

\begin{theorem} \label{thm:Allard-constancy}
	Suppose $m$ and $n$ are positive integers, $m \leq n$, $U$ is an open
	subset of $\mathbf R^n$, $B$ is a submanifold of class $2$ in $U$, the
	inclusion map $i : B \to U$ is proper, $V \in \mathbf V_m (U)$, and
	\begin{equation*}
		\sup \left \{ {\textstyle\int S_\natural \bullet \Der \eta (x)
		\ud V \, (x,S)} \with \textup{$\eta \in H$ is of class $1$,
		$\spt \eta \subset K$, $| \eta | \leq 1$} \right \} < \infty
	\end{equation*}
	whenever $K$ is a compact subset of $U$, where $H$ is as in
	\ref{lemma:zero-trace-test}.

	Then, there holds $V \restrict B \times \mathbf G (n,m) \in i_\# [
	\mathbf V_m ( B ) ]$ in the case that $m \leq \dim
	B$ and $V \restrict B \times \mathbf G(n,m) = 0$ in the case that $m >
	\dim B$.
\end{theorem}

\begin{proof}
	In view of
	\ref{lemma:zero-trace-test}\,\eqref{item:zero-trace-test:boundedness},
	we may proceed as in \cite[4.6\,(1)\,(2)]{MR0307015}.
\end{proof}

\begin{remark}
	The case $\dim B = n-1$ was treated, under slightly more restrictive
	hypotheses, in \cite[Lemma 3.1]{MR4359920}; the case $\dim B \neq n-1$
	is new.
\end{remark}

\begin{remark} \label{remark:rectifiability-boundary-part}
	If $\| \updelta V \|$ is a Radon measure (see
	\ref{example:Neumann-boundary} below) and
	$m = \dim B$, then $V \restrict B \times \mathbf G
	(n,m)$ is rectifiable by \cite[3.5\,(1b),
	5.1\,(4)]{MR0307015} and 
	\cite[2.25]{DOI_10.4171_RMI_1487}.
\end{remark}

\begin{remark} \label{remark:mean-curv}
	We record the following related basic fact: If $\theta : U \to \mathbf
	R^n$ is a function of class $1$ with compact
	support and $\theta (b) \in \Nor (B,b)$ for $b \in B$, then
	\begin{equation*}
		S_\natural \bullet \Der \theta (b) = - \mathbf h (B,b,S)
		\bullet \theta (b) \quad \text{for $(b,S) \in \mathbf G_m ( B
		)$}.
	\end{equation*}
\end{remark}

\begin{example} \label{example:Neumann-setting}
	Suppose $m$ and $n$ are positive integers, $m \leq n$, $U$ is an open
	subset of $\mathbf R^n$, $M$ is a relatively closed subset of $U$ and
	an $n$ dimensional submanifold-with-boundary of class $2$ in $U$, $B =
	\partial M$, the open subset $X$ of $\mathbf R^n$ is associated with
	$B$ as in \ref{thm:reach-non-proper-submanifolds}, $\delta : X \to
	\mathbf R$ satisfies
	\begin{equation*}
		\text{$\delta (x) = \dist (x,B)$ if $x \in M$}, \quad
		\text{$\delta (x) = - \dist (x,B)$ if $x \notin M$}
	\end{equation*}
	whenever $x \in X$; hence, $B \subset X$, $\delta$ is of class $2$,
	$|\grad \delta (x) | = 1$ for $x \in X$, and
	\begin{equation*}
		\grad \delta (b) = - \mathbf n ( M, b ) \quad \text{for $b \in
		B$}
	\end{equation*}
	by \cite[4.8\,(3)]{MR0110078}, \cite[3.1.19\,(5)]{MR41:1976},
	\ref{thm:reach-real-valued-graphs},
	\ref{corollary:reach-real-valued-graphs}, and
	\ref{thm:reach-non-proper-submanifolds}.

	Suppose $W \in \mathbf V_m ( U \cap X \without B)$, $\spt \| W \|
	\subset M$, and
	\begin{equation*}
		( \| W \| + \| \updelta W \| ) ( K \without B ) < \infty \quad
		\text{whenever $K$ is a compact subset of $U \cap X$}.
	\end{equation*}
	Let $f = \delta | (U \cap X \without B)$, note $f \in \mathbf T ( W )$
	and $| W \weakD f | \leq 1$ by \cite[4.6\,(1)]{MR3777387}, and
	abbreviate
	\begin{equation*}
		E(y) = \{ x \with f(x)>y \} \quad \text{for $y \in \mathbf
		R$}.
	\end{equation*}
	Whenever $\zeta : U \cap X \to \mathbf R$ is a Lipschitzian function
	with compact support, we define a right-continuous
	function $g_\zeta : \{ y \with 0 \leq y < \infty \} \to \mathbf R$ by
	\begin{equation*}
		g_\zeta (y) = {\textstyle\int_{E(y)} ( \boldsymbol \upeta
		(W,\cdot) \bullet \grad f ) \zeta \ud \| \updelta W \|} -
		{\textstyle\int_{E(y)} \trace ( W \weakD ( \zeta \grad f ) )
		\ud \| W \|}
	\end{equation*}
	for $0 \leq y < \infty$ with the help of \cite[4.6\,(1)]{MR3777387},
	hence
	\begin{equation*}
		g_\zeta(0) = {\textstyle\int ( \boldsymbol \upeta (W,\cdot)
		\bullet \grad f ) \zeta \ud \| \updelta W \|} -
		{\textstyle\int \trace ( W \weakD ( \zeta \grad f ) ) \ud \| W
		\|};
	\end{equation*}
	we also let $Q_\zeta \in \mathscr D' ( \mathbf R, \mathbf R )$ be
	defined by $Q_\zeta ( \omega ) = \int_0^\infty g_\zeta \omega \ud
	\mathscr L^1$ for $\omega \in \mathscr D ( \mathbf R, \mathbf R )$.
	Finally, we define $T \in \mathscr D' ( U \cap X, \mathbf R )$ by $T (
	\zeta ) = g_\zeta (0)$ for $\zeta \in \mathscr D ( U \cap X, \mathbf R
	)$.
\end{example}

\begin{remark} \label{remark:trace-estimate}
	We record that
	\begin{equation*}
		| \trace ( h \circ S_\natural ) | \leq m \| h \| \quad
		\text{for $h \in \Hom ( \mathbf R^n, \mathbf R^n )$ and $S \in
		\mathbf G (n,m)$}.
	\end{equation*}
\end{remark}

\begin{lemma} \label{lemma:cut-towards-Neumann}
	Suppose $m$, $U$, $B$, $X$, $W$, $f$, $g_\zeta$, $Q_\zeta$, and $T$
	are as in \ref{example:Neumann-setting}.

	Then, the following two statements hold.
	\begin{enumerate}
		\item \label{item:cut-towards-Neumann:monotone} The
		distribution $T$ is representable by integration, $\spt T
		\subset B$,
		\begin{equation*}
			T ( \zeta ) = {\textstyle\int} \zeta \ud \| T \| \quad
			\text{for $\zeta \in \mathbf L_1 ( \| T \| )$}.
		\end{equation*}
		\item \label{item:cut-towards-Neumann:estimate} If $\zeta : U
		\cap X \to \mathbf R$ is a nonnegative Lipschitzian function
		with compact support, then $\alpha = \zeta | ( U \cap X
		\without B ) \in \mathbf T ( W )$ and
		\begin{equation*}
			{\textstyle\int \alpha \ud \| T \|} \leq
			{\textstyle\int \alpha \ud \| \updelta W \|} +
			{\textstyle\int ( | W \weakD \alpha | + m \alpha \|
			\Der^2 f \| ) \ud \| W \|}.
		\end{equation*}
		\item \label{item:cut-towards-Neumann:BV} Suppose $\zeta : U
		\cap X \to \mathbf R$ is Lipschitzian with compact support.
		Then,
		\begin{gather*}
			\begin{aligned}
				Q_\zeta ( \omega ) & = {\textstyle\int \zeta
				\langle \grad f, W \weakD f \rangle (\omega
				\circ f) \ud \| W \|}, \\
				\Der_1 Q_\zeta ( \omega ) & = g_\zeta (0)
				\omega (0) - {\textstyle\int \zeta (
				\boldsymbol \upeta (W,\cdot) \bullet \grad f)
				(\omega \circ f) \ud \| \updelta W \|} \\
				& \phantom = \ + {\textstyle\int \langle \grad
				f, W \weakD \zeta \rangle (\omega \circ f) \ud
				\| W \|} \\
				& \phantom = \ + {\textstyle\int \zeta \trace
				( W \weakD ( \grad f ) ) ( \omega \circ f) \ud
				\| W \|}
			\end{aligned}
		\end{gather*}
		whenever $\omega \in \mathscr D ( \mathbf R, \mathbf R )$; in
		particular,
		\begin{align*}
			\| Q_\zeta \| & \leq f_\# \big ( \| W \| \restrict |
			\zeta \, W \weakD f | \big ), \\
			\| \Der_1 Q_\zeta \| & \leq | g_\zeta(0) |
			\boldsymbol \updelta_0 + f_\# \big ( \| \updelta W \|
			\restrict | \zeta | + \| W \| \restrict ( | W \weakD
			\zeta | + m \| \zeta \Der^2 f \| ) \big ),
		\end{align*}
		and, if $\zeta \geq 0$, then $g_\zeta \geq 0$.  Moreover,
		there holds
		\begin{equation*}
			{\textstyle\int \zeta \ud \| T \| = g_\zeta (0)}.
		\end{equation*}
	\end{enumerate}
\end{lemma}

\begin{proof}
	We suppose $\zeta : U \cap X \to \mathbf R$ is Lipschitzian with
	compact support.

	Noting $B \cap \Clos E(y) = \varnothing$ for $0 < y < \infty$, we
	apply \ref{thm:area-formular-representation-slice} and
	\ref{remark:area-formular-representation-slice} with $U$, $V$, and
	$\theta$ replaced by $U \cap X \without B$, $W$, and $\zeta \grad f$
	to conclude
	\begin{align*}
		g_\zeta (y) & = W \boundary E(y) ( \zeta \grad f ) \\
		& = {\textstyle\int \zeta (x) \langle \grad f (x), | W \weakD
		f (x) |^{-1} W \weakD f (x) \rangle \ud \| W \boundary E(y)
		\| \, x}
	\end{align*}
	for $\mathscr L^1$ almost all $0 \leq y < \infty$; in particular,
	\begin{equation*}
		g_\zeta (0) = 0 \quad \text{in case $B \cap \spt \zeta =
		\varnothing$}
	\end{equation*}
	because $\spt W \boundary E(y) \subset \{ x \with f(x)=y \}$ for $y
	\in \mathbf R$.  Noting \cite[4.11]{MR3777387}, we apply \cite[8.5,
	8.30]{MR3528825} for each $\omega \in \mathscr D ( \mathbf R, \mathbf
	R )$ with $V$ and $g(x,y)$ replaced by $W$ and
	\begin{equation*}
		i(y) \omega (y) \zeta (x) \langle \grad f(x), | W \weakD f
		(x)|^{-1} W \weakD f (x) \rangle,
	\end{equation*}
	where $i$ is the characteristic function of $\{ y \with 0 \leq y <
	\infty \}$ on $\mathbf R$, to infer the first equation in
	\eqref{item:cut-towards-Neumann:BV}.  In case $\zeta \geq 0$, we
	deduce $Q_\zeta ( \omega ) \geq 0$ whenever $0 \leq \omega \in
	\mathscr D ( \mathbf R, \mathbf R )$, as
	\begin{equation*}
		\langle \grad f(x), W \weakD f (x) \rangle = {\textstyle\int |
		S_\natural ( \grad f(x)) |^2 \ud W^{(x)} \, S} \geq 0
	\end{equation*}
	for $\| W \|$ almost all $x$ by \cite[2.3\,(1)]{MR0307015} and
	\cite[4.1, 4.5]{MR3777387}.  By
	\ref{example:distributions_representable_by_integration} with $U$,
	$Y$, $\phi$, $k$, and $T$ replaced by $\mathbf R$, $\mathbf R$,
	$\mathscr L^1$, $g_\zeta$, and $Q_\zeta$, where $\mathbf R^\ast \simeq
	\mathbf R$, it follows that
	\begin{equation*}
		g_\zeta \geq 0  \quad \text{in case $\zeta \geq 0$}.
	\end{equation*}
	By means of \cite[4.1.5]{MR41:1976}, the preceding considerations
	readily yield \eqref{item:cut-towards-Neumann:monotone} and the final
	equation in \eqref{item:cut-towards-Neumann:BV} which in turn imply
	\eqref{item:cut-towards-Neumann:estimate} by
	\cite[8.20\,(4)]{MR3528825}, \cite[4.6\,(1), 4.11]{MR3777387}, and
	\ref{remark:trace-estimate}.

	To prove the second equation of \eqref{item:cut-towards-Neumann:BV},
	we additionally suppose $\omega \in \mathscr D ( \mathbf R, \mathbf
	R)$ and take $\theta = \zeta ( \omega \circ f ) \grad f$.  Noting
	\cite[4.6\,(1), 4.11]{MR3777387}, we compute
	\begin{align*}
		\trace ( W \weakD \theta(x)) & = \langle \grad f(x), W \weakD
		\zeta (x) \rangle \omega (f(x)) \\
		& \phantom = \ + \zeta (x) \langle \grad f(x), W \weakD f (x)
		\rangle \omega'(f(x)) \\
		& \phantom = \ + \zeta (x) \trace ( W \weakD ( \grad f ) (x) )
		\omega (f(x))
	\end{align*}
	for $\| W \|$ almost all $x$ by \cite[8.20\,(4)]{MR3528825}.
	Thus, the first equation of \eqref{item:cut-towards-Neumann:BV}
	yields
	\begin{align*}
		- Q_\zeta (\omega') & = - {\textstyle\int \trace ( W \weakD
		\theta ) \ud \| W \|} + {\textstyle\int \langle \grad f, W
		\weakD \zeta \rangle ( \omega \circ f ) \ud \| W \|} \\
		& \phantom = \ + {\textstyle\int \zeta \trace ( W \weakD (
		\grad f)) ( \omega \circ f ) \ud \| W \|}.
	\end{align*}
	Taking $\delta$ as in \ref{example:Neumann-setting}, hence $\delta [
	\spt T ] \subset \{ 0 \}$ by
	\eqref{item:cut-towards-Neumann:monotone}, the final equation in
	\eqref{item:cut-towards-Neumann:BV} shows
	\begin{equation*}
		{\textstyle\int \boldsymbol \upeta (W,\cdot) \bullet \theta
		\ud \| \updelta W \|} - {\textstyle\int \trace ( W \weakD
		\theta ) \ud \| W \|} = g_{\zeta (\omega \circ \delta)} (0) =
		g_\zeta (0) \omega (0)
	\end{equation*}
	and the second equation of \eqref{item:cut-towards-Neumann:BV}
	follows.  Taking \cite[4.1, 4.5]{MR3777387} and
	\ref{remark:trace-estimate} into account, the remaining assertions of
	\eqref{item:cut-towards-Neumann:BV} may now be verified by means of
	\cite[2.4.18\,(1)]{MR41:1976} and \cite[2.8,
	2.11]{DOI_10.4171_RMI_1487}.
\end{proof}

\begin{theorem} \label{thm:Neumann-normal-variations}
	Suppose that $n$, $U$, $M$, $B$, $X$, $W$, and $T$ are as in
	\ref{example:Neumann-setting}, that $\theta : U \cap X \to \mathbf
	R^n$ is a Lipschitzian function with compact support satisfying
	$\theta (b) \in \Nor ( B,b)$ for $b \in B$, and that $h = \theta | (U
	\without B )$.
	
	Then, $h \in \mathbf T ( W, \mathbf R^n )$ and there holds
	\begin{equation*}
		{\textstyle\int \trace ( W \weakD h ) \ud \| W \|} =
		{\textstyle\int \boldsymbol \upeta (W,\cdot) \bullet \theta
		\ud \| \updelta W \|} + {\textstyle\int \mathbf n (M,\cdot)
		\bullet \theta \ud \| T \|}.
	\end{equation*}
\end{theorem}

\begin{proof}
	We take $\delta$ and $g_\zeta$ as in \ref{example:Neumann-setting} and
	recall \cite[4.6\,(1)]{MR3777387}.  By additivity, see
	\cite[8.20\,(3)]{MR3528825} and \cite[4.11]{MR3777387}, it suffices to
	consider two cases, namely $\theta = ( \theta \bullet \grad \delta)
	\grad \delta$ and $\theta \bullet \grad \delta = 0$.  In the first
	case, we take $\zeta = \theta \bullet \grad \delta$ and note that
	\begin{equation*}
		g_\zeta (0) = {\textstyle \int \zeta \ud \| T \|} = -
		{\textstyle\int \mathbf n (M,\cdot) \bullet \theta \ud \| T
		\|}
	\end{equation*}
	by \ref{lemma:cut-towards-Neumann} which yields the conclusion in that
	case.  The second case implies $\theta (b) = 0$ for $b \in B$ and thus
	follows from
	\ref{lemma:zero-trace-test}\,\eqref{item:zero-trace-test:Radon}
	applied with $U$ and $\eta$ replaced by $U \cap X$ and $\theta$ in
	conjunction with \ref{lemma:cut-towards-Neumann}%
	\,\eqref{item:cut-towards-Neumann:monotone}.
\end{proof}

\begin{example} \label{example:Neumann-boundary}
	Suppose $m$ and $n$ are positive integers, $m \leq n$, $U$ is an open
	subset of $\mathbf R^n$, $M$ is a relatively closed subset of $U$ and
	an $n$ dimensional submanifold-with-boundary of class $2$, $B =
	\partial M$, $X$ is associated with $B$ as in
	\ref{thm:reach-non-proper-submanifolds}, and $V \in \mathbf V_m (U)$
	satisfies $\spt \| V \| \subset M$ and
	\begin{equation*}
		\sup \{ {\textstyle\int S_\natural \bullet \Der \theta (x) \ud
		V \, (x,S)} \with \text{$\theta \in \Theta$, $\spt \theta
		\subset K$, $| \theta | \leq 1$} \} < \infty
	\end{equation*}
	whenever $K$ is a compact subset of $U$, where $\Theta$ is the vector
	space of functions $\theta : U \to \mathbf R^n$ of class $1$ with
	compact support and $\theta (b) \in \Tan (B,b)$ for $b \in B$. Then,
	\begin{equation*}
		W = V | \mathbf 2^{(U \cap X \without B ) \times \mathbf G
		(n,m)} \in \mathbf V_m ( U \cap X \without B )
	\end{equation*}
	satisfies the conditions of \ref{example:Neumann-setting} and,
	defining $T \in \mathscr D' ( U \cap X, \mathbf R )$ by
	\begin{equation*}
		T ( \zeta ) = {\textstyle\int ( \boldsymbol \upeta (W,\cdot)
		\bullet \grad f ) \zeta \ud \| \updelta W \|} -
		{\textstyle\int \trace ( W \weakD ( \zeta \grad f ) ) \ud \| W
		\|}
	\end{equation*}
	for $\zeta \in \mathscr D ( U \cap X, \mathbf R)$, where $f : U \cap X
	\without B \to \mathbf R$ is given by
	\begin{equation*}
		\text{$f(x) = \dist ( x, B )$ if $x \in M$}, \quad \text{$f(x)
		= - \dist (x,B )$ if $x \notin M$}
	\end{equation*}
	for $x \in U \cap X \without B$, we conclude that $\| \updelta V \|$
	is a Radon measure and
	\begin{align*}
		& {\textstyle\int_B \boldsymbol \upeta ( V, b ) \bullet \Nor
		(B,b)_\natural ( \theta (b))  \ud \| \updelta V \| \, b } \\
		& \qquad = {\textstyle\int \mathbf n ( M, x ) \bullet \theta
		(x) \ud \| T \| \, x} - {\textstyle\int_{\mathbf G_m ( B )}
		\mathbf h ( B,b,S) \bullet \theta (b) \ud V \, (b,S)}
	\end{align*}
	whenever $\theta : U \to \mathbf R^n$ is of class $1$ with compact
	support; in fact, recalling \ref{lemma:cut-towards-Neumann}%
	\,\eqref{item:cut-towards-Neumann:monotone}, it suffices to note that
	if such $\theta$ satisfies $\spt \theta \subset U \cap X$ and $\theta
	(b) \in \Nor (B,b)$ for $b \in B$, then combining
	\ref{thm:Allard-constancy}, \ref{remark:mean-curv},
	\ref{thm:Neumann-normal-variations}, and \cite[4.1, 4.5]{MR3777387}
	yields
	\begin{align*}
		& {\textstyle\int S_\natural \bullet \Der \theta (x) \ud V \,
		(x,S)} = - {\textstyle \int_{\mathbf G_m ( B)} \mathbf h ( B,
		b, S ) \bullet \theta (b) \ud V \, (b,S)} \\
		& \qquad + {\textstyle\int \boldsymbol \upeta ( W, x ) \bullet
		\theta (x) \ud \| \updelta W \| \, x} + {\textstyle\int
		\mathbf n ( M,x ) \bullet \theta (x) \ud \| T \| \, x}.
	\end{align*}

	With the techniques of \cite[2.5.14]{MR41:1976}, we construct a Radon
	measure $\psi$ over $U$ such that
	\begin{equation*}
		\psi (G) = \sup \{ ( \updelta V ) ( \theta ) \with \theta \in
		\Theta, \spt \theta \subset G, | \theta | \leq 1 \}
	\end{equation*}
	whenever $G$ is an open subset of $U$.  Since the vector subspace
	$\Theta$ is $\| \updelta V \|_{(1)}$ dense in $\mathbf L_1 ( \|
	\updelta V \|, \mathbf R^n ) \cap \{ \theta \with \text{$\dmn \theta =
	U$, $\theta (b) \in \Tan (B,b)$ for $b \in B$} \}$ by
	\ref{example:distribution-submanifolds}, we conclude
	\begin{equation*}
		\psi = \| \updelta V \| \restrict | \langle \boldsymbol \upeta
		( V, \cdot ), \tau \rangle |
	\end{equation*}
	by means of \cite[2.8]{DOI_10.4171_RMI_1487},
	\ref{lemma:almost-retraction-to-ball}, and
	\ref{remark:polar_decomposition}, where $\tau : U \to \Hom ( \mathbf
	R^n, \mathbf R^n )$ is defined by
	\begin{equation*}
		\text{$\tau (x) = \mathbf 1_{\mathbf R^n}$ for $x \in U
		\without B$}, \quad \text{$\tau (x) = \Tan(B,x)_\natural$ for
		$x \in B$}.
	\end{equation*}
	Whenever $a \in \mathbf R^n$, $0 < r < \infty$, and $\mathbf B (a,2r)
	\subset U \cap X$, we will estimate
	\begin{align*}
		\| \updelta V \| \, \mathbf B (a,r) & \leq \big ( \psi + \| T
		\| + r^{-1} m \lambda \| V \| \big ) \, \mathbf B (a,r) \\
		& \leq \big ( 2 \psi + r^{-1} ( 1 + 2m \lambda )  \| V \| \big
		) \, \mathbf B (a,2r),
	\end{align*}
	where $\lambda = r \sup \| \Der^2 f \| [ \mathbf B (a,2r) ]$; in fact,
	for the first inequality, it suffices to note $\| \mathbf b (B,b) \| =
	\| \Der^2 f (b) \|$ for $b \in B$ and recall
	\ref{lemma:cut-towards-Neumann}%
	\,\eqref{item:cut-towards-Neumann:monotone}, whereas, for the second
	inequality, in view of \cite[4.6\,(1)]{MR3777387}, we may apply
	\ref{lemma:cut-towards-Neumann}%
	\,\eqref{item:cut-towards-Neumann:estimate} with
	\begin{equation*}
		\alpha (x) = \sup \{ 0, 1 - \dist (x, \mathbf B (a,r))/r \}
		\quad \text{for $x \in U \cap X \without B$}.
	\end{equation*}
	Finally, if $E$ is $\| V \| + \| \updelta V \|$ measurable, $V
	\boundary E = 0$, and $V' = V \restrict E \times \mathbf G (n,m)$,
	then $\updelta V' = ( \updelta V ) \restrict E$ yields that, whenever
	$K$ is a compact subset of $U$, we have
	\begin{equation*}
		\sup \{ {\textstyle\int S_\natural \bullet \Der \theta (x) \ud
		V' \, (x,S)} \with \text{$\theta \in \Theta$, $\spt \theta
		\subset K$, $| \theta | \leq 1$} \} < \infty
	\end{equation*}
	and $\psi' = \psi \restrict E$ by \ref{remark:polar_decomposition} and
	\ref{lemma:zero_distributional_boundary}, where $\psi'$ is associated
	with $V'$ as $\psi$ with $V$.
\end{example}

\begin{remark} \label{remark:Neumann-rectifiability}
	If $\boldsymbol \Uptheta^m ( \| V \|, x ) \geq 1$ for $\| V \|$ almost
	all $x$, then $V$ is rectifiable by \cite[5.5\,(1)]{MR0307015}.
\end{remark}

\begin{remark} \label{remark:comparison-DM21}
	Based on \ref{thm:reach-non-proper-submanifolds} and
	\ref{remark:nearest-point-projection}, the development in
	\ref{lemma:zero-trace-test}--\ref{remark:Neumann-rectifiability}
	localises the results of \cite[Section 3, Subsections
	4.1--4.3]{MR4359920}.  By fully employing the machinery of
	\cite{MR0110078}, \cite{MR0307015}, \cite{MR0397520},
	\cite{MR3528825}, and \cite{MR3777387}---in particular,
	\cite[4.8]{MR0110078}, \cite[2.5, 4.6]{MR0307015},
	\cite[2.2]{MR0397520}, \cite[8.5, 8.30]{MR3528825}, and \cite[4.6,
	4.11]{MR3777387}---and re-organising the material, we intend to
	provide a deeper understanding of the long computations of
	\cite[pp.\,2614--2625]{MR4359920}: Namely, \cite[Lemma 3.1]{MR4359920}
	is a special case of \ref{thm:Allard-constancy}; the content of
	\cite[Theorem 1.1, Theorem 4.1, and Corollary 4.2]{MR4359920} was
	split and, recalling \cite[4.8\,(3)]{MR0110078}, it is implied by
	\ref{thm:Allard-constancy}, \ref{remark:mean-curv},
	\ref{lemma:cut-towards-Neumann}%
	\,\eqref{item:cut-towards-Neumann:monotone}%
	\,\eqref{item:cut-towards-Neumann:estimate}, and
	\ref{thm:Neumann-normal-variations};%
	\begin{footnote}%
		{In \cite[Theorems 1.1 and 4.1]{MR4359920}, to obtain
		assertions supported by the proofs provided, \emph{second
		fundamental form} should be replaced by \emph{reach} regarding
		the dependence of $c (\mathscr M)$.}
	\end{footnote}%
	for \cite[Corollary 4.3]{MR4359920}, the same holds when
	\ref{remark:rectifiability-boundary-part} and
	\ref{example:Neumann-boundary} are taken into account; for
	\cite[Corollaries 4.4 and 4.6]{MR4359920}, one similarly takes note of
	\ref{remark:boundary-not-charged}; finally, \cite[Corollary
	4.7]{MR4359920} is contained in \ref{lemma:cut-towards-Neumann}%
	\,\eqref{item:cut-towards-Neumann:monotone}%
	\,\eqref{item:cut-towards-Neumann:estimate} and
	\ref{example:Neumann-boundary}.  The statements on the distributional
	derivative of $Q_\zeta$ in
	\ref{lemma:cut-towards-Neumann}\,\eqref{item:cut-towards-Neumann:BV}
	are new.
\end{remark}

\begin{example} \label{example:Neumann-Lm}
	Suppose $m$ and $n$ are integers, $1 \leq m \leq
	n$, $U$ is an open subset of $\mathbf R^n$, $M$ is a relatively closed
	subset of $U$ and an $n$ dimensional submanifold-with-boundary of
	class $2$, $B = \partial M$,
	$V \in \mathbf V_m ( U )$, $\spt \| V \| \subset M$, $\Theta$ consists
	of all $\theta : U \to \mathbf R^n$ of class $1$ with compact support
	and $\theta (b) \in \Tan (B,b)$ for $b \in B$, $1 <
	p \leq \infty$, $q = 1$ if $p = \infty$, and $q=p/(p-1)$ if $p <
	\infty$. Then, we will prove that the following two conditions are
	equivalent and imply those of \ref{example:Neumann-boundary}.
	\begin{enumerate}
		\item \label{item:Neumann-Lm:dual} If $K$ is a compact subset
		of $U$, then
		\begin{equation*}
			\sup \big \{ {\textstyle\int S_\natural \bullet \Der
			\theta (x) \ud V \, (x,S)} \with \text{$\theta \in
			\Theta$, $\spt \theta \subset K$,
			$\| V \|_{(q)} ( \theta ) \leq 1$}
			\big \} < \infty.
		\end{equation*}
		\item \label{item:Neumann-Lm:summability} The measure $\|
		\updelta V \|$ is a Radon measure, $\langle
		\mathbf h (V,\cdot), \tau \rangle \in \mathbf
		L_p^{\textup{loc}} ( \| V \|, \mathbf R^n )$, and
		\begin{equation*}
			{\textstyle\int S_\natural \bullet \Der \theta (x) \ud
			V \, (x,S)} = - {\textstyle\int \mathbf h (V,x)
			\bullet \theta (x) \ud \| V \| \, x} \quad
			\text{whenever $\theta \in \Theta$},
		\end{equation*}
		where $\tau : U \to \Hom ( \mathbf R^n, \mathbf R^n )$ is
		defined by
		\begin{equation*}
			\text{$\tau (x) = \mathbf 1_{\mathbf R^n}$ for $x \in
			U \without B$}, \quad \text{$\tau (x) =
			\Tan(B,x)_\natural$ for $x \in B$}.
		\end{equation*}
	\end{enumerate}
	Clearly, \eqref{item:Neumann-Lm:summability} implies
	\eqref{item:Neumann-Lm:dual} and \eqref{item:Neumann-Lm:dual} implies
	the conditions of \ref{example:Neumann-boundary}.  Thus, if
	\eqref{item:Neumann-Lm:dual} holds, then $\| \updelta V \|$ is a Radon
	measure by \ref{example:Neumann-boundary} and
	\eqref{item:Neumann-Lm:summability} follows by
	\ref{example:distribution-submanifolds} as the function $\langle
	\mathbf h (V,\cdot), \tau \rangle$ is $\| V \|$ almost characterised
	by the fact that, for $\| V \|$ almost all $x$,
	\begin{equation*}
		- \mathbf h (V,x) \bullet \theta (x) = \lim_{r \to 0+} \frac{
		( \updelta V ) ( b_{x,r} \cdot \theta )}{\| V \| \, \mathbf B
		(x,r)} \quad \text{whenever $\theta \in \Theta$},
	\end{equation*}
	where $b_{x,r}$ is the characteristic function of $\mathbf B (x,r)$ on
	$U$, which is true since $\theta$ is continuous and $\lim_{r \to 0+}
	\| \updelta V \| \, \mathbf B (x,r) \big / \| V \| \, \mathbf B (x,r)
	< \infty$ for $\| V \|$ almost all $x$ by \cite[2.8.18,
	2.9.5]{MR41:1976}.  Taking $\psi$ as in
	\ref{example:Neumann-boundary}, the preceding conditions imply
	\begin{equation*}
		( \| \updelta V \| - \| \updelta V \|_{\| V \|} ) \restrict |
		\langle \boldsymbol \upeta (V,\cdot), \tau \rangle | = 0, \quad
		\psi = \| V \| \restrict | \langle \mathbf h (V,\cdot), \tau
		\rangle |;
	\end{equation*}
	in fact, the first equation follows from \cite[2.9.2]{MR41:1976}, as
	$\psi$ is absolutely continuous with respect to $\| V \|$, and
	entails the second equation by \cite[2.8]{DOI_10.4171_RMI_1487} and
	\ref{remark:polar_decomposition}.  Finally, if $E$ is $\| V \| + \|
	\updelta V \|$ measurable, $V \boundary E = 0$, and $W = V \restrict E
	\times \mathbf G (n,m)$, then we employ $\updelta W = ( \updelta V )
	\restrict E$ in conjunction with \ref{remark:polar_decomposition} and
	\ref{lemma:zero_distributional_boundary} to verify
	\begin{equation*}
		\sup \big \{ {\textstyle\int S_\natural \bullet \Der \theta
		(x) \ud W \, (x,S)} \with \text{$\theta \in \Theta$, $\spt
		\theta \subset K$, $\| W \|_{(q)} ( \theta
		) \leq 1$} \big \} < \infty
	\end{equation*}
	whenever $K$ is a compact subset of $U$.
\end{example}

\begin{remark} \label{remark:free-boundary-constrained}
	By \ref{remark:immersion_varifold}, whenever $N$ is an $m$ dimensional
	manifold-with-boundary of class~$2$, $F : N \to U$ is a proper
	immersion of class $2$, $F[N] \subset M$, and $V$ is associated with
	$(F,U)$, the varifold $V$ satisfies the conditions of
	\ref{example:Neumann-Lm} with $p = \infty$ if
	\begin{equation*}
		\mathbf n (F,c) \in \Nor (M,F(c)) \quad \text{for $c \in
		\partial N$};
	\end{equation*}
	in case $F|\partial N$ is an embedding, the condition on the exterior
	normal is necessary whenever $1 < p \leq \infty$.
	The special case that $U = \mathbf R^n$, $M$ compact, $N \subset M$,
	$F = \mathbf 1_N$, and $\mathbf h (F,\cdot) = 0$, pertaining to the
	\emph{constrained free boundary problem}, is of particular importance;
	a general existence theorem of such $N$ was obtained in case $2 \leq m
	\leq 6$, $n=m+1$, and $M$ of class $\infty$ in \cite[Theorem
	1.1]{MR4285846}.  In the classical subcase that $n = 3$ and $M =
	\mathbf B (0,1)$, existence of $N$ with prescribed number of boundary
	components and genus $0$ or with prescribed genus and connected
	boundary was treated in \cite[Theorem 1.6]{MR3461367} and
	\cite[Theorem 1.1]{MR4524829}, respectively.
\end{remark}

\begin{remark} \label{remark:free-boundary-sliding}
	If $E$ is an $\mathscr H^m$ measurable subset of $M$ which meets every
	compact subset of $U$ in an $(\mathscr H^m,m)$ rectifiable set, and
	\begin{equation*}
		\mathscr H^m ( E \cap \spt \theta ) \leq \mathscr H^m ( \phi
		(t, \cdot ) [E] \cap \spt \theta ) \quad \text{for $t \in
		\mathbf R$ and $\theta \in \Theta$},
	\end{equation*}
	where $\phi$ is the flow associated with $\theta$, then $V = \mathbf
	v_m ( E ) \in \mathbf{RV}_m (U)$ satisfies the conditions of
	\ref{example:Neumann-boundary} and of \ref{example:Neumann-Lm} with
	$\psi = 0$ by \cite[2.27]{DOI_10.4171_RMI_1487}.  According to
	\cite[Theorem 5.16]{MR3975493}, our treatment therefore locally
	applies to \emph{minimal sets} in $U$ with \emph{sliding boundary
	conditions} given by $M$ and $B$ in the sense of \cite{MR3975493}; in
	the special case $m = 2$, $n=3$, $U = \mathbf R^n$, and $M$ compact,
	an existence result for such sets is established in \cite[Theorem
	8.1]{MR4220652}.  Adopting the terminology of \cite[Section
	12]{MR3800850} and noting \cite[Lemma 2.1.2]{MR4489608}, we obtain the
	following proposition: \emph{If $M$ is a compact $n$ dimensional
	submanifold-with-boundary of class $2$ of $\mathbf R^n$, $B = \partial
	M$, $G$ is a commutative group, $L$ is a subgroup of the $(m-1)$-th
	Čech homology group of $B$, $\mathscr{\check C} ( B, L, G)$ denotes
	the family of closed subsets of $\mathbf R^n$ spanning $L$, $M \supset
	E \in \mathscr{\check C} ( B, L, G )$,
	\begin{equation*}
		\mathscr H^m ( E \without B ) = \inf \{ \mathscr H^m ( F
		\without B ) \with M \supset F \in \mathscr{\check C} ( B, L,
		G ) \},
	\end{equation*}
	and $E \without B$ is $(\mathscr H^m,m)$ rectifiable, then $E \without
	B$ satisfies the preceding conditions}; for given $(M,n,G,L,m)$ such
	that the infimum is finite, the existence of such a set $E$ follows
	from \cite[Theorem 3.20]{MR3800850} with $U$ replaced by $\mathbf R^n
	\without B$.  The preceding proposition remains valid when ``$\without
	B$'' is omitted both in its hypotheses and its conclusion; however,
	there is no known existence resulting in this case.  Finally, it is not
	clear whether such sets $E$ must be minimal in $\mathbf R^n$ with
	sliding boundary conditions given by $M$ and $B$, see \cite[Remark
	7.7]{MR3329849}.
\end{remark}

\begin{remark} \label{remark:free-boundary-Willmore}
	Two related classes of \emph{curvature varifolds with boundary} and
	$p$-th power summable \emph{weak second fundamental form} in the sense
	of \cite{MR1412686} occur in \cite[Theorems 4 and 5]{MR4787572}; the
	first one satisfies the conditions of \ref{example:Neumann-boundary}
	and, if $p>1$, of \ref{example:Neumann-Lm}, whereas the second one
	satisfies \ref{remark:Neumann-rectifiability} and
	\ref{example:Neumann-Lm} with $p = 2$.
\end{remark}

\begin{remark} \label{remark:free-boundary-MCF}
	For $\mathscr L^1$ almost all \emph{times}, rectifiable varifolds
	satisfying the conditions of \ref{example:Neumann-Lm} with $p = 2$
	occur in \emph{Brakke flow with free boundary}, see \cite[Theorems 2.1
	and 2.5]{MR3348119} jointly with \cite{MR3542008}, \cite[Theorems 3.3,
	3.5, and 3.6\,(A1)]{MR3953134}, and \cite[Theorem 9.1]{MR4048443}.
	Such varifolds also appear in \emph{level set mean curvature flow with
	Neumann boundary conditions}, see \cite[Theorem 2.6]{MR4612634}.
\end{remark}

\begin{lemma} \label{lemma:calculus}
	Suppose $m$, $r$, and $\gamma$ are positive real numbers, $0 < \delta
	\leq (8m\gamma)^{-1}$, $m \geq 1$, $g : \{ s \with r/4 \leq s \leq r
	\} \to \mathbf R$ is a nondecreasing function, $g(r/4) \geq ( \delta
	r/4)^m$, and
	\begin{equation*}
		g(s)^{1-1/m} \leq 2^{-2m-2} \delta^{-1} s^{-1} g(2s) + 2^{-2m}
		g(2s)^{1-1/m} + \gamma g'(s)
	\end{equation*}
	for $\mathscr L^1$ almost all $s$ with $r/4 \leq s \leq r/2$.

	Then, $g(r) \geq \delta^m r^m$.
\end{lemma}

\begin{proof}
	If the lemma were false, noting that, for $r/4 \leq s \leq r/2$, we
	would have
	\begin{gather*}
		g(2s) \leq \delta^m r^m \leq 2^{2m} \inf \{ g(s), \delta^m s^m
		\}, \\
		2^{-2m-2} \delta^{-1} s^{-1} g (2s) + 2^{-2m} g(2s)^{1-1/m}
		\leq 2^{1-2m} g(2s)^{1-1/m} \leq 2^{-1} g(s)^{1-1/m},
	\end{gather*}
	we would conclude $(2m\gamma)^{-1} \leq (g^{1/m})'(s)$ for $\mathscr
	L^1$ almost all $s$ with $r/4 \leq s \leq r/2$ so that
	\begin{equation*}
		(8m\gamma)^{-1} r \leq {\textstyle\int_{r/4}^{r/2} (g^{1/m})'
		\ud \mathscr L^1} \leq g(r)^{1/m} < \delta r
	\end{equation*}
	by \cite[2.9.19]{MR41:1976}, in contradiction to $\delta \leq
	(8m\gamma)^{-1}$.
\end{proof}

\begin{theorem} \label{thm:Neumann-lower-density-ratio-bound}
	Suppose $m$, $U$, $X$, and $f$ are as in
	\ref{example:Neumann-boundary}, $G$ is an open subset of $U \cap X$,
	$\kappa = \sup \| \Der^2 f \| [G] < \infty$,
	\begin{enumerate}
		\item if $m = 1$, then $V$ and $\tau$ are as in
		\ref{example:Neumann-boundary}, and
		\begin{equation*}
			{\textstyle \int_G | \langle \boldsymbol \upeta
			(V,\cdot), \tau \rangle | \ud \| \updelta V \| \leq
			2^{-3} \boldsymbol \upgamma (1)^{-1}},
		\end{equation*}
		\item if $m \geq 2$, then $V$ and $\tau$ are as in
		\ref{example:Neumann-Lm}\,\eqref{item:Neumann-Lm:summability}
		with $p = m$, and
		\begin{equation*}
			{\textstyle \big ( \int_G | \langle \mathbf h (V,
			\cdot ), \tau \rangle |^m \ud \| V \| \big )^{1/m}
			\leq 2^{-2m-1} \boldsymbol \upgamma (m)^{-1}},
		\end{equation*}
	\end{enumerate}
	and $\boldsymbol \Uptheta^m ( \| V \|, x ) \geq 1$ for $\| V \|$
	almost all $x \in G$.

	Then, there holds
	\begin{equation*}
		\| V \| \, \mathbf B (a,r) \geq \delta^m r^m, \quad
		\text{where $\delta = 1 \big / \big ( 2^{2m+2} \boldsymbol
		\upgamma (m) (1+2m\kappa r) \big )$},
	\end{equation*}
	whenever $a \in \spt \| V \|$, $0 < r < \infty$, and $\mathbf B (a,r)
	\subset G$.
\end{theorem}

\begin{proof}
	Taking $n$ as in \ref{example:Neumann-Lm}, $V \restrict G \times
	\mathbf G (n,m) \in \mathbf{RV}_m (U)$ by
	\ref{remark:Neumann-rectifiability}.  It suffices to consider $a$ with
	$\boldsymbol \Uptheta^m ( \| V \|, a) \geq 1$.  We define $g : \{ s
	\with 0 < s \leq r \} \to \mathbf R$ by
	\begin{equation*}
		g(s) = \| V \| \, \mathbf B (a,s) \quad \text{for $0 < s \leq
		r$}.
	\end{equation*}
	The isoperimetric inequality applied to $i_\# ( V \restrict
	\mathbf B (a,s) \times \mathbf G (n,m)) \in \mathbf {RV}_m ( \mathbf
	R^n)$, where $i : U \to \mathbf R^n$ is the inclusion map, in
	conjunction with \cite[8.7, 8.29]{MR3528825}, yields
	\begin{equation*}
		g(s)^{1-1/m} \leq \boldsymbol \upgamma (m) \big ( \| \updelta
		V \| \, \mathbf B (a,s) + g'(s) \big )
	\end{equation*}
	for $\mathscr L^1$ almost all $s$ with $0 < s \leq r$.  Since
	\begin{equation*}
		\| \updelta V \| \, \mathbf B (a,s) \leq 2^{-2m} \boldsymbol
		\upgamma (m)^{-1} g(2s)^{1-1/m} + s^{-1} (1+2m\kappa r) g(2s),
	\end{equation*}
	for $0 < s \leq r/2$ by \ref{example:Neumann-boundary},
	\ref{example:Neumann-Lm}, and Hölder's inequality, we infer
	\begin{equation*}
		g(s)^{1-1/m} \leq 2^{-2m-2} \delta^{-1} s^{-1} g(2s) + 2^{-2m}
		g(2s)^{1-1/m} + \boldsymbol \gamma (m) g'(s)
	\end{equation*}
	for $\mathscr L^1$ almost all $s$ with $0 < s \leq r/2$.  Recalling
	$\boldsymbol \gamma (m) \geq \boldsymbol \alpha (m)^{-1/m}/m$ from
	\cite[2.4]{MR2537022}, we observe that $\delta^m < \boldsymbol \alpha
	(m)$. Applying \ref{lemma:calculus} with $\gamma =
	\boldsymbol \gamma (m)$ and $r$ replaced by $2^{-i}r$
	for nonnegative integers $i$ then yields the conclusion.
\end{proof}

\begin{remark} \label{remark:Neumann-lower-density-ratio-bound}
	The preceding theorem adapts \cite[8.3]{MR0307015} to the present
	setting.  A possible strengthening analogous to \cite[2.5]{MR2537022}
	remains open though.
\end{remark}

\begin{corollary} \label{corollary:Neumann-locally-finite-decompositions}
	Suppose $m$ and $V$ satisfy the equivalent conditions of
	\ref{example:Neumann-Lm} with $p = m \geq 2$ and
	$\boldsymbol \Uptheta^m ( \| V \|, x ) \geq 1$ for $\| V \|$ almost
	all $x$.

	Then, every decomposition of $V$ is locally finite.
\end{corollary}

\begin{proof}
	Recalling $\| \updelta V \|$ is a Radon measure by
	\ref{example:Neumann-Lm}\,\eqref{item:Neumann-Lm:summability}, that
	$V$ is rectifiable by \ref{remark:Neumann-rectifiability}, we verify
	the condition of \ref{remark:criterion-locally-finite}: Noting
	\ref{lemma:zero_distributional_boundary} and taking $U$ and $B$ as in
	\ref{example:Neumann-Lm}, this follows from \cite[2.5]{MR2537022} for
	$a \in U \without B$ and from
	\ref{thm:Neumann-lower-density-ratio-bound} for $a \in B$.
\end{proof}

\begin{remark} \label{remark:Neumann-locally-finite-decompositions}
	By \cite[6.11]{MR3528825}, the conclusion also holds if $m = 1$, $V$
	satisfies the conditions of \ref{example:Neumann-boundary}, and
	$\boldsymbol \Uptheta^1 ( \| V \|, x ) \geq 1$ for $\| V \|$ almost
	all $x$.
\end{remark}

\section{Criteria for indecomposability with respect to a family of
generalised weakly differentiable real valued functions}

Firstly, in
\ref{def:Riemannian_distance}--\ref{remark:connectedness-immersions}, we treat
varifolds associated with immersions.  This includes a constancy theorem in
\ref{thm:constancy} and Theorem \ref{Thm:connected-immersions} of the
introductory section in \ref{thm:indecomposability_immersions}.  Secondly, in
\ref{thm:indecomposable_chains}--%
\ref{remark:indecomposable_chains_decomposable_varifolds}, we proceed to
varifolds associated with integral chains with coefficients in a complete
normed commutative group; in particular, we provide Theorem
\ref{Thm:Indecomposability-G-chains} of the introductory section in
\ref{thm:indecomposable_chains}.  Finally, we study
several purely varifold-geometric settings in
\ref{lemma:decompositions}%
--\ref{remark:Neumann-indecomposable} and derive
Theorem \ref{Thm:indecomposability-type-G} and Corollaries
\ref{Corollary:equivalence}, \ref{Thm:E}, and
\ref{Thm:Neumann} of the introductory section in
\ref{thm:indecomposable_type_Psi},
\ref{corollary:indecomposability-equivalence},
\ref{corollary:indecomposable_type_Gamma}, and
\ref{corollary:Neumann-indecomposable}, respectively.

\begin{definition} \label{def:Riemannian_distance}
	Suppose the pair $(M,g)$~consists of a connected
	manifold-with-boundary~$M$ of class~$1$ and a Riemannian metric~$g$
	on~$M$ of class~$0$.
	
	Then, the \emph{Riemannian distance~$\sigma$ of~$(M,g)$} is the
	function on~$M \times M$ whose value at $(c,z) \in M \times M$ equals
	the infimum of the set of numbers
	\begin{equation*}
		{\textstyle \int_K \langle (C'(y), C'(y)), g(C(y))
		\rangle^{1/2} \ud \mathscr L^1 \, y}
	\end{equation*}
	corresponding to all locally Lipschitzian%
	\begin{footnote}
		{That is, $C$ is continuous and $\phi \circ C$ is locally
		Lipschitzian whenever $\phi$ is a chart of~$M$ of class~$1$. A
		posteriori, this is equivalent to $C$~being locally
		Lipschitzian with respect to~$\sigma$.}
	\end{footnote}%
	functions $C$ mapping some compact interval~$K$ into~$M$ with $C (
	\inf K ) = c$ and $C ( \sup K ) = z$.
\end{definition}

\begin{remark}
	In our development, $g$ is always induced by an immersion into
	$\mathbf R^n$.
\end{remark}

\begin{lemma} \label{lemma:immersions}
	Suppose $m$ and $n$ are positive integers, $m \leq n$, $M$~is a
	connected $m$~dimensional manifold-with-boundary of class~$1$, $F : M
	\to \mathbf R^n$ is an immersion of class~$1$, $g$~is the Riemannian
	metric on $M$ induced by $F$ from $\mathbf R^n$, and $\sigma$ is the
	Rie\-man\-ni\-an distance associated with~$(M,g)$.
	
	Then, the function~$\sigma$ is a metric on~$M$ inducing the given
	topology on~$M$ and $F_\# \mathscr H^k_\sigma = \mathscr H^k \restrict
	\Number ( F, \cdot)$ whenever $0 \leq k < \infty$.
\end{lemma}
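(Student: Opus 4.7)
The plan exploits the local embedding property of immersions. By the immersion theorem, for each $c \in M$ there exists a chart $(\phi, W)$ of $M$ with $\phi(c) = 0$ such that $F|W$ is a $C^1$ embedding onto a $C^1$ submanifold of $\mathbf R^n$; after shrinking $W$, one may assume $F \circ \phi^{-1}$ is bi-Lipschitz on a neighborhood of $0$ in $\phi[W]$, whence the locally defined Riemannian distance $\sigma_W$ of $(W, g|W)$ is bi-Lipschitz equivalent to $|\phi(\cdot) - \phi(\cdot)|$ near $c$. This local structure is the essential input for both assertions.

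For the first assertion, symmetry, triangle inequality, and $\sigma(c,c) = 0$ are immediate from the infimum definition by reversing and concatenating curves. For positivity when $c \neq z$, the basic length inequality ${\textstyle\int_K} |(F \circ C)'(y)| \ud \mathscr L^1 \, y \geq |F(c) - F(z)|$, valid since $g$ is the pullback of the Euclidean metric, handles the case $F(c) \neq F(z)$; when $F(c) = F(z)$, I select a chart $W$ as above small enough that $z \notin W$, together with an open neighborhood $V$ of $c$ with $\overline V \subset W$, so that every curve from $c$ to $z$ must first cross $\partial V$ on its initial arc within $W$, hence has length at least $\inf \sigma_W(c, \cdot)[\partial V] > 0$ by the bi-Lipschitz estimate on $W$. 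For the topology equivalence, the same lower bound shows that every manifold-neighborhood of $c$ contains some $\sigma$-ball at $c$, while straight-line paths in chart coordinates give the upper bound $\sigma(c, z) \leq L \, |\phi(c) - \phi(z)|$, showing that every $\sigma$-ball at $c$ contains a chart-neighborhood.

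For the measure identity, fix $k$ with $0 \leq k < \infty$ and choose a countable Borel partition $\{E_j\}$ of $M$ with each $E_j$ contained in a chart $W_j$ as above. By countable additivity and linearity of pushforward it suffices to establish $F_\#(\mathscr H^k_\sigma \restrict E_j) = \mathscr H^k \restrict F[E_j]$ for each $j$; summing then yields the stated identity, because the injectivity of $F|W_j$ combined with the partition property give $\sum_j \chi_{F[E_j]}(y) = \Number(F, y)$ for $y \in \mathbf R^n$. The local identity has two steps. First, $\mathscr H^k_\sigma \restrict E_j = \mathscr H^k_{\sigma_{W_j}} \restrict E_j$: for every $c \in W_j$ I pick $r > 0$ with $\overline{B_{\sigma_{W_j}}(c, 2r)} \subset W_j$; any path joining two points of $B_{\sigma_{W_j}}(c, r/2)$ that leaves $W_j$ has length strictly greater than $\sigma_{W_j}$ between those points, so $\sigma = \sigma_{W_j}$ locally near $c$, and Hausdorff measure, being determined by arbitrarily small-scale structure, agrees. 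Second, $F|W_j$ is a metric isometry from $(W_j, \sigma_{W_j})$ onto $F[W_j]$ equipped with its intrinsic path-length metric, and on a $C^1$ submanifold of $\mathbf R^n$ the $k$-dimensional Hausdorff measure computed intrinsically coincides with the one computed using the ambient Euclidean metric; this yields $F_\#(\mathscr H^k_{\sigma_{W_j}} \restrict E_j) = \mathscr H^k \restrict F[E_j]$.

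The main obstacle is the locality-of-paths argument ensuring $\sigma = \sigma_{W_j}$ on small pairs, which is classical for smooth Riemannian manifolds but warrants careful justification given only continuity of $g$; the comparison with the bi-Lipschitz chart coordinates will make the length estimates uniform enough on small neighborhoods to conclude the equality of metrics locally.
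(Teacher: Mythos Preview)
Your proposal is correct and follows essentially the same route as the paper: reduce to the embedded situation by working locally where $F$ is an embedding, then compare the intrinsic Riemannian distance with the ambient Euclidean one via the local estimate $\sigma(z,\zeta) \leq \lambda\,|z-\zeta|$ for every $\lambda>1$. The paper compresses your reduction into a single sentence (``one may reduce the statement to the case that $F$ is an embedding, hence to the case that $M \subset \mathbf R^n$ and $F = \mathbf 1_M$'') and then carries out the comparison directly on the embedded submanifold, whereas you spell out the Borel partition $\{E_j\}$, the locality argument $\sigma = \sigma_{W_j}$, and the summation yielding $\Number(F,\cdot)$; both arrive at the same core estimate.
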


\begin{proof}
	We first verify that one may reduce the statement to the case that
	$F$~is an embedding, hence to case that $M \subset \mathbf R^n$ and $F
	= \mathbf 1_M$.  Clearly, $|z-\zeta| \leq \sigma (z,\zeta)$ for
	$z,\zeta \in M$, hence $\mathscr H^k (S) \leq \mathscr H_\sigma^k (S)$
	for $S \subset M$.  On the other hand, given $1 < \lambda < \infty$
	and $c \in M$, there exists $\delta > 0$ such that
	\begin{equation*}
		\sigma (z,\zeta) \leq \lambda |z-\zeta| \quad \text{whenever
		$z,\zeta \in M \cap \mathbf B (c,\delta)$};
	\end{equation*}
	in fact, we observe that it is sufficient to note that the
	chart~$\psi$ of~$\mathbf R^n$ of class~$1$ occurring in the definition
	of submanifold-with-boundary of~$\mathbf R^n$ of class~$1$
	(see~\cite[p.\,30]{MR1336822}) may be required to satisfy $\Der \psi
	(c) = \mathbf 1_{\mathbf R^n}$.  This in particular implies $\mathscr
	H_\sigma^k (S) \leq \lambda^k \mathscr H^k ( S )$ for $S \subset M
	\cap \mathbf B (c,\delta)$ and the conclusion follows.
\end{proof}

\begin{remark} \label{remark:hausdorff_measure}
	Denoting by $h$~the Riemannian metric on~$\partial M$ induced by~$F|
	\partial M$ from $\mathbf R^n$ and by $\rho$ the Riemannian distance
	associated with $(\partial M,h)$, the preceding lemma yields in
	particular $\mathscr H^{m-1}_\rho (S) = \mathscr H^{m-1}_\sigma (S)$
	for $S \subset \partial M$.  Moreover, the measures $\mathscr
	H^m_\sigma$ and~$\mathscr H^{m-1}_\rho$ agree with the usual
	Riemannian measures (see~\cite[Section~2.5]{MR1390760}) associated
	with $(M,g)$ and $(\partial M,h)$, respectively,
	by~\cite[3.2.46]{MR41:1976}.
\end{remark}

\begin{theorem} \label{thm:constancy}
	Suppose $m$, $M$, and $\sigma$ are as in
	\ref{lemma:immersions}, $C$ is a subset of $M$, and
	$\mathscr H^{m-1}_\sigma ( ( \Bdry C ) \without \partial M ) = 0$.
	
	Then, either $\mathscr H^m_\sigma ( C ) = 0$ or $\mathscr H^m_\sigma (
	M \without C ) = 0$.
\end{theorem}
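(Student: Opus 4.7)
The plan is to reduce the dichotomy to a topological one on the connected open $m$-manifold $M^\circ = M \without \boundary M$ and then invoke a standard non-disconnection result.  First, I would note that $\boundary M$ is an $(m-1)$-dimensional submanifold of class $1$, so $\mathscr H^m_\sigma ( \boundary M ) = 0$ by \ref{lemma:immersions} combined with the standard fact that $(m-1)$-rectifiable sets carry no $\mathscr H^m$ measure; hence it suffices to show that one of $\mathscr H^m_\sigma ( C \cap M^\circ )$ and $\mathscr H^m_\sigma ( M^\circ \without C )$ is zero.

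Next, I would set $N = ( \Bdry C ) \without \boundary M$, which is a relatively closed subset of $M^\circ$ satisfying $\mathscr H^{m-1}_\sigma ( N ) = 0$ by hypothesis.  Since $M$ is a connected manifold-with-boundary, $M^\circ$ is a connected open $m$-dimensional manifold without boundary (boundary points can be pushed into the interior by the half-space chart structure).  The decisive topological input is that a relatively closed $\mathscr H^{m-1}_\sigma$ null subset of $M^\circ$ cannot disconnect $M^\circ$; locally, via charts and the isometric comparison of Riemannian and Euclidean distances furnished by the proof of \ref{lemma:immersions}, this reduces to the classical Euclidean fact that a closed $\mathscr H^{m-1}$ null subset does not separate a connected open subset of $\mathbf R^m$, in the spirit of the constancy theorem \cite[4.5.11]{MR41:1976}; one then patches across overlapping charts by path-connectedness of $M^\circ$.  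Consequently, $M^\circ \without N$ is connected.

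Finally, $M^\circ \without N$ decomposes as the disjoint union of the open sets $\Int C \cap M^\circ$ and $\Int ( M \without C ) \cap M^\circ$, because every point of $M^\circ$ lying outside $\Bdry C$ admits a neighbourhood contained either in $C$ or in its complement.  Connectedness forces one of these open sets to be empty; if, say, $\Int ( M \without C ) \cap M^\circ = \varnothing$, then $M^\circ \without C \subset N$, whence $\mathscr H^m_\sigma ( M^\circ \without C ) \leq \mathscr H^m_\sigma ( N ) = 0$, and the symmetric alternative yields $\mathscr H^m_\sigma ( C \cap M^\circ ) = 0$.  The main obstacle will be a clean justification of the non-disconnection lemma in the class $1$ manifold-with-boundary setting; although essentially classical, it requires ruling out that the $\mathscr H^{m-1}_\sigma$ null set $N$ separates $M^\circ$, which I would handle in charts by integralgeometric arguments (generic line segments avoid an $\mathscr H^{m-1}$ null set) and then chain across coordinate overlaps using path-connectedness of $M^\circ$.
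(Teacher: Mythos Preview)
Your proposal is correct and closely parallels the paper's argument, with a mild topological reorganisation.  The paper also first discards~$\boundary M$ using $\mathscr H^m_\sigma(\boundary M)=0$ and connectedness of $M\without\boundary M$, and then works chart by chart: for a chart $\phi$ with $\im\phi=\mathbf R^m$ it transports~$C$ to $A=\phi[C]\subset\mathbf R^m$, uses \cite[3.2.46]{MR41:1976} to get $\mathscr H^{m-1}(\Bdry A)=0$, invokes \cite[4.5.11]{MR41:1976} and \cite[4.5.6\,(1)]{MR41:1976} to obtain $\boundary(\mathbf E^m\restrict A)=0$, and concludes via the constancy theorem \cite[4.1.7]{MR41:1976} that $A$ or its complement is $\mathscr L^m$~null; the chart-wise dichotomies are then made globally consistent by connectedness of~$M$.

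Your route instead packages the same local Euclidean input as the non-disconnection statement ``a relatively closed $\mathscr H^{m-1}$~null set does not separate a connected open subset of~$\mathbf R^m$'', patches this to conclude $M^\circ\without N$ is connected, and finishes with the clean decomposition $M^\circ\without N=(\Int C\cap M^\circ)\sqcup(\Int(M\without C)\cap M^\circ)$.  Note, however, that the most direct proof of your non-disconnection lemma is exactly the paper's: if $M^\circ\without N=A\sqcup B$ with $A,B$ open and nonempty, then $\Bdry A\subset N$ has $\mathscr H^{m-1}$~measure zero, whence (in a chart) $\boundary(\mathbf E^m\restrict\phi[A])=0$ and constancy forces $\phi[A]$ to be $\mathscr L^m$~null or co-null.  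Your suggested integralgeometric alternative---arguing that almost every line avoids~$N$ and building broken-line paths---can be made to work but needs more care than you indicate (lines through a \emph{fixed} point are not directly covered by $\mathscr I_1^{m-1}(N)=0$; one typically projects radially from each of $p,q$ separately and intersects the good direction sets).  Either way, the substantive content is identical to the paper's; what differs is only whether one phrases the conclusion as a measure dichotomy in each chart or as a global connectedness statement.
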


\begin{proof}
	Noting that $M \without \partial M$ is connected and $\mathscr
	H^m_\sigma ( \partial M ) = 0$ by \ref{remark:hausdorff_measure}, we
	assume $\partial M = \varnothing$.  Next, we observe that it is
	sufficient to prove that there holds either $\mathscr H^m_\sigma (C
	\cap \dmn \phi ) = 0$ or~$\mathscr H^m_\sigma ( ( \dmn \phi ) \without
	C ) = 0$ whenever $\phi$ is a chart of~$M$ of class~$1$ satisfying
	$\im \phi = \mathbf R^m$, as $M$~is covered by the domains of a
	countable collection of such charts.  To verify this dichotomy, we let
	$A = \phi [C]$ and infer $\mathscr H^{m-1} ( \Bdry A) = 0$
	from~\cite[3.2.46]{MR41:1976}, hence $\Bdry A = \varnothing$ if $m=1$
	and $\mathscr I^{m-1}_1 ( \Bdry A ) = 0$ if $m>1$
	by~\cite[2.10.15]{MR41:1976}.  This implies $A$~is of locally finite
	perimeter by~\cite[4.5.11]{MR41:1976} and that $\boundary ( \mathbf
	E^m \restrict A ) = 0$ by~\cite[4.5.6\,(1)]{MR41:1976}.  Consequently,
	there holds $\mathscr L^m (A) = 0$ or $\mathscr L^m ( \mathbf R^m
	\without A ) = 0$ by the constancy theorem~\cite[4.1.7]{MR41:1976},
	whence we deduce the assertion by~\cite[3.2.46]{MR41:1976}.
\end{proof}

\begin{remark}
	Alternatively to using \cite[4.5.6, 4.5.11]{MR41:1976}, one could
	employ an argument based on capacity (see, e.g.,
	\cite[5.7]{MR3528825}) to infer $\boundary ( \mathbf E^m \restrict A )
	= 0$ from~$\mathscr H^{m-1} (\Bdry A) = 0$.
\end{remark}

\begin{theorem} \label{thm:indecomposability_immersions}
	Suppose $m$ and~$n$ are positive integers, $m \leq n$, $U$ is an open
	subset of $\mathbf R^n$, $\Lambda$ is the class of all locally
	Lipschitzian real valued functions with domain $U$, $M$ is a connected
	$m$~dimensional manifold-with-boundary of class $2$, $F : M \to U$ is
	a proper immersion of class~$2$, and $V$ is the varifold associated
	with $(F,U)$.
	
	Then, $V$ is indecomposable of type $\Lambda$.
\end{theorem}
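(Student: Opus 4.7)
The plan is to pull the problem back from $U$ to the connected manifold-with-boundary $M$ and apply the constancy theorem~\ref{thm:constancy}. Given $f \in \Lambda$, I would set $\tilde f = f \circ F : M \to \mathbf R$; this is locally Lipschitzian with respect to the Riemannian distance $\sigma$ on $M$ associated with the metric induced by $F$. For $y \in \mathbf R$, denote $E(y) = \{ x \with f(x) > y \}$ and $\tilde C(y) = F^{-1} [ E(y) ]$. Lemma~\ref{lemma:immersions} and Definition~\ref{def:immersed_varifold} yield
\begin{equation*}
	\| V \| ( E(y) ) = \mathscr H^m_\sigma ( \tilde C(y) ), \quad \| V \| ( U \without E(y) ) = \mathscr H^m_\sigma ( M \without \tilde C (y) ).
\end{equation*}
Writing $B$ for the set of $y$ for which $\| V \| ( E(y) ) > 0$, $\| V \| ( U \without E(y) ) > 0$, and $V \boundary E(y) = 0$, the task reduces to proving $\mathscr L^1 ( B ) = 0$.

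The coarea formula~\cite[12.1]{MR3528825} advertised in the introductory section gives, for $\mathscr L^1$ almost all $y \in \mathbf R$,
\begin{equation*}
	\| V \boundary E(y) \| = \big ( \mathscr H^{m-1} \restrict \{ x \with f(x) = y \} \big ) \restrict \boldsymbol \Uptheta^m ( \| V \|, \cdot ).
\end{equation*}
For each such $y$ in $B$, the integrand $\boldsymbol \Uptheta^m ( \| V \|, \cdot )$ must vanish $\mathscr H^{m-1}$ almost everywhere on the level set $\{ f = y \}$. Combining this with the identity $\boldsymbol \Uptheta^m ( \| V \|, x ) = \Number ( F, x )$ holding for $\| V \|$ almost all $x$ from \ref{remark:immersion_varifold}, with the $(\mathscr H^m, m )$ rectifiability of $\im F$ from \ref{example:immersion-pt-approx-diff}, and with the coarea-type fact that an $\mathscr H^m$ null subset of a rectifiable set intersects $\mathscr L^1$ almost every level set of $f$ in an $\mathscr H^{m-1}$ null set (by \cite[3.2.22]{MR41:1976}), I expect to deduce that $\Number ( F, \cdot ) = 0$ for $\mathscr H^{m-1}$ almost all points of $\{ f = y \}$, and hence, via Lemma~\ref{lemma:immersions} applied with $k = m-1$ together with properness of $F$, that
\begin{equation*}
	\mathscr H^{m-1}_\sigma ( \tilde f^{-1} \{ y \} ) = 0 \quad \text{for $\mathscr L^1$ almost all $y \in B$}.
\end{equation*}

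Since $\tilde f$ is continuous, $\Bdry \tilde C(y) \subset \tilde f^{-1} \{ y \}$, so the previous display forces $\mathscr H^{m-1}_\sigma ( ( \Bdry \tilde C(y) ) \without \boundary M ) = 0$. The constancy theorem~\ref{thm:constancy}, crucially invoking connectedness of $M$, then yields either $\mathscr H^m_\sigma ( \tilde C(y) ) = 0$ or $\mathscr H^m_\sigma ( M \without \tilde C(y) ) = 0$, contradicting the defining conditions of $B$; hence $\mathscr L^1 ( B ) = 0$ as required. The main obstacle will be the technical passage from the $\| V \|$ almost everywhere identity between $\boldsymbol \Uptheta^m ( \| V \|, \cdot )$ and $\Number ( F, \cdot )$ to the corresponding $\mathscr H^{m-1}$ almost everywhere identity on level sets of $f$; this is where the rectifiability of $\im F$ and Federer's coarea-type estimates are essential, and where care is needed to avoid exceptional $y$ of positive $\mathscr L^1$ measure.
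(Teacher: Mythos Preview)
Your proposal is correct and follows essentially the same route as the paper: both arguments pull back to $M$, invoke the coarea formula for $\| V \boundary E(y) \|$, translate vanishing of that boundary measure into $\mathscr H^{m-1}_\sigma ( F^{-1}[\{ f = y \}] ) = 0$ via Lemma~\ref{lemma:immersions} with $k = m-1$, and then apply the constancy theorem~\ref{thm:constancy}. The ``obstacle'' you flag---upgrading the $\| V \|$ almost everywhere identity $\boldsymbol \Uptheta^m ( \| V \|, \cdot ) = \Number ( F, \cdot )$ to an $\mathscr H^{m-1}$ almost everywhere statement on level sets---is handled in the paper by first noting (via \cite[3.5\,(1b)]{MR0307015} together with \ref{remark:immersion_varifold}) that the identity in fact holds $\mathscr H^m$ almost everywhere on $U$, and then invoking the Eilenberg-type inequality \cite[2.10.25]{MR41:1976} rather than \cite[3.2.22]{MR41:1976}; either citation suffices.
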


\begin{proof}
	Clearly, $V$ is rectifiable and $\| \updelta V \|$~is a Radon measure
	by~\ref{remark:immersion_varifold}.  Suppose $f : U \to \mathbf R$ is
	locally Lipschitzian, let
	\begin{equation*}
		B(y) = \{ x \with f(x) = y \}, \quad E(y) = \{ x \with
		f(x)>y\}
	\end{equation*}
	for $y \in \mathbf R$, note $\Bdry F^{-1} [E(y)] \subset F^{-1}
	[B(y)]$, and define $\sigma$ as in \ref{lemma:immersions}.  Since
	$\boldsymbol \Uptheta^m ( \| V \|, x ) = \Number(F,x)$ for $\mathscr
	H^m$~almost all $x \in U$ by \cite[3.5\,(1b)]{MR0307015}
	and~\ref{remark:immersion_varifold}, we employ \cite[2.33,
	2.34]{DOI_10.4171_RMI_1487}, \cite[2.10.25]{MR41:1976}, and
	\ref{lemma:immersions} to infer
	\begin{align*}
		\| V \boundary E(y) \| (U) & = {\textstyle\int_{B(y)}}
		\boldsymbol \Uptheta^m ( \| V \|, x ) \ud \mathscr H^{m-1} \,
		x \\
		& = {\textstyle\int_{B(y)}} \Number(F,x)\ud \mathscr H^{m-1}
		\, x = ( F_\# \mathscr H^{m-1}_\sigma ) ( B(y) )
	\end{align*}
	for $\mathscr L^1$ almost all $y$.  Whenever $V \boundary E (y) = 0$
	for such~$y$, we apply~\ref{thm:constancy} with $C = F^{-1} [E(y)]$ to
	conclude that
	\begin{equation*}
		\text{either $F_\# \mathscr H^m_\sigma ( E(y) ) = 0$} \quad
		\text{or $F_\# \mathscr H^m_\sigma ( U \without E(y)) = 0$}.
	\end{equation*}
	Since $F_\# \mathscr H^m_\sigma = \| V \|$ by \ref{lemma:immersions}
	and~\ref{remark:immersion_varifold}, the conclusion follows.
\end{proof}

\begin{remark} \label{remark:connectedness-immersions}
	Simple examples with $\Number(F,x) = 2$ for $x \in \im F$, yield that
	conversely even indecomposability of~$V$ need not imply connectedness
	of~$M$.
\end{remark}

\begin{theorem} \label{thm:indecomposable_chains}
	Suppose $m$ and $n$ are positive integers, $m \leq n$, $U$ is an open
	subset of $\mathbf R^n$, $G$ is a complete normed commutative group,
	$S \in \mathbf I_m( U, G)$ is indecomposable, $V \in \mathbf{RV}_m(
	U)$ is characterised by $\| V \| = \| S \|$, $\| \updelta V \|$ is a
	Radon measure, and $\Lambda$ denotes the class of all locally
	Lipschitzian real valued functions with domain $U$.
	
	Then, $V$ is indecomposable of type $\Lambda$.
\end{theorem}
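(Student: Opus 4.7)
Let $f \in \Lambda$ and set $E(y) = \{x \with f(x) > y\}$ for $y \in \mathbf R$. The goal is to show that
\begin{equation*}
	B = \big\{y \with \|V\|(E(y)) > 0, \|V\|(U \without E(y)) > 0, V \boundary E(y) = 0\big\}
\end{equation*}
is $\mathscr L^1$-null. Following the strategy indicated in the introduction, the plan is to transfer the vanishing of $V \boundary E(y)$ to the vanishing of the slice $\langle S, f, y\rangle$ of $S$ by $f$, and then to apply the slicing-boundary identity to produce a splitting of $S$ contradicting its indecomposability in the sense of \ref{definition:indecomposable_chains}.

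First, I would combine the varifold coarea formula \cite[12.1]{MR3528825} with \cite[3.33, 3.34]{arXiv:2206.14046v1} to obtain, for $\mathscr L^1$-a.e. $y$,
\begin{equation*}
	\|V \boundary E(y)\| = \big(\mathscr H^{m-1} \restrict f^{-1}\{y\}\big) \restrict \boldsymbol \Uptheta^m(\|V\|, \cdot).
\end{equation*}
By the slicing theory for integral $G$ chains \cite[4.8, 5.13\,(8)]{arXiv:2206.14046v1}, the slice $\langle S, f, y\rangle$ lies in $\mathbf I_{m-1}(U, G)$ for $\mathscr L^1$-a.e. $y$ and its weight measure admits the parallel representation via the multiplicity of $S$ paired with $\mathscr H^{m-1} \restrict f^{-1}\{y\}$. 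Since $\|V\| = \|S\|$ and $V$ is rectifiable, this multiplicity equals $\boldsymbol \Uptheta^m(\|V\|, \cdot)$ on a set of full $\|V\|$ measure; hence
\begin{equation*}
	\|\langle S, f, y\rangle\| = \|V \boundary E(y)\| \quad \text{for $\mathscr L^1$-a.e. $y$.}
\end{equation*}

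Second, the slicing-boundary identity from \cite[5.13\,(8)]{arXiv:2206.14046v1} gives, for $\mathscr L^1$-a.e. $y$, that $T_y = S \restrict E(y)$ belongs to $\mathbf I_m(U, G)$ with $\boundary_G T_y = (\boundary_G S) \restrict E(y) \pm \langle S, f, y\rangle$, and analogously for $S - T_y = S \restrict (U \without E(y))$. For $y \in B$ at which both coarea conclusions and this identity apply, $V \boundary E(y) = 0$ forces $\langle S, f, y\rangle = 0$, whereupon
\begin{equation*}
	\|S\| = \|T_y\| + \|S - T_y\|, \qquad \|\boundary_G S\| = \|\boundary_G T_y\| + \|\boundary_G(S - T_y)\|.
\end{equation*}
The positivity of $\|V\|(E(y))$ and $\|V\|(U \without E(y))$ then yields $T_y \neq 0 \neq S - T_y$, contradicting indecomposability of $S$. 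The principal obstacle I anticipate is the clean identification of $\|V \boundary E(y)\|$ with $\|\langle S, f, y\rangle\|$ for $\mathscr L^1$-a.e. $y$, not merely up to gradient factors from tangent planes; this relies on pairing the two coarea formulas against the common rectifiable support using $\|V\| = \|S\|$ to match the scalar multiplicities. Once this identification is in hand, the remainder is routine bookkeeping with the slicing-boundary formula.
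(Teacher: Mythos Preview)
Your proposal is correct and follows essentially the same route as the paper: coarea for $\|V\boundary E(y)\|$, the slicing formula for $G$ chains, and the boundary--slice identity, leading to a forbidden splitting of $S$. The only simplification the paper makes is that it does not attempt the full identification $\|\langle S,f,y\rangle\|=\|V\boundary E(y)\|$ you flag as the principal obstacle; since $\|V\|=\|S\|$, the vanishing of $(\mathscr H^{m-1}\restrict f^{-1}\{y\})\restrict\boldsymbol\Uptheta^m(\|V\|,\cdot)$ already forces $\langle S,f,y\rangle=0$ via \cite[4.8]{arXiv:2206.14046v1}, so only this one-sided implication is needed and your worry about tangential gradient factors does not arise.
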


\begin{proof}
	Suppose $f: U \to \mathbf R$ is locally Lipschitzian, define $E(y) =
	\{ x \with f(x) > y\}$ for $y \in \mathbf R$, and let $Y = \mathbf R
	\cap \{ y \with V \boundary E(y)=0 \}$.  Firstly, \cite[2.33,
	2.34]{DOI_10.4171_RMI_1487} yield
	\begin{equation*}
		(\mathscr H^{m-1} \restrict \{ x \with f(x) = y \}) \restrict
		\boldsymbol \Uptheta^m(\|V\|, \cdot) = 0 \quad \text{for
		$\mathscr L^1$ almost all $y \in Y$}.
	\end{equation*}
	We then infer $\langle S,f,y \rangle = 0$ for $\mathscr L^1$ almost
	all $y \in Y$ by \cite[3.8]{DOI_10.4171_RMI_1487} and hence
	\begin{equation*}
		S \restrict E(y) \in \mathbf I_m ( U, G ) \quad \text{and}
		\quad \boundary_G ( S \restrict E(y) ) = ( \boundary_G S )
		\restrict E(y)
	\end{equation*}
	for $\mathscr L^1$ almost all $y \in Y$ by
	\cite[4.13\,(8)]{DOI_10.4171_RMI_1487}.  In view of
	\cite[3.5]{DOI_10.4171_RMI_1487}, the indecomposability of $S$ finally
	yields that either $\| S \| ( E(y) ) = 0$ or $\| S \| ( U \without
	E(y) ) = 0$ for such $y$.
\end{proof}

\begin{remark} \label{remark:set-indecomposable_chains}
	The indecomposability hypothesis on $S$ may be weakened to the
	requirement that there exists no Borel subset $E$ of $U$ satisfying
	\begin{equation*}
		\| S \| ( E ) > 0, \quad \| S \| ( U \without E ) > 0, \quad
		\text{and} \quad \boundary_G ( S \restrict E ) = ( \boundary_G
		S) \restrict E.
	\end{equation*}
	In \cite[Definition 1.1\,(2)]{MR4814697}, the strictly%
	\begin{footnote}%
		{We may consider $m=n=1$, $U = \mathbf R$, $G = \mathbf Z$,
		and $Q = \boldsymbol [ 0,1 \boldsymbol ] + \boldsymbol [ 0, -1
		\boldsymbol ] \in \mathbf I_1 ( \mathbf R )$.}
	\end{footnote}%
	stronger condition of \emph{set-in\-de\-com\-pos\-abil\-i\-ty}---still
	strictly weaker than indecomposability---is introduced in the context
	of the notion of \emph{flat $G$ chains} defined in \cite{MR1738045}.
\end{remark}

\begin{remark} \label{remark:indecomposable_chains_decomposable_varifolds}
	It may happen that $V$ is decomposable, see \ref{example:four-arcs}.
\end{remark}

\begin{lemma} \label{lemma:decompositions}
	Suppose $m$ and $n$ are positive integers, $m \leq n$, $U$ is an open
	subset of $\mathbf R^n$, $V \in \mathbf V_m ( U )$, $\| \updelta V \|$
	is a Radon measure, $\Phi$ is the family of connected components of
	$\spt \| V \|$, and $\Xi$ is a locally finite decomposition of $V$.
	
	Then, the following four statements hold.
	\begin{enumerate}
		\item \label{item:decompositions:refinement} If $C \in \Phi$,
		then $C = \bigcup \{ \spt \| W \| \with W \in \Xi, C \cap \spt
		\| W \| \neq \varnothing \}$.
		\item \label{item:decompositions:finite} If $K$ is a compact
		subset of $U$, then $\card ( \Phi \cap \{ C \with C \cap K
		\neq \varnothing \} ) < \infty$.
		\item \label{item:decompositions:open} If $C \in \Phi$, then
		$C$ is open relative to $\spt \| V \|$.
		\item \label{item:decompositions:connected_components} If $C
		\in \Phi$, then $\spt ( \| V \| \restrict C ) = C$ and $V
		\boundary C = 0$.
	\end{enumerate}
\end{lemma}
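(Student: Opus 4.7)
The plan is to first establish a general principle---usable without any appeal to indecomposability---and then combine it with the indecomposability of each component and the local finiteness of $\Xi$. The principle reads: \emph{whenever $W$ is an $m$ dimensional varifold in $U$ such that $\|\updelta W\|$ is a Radon measure and $\spt \| W \| \subset A \cup B$ for disjoint closed subsets $A$, $B$ of $U$, there holds $W \boundary A = 0$.} To verify this, given $\theta \in \mathscr D ( U, \mathbf R^n )$ with compact support $K$, the sets $A \cap K$ and $B \cap K$ are disjoint compact subsets of $U$ and admit a smooth separator $\eta : U \to [0,1]$ equal to $1$ on an open neighborhood of $A \cap K$ and equal to $0$ on an open neighborhood of $B \cap K$. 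Using the inclusions $\spt \| \updelta W \| \subset \spt \| W \| \subset A \cup B$ and the polar decomposition from \ref{remark:polar_decomposition}, both quantities $((\updelta W) \restrict A)(\theta)$ and $\updelta ( W \restrict A \times \mathbf G (n,m))(\theta)$ can be identified with $(\updelta W)(\eta \theta)$: on a neighborhood of $A$ the test field $\eta \theta$ and its derivative coincide with $\theta$ and $\Der \theta$, and on a neighborhood of $B$ both $\eta \theta$ and $\Der(\eta \theta)$ vanish, so the $A$ integrals may be harmlessly extended over $A \cup B$ and thus over $U$. The desired cancellation follows.

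The second step applies this principle to each $W \in \Xi$ to deduce that $\spt \| W \|$ is connected. Suppose otherwise: write $\spt \| W \| = A' \cup B'$ for nonempty disjoint relatively clopen subsets $A'$ and $B'$. Since $\spt \| W \|$ is closed in $U$, both $A'$ and $B'$ are closed in $U$; being nonempty and relatively open in $\spt \| W \|$, they satisfy $\| W \|(A')>0$ and $\| W \|(B')>0$ by the defining property of support. The general principle yields $W \boundary A' = 0$, whence $\{W \restrict A' \times \mathbf G (n,m), W \restrict B' \times \mathbf G (n,m) \}$ forms a nontrivial partition of $W$, contradicting that $W$ is a component---hence indecomposable---of $V$.

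With connectedness of each $\spt \| W \|$ established, together with the identity $\spt \| V \| = \bigcup \{ \spt \| W \| \with W \in \Xi \}$ from \ref{lemma:spt-partitions}, the four conclusions become point-set-topological deductions. For \eqref{item:decompositions:refinement}, any $W \in \Xi$ whose connected support meets $C$ must lie entirely inside $C$, and the union of these exhausts $C$. For \eqref{item:decompositions:open}, local finiteness of $\Xi$ renders $\bigcup \{ \spt \| W \| \with W \in \Xi, \spt \| W \| \cap C = \varnothing \}$ a locally finite union of subsets closed in $U$, hence closed; intersecting with $\spt \| V \|$ identifies it with $\spt \| V \| \without C$, so $C$ is relatively open. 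For \eqref{item:decompositions:finite}, any compact $K \subset U$ meets only finitely many $\spt \| W \|$, each of which lies in a single $C \in \Phi$, so only finitely many components of $\spt \| V \|$ can meet $K$. For \eqref{item:decompositions:connected_components}, the equality $\spt ( \| V \| \restrict C ) = C$ follows routinely from the relative clopenness of $C$ in $\spt \| V \|$, whereas $V \boundary C = 0$ is obtained by applying the general principle directly to $V$ with $A = C$ and $B = \spt \| V \| \without C$, both of which are closed in $U$ by what precedes.

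The main obstacle is the first step: the partition-of-unity computation must be carried out carefully enough to identify both $((\updelta W) \restrict A)(\theta)$ and $\updelta ( W \restrict A \times \mathbf G (n,m))(\theta)$ with the common expression $(\updelta W)(\eta \theta)$. This relies on systematically exploiting $\spt \| \updelta W \| \subset \spt \| W \|$ and the local constancy of $\eta$ near $A$ and near $B$. Once this principle is installed, extracting the four statements is essentially a topological reshuffling built on local finiteness.
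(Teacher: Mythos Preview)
Your approach is essentially the same as the paper's: the paper cites \cite[6.8]{MR3528825} for the connectedness of $\spt\|W\|$ (your second step) and \cite[6.5]{MR3528825} for the vanishing of distributional boundaries of relatively clopen sets (your general principle), whereas you reprove both inline. The logical flow through \eqref{item:decompositions:refinement}--\eqref{item:decompositions:connected_components} is then the same.

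There is, however, a genuine error in your appeal to \ref{lemma:spt-partitions} for the identity $\spt\|V\| = \bigcup\{\spt\|W\| \with W \in \Xi\}$. That lemma sits \emph{after} \ref{lemma:decompositions} in the paper, and its proof explicitly invokes \ref{lemma:decompositions}\,\eqref{item:decompositions:refinement}, so citing it here is circular. Moreover, \ref{lemma:spt-partitions} requires $V$ to be rectifiable and \emph{every} decomposition of $V$ to be locally finite; neither is among the hypotheses of \ref{lemma:decompositions}, where $V$ is a general varifold and only the \emph{given} decomposition $\Xi$ is assumed locally finite. Fortunately the identity you need is immediate from local finiteness alone: each $\spt\|W\|$ is contained in $\spt\|V\|$, and conversely, given $x \in \spt\|V\|$, choose a neighbourhood $N$ of $x$ meeting only $\spt\|W_1\|, \ldots, \spt\|W_k\|$; then $\|V\| \restrict N = \sum_{i=1}^k \|W_i\| \restrict N$, so $x$ must lie in some $\spt\|W_i\|$. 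Replace the citation by this one-line argument and your proof stands.
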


\begin{proof}
	Our hypotheses yield $\spt \| V \| = \bigcup \{ \spt \| W \| \with W
	\in \Xi \}$; hence, \cite[6.8]{MR3528825} implies
	\eqref{item:decompositions:refinement}.  Moreover,
	\eqref{item:decompositions:refinement} implies
	\eqref{item:decompositions:finite} and
	\eqref{item:decompositions:finite} implies
	\eqref{item:decompositions:open}.  Finally,
	\eqref{item:decompositions:open} and \cite[6.5]{MR3528825} yield
	\eqref{item:decompositions:connected_components}.
\end{proof}

\begin{remark}
	The proof is almost identical to \cite[6.11, 6.14]{MR3528825}.
\end{remark}

\begin{remark}
	For nonrectifiable varifolds $V$, no decomposition needs to exist even
	if $\updelta V = 0$, see \cite[4.12\,(3)]{MR3777387}.
\end{remark}

\begin{lemma} \label{lemma:spt-partitions}
	Suppose $m$ and $n$ are positive integers, $m \leq n$, $U$ is an open
	subset of $\mathbf R^n$, $V \in \mathbf{RV}_m ( U )$, $\| \updelta V
	\|$ is a Radon measure, and every decomposition of $V$ is locally
	finite.
	
	Then, there holds
	\begin{equation*}
		\spt \| V \| = \bigcup \{ \spt \| W \| \with W \in \Pi \}
		\quad \text{whenever $\Pi$ is a partition of $V$}.
	\end{equation*}
\end{lemma}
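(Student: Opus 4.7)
The plan is to use the refinement procedure from Remark \ref{remark:partition-vs-decomposition} to reduce the partition statement to a known decomposition statement. First I will note that the inclusion $\bigcup \{\spt \|W\| \with W \in \Pi\} \subset \spt \|V\|$ is immediate from $\|W\| \leq \|V\|$ for $W \in \Pi$, which holds because each $W$ has the form $V \restrict E_W \times \mathbf G(n,m)$ for some $\|V\| + \|\updelta V\|$ measurable set $E_W$.

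For the nontrivial inclusion, I proceed as follows. Since $V$ is rectifiable and each $W \in \Pi$ has the form $V \restrict E_W \times \mathbf G(n,m)$ with $V \boundary E_W = 0$, Lemma \ref{lemma:zero_distributional_boundary} yields that each such $W$ is rectifiable and $\| \updelta W \|$ is a Radon measure. By \cite[6.12]{MR3528825}, each $W \in \Pi$ therefore admits a decomposition, say $\xi(W)$, and Remark \ref{remark:partition-vs-decomposition} then guarantees that $\Xi = \bigcup_{W \in \Pi} \xi(W)$ is a decomposition of $V$. By hypothesis, $\Xi$ is locally finite.

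The main step is now to argue that $\spt \|V\| = \bigcup \{\spt \|X\| \with X \in \Xi\}$. Local finiteness of $\Xi$ ensures that $\bigcup \{\spt \|X\| \with X \in \Xi\}$ is relatively closed in $U$, and for any $x \in U$ outside this set I can find an open neighbourhood of $x$ that meets $\spt \|X\|$ only for finitely many $X \in \Xi$; after shrinking, this neighbourhood meets no $\spt \|X\|$ at all, so the equation $\|V\|(k) = \sum_{X \in \Xi} \|X\|(k)$ from the definition of partition (applied to a compactly supported bump in the neighbourhood) forces $\|V\|$ of that neighbourhood to vanish, i.e., $x \notin \spt \|V\|$. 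This gives $\spt \|V\| \subset \bigcup\{\spt \|X\| \with X \in \Xi\}$, and the reverse inclusion holds since $\|X\| \leq \|V\|$ for $X \in \Xi$.

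To conclude, for each $X \in \Xi$ there is a unique $W \in \Pi$ with $X \in \xi(W)$, and then $\|X\| \leq \|W\|$ gives $\spt \|X\| \subset \spt \|W\|$. Therefore
\begin{equation*}
	\spt \|V\| = {\textstyle\bigcup} \{\spt \|X\| \with X \in \Xi\} \subset {\textstyle\bigcup} \{\spt \|W\| \with W \in \Pi\},
\end{equation*}
completing the proof. I expect no substantial obstacle beyond the bookkeeping above; the only delicate point is the passage from the partition-level sum $\|V\| = \sum_{W \in \Pi} \|W\|$ to the support equality, which is precisely where local finiteness of the refined decomposition $\Xi$ (rather than of $\Pi$ itself) is indispensable, since the members of $\Pi$ need not be indecomposable and hence cannot a priori be assumed to form a locally finite family.
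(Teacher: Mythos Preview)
Your proof is correct and follows essentially the same route as the paper: refine the partition $\Pi$ into a decomposition $\Xi$ via Remark~\ref{remark:partition-vs-decomposition}, invoke the hypothesis to conclude $\Xi$ is locally finite, and deduce the support equality. The only difference is that you spell out the argument for $\spt \|V\| = \bigcup\{\spt \|X\| \with X \in \Xi\}$ directly, whereas the paper obtains it by citing Lemma~\ref{lemma:decompositions}\,\eqref{item:decompositions:refinement} (whose proof begins with precisely that equation).
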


\begin{proof}
	With a decomposition $\Xi$ related to $\Pi$ as in
	\ref{remark:partition-vs-decomposition}, we apply
	\ref{lemma:decompositions}\,\eqref{item:decompositions:refinement}.
\end{proof}

\begin{theorem} \label{thm:indecomposable_type_Psi}
	Suppose $m$ and $n$ are positive integers, $m \leq n$, $U$ is an open
	subset of $\mathbf R^n$, $V \in \mathbf{RV}_m (U)$, $\| \updelta V \|$
	is a Radon measure, every decomposition of $V$ is locally finite,
	$\spt \| V \|$ is connected, and
	\begin{equation*}
		\Gamma = \mathbf T(V) \cap \{ f \with \textup{$\dmn f = \spt
		\| V \|$, $f$ is continuous} \}.
	\end{equation*}
	
	Then, whenever $f \in \Gamma$ and $E(y) = \{ y \with f(x)>y \}$ for $y
	\in \mathbf R$, the set
	\begin{equation*}
		\big \{ y \with \| V \| ( E(y)) > 0, \| V \| ( U \without
		E(y)) > 0, V \boundary E(y) = 0 \big \}
	\end{equation*}
	is countable; in particular, $V$ is indecomposable of type $\Gamma$.
\end{theorem}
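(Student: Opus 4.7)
The plan is to invoke Theorem \ref{thm:decomposition_adapted_to_fct} to obtain, for given $f \in \Gamma$, a partition $\Pi$ of $V$ along $f$, to associate with each $W \in \Pi$ the subinterval $J(W) = \spt f_\# \|W\|$ of $\mathbf R$ (cf.\ Remark \ref{remark:partition-along} and Lemma \ref{lemma:basic_indecomp}\,\eqref{item:basic_indecomp:interval}), and to establish the inclusion
\begin{equation*}
	B \subset {\textstyle\bigcup \{ \Bdry J(W) \with W \in \Pi \}}
\end{equation*}
for the set $B$ described in the theorem.  As $\Pi$ is countable by Remark \ref{remark:partition} and each $\Bdry J(W)$ has at most two elements, this inclusion yields countability of~$B$, whence indecomposability of~$V$ of type~$\Gamma$ follows a fortiori.

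A preliminary step is to show that $\Pi$ is itself locally finite: each $W \in \Pi$ is rectifiable with $\|\updelta W\|$ a Radon measure by Remark \ref{remark:partition-along-pi}, hence admits a decomposition via \cite[6.12]{MR3528825}, and Remark \ref{remark:partition-vs-decomposition} assembles such choices into a decomposition of~$V$, which is locally finite by hypothesis; therefore $\Pi$ is locally finite.  Combined with Lemma \ref{lemma:spt-partitions}, this yields $\spt \|V\| = \bigcup_{W \in \Pi} \spt \|W\|$ as a locally finite union of closed subsets of~$U$.

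To establish the inclusion I would fix $y \in B$ and suppose $y \notin \Bdry J(W)$ for every $W \in \Pi$.  Since the subintervals $\pi(W)$ of Remark \ref{remark:partition-along-pi} are pairwise disjoint and dense in the corresponding $J(W)$, at most one $W_0 \in \Pi$ can satisfy $y \in \Int J(W_0)$, while every remaining $W$ then satisfies $y \notin \Clos J(W)$, so $J(W)$ lies strictly to one side of~$y$.  If such $W_0$ exists, then $y \in \Int J(W_0)$ forces both $\|W_0\|(E(y)) > 0$ and $\|W_0\|(U \without E(y)) > 0$, while every $W \neq W_0$ has $\|W\|$ and $\|\updelta W\| = \|\updelta V\| \restrict f^{-1}[\pi(W)]$ concentrated entirely on $E(y)$ or entirely on $U \without E(y)$, so that $W \boundary E(y) = 0$; additivity (Remark \ref{remark:partition}) together with $V \boundary E(y) = 0$ then gives $W_0 \boundary E(y) = 0$, and the partition of $\mathbf R$ into $(y,\infty)$ and $(-\infty,y]$ violates clause \eqref{item:partition-along:indecomposable} of Definition \ref{definition:partition-along} applied to $W_0$---a contradiction.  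If no such $W_0$ exists, $\Pi$ splits as $\Pi_1 = \{W \with \inf J(W) > y\}$ and $\Pi_2 = \{W \with \sup J(W) < y\}$; continuity of $f$ on $\spt \|V\|$ combined with density of $\pi(W)$ in $J(W)$ yields $\spt \|W\| \subset \{x \with f(x) \in \Clos J(W)\}$, so the unions $A_i = \bigcup_{W \in \Pi_i} \spt \|W\|$ lie in $\{x \with f(x) > y\}$ and $\{x \with f(x) < y\}$, respectively, are closed in~$U$ by local finiteness of~$\Pi$, cover $\spt \|V\|$ by Lemma \ref{lemma:spt-partitions}, and are both nonempty because $\|V\|(E(y)) > 0 < \|V\|(U \without E(y))$ together with that lemma forces contributions from both sides---contradicting the assumed connectedness of $\spt \|V\|$.

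The main obstacle is the first case: one must simultaneously transfer vanishing of $V \boundary E(y)$ to $W_0$ via additivity and convert the resulting configuration into a \emph{sharp}---not merely $\mathscr L^1$-almost-everywhere---violation of the indecomposability clause built into the notion of a partition along~$f$, which is strictly stronger than indecomposability of type~$\{f\}$ alone would afford; the second case, by contrast, is a clean disconnection argument resting on continuity of $f$ and the local finiteness of $\Pi$.
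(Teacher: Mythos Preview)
Your proof is correct and rests on the same central machinery as the paper: the partition $\Pi$ of $V$ along $f$ from Theorem \ref{thm:decomposition_adapted_to_fct}, the intervals $J(W)=\spt f_\#\|W\|$, and the target inclusion $B\subset\bigcup_{W\in\Pi}\Bdry J(W)$. Your Case~1 is, up to contrapositive phrasing, identical to the paper's argument that $B\cap\Int J(W)=\varnothing$ for every $W\in\Pi$: disjointness of the $\pi(W')$ forces $W'\boundary E(y)=0$ for $W'\neq W$, additivity from Remark \ref{remark:partition} gives $V\boundary E(y)=W\boundary E(y)$, and clause \eqref{item:partition-along:indecomposable} of Definition \ref{definition:partition-along} prevents the latter from vanishing.

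The only substantive difference is your Case~2. The paper dispatches it more directly: since $f$ is continuous on the connected set $\spt\|V\|$, the image $\im f$ is an interval, and Lemma \ref{lemma:spt-partitions} together with continuity (cf.\ Remark \ref{remark:utility}\,\eqref{item:utility:continuous}) yields $\im f\subset\bigcup_{W}J(W)$; because $B\subset\im f$, every $y\in B$ already lies in some $J(W)$, so Case~2 is vacuous. Your disconnection argument also works, but it obliges you first to show that $\Pi$ itself is locally finite---the ``therefore'' there hides a short but not entirely trivial transfer from local finiteness of the assembled decomposition back to $\Pi$. The paper's route avoids that extra step, using connectedness only through the fact that $\im f$ is an interval.
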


\begin{proof}
	Suppose $f \in \Gamma$ and $\Pi$ denotes a partition of $V$ along $f$,
	see \ref{thm:decomposition_adapted_to_fct}.  Since $f$ is continuous
	and $\dmn f$ is connected, $\im f$ is an interval.  Moreover, we have
	\begin{equation*}
		\spt \| V \| = \bigcup \{ \spt \| W \| \with W \in \Pi \}
	\end{equation*}
	by \ref{lemma:spt-partitions}.  It follows
	\begin{equation*}
		\im f \subset \bigcup \{ J(W) \with W \in \Pi \}
	\end{equation*}
	by \ref{remark:utility}\,\eqref{item:utility:continuous}, where $J(W)
	= \spt f_\# \| W \|$.  Abbreviating
	\begin{equation*}
		B = \big \{ y \with \text{$\| V \| ( E(y)) > 0$, $\| V \| ( U
		\without E(y) ) > 0$, and $V \boundary E(y) = 0$} \big \},
	\end{equation*}
	we conclude
	\begin{equation*}
		B \cap \bigcup_{W \in \Pi} \{ y \with \inf J(W) < y < \sup
		J(W) \} = \varnothing.
	\end{equation*}
	because we have $V \boundary E(y) = W \boundary E(y) \neq 0$ whenever
	$\inf J(W) < y < \sup J(W)$ and $W \in \Pi$ by \ref{remark:partition}
	and \ref{remark:partition-along-pi}.  Noting $B \subset \im f$, the
	set $B$ is accordingly contained in the countable set $\bigcup \{
	\Bdry J(W) \with W \in \Pi \}$; in particular, $\mathscr L^1(B)=0$.
\end{proof}

\begin{remark} \label{remark:indecomposability_immersions}
	By \ref{example:indecomposability-type-f}, our estimate is sharp and
	$V$ may be decomposable.  This also shows that indecomposability of
	type $\Gamma$ and indecomposability differ which, by
	\cite[8.7]{MR3528825}, answers the second question posed in
	\cite[Section~A]{scharrer:MSc}.
\end{remark}

\begin{remark}
	If the hypotheses of \ref{thm:locally_finite_near_boundary} are
	satisfied with $B = \varnothing$ and $\spt \| V \|$ is connected, then
	geometrically significant members of $\Gamma$ which may fail to be
	Lipschitzian are the geodesic distance functions on $\spt \| V \|$ to
	points in $\spt \| V \|$, see \cite[6.8\,(2), 6.11]{MR3626845}; more
	generally, this applies to functions in the image of the embedding of
	the \emph{local Sobolev space with respect to $V$ and exponent $q$}
	satisfying $q>m$ if $m>1$, into the space of real valued continuous
	functions on $\spt \| V \|$, see \cite[7.12]{MR3626845}.
\end{remark}

\begin{corollary} \label{corollary:indecomposability-equivalence}
	Suppose $m$ and $n$ are positive integers, $m \leq n$, $U$ is an open
	subset of $\mathbf R^n$, $V \in \mathbf{RV}_m (U)$, $\| \updelta V \|$
	is a Radon measure, every decomposition of $V$ is locally finite, and
	$\Gamma = \mathbf T(V) \cap \{ f \with \textup{$\dmn f = \spt \| V
	\|$, $f$ is continuous} \}$.
	
	Then, the following three statements are equivalent.
	\begin{enumerate}
		\item The set $\spt \| V \|$ is connected.
		\item The varifold $V$ is indecomposable of type $\Gamma$.
		\item The varifold $V$ is indecomposable of type $\mathscr E (
		U, \mathbf R )$.
	\end{enumerate}
\end{corollary}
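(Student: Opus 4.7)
The plan is to establish a three-way cyclic implication. The implication $(1) \Rightarrow (2)$ is immediately given by Theorem~\ref{thm:indecomposable_type_Psi}, whose hypotheses match those of the corollary, and $(3) \Rightarrow (1)$ is Theorem~\ref{thm:indecomposability_connected}. Thus the only step requiring work is $(2) \Rightarrow (3)$, which I will prove by showing that every $f \in \mathscr E(U,\mathbf R)$ can, after restriction, be replaced by a member of~$\Gamma$ that induces the same superlevel set decomposition from the varifold-theoretic viewpoint.

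For this step, given $f \in \mathscr E(U,\mathbf R)$, I would first note that $f$ is locally Lipschitzian and therefore belongs to $\mathbf T(V)$ by \cite[8.7]{MR3528825}. Next, since the test-field representation of~$\updelta V$ yields $\updelta V(\theta) = 0$ whenever $\spt\theta \cap \spt\|V\| = \varnothing$, one has $\spt\|\updelta V\| \subset \spt\|V\|$; hence $\spt\|V\|$ carries the full $\|V\| + \|\updelta V\|$ measure of~$U$. Consequently, the function $g = f|\spt\|V\|$ has domain of full $\|V\| + \|\updelta V\|$ measure, is continuous, agrees with~$f$ on a set of full measure, and still belongs to~$\mathbf T(V)$; in particular, $g \in \Gamma$.

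Finally, for each $y \in \mathbf R$, the sets $E(y) = \{x \in U \with f(x) > y\}$ and $E'(y) = \{x \in \spt\|V\| \with g(x) > y\}$ differ only on $U \without \spt\|V\|$, a set of $\|V\| + \|\updelta V\|$ measure zero. Thus $V \restrict E(y) = V \restrict E'(y)$ and $(\updelta V) \restrict E(y) = (\updelta V) \restrict E'(y)$, giving $V \boundary E(y) = V \boundary E'(y)$ together with the equalities $\|V\|(E(y)) = \|V\|(E'(y))$ and $\|V\|(U \without E(y)) = \|V\|(\spt\|V\| \without E'(y))$. The hypothesis of indecomposability of type~$\Gamma$ applied to~$g$ then forces the exceptional set for~$f$ to have $\mathscr L^1$ measure zero, proving $(2) \Rightarrow (3)$. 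The only, quite modest, obstacle is justifying that $g$ lies in $\mathbf T(V) \cap \Gamma$; this rests on the easy inclusion $\spt\|\updelta V\| \subset \spt\|V\|$ together with the fact that membership in~$\mathbf T(V)$ is insensitive to modifications on $\|V\| + \|\updelta V\|$ null sets.
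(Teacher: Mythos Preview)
Your proof is correct and follows essentially the same route as the paper: the paper's one-line proof cites only \ref{thm:indecomposable_type_Psi} for $(1)\Rightarrow(2)$ and \ref{thm:indecomposability_connected} for $(3)\Rightarrow(1)$, leaving $(2)\Rightarrow(3)$ implicit as the evident restriction argument you have written out in detail. Your justification that $g=f|\spt\|V\|$ lies in $\Gamma$ via $\spt\|\updelta V\|\subset\spt\|V\|$ and the $\|V\|+\|\updelta V\|$ almost-everywhere invariance of membership in $\mathbf T(V)$ is exactly the content the paper regards as routine.
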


\begin{proof}
	We combine \ref{thm:indecomposability_connected} and
	\ref{thm:indecomposable_type_Psi}.
\end{proof}

\begin{corollary} \label{corollary:indecomposable_type_Gamma}
	Suppose $V$ is as in
	\ref{example:regularity-hypotheses}, the set $\spt \| V \|$ is
	connected, and $\Gamma = \mathbf T(V) \cap \{ f \with \textup{$\dmn f
	= \spt \| V \|$, $f$ is continuous} \}$.
	
	Then, $V$ is indecomposable of type $\Gamma$.
\end{corollary}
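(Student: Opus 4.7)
The plan is to derive this corollary by a direct chaining of two theorems already established in the paper, since the regularity hypotheses supply exactly the ingredients needed to feed into Theorem~\ref{thm:indecomposable_type_Psi}. There is essentially no new argument required; the work is in verifying that all hypotheses match up.

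First, I would recall that \ref{example:regularity-hypotheses} asserts that, under the regularity hypotheses, $V$~is an $m$~dimensional rectifiable varifold in $U$ and $\|\updelta V\|$ is a Radon measure over $U$. Thus the preliminary structural hypotheses of Theorem~\ref{thm:indecomposable_type_Psi} are immediate.

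Next, I would invoke Theorem~\ref{thm:locally_finite_near_boundary} directly: since $m$, $n$, $U$, $B$, and~$V$ satisfy the regularity hypotheses by assumption, every decomposition of~$V$ is locally finite. This supplies the local finiteness hypothesis of Theorem~\ref{thm:indecomposable_type_Psi}.

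Finally, with the rectifiability of~$V$, the Radon measure property of~$\|\updelta V\|$, the local finiteness of all decompositions, and the connectedness of $\spt\|V\|$ assumed in the corollary, I would apply Theorem~\ref{thm:indecomposable_type_Psi} with the same~$\Gamma$ as here to conclude that $V$ is indecomposable of type~$\Gamma$. The proof amounts to at most a couple of sentences; there is no genuine obstacle, and the substance of the result lies entirely in the two theorems being combined.
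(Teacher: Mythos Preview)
Your proposal is correct and matches the paper's own proof essentially verbatim: the paper's proof reads simply ``We combine \ref{thm:locally_finite_near_boundary} and \ref{thm:indecomposable_type_Psi}.'' Your verification that the hypotheses line up (rectifiability and the Radon measure property from \ref{example:regularity-hypotheses}, then local finiteness of decompositions from \ref{thm:locally_finite_near_boundary}) is exactly the content needed.
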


\begin{proof}
	We combine \ref{thm:locally_finite_near_boundary} and
	\ref{thm:indecomposable_type_Psi}.
\end{proof}

\begin{remark} \label{remark:clamped-Willmore}
	Integral varifolds satisfying the hypotheses with $m = 2$, $n = 3$,
	and $U = \mathbf R^3$ occur in the minimisation of the Willmore energy
	with \emph{clamped boundary condition} amongst connected surfaces; see
	\cite[Theorem 4.1]{MR4141858}.
\end{remark}

\begin{corollary} \label{corollary:Neumann-indecomposable}
	Suppose $m$ and $V$ satisfy the equivalent conditions of
	\ref{example:Neumann-Lm}, $\boldsymbol \Uptheta^m ( \| V \|, x ) \geq
	1$ for $\| V \|$ almost all $x$, $\spt \| V \|$ is connected, and
	\begin{equation*}
		\Gamma = \mathbf T (V) \cap \{ f \with \textup{$\dmn f = \spt
		\| V \|$, $f$ is continuous} \}.
	\end{equation*}

	Then, $V$ is indecomposable of type $\Gamma$.
\end{corollary}

\begin{proof}
	Recalling \ref{remark:Neumann-rectifiability} and
	\ref{example:Neumann-Lm}\,\eqref{item:Neumann-Lm:summability}, we
	combine \ref{corollary:Neumann-locally-finite-decompositions} and
	\ref{thm:indecomposable_type_Psi}.
\end{proof}

\begin{remark} \label{remark:Neumann-indecomposable}
	By \ref{remark:Neumann-rectifiability},
	\ref{remark:Neumann-locally-finite-decompositions}, and
	\ref{thm:indecomposable_type_Psi}, the conclusion also holds if $m=1$,
	$V$ satisfies the conditions of \ref{example:Neumann-boundary},
	$\boldsymbol \Uptheta^1 ( \| V \|, x ) \geq 1$ for $\| V \|$ almost
	all $x$, and $\spt \| V \|$ is connected.
\end{remark}

\medskip \noindent \textsc{Affiliations}

\medskip \noindent Ulrich Menne \smallskip \newline
Department of Mathematics \\
National Taiwan Normal University \\
No.88, Sec.4, Tingzhou Rd. \\
Wenshan Dist., \textsc{Taipei City 116059 \\
	Taiwan(R.\ O.\ C.)}

\medskip \noindent Christian Scharrer \smallskip \newline Institute for Applied Mathematics
\newline University of Bonn
\newline Endenicher Allee 60 \newline 
\textsc{53115 Bonn} \\ \textsc{Germany}

\medskip \noindent \textsc{Email addresses}

\medskip \noindent
\href{mailto:Ulrich.Menne@math.ntnu.edu.tw}{Ulrich.Menne@math.ntnu.edu.tw}
\quad
\href{mailto:Scharrer@iam.uni-bonn.de}{Scharrer@iam.uni-bonn.de}


\begin{thebibliography}{ACMM01}

\bibitem[ACMM01]{MR1812124}
Luigi Ambrosio, Vicent Caselles, Simon Masnou, and Jean-Michel Morel.
\newblock Connected components of sets of finite perimeter and applications to
  image processing.
\newblock {\em J. Eur. Math. Soc. (JEMS)}, 3(1):39--92, 2001.
\newblock URL: \url{https://doi.org/10.1007/PL00011302}.

\bibitem[AFP00]{MR2003a:49002}
Luigi Ambrosio, Nicola Fusco, and Diego Pallara.
\newblock {\em Functions of bounded variation and free discontinuity problems}.
\newblock Oxford Mathematical Monographs. The Clarendon Press Oxford University
  Press, New York, 2000.

\bibitem[Aim23]{MR4612634}
Satoru Aimi.
\newblock Level set mean curvature flow, with {N}eumann boundary conditions.
\newblock {\em Hokkaido Math. J.}, 52(1):41--64, 2023.
\newblock URL: \url{https://doi.org/10.14492/hokmj/2020-426}.

\bibitem[All72]{MR0307015}
William~K. Allard.
\newblock On the first variation of a varifold.
\newblock {\em Ann. of Math. (2)}, 95(3):417--491, 1972.
\newblock URL: \url{https://doi.org/10.2307/1970868}.

\bibitem[All75]{MR0397520}
William~K. Allard.
\newblock On the first variation of a varifold: boundary behavior.
\newblock {\em Ann. of Math. (2)}, 101(3):418--446, 1975.
\newblock URL: \url{https://doi.org/10.2307/1970934}.

\bibitem[Alm00]{MR1777737}
Frederick~J. Almgren, Jr.
\newblock {\em Almgren's big regularity paper}, volume~1 of {\em World
  Scientific Monograph Series in Mathematics}.
\newblock World Scientific Publishing Co. Inc., River Edge, NJ, 2000.
\newblock $Q$-valued functions minimizing Dirichlet's integral and the
  regularity of area-minimizing rectifiable currents up to codimension 2, With
  a preface by Jean E.\ Taylor and Vladimir Scheffer.
\newblock URL: \url{https://doi.org/10.1142/9789812813299}.

\bibitem[BBG{\etalchar{+}}95]{MR1354907}
Philippe B{\'e}nilan, Lucio Boccardo, Thierry Gallou{\"e}t, Ron Gariepy, Michel
  Pierre, and Juan~Luis V{\'a}zquez.
\newblock An {$L^1$}-theory of existence and uniqueness of solutions of
  nonlinear elliptic equations.
\newblock {\em Ann. Scuola Norm. Sup. Pisa Cl. Sci. (4)}, 22(2):241--273, 1995.
\newblock URL: \url{http://www.numdam.org/item?id=ASNSP_1995_4_22_2_241_0}.

\bibitem[CFS22]{MR4524829}
Alessandro Carlotto, Giada Franz, and Mario~B. Schulz.
\newblock Free boundary minimal surfaces with connected boundary and arbitrary
  genus.
\newblock {\em Camb. J. Math.}, 10(4):835--857, 2022.
\newblock URL: \url{https://doi.org/10.4310/cjm.2022.v10.n4.a3}.

\bibitem[Cho23]{MR4609162}
Hsin-Chuang Chou.
\newblock Integral decompositions of varifolds.
\newblock {\em Ann. Global Anal. Geom.}, 64:Paper No. 3, 16, 2023.
\newblock URL: \url{https://doi.org/10.1007/s10455-023-09908-x}.

\bibitem[Dav14]{MR3329849}
Guy David.
\newblock Should we solve {P}lateau's problem again?
\newblock In {\em Advances in analysis: the legacy of {E}lias {M}. {S}tein},
  volume~50 of {\em Princeton Math. Ser.}, pages 108--145. Princeton Univ.
  Press, Princeton, NJ, 2014.

\bibitem[Dav19]{MR3975493}
Guy David.
\newblock Local regularity properties of almost- and quasiminimal sets with a
  sliding boundary condition.
\newblock {\em Ast\'{e}risque}, 411:ix+377, 2019.
\newblock URL: \url{https://doi.org/10.24033/ast.1077}.

\bibitem[DGA88]{MR1152641}
Ennio De~Giorgi and Luigi Ambrosio.
\newblock New functionals in the calculus of variations.
\newblock {\em Atti Accad. Naz. Lincei Rend. Cl. Sci. Fis. Mat. Natur. (8)},
  82(2):199--210 (1989), 1988.

\bibitem[DM21]{MR4359920}
Luigi De~Masi.
\newblock Rectifiability of the free boundary for varifolds.
\newblock {\em Indiana Univ. Math. J.}, 70(6):2603--2651, 2021.
\newblock URL: \url{https://doi.org/10.1512/iumj.2021.70.9401}.

\bibitem[DS58]{MR0117523}
Nelson Dunford and Jacob~T. Schwartz.
\newblock {\em Linear {O}perators. {I}. {G}eneral {T}heory}.
\newblock With the assistance of W. G. Bade and R. G. Bartle. Pure and Applied
  Mathematics, Vol. 7. Interscience Publishers, Inc., New York; Interscience
  Publishers, Ltd., London, 1958.
\newblock URL:
  \url{https://babel.hathitrust.org/cgi/pt?id=mdp.39015000962400;view=1up;seq=9A}.

\bibitem[Ede20]{MR4048443}
Nick Edelen.
\newblock The free-boundary {B}rakke flow.
\newblock {\em J. Reine Angew. Math.}, 758:95--137, 2020.
\newblock URL: \url{https://doi.org/10.1515/crelle-2017-0053}.

\bibitem[Fan21]{MR4220652}
Yangqin Fang.
\newblock Local {$C^{1,\beta}$}-regularity at the boundary of two dimensional
  sliding almost minimal sets in {$\mathbb{R}^3$}.
\newblock {\em Trans. Amer. Math. Soc. Ser. B}, 8:130--189, 2021.
\newblock URL: \url{https://doi.org/10.1090/btran/40}.

\bibitem[Fed59]{MR0110078}
Herbert Federer.
\newblock Curvature measures.
\newblock {\em Trans. Amer. Math. Soc.}, 93:418--491, 1959.
\newblock URL: \url{https://doi.org/10.1090/S0002-9947-1959-0110078-1}.

\bibitem[Fed69]{MR41:1976}
Herbert Federer.
\newblock {\em Geometric measure theory}.
\newblock Die Grundlehren der ma\-the\-ma\-ti\-schen Wissenschaften, Band 153.
  Springer-Verlag New York Inc., New York, 1969.
\newblock URL: \url{https://doi.org/10.1007/978-3-642-62010-2}.

\bibitem[FK18]{MR3800850}
Yangqin Fang and Sławomir Kolasiński.
\newblock Existence of solutions to a general geometric elliptic variational
  problem.
\newblock {\em Calc. Var. Partial Differential Equations}, 57(3):Art. 91, 71,
  2018.
\newblock URL: \url{https://doi.org/10.1007/s00526-018-1348-4}.

\bibitem[FS16]{MR3461367}
Ailana Fraser and Richard Schoen.
\newblock Sharp eigenvalue bounds and minimal surfaces in the ball.
\newblock {\em Invent. Math.}, 203(3):823--890, 2016.
\newblock URL: \url{https://doi.org/10.1007/s00222-015-0604-x}.

\bibitem[GM24]{MR4814697}
Michael Goldman and Benoit Merlet.
\newblock Set-decomposition of normal rectifiable {$G$}-chains via an abstract
  decomposition principle.
\newblock {\em Rev. Mat. Iberoam.}, 40(6):2073--2094, 2024.
\newblock URL: \url{https://doi.org/10.4171/rmi/1504}.

\bibitem[GT01]{MR1814364}
David Gilbarg and Neil~S. Trudinger.
\newblock {\em Elliptic partial differential equations of second order}.
\newblock Classics in Mathematics. Springer-Verlag, Berlin, 2001.
\newblock Reprint of the 1998 edition.
\newblock URL: \url{https://doi.org/10.1007/978-3-642-61798-0}.

\bibitem[Hir94]{MR1336822}
Morris~W. Hirsch.
\newblock {\em Differential topology}, volume~33 of {\em Graduate Texts in
  Mathematics}.
\newblock Springer-Verlag, New York, 1994.
\newblock Corrected fifth printing.
\newblock URL: \url{https://doi.org/10.1007/978-1-4684-9449-5}.

\bibitem[ITIT69]{MR0276438}
A.~Ionescu~Tulcea and C.~Ionescu~Tulcea.
\newblock {\em Topics in the theory of lifting}.
\newblock Ergebnisse der Mathematik und ihrer Grenzgebiete, Band 48.
  Springer-Verlag New York Inc., New York, 1969.
\newblock URL: \url{https://doi.org/10.1007/978-3-642-88507-5}.

\bibitem[Kag19]{MR3953134}
Takashi Kagaya.
\newblock Convergence of the {A}llen-{C}ahn equation with a zero {N}eumann
  boundary condition on non-convex domains.
\newblock {\em Math. Ann.}, 373(3-4):1485--1528, 2019.
\newblock URL: \url{https://doi.org/10.1007/s00208-018-1720-x}.

\bibitem[KM17]{MR3625810}
Sławomir Kolasiński and Ulrich Menne.
\newblock Decay rates for the quadratic and super-quadratic tilt-excess of
  integral varifolds.
\newblock {\em NoDEA Nonlinear Differential Equations Appl.}, 24(2):Art. 17,
  56, 2017.
\newblock URL: \url{https://doi.org/10.1007/s00030-017-0436-z}.

\bibitem[KM24]{MR4787572}
Ernst Kuwert and Marius M\"uller.
\newblock Curvature varifolds with orthogonal boundary.
\newblock {\em J. Lond. Math. Soc. (2)}, 110(3):Paper No. e12976, 42, 2024.
\newblock URL: \url{https://doi.org/10.1112/jlms.12976}.

\bibitem[Lab22]{MR4489608}
Camille Labourie.
\newblock Solutions of the (free boundary) {R}eifenberg {P}lateau problem.
\newblock {\em Adv. Calc. Var.}, 15(4):913--927, 2022.
\newblock URL: \url{https://doi.org/10.1515/acv-2020-0067}.

\bibitem[LW25]{arXiv:2512.19227v1}
Yu~Tong Liu and Myles Workman.
\newblock The space-time-{G}rassmann measure of the {B}rakke flow, 2025.
\newblock \href {http://arxiv.org/abs/2512.19227v1}
  {\path{arXiv:2512.19227v1}}.

\bibitem[LZ21]{MR4285846}
Martin Man-Chun Li and Xin Zhou.
\newblock Min-max theory for free boundary minimal hypersurfaces
  {I}---{R}egularity theory.
\newblock {\em J. Differential Geom.}, 118(3):487--553, 2021.
\newblock URL: \url{https://doi.org/10.4310/jdg/1625860624}.

\bibitem[Man96]{MR1412686}
Carlo Mantegazza.
\newblock Curvature varifolds with boundary.
\newblock {\em J. Differential Geom.}, 43(4):807--843, 1996.
\newblock URL:
  \url{http://projecteuclid.org/getRecord?id=euclid.jdg/1214458533}.

\bibitem[Men04]{snulmenn:diploma_thesis}
Ulrich Menne.
\newblock Ein maßtheoretischer {Z}ugang zu {U}nterhalbstetigkeit und
  {K}onvergenz von {F}unktionalen, 2004.
\newblock Diploma thesis, University of Bonn.

\bibitem[Men09]{MR2537022}
Ulrich Menne.
\newblock Some applications of the isoperimetric inequality for integral
  varifolds.
\newblock {\em Adv. Calc. Var.}, 2(3):247--269, 2009.
\newblock URL: \url{https://doi.org/10.1515/ACV.2009.010}.

\bibitem[Men16a]{MR3528825}
Ulrich Menne.
\newblock Weakly differentiable functions on varifolds.
\newblock {\em Indiana Univ. Math. J.}, 65(3):977--1088, {\noopsort{a}}2016.
\newblock URL: \url{https://doi.org/10.1512/iumj.2016.65.5829}.

\bibitem[Men16b]{MR3626845}
Ulrich Menne.
\newblock Sobolev functions on varifolds.
\newblock {\em Proc. Lond. Math. Soc. (3)}, 113(6):725--774,
  {\noopsort{b}}2016.
\newblock URL: \url{https://doi.org/10.1112/plms/pdw023}.

\bibitem[Men19]{MR3936235}
Ulrich Menne.
\newblock Pointwise differentiability of higher order for sets.
\newblock {\em Ann. Global Anal. Geom.}, 55(3):591--621, 2019.
\newblock URL: \url{https://doi.org/10.1007/s10455-018-9642-0}.

\bibitem[Men21]{MR4241804}
Ulrich Menne.
\newblock Pointwise differentiability of higher-order for distributions.
\newblock {\em Anal. PDE}, 14(2):323--354, 2021.
\newblock URL: \url{https://doi.org/10.2140/apde.2021.14.323}.

\bibitem[Mon14]{MR3148123}
Andrea Mondino.
\newblock Existence of integral {$m$}-varifolds minimizing {$\int\vert A\vert
  ^p$} and {$\int\vert H\vert ^p,\,p>m,$} in {R}iemannian manifolds.
\newblock {\em Calc. Var. Partial Differential Equations}, 49(1-2):431--470,
  2014.
\newblock URL: \url{https://doi.org/10.1007/s00526-012-0588-y}.

\bibitem[MS18]{MR3777387}
Ulrich Menne and Christian Scharrer.
\newblock An isoperimetric inequality for diffused surfaces.
\newblock {\em Kodai Math. J.}, 41(1):70--85, 2018.
\newblock URL: \url{https://doi.org/10.2996/kmj/1521424824}.

\bibitem[MS24]{arXiv:1709.05504v3}
Ulrich Menne and Christian Scharrer.
\newblock A priori bounds for geodesic diameter. {P}art {III}. {A}
  {S}obolev-{P}oincaré inequality and applications to a variety of geometric
  variational problems, 2024.
\newblock Accepted in \emph{Duke Math. J.}
\newblock \href {http://arxiv.org/abs/1709.05504v3}
  {\path{arXiv:1709.05504v3}}.

\bibitem[MS25]{DOI_10.4171_RMI_1487}
Ulrich Menne and Christian Scharrer.
\newblock A priori bounds for geodesic diameter. {P}art {I}. {I}ntegral chains
  with coefficients in a complete normed commutative group.
\newblock {\em Rev. Mat. Iberoam.}, 41:29--72, 2025.
\newblock URL: \url{https://doi.org/10.4171/rmi/1487}.

\bibitem[MT15]{MR3348119}
Masashi Mizuno and Yoshihiro Tonegawa.
\newblock Convergence of the {A}llen-{C}ahn equation with {N}eumann boundary
  conditions.
\newblock {\em SIAM J. Math. Anal.}, 47(3):1906--1932, 2015.
\newblock URL: \url{https://doi.org/10.1137/140987808}.

\bibitem[MT16]{MR3542008}
Masashi Mizuno and Yoshihiro Tonegawa.
\newblock Erratum to ``{C}onvergence of the {A}llen-{C}ahn equation with
  {N}eumann boundary conditions''.
\newblock {\em SIAM J. Math. Anal.}, 48(4):3035--3036, 2016.
\newblock URL: \url{https://doi.org/10.1137/16M1074059}.

\bibitem[NP20]{MR4141858}
Matteo Novaga and Marco Pozzetta.
\newblock Connected surfaces with boundary minimizing the {W}illmore energy.
\newblock {\em Math. Eng.}, 2(3):527--556, 2020.
\newblock URL: \url{https://doi.org/10.3934/mine.2020024}.

\bibitem[Rei71]{MR0273082}
William~T. Reid.
\newblock {\em Ordinary differential equations}.
\newblock John Wiley \& Sons, Inc., New York-London-Sydney, 1971.

\bibitem[Sak96]{MR1390760}
Takashi Sakai.
\newblock {\em Riemannian geometry}, volume 149 of {\em Translations of
  Mathematical Monographs}.
\newblock American Mathematical Society, Providence, RI, 1996.
\newblock Translated from the 1992 Japanese original by the author.

\bibitem[San19]{MR3978264}
Mario Santilli.
\newblock Rectifiability and approximate differentiability of higher order for
  sets.
\newblock {\em Indiana Univ. Math. J.}, 68(3):1013--1046, 2019.
\newblock URL: \url{https://doi.org/10.1512/iumj.2019.68.7645}.

\bibitem[Sch16]{scharrer:MSc}
Christian Scharrer.
\newblock Relating diameter and mean curvature for varifolds, 2016.
\newblock MSc thesis, University of Potsdam.
\newblock URL: \url{http://nbn-resolving.de/urn:nbn:de:kobv:517-opus4-97013}.

\bibitem[Whi99]{MR1738045}
Brian White.
\newblock The deformation theorem for flat chains.
\newblock {\em Acta Math.}, 183(2):255--271, 1999.
\newblock URL: \url{https://doi.org/10.1007/BF02392829}.

\end{thebibliography}
\end{document}